\setlist[enumerate,1]{label=\textup{(\arabic*)}}
\definecolor{todo-background-color}{gray}{0.95}
\def\@settitle{%
  \vspace*{10pt}
  \begin{flushleft}%
    \LARGE\bfseries
    \strut\@title\strut
  \end{flushleft}%
}
\def\@setauthors{%
  \begingroup
  \def\thanks{\protect\thanks@warning}%
  \trivlist
  \raggedright
  \large \@topsep28\p@\relax
  \advance\@topsep by -\baselineskip
  \item\relax
  \author@andify\authors
  \def\\{\protect\linebreak}%
  \authors
  \ifx\@empty\contribs
  \else
    ,\penalty-3 \space \@setcontribs
    \@closetoccontribs
  \fi
  \normalfont
  \endtrivlist
  \endgroup
}
\def\@setaddresses{\par
  \nobreak \begingroup
  \small\raggedright
  \def\author##1{\nobreak\addvspace\smallskipamount}%
  \def\\{\unskip, \ignorespaces}%
  \interlinepenalty\@M
  \def\address##1##2{\begingroup
    \par\addvspace\bigskipamount\noindent
    \@ifnotempty{##1}{(\ignorespaces##1\unskip) }%
    {\ignorespaces##2}\par\endgroup}%
  \def\curraddr##1##2{\begingroup
    \@ifnotempty{##2}{\nobreak\noindent\curraddrname
      \@ifnotempty{##1}{, \ignorespaces##1\unskip}\/:\space
      ##2\par}\endgroup}%
  \def\email##1##2{\begingroup
    \@ifnotempty{##2}{\nobreak\noindent E-mail address%
      \@ifnotempty{##1}{, \ignorespaces##1\unskip}\/:\space
      \ttfamily##2\par}\endgroup}%
  \def\urladdr##1##2{\begingroup
    \def~{\char`\~}%
    \@ifnotempty{##2}{\nobreak\noindent\urladdrname
      \@ifnotempty{##1}{, \ignorespaces##1\unskip}\/:\space
      \ttfamily##2\par}\endgroup}%
  \addresses
  \endgroup
  \global\let\addresses=\@empty
}
\def\@setabstracta{%
  \ifvoid\abstractbox
  \else
    \skip@20pt \advance\skip@-\lastskip
    \advance\skip@-\baselineskip \vskip\skip@
    \box\abstractbox
    \prevdepth\z@ 
    \vskip-22pt
  \fi
}
\renewenvironment{abstract}{%
  \ifx\maketitle\relax
    \ClassWarning{\@classname}{Abstract should precede
      \protect\maketitle\space in AMS document classes; reported}%
  \fi
  \global\setbox\abstractbox=\vtop \bgroup
    \normalfont\small
    \list{}{\labelwidth\z@
      \leftmargin0pc \rightmargin\leftmargin
      \listparindent\normalparindent \itemindent\z@
      \parsep\z@ \@plus\p@
      
    }%
    \item[\hskip\labelsep\bfseries\abstractname.]%
}{%
  \endlist\egroup
  \ifx\@setabstract\relax \@setabstracta \fi
}
\def\ps@headings{\ps@empty
  \def\@evenhead{%
    \setTrue{runhead}%
    \normalfont\scriptsize
    \rlap{\thepage}\hfill
    \def\thanks{\protect\thanks@warning}%
    \leftmark{}{}}%
  \def\@oddhead{%
    \setTrue{runhead}%
    \normalfont\scriptsize
    \def\thanks{\protect\thanks@warning}%
    \rightmark{}{}\hfill \llap{\thepage}}%
  \let\@mkboth\markboth
}\ps@headings
\def\section{\@startsection{section}{1}%
  \z@{-1.4\linespacing\@plus-.5\linespacing}{.8\linespacing}%
  {\normalfont\bfseries\Large}}
\def\subsection{\@startsection{subsection}{2}%
  \z@{-.8\linespacing\@plus-.3\linespacing}{.5\linespacing\@plus.2\linespacing}%
  {\normalfont\bfseries\large}}
\def\subsubsection{\@startsection{subsubsection}{3}%
  \z@{.7\linespacing\@plus.2\linespacing}{-1.5ex}%
  {\normalfont\bfseries}}
\def\@secnumfont{\bfseries}
\renewcommand\contentsnamefont{\bfseries\large}
\def\@starttoc#1#2{\begingroup
  \setTrue{#1}%
  \par\removelastskip\vskip\z@skip
  \@startsection{}\@M\z@{\linespacing\@plus\linespacing}%
    {.5\linespacing}{
      \contentsnamefont}{#2}%
  \ifx\contentsname#2%
  \else \addcontentsline{toc}{section}{#2}\fi
  \makeatletter
  \@input{\jobname.#1}%
  \if@filesw
    \@xp\newwrite\csname tf@#1\endcsname
    \immediate\@xp\openout\csname tf@#1\endcsname \jobname.#1\relax
  \fi
  \global\@nobreakfalse \endgroup
  \addvspace{32\p@\@plus14\p@}%
  \let\tableofcontents\relax
}
\def\contentsname{Contents}
\def\l@section{\@tocline{1}{.5ex}{0mm}{5pc}{}}
\def\l@subsection{\@tocline{2}{0pt}{2em}{5pc}{}}
\def\to{\mathchoice{\longrightarrow}{\rightarrow}{\rightarrow}{\rightarrow}}
\newcommand{\shortxra}[2][]{\ext@arrow 0359\rightarrowfill@{#1}{#2}}
\def\longrightarrowfill@{\arrowfill@\relbar\relbar\longrightarrow}
\newcommand{\longxra}[2][]{\ext@arrow 0359\longrightarrowfill@{#1}{#2}}
\renewcommand{\xrightarrow}[2][]{\mathchoice{\longxra[#1]{#2}}%
  {\shortxra[#1]{#2}}{\shortxra[#1]{#2}}{\shortxra[#1]{#2}}}
\def\Nopagebreak{\@nobreaktrue\nopagebreak}
\newtheorem{theorem}{Theorem}[section]
\newtheorem{theoremalpha}{Theorem}
\newtheorem{proposition}[theorem]{Proposition}
\newtheorem{corollary}[theorem]{Corollary}
\newtheorem{corollaryalpha}[theoremalpha]{Corollary}
\newtheorem{lemma}[theorem]{Lemma}
\theoremstyle{definition}
\newtheorem{definition}[theorem]{Definition}
\newtheorem{remark}[theorem]{Remark}
\newtheoremstyle{theorem-giventitle}
        {}{}              
        {\itshape}                      
        {}                              
        {\bfseries}                     
        {.}                             
        { }                             
        {\thmnote{\bfseries#3}}
\theoremstyle{theorem-giventitle}
\newtheorem{theorem-named}{}
\newtheoremstyle{definition-giventitle}
        {}{}              
        {}                      
        {}                              
        {\bfseries}                     
        {.}                             
        {.7em}                             
        {\thmnote{\bfseries#3}}
\theoremstyle{definition-giventitle}
\newtheorem{question-named}{}
\newtheorem{step-named}{}
\newtheorem{claim-named}{}
\newenvironment{subproof}{\begin{trivlist}\item}{\end{trivlist}}
\numberwithin{equation}{section}
\def\Z{\mathbb{Z}}
\def\Q{\mathbb{Q}}
\def\C{\mathbb{C}}
\def\cA{\mathcal{A}}
\def\cC{\mathcal{C}}
\def\cR{\mathcal{R}}
\def\Ker{\operatorname{Ker}}
\def\Coker{\operatorname{Coker}}
\def\Im{\operatorname{Im}}
\def\Hom{\mathrm{Hom}}
\def\Ext{\mathrm{Ext}}
\def\id{\mathrm{id}}
\def\Aut{\mathrm{Aut}}
\def\sm{\setminus}
\def\tpmod#1{{\@displayfalse\pmod{#1}}}
\def\mathbinover#1#2{\mathbin{\mathop{#1}\limits_{#2}}}
\def\cupover#1{\mathbinover{\cup}{#1}}
\def\colim{\operatorname*{colim}}
\def\setminus{\smallsetminus}
\def\isomto{\mathrel{\smash{\xrightarrow{\smash{\lower.4ex\hbox{$\scriptstyle{\cong}$}}}}}}
\def\isomfrom{\mathrel{\smash{\xleftarrow{\smash{\lower.4ex\hbox{$\scriptstyle{\cong}$}}}}}}
\def\sbmatrix#1{\left[\begin{smallmatrix}#1\end{smallmatrix}\right]}
\def\cyc#1{\tfrac{1}{#1}\Z/\Z}
\def\frt#1{(\tfrac{1}{#1}\Z)^2}
\begin{document}

\vspace*{0mm}

\title%
{Transfinite Milnor invariants for 3-manifolds}

\author{Jae Choon Cha}
\address{
  Center for Research in Topology\\
  POSTECH\\
  Pohang 37673\\
  Republic of Korea
}
\email{jccha@postech.ac.kr}

\author{Kent E. Orr}
\address{Department of Mathematics\\
  Indiana University\\
  Bloomington, Indiana 47405
  \\USA}
\email{korr@indiana.edu}

\def\subjclassname{\textup{2010} Mathematics Subject Classification}
\expandafter\let\csname subjclassname@1991\endcsname=\subjclassname
\expandafter\let\csname subjclassname@2000\endcsname=\subjclassname
\subjclass{%
}

\begin{abstract}
  In his 1957 paper, John Milnor introduced link invariants which measure the
  homotopy class of the  longitudes of a link relative to the lower central
  series of the link group.  Consequently, these invariants determine the lower
  central series quotients of the link group. This work has driven decades of
  research with profound influence.  One of Milnor's original problems remained
  unsolved: to extract similar invariants from the transfinite lower central
  series of the link group. We reformulate and extend Milnor's invariants in the
  broader setting of $3$-manifolds, with his original invariants as special
  cases. We present a solution to Milnor's problem for general 3-manifold
  groups, developing a theory of transfinite invariants and realizing nontrivial
  values.
 \end{abstract}

\maketitle

\setcounter{tocdepth}{1}
\tableofcontents

\section{Introduction}
\label{section:introduction}

In John Milnor's 1954 Ph.D.\ thesis~\cite{Milnor:1957-1}, he introduced link
invariants obtained from the lower central series of the fundamental group.
Milnor's work vastly extended the classical linking number, and has influenced
decades of fundamental research.

Roughly speaking, Milnor's invariants inductively measure whether the
fundamental group of the exterior of a given link has the same lower central
series quotients as that of the free group~\cite{Milnor:1957-1}.  Another key
feature of Milnor's invariant, due to Stallings~\cite{Stallings:1965-1}, is
invariance under link concordance, and more generally under homology cobordism
of the link exterior. Invariance under homology cobordism  seeds a fundamental
connection between Milnor's invariants and the topology of 4-manifolds.

Although seldom noted, the first part of Milnor's paper~\cite{Milnor:1957-1}
concerns fundamental groups of exteriors of links in an {\em arbitrary
3-manifold}, while the latter part of Milnor's paper, as well as most subsequent
research of others, focuses on the special case of links in~$S^3$.

The following problems posed by Milnor in~\cite{Milnor:1957-1} have remained
unsolved for more than 60 years.

\begin{question-named}
  [Milnor's Problem \textmd{~\cite[p.~52, Problem~(b)]{Milnor:1957-1}}]

  Find a method of attacking the \emph{transfinite} lower central series
  quotients  and extracting information from it.
\end{question-named}

That is, develop a transfinite lower central series version of
Milnor's invariants which contains non-vacuous information.

Recall that the \emph{transfinite lower central series} of a group $G$
consists of subgroups $G_\kappa$ indexed by ordinals $\kappa$ and defined by
\[
  G_\kappa = \begin{cases}
    G &\text{if $\kappa=1$,}\\
    [G,G_{\kappa-1}] &\text{if $\kappa>1$ is a discrete ordinal,}\\
    \bigcap_{\lambda<\kappa} G_\lambda &\text{if $\kappa$ is a limit ordinal.}
  \end{cases}
\]

We acknowledge past progress toward a solution to Milnor's problem. The first
viable candidate for a transfinite invariant was given in work of the second
author~\cite{Orr:1987-1}.  He presented a reformulation of the original Milnor
link invariants by introducing a homotopy theoretic approach.
Papers~\cite{Orr:1987-1,Igusa-Orr:2001-1} answered numerous problems
from~\cite{Milnor:1957-1}, including the realizabilty and independence of Milnor
invariants.  But Orr's ``transfinite'' invariant of links continues to
resist computation.  (See recent progress by E. D. Farjoun and R. Mikhailov
in~\cite{Farjoun-Mikhailov:2018-1}.)

J.~Levine refined Orr's transfinite invariant by developing the fundamental
notion of ``algebraic closure of groups''~\cite{Levine:1989-2, Levine:1989-1}.
This arose, in part, from harvesting key insights from work of M.
Gutierrez~\cite{Gutierrez:1979-1} and P.
Vogel~\cite{Vogel:1978-1,LeDimet:1988-1}.  With his breakthrough, Levine proved
further realization and geometric characterization results. Unfortunately,
Levine's refinement resists computation as well.

In particular, it remains open whether the invariants in~\cite{Orr:1987-1,
Levine:1989-2, Levine:1989-1} always vanish for links with vanishing classical
Milnor invariants!

\subsubsection*{Our contribution}
\label{section:contribution}

In this paper, we develop new families of \emph{transfinite} invariants for
closed, orientable \emph{$3$-manifolds}.  For one family of these invariants we
find striking parallels to Milnor's link invariants, leading
us to name that family of invariants \emph{Milnor invariants of $3$-manifolds}.
The Milnor invariants we introduce are indexed by arbitrary ordinal numbers
called the \emph{length} of the invariant.  This allows one to extend the
integer grading in Milnor's original work.  Our invariants include classical
Milnor invariants as a special case.

We show that our invariants are highly nontrivial even at infinite ordinals.
Thus, \emph{we view these invariants as presenting a solution to Milnor's
problem within the broad context of oriented closed $3$-manifolds.}

Indeed, we define four closely related invariants.  The invariants we call the
Milnor invariants is denoted by $\bar\mu_\kappa(M)$, where $\kappa$ is the
length. The invariant $\bar\mu_\kappa(M)$ has the following features.

\begin{enumerate}[(i)]
  \item\label{item:feature-lcsq}
  \emph{Determination of lower central series quotients}: $\bar\mu_\kappa(M)$
  inductively determine the isomorphism classes of the lower central series
  quotients, as do Milnor's link invariants.  Furthermore, this inductive
  process extends to transfinite ordinals.
  \item\label{item:feature-H-cob}
  \emph{Homology cobordism invariance}: $\bar\mu_\kappa(M)$ is invariant under
  homology cobordism, as are Milnor's link invariants.
  \item\label{item:feature-link}
  \emph{Specialization to Milnor's link invariants}: $\bar\mu_\kappa(M)$ with
  finite $\kappa$ determines Milnor's link invariants, when $M$ is the
  zero-surgery on a link in~$S^3$.
  \item\label{item:feature-grope}
  \emph{Obstructions to gropes:}  like those of links, $\bar\mu_\kappa(M)$ is an
  obstruction to building gropes.  Moreover, this extends to the transfinite
  length case, using an appropriate notion of transfinite gropes.
  \item\label{item:feature-realization}
  \emph{Realization:} $\bar\mu_\kappa(M)$ lives in a set
  $\cR_\kappa(\Gamma)/{\approx}$, whose elements are explicitly characterized.
  Every element in $\cR_\kappa(\Gamma)/{\approx}$ is realized as
  $\bar\mu_\kappa(M)$ for some closed 3-manifold~$M$.
\end{enumerate}

This shows that many {\em fundamental characterizing properties} of Milnor's
link invariants generalize to our 3-manifold Milnor invariants, thereby
extending Milnor's theory across all ordinals and $3$-manifolds.

Using realizability from~\ref{item:feature-realization},  we show the
aforementioned result that the transfinite theory is \emph{highly nontrivial
even at infinite ordinals}---we exhibit infinitely many explicit 3-manifolds $M$
with vanishing $\bar\mu_\kappa(M)$ for all finite $\kappa$ but have
non-vanishing, pairwise distinct $\bar\mu_\omega(M)$ for the first transfinite
ordinal~$\omega$.

We also define and study a ``universal'' transfinite invariant, which
generalizes Levine's link invariant~\cite{Levine:1989-1} over algebraic closures
to the case of 3-manifolds.  We prove that this universal invariant is highly
nontrivial, even for 3-manifolds for which all transfinite Milnor invariants
vanish.  As mentioned earlier, for links, whether Levine's invariant can be
non-zero remains open.

We define two additional invariants central to our paper.  The following section
describes all four invariants and provides precise statements of our main
results as well as
applications,~\ref{item:feature-lcsq}--\ref{item:feature-realization}.

The new results of this paper, especially the framework of transfinite
invariants, opens multiple avenues for future research.  We discuss a small
portion of these, including potential applications to link concordance and to
Whitney towers, at the end of the paper.

\subsection*{Acknowledgements}

The first named author was partly supported by NRF grant 2019R1A3B2067839. The
second named author was partly supported by Simons Foundation Grants 209082 and
430351.  The second author gratefully acknowledges support from SFB 1085 `Higher
Invariants' funded by the Deutsche Forschungsgemeinschaft DFG,  University of
Regensburg.

Subsequent to the development of our theory, Sergei Ivanov and Roman Mikhailov
have begun studying the Bousfield-Kan completion of
$3$-manifolds~\cite{Ivanov-Mikhailov:2019-1}. Their work seems to relate
mysteriously with this paper.  \emph{Their result inspired our use of the examples
$M_k$ in Section~\ref{section:modified-tb}.}  We thank Sergei and Roman for
bringing these examples to our attention.

We thank the referee who read our manuscript with keen understanding and insight, and offered numerous valuable comments.

\section{Statements of main results}
\label{section:main-results}

In this section, we describe our main results.  In
Section~\ref{subsection:intro-localization}, we provide a quick review of
homology localization.  In
Sections~\ref{subsection:definition-invariant}--\ref{subsection:results-universal-inv},
we define four invariants for 3-manifolds and present their key features,
including 3-manifold Milnor invariants.  In
Sections~\ref{subsection:results-torus-bundle}--\ref{subsection:results-nontrivial-milnor},
we discuss examples exhibiting rich information extracted from these invariants.

Throughout this paper, we consider only compact oriented manifolds unless stated
otherwise explicitly. The notation $H_*(-)$ denotes homology with integral
coefficients.

\subsection{Homology localization of groups}
\label{subsection:intro-localization}

We begin with a brief introduction to the role of \emph{locally finite homology
localization} of groups, also known as \emph{algebraic closure}. Readers who are
already familiar with this might prefer to skip to the last paragraph (or the
last sentence) of this subsection.

The invariance of the original Milnor invariants under concordance and homology
cobordism follows from a well known result of Stallings that the lower central
quotients $\pi_1(-)/\pi_1(-)_k$ are preserved under homology equivalence of
spaces for all $k<\infty$~\cite{Stallings:1965-1}.  (See also
\cite{Casson:1975-1}.) By contrast, the transfinite lower central quotient
$\pi_1(-)/\pi_1(-)_\kappa$ is not invariant under homology cobordism (or
homology equivalence).  For instance, this follows from an example of
Hillman~\cite[p.~56--57]{Hillman:1981-1}.

To extract information invariant under concordance of links and homology
cobordism of 3-manifolds, we follow an approach suggested in work of
Vogel~\cite{Vogel:1978-1} and Levine~\cite{Levine:1989-1,Levine:1989-2}, using
homology localization of groups.

In general, localization is defined for a given collection $\Omega$ of morphisms
in a category~$\cC$.  Briefly, a localization designates a functor $E\colon\cC
\to \cC$ equipped with a natural transformation $A=1_\cC(A) \to E(A)$ such that

\begin{enumerate}[(i)]
\item $E(\phi)$ is an equivalence for all morphisms $\phi$ in~$\Omega$, and
\item $E$ is universal (initial) among those satisfying~(i).
\end{enumerate}

A precise definition will be stated in Section~\ref{section:localization}.
Observe that a homology equivalence $X\to Y$ of spaces gives rise to a group
homomorphism $\pi_1(X)\to \pi_1(Y)$ which induces an isomorphism on $H_1(-)$ and
an epimorphism on $H_2(-)$.  We call a group homomorphism with this homological
property \emph{2-connected}. Due to an unpublished manuscript of
Vogel~\cite{Vogel:1978-1} and an independent approach of
Levine~\cite{Levine:1989-2,Levine:1989-1}, there exists a localization, in the
category of groups, for the collection of 2-connected homomorphisms $\phi\colon
A\to B$ with $A$ and $B$ finitely presented.  (For those who are familiar with
Bousfield's $H\Z$ localization~\cite{Bousfield:1974-1,Bousfield:1975-1}, we
remark that the key difference between Vogel-Levine from the $H\Z$ case is the
finite presentability of $A$ and $B$, which turns out to provide a crucial
advantage for applications to compact manifolds.)  

We observe that Levine's version in~\cite{Levine:1989-1} differs slightly from
what we use here.  With applications to link concordance in mind, he adds the
additional requirement that the image $\phi(A)$ normally generates~$B$. (This
reflects the property that meridians for a link normally generate the link group
$\pi_1(S^3\sm L)$.)  Levine was aware of both notions of localization.  The
first detailed exposition of what we use can be found in~\cite{Cha:2004-1}. We
denote this homology localization by $G\to \widehat G$ in this paper.  See
Section~\ref{section:localization} for more details.

The following two properties of the homology localization $\widehat G$ are
essential for our purpose.  For brevity, denote the transfinite lower central
subgroup $(\widehat G)_\kappa$ by~$\widehat G_\kappa$.
\begin{enumerate}[(i)]
  \item A 2-connected homomorphism $G \to \Gamma$ between finitely presented
  groups induces an isomorphism on~$\widehat G/\widehat G_\kappa \to \widehat
  \Gamma/\widehat \Gamma_\kappa$ for every ordinal~$\kappa$.
  \item When $G$ is finitely presented, $\widehat G/\widehat G_k\cong G/G_k$ for
  all $k$ finite.
\end{enumerate}

See Section~\ref{section:localization}, especially
Corollary~\ref{corollary:localization-basic-consequences}.

So, $\widehat G/\widehat G_\kappa$ is a transfinite generalization of the finite
lower central quotients $G/G_k$, which remains invariant under homology
cobordism of compact manifolds for every ordinal~$\kappa$. In this regard,
$\widehat G/\widehat G_\kappa$ is a correct generalization of $G/G_k$ for
studies related to homology cobordism, concordance, and disk embedding in
dimension 4.  From now on, ``transfinite lower central quotient'' in this paper
means $\widehat G/\widehat G_\kappa$, instead of~$G/G_\kappa$, where $\widehat
G$ is the integer coefficient Vogel-Levine homology localization as constructed
in~\cite{Cha:2004-1}.

\subsection{Definition of the transfinite invariants}
\label{subsection:definition-invariant}

Milnor's original work~\cite{Milnor:1957-1} compares the lower central quotients
$\pi/\pi_k$ of a link group $\pi=\pi_1(S^3\sm L)$ with that of the trivial link,
namely the free nilpotent quotients~$F/F_k$, inductively on~$k$. We provide a
relative theory, comparing the lower central quotients of other $3$-manifolds to
that of a fixed $3$-manifold we choose arbitrarily. For instance, when studying
links, we can begin with $0$-surgery on a nontrivial link, and compare its lower
central series quotients to that of other links. By replacing a $3$-manifold
group with its localization, we extend this theory throughout the {\em
transfinite} lower central series.

Fix a closed 3-manifold $Y$, which will play the role analogous to the trivial
link in Milnor's work.  Denote $\Gamma=\pi_1(Y)$.  Suppose $M$ is another closed
3-manifold with $\pi=\pi_1(M)$.  Our invariants compare the transfinite lower
central quotients with that of~$\Gamma$.  

Indeed, we define and study four invariants of~$M$: 
\begin{enumerate}
  \item a $\theta$-invariant $\theta_\kappa(M)$ defined as a 3-dimensional
  homology class,
  \item a reduced version of the $\theta_\kappa(M)$ living in a certain
  ``cokernel,''
  \item 3-manifold Milnor invariant $\bar\mu_\kappa(M)$, and
  \item a universal $\theta$-invariant $\widehat\theta(M)$. 
\end{enumerate}
The first three invariants are indexed by arbitrary ordinals~$\kappa$.  In
Sections~\ref{subsection:definition-invariant}--\ref{subsection:results-universal-inv},
we describe the definitions and state their key features.

We begin with $\theta_\kappa(M)$.  Fix an arbitrary ordinal~$\kappa$, and
suppose the 3-manifold group $\pi$ admits an isomorphism $f\colon
\widehat\pi/\widehat\pi_\kappa \isomto \widehat\Gamma/\widehat\Gamma_\kappa$.
The goal is to determine whether the next stage quotient
$\widehat\pi/\widehat\pi_{\kappa+1}$ is isomorphic
to~$\widehat\Gamma/\widehat\Gamma_{\kappa+1}$.

The following definition is motivated from work of the second
author~\cite{Orr:1989-1} and Levine~\cite{Levine:1989-2, Levine:1989-1}.  (See also~\cite{Heck:2009-1}.)

\begin{definition}
  \label{definition:theta-kappa}
  Let $\theta_\kappa(M,f)\in H_3(\widehat\Gamma/\widehat\Gamma_\kappa)$ be
  the image of the fundamental class $[M]\in H_3(M)$, under the
  composition
  \[
    H_3(M) \to H_3(\pi) \to H_3(\widehat\pi) \to
    H_3(\widehat\pi/\widehat\pi_\kappa) \xrightarrow[\cong]{f_*}
    H_3(\widehat\Gamma/\widehat\Gamma_\kappa).
  \]
  We call $\kappa$ the \emph{length} of the invariant~$\theta_\kappa$.
  We will generally write $\theta_\kappa(M)$, in order to avoid an excess of notation, and will write $\theta(M,f)$ when we need to emphasize the choice of~$f$.
\end{definition}

The value of $\theta_\kappa(M)$ in $H_3(\widehat\Gamma/\widehat\Gamma_\kappa)$
depends on the choice of $f\colon \widehat\pi/\widehat\pi_\kappa \isomto
\widehat\Gamma/\widehat\Gamma_\kappa$, and could be denoted $\theta_\kappa(M,
f).$  We choose to omit the reference to $f$ to simplify notation, but we
emphasize to the reader that this indeterminacy is often nontrivial.

If we choose to remove indeterminacy, we can do so by comparing possible choices
for~$f$.  Doing so, we obtain an invariant of $3$-manifolds defined from
$\theta_\kappa(M)$ by taking the value of $\theta_\kappa(M)$ in the orbit space
$H_3(\widehat\Gamma/\widehat\Gamma_\kappa) /
\Aut(\widehat\Gamma/\widehat\Gamma_\kappa)$ of the action of automorphisms of
$\widehat\Gamma/\widehat\Gamma_\kappa$, thus providing an alternative definition
of $\theta_\kappa(M)$ which is independent of the choice of~$f$.  It turns out
that both versions (with and without indeterminacy) are useful, as we discuss
below.  We will refer to these invariants as the
\emph{$\theta_\kappa$-invariants of $M$} (relative to~$\Gamma$).

\subsection{Invariance under homology cobordism}
\label{subsection:results-homology-cob}

\begin{theoremalpha}
  \label{theorem:results-homology-cob}
  The class $\theta_\kappa(M)$ is invariant under homology cobordism.  More
  precisely, if $M$ and $N$ are homology cobordant 3-manifolds with
  $\pi=\pi_1(M)$ and $G=\pi_1(N)$, then for every ordinal $\kappa$,
  the following hold.

  \begin{enumerate}

    \item\label{item:homology-cob-inv-lcsq}
    There is an isomorphism $\phi\colon \widehat{G}/\widehat{G}_\kappa \isomto
    \widehat{\pi}/\widehat{\pi}_\kappa$.  Consequently $\theta_\kappa(N)$ is
    defined if and only if $\theta_\kappa(M)$ is defined.

    \item\label{item:homology-cob-inv-theta}
    If $f\colon \widehat{\pi}/\widehat{\pi}_\kappa \isomto \widehat\Gamma/\widehat\Gamma_\kappa$ is an isomorphism, then $\theta_\kappa(M,f) = \theta_\kappa(N,f\circ\theta)$ in $H_3(\widehat\Gamma/\widehat\Gamma_\kappa)$, where $\phi$ is the isomorphism in~\ref{item:homology-cob-inv-lcsq}.

    \item\label{item:homology-cob-inv-theta-mod-aut}
    If $\theta_\kappa(M)$ and $\theta_\kappa(N)$ are defined using
    arbitrary isomorphisms $\widehat\pi/\widehat\pi_\kappa \isomto
    \widehat\Gamma/\widehat\Gamma_\kappa$ and $\widehat G/\widehat G_\kappa
    \isomto \widehat\Gamma/\widehat\Gamma_\kappa$, then 
    $\theta_\kappa(M)=\theta_\kappa(N)$ in
    $H_3(\widehat\Gamma/\widehat\Gamma_\kappa) /
    \Aut(\widehat\Gamma/\widehat\Gamma_\kappa)$.

  \end{enumerate}
\end{theoremalpha}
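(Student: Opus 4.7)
The plan is to exploit a homology cobordism $W$ between $M$ and $N$ as a common receptacle and to show that the inclusions $M \hookrightarrow W \hookleftarrow N$ induce 2-connected homomorphisms on fundamental groups, which then localize to isomorphisms. Set $\Pi = \pi_1(W)$; since $W$ is a compact 4-manifold, $\Pi$ is finitely presented, and the homology cobordism hypothesis $H_*(W,M) = 0 = H_*(W,N)$ says the inclusions induce isomorphisms on $H_*$. To promote the isomorphism on $H_1$ and $H_2$ of spaces to 2-connectedness of the group homomorphisms $\pi \to \Pi$ and $G \to \Pi$, I use the naturality of the Hopf surjection $H_2(X) \twoheadrightarrow H_2(\pi_1(X))$ together with the $H_1$-isomorphism to conclude that $H_2(\pi) \to H_2(\Pi)$ and $H_2(G) \to H_2(\Pi)$ are surjective.

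Now invoke the key property of Vogel--Levine localization recalled in Section~\ref{subsection:intro-localization}: a 2-connected map between finitely presented groups induces an isomorphism on $\widehat{(-)}/\widehat{(-)}_\kappa$ for every ordinal~$\kappa$. This yields isomorphisms $\widehat\pi/\widehat\pi_\kappa \isomto \widehat\Pi/\widehat\Pi_\kappa \isomfrom \widehat G/\widehat G_\kappa$, whose composite provides the isomorphism $\phi$ in part~\ref{item:homology-cob-inv-lcsq}. In particular $\theta_\kappa(M)$ is defined iff $\theta_\kappa(N)$ is.

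For part~\ref{item:homology-cob-inv-theta}, the geometric core is that $[M]$ and $[N]$ have the same image in $H_3(W)$: indeed, from $\partial[W,\partial W] = [M] - [N]$ in $H_3(\partial W)$ (with the convention $\partial W = M \sqcup (-N)$) and the fact that boundaries vanish in $H_3(W)$, these two classes coincide in $H_3(W)$, hence in $H_3(\widehat\Pi/\widehat\Pi_\kappa)$. Applying the naturality of the composition in Definition~\ref{definition:theta-kappa} to the commutative diagrams produced by the inclusions $M \hookrightarrow W$ and $N \hookrightarrow W$ (pulling back the common class from $H_3(\widehat\Pi/\widehat\Pi_\kappa)$ to $H_3(\widehat\pi/\widehat\pi_\kappa)$ and $H_3(\widehat G/\widehat G_\kappa)$ via the isomorphisms constructed above) then yields $\theta_\kappa(M,f) = \theta_\kappa(N, f\circ\phi)$ in $H_3(\widehat\Gamma/\widehat\Gamma_\kappa)$ after applying~$f_*$.

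Part~\ref{item:homology-cob-inv-theta-mod-aut} is a formal consequence: any two isomorphisms $\widehat G/\widehat G_\kappa \isomto \widehat\Gamma/\widehat\Gamma_\kappa$ differ by post-composition with some $\alpha \in \Aut(\widehat\Gamma/\widehat\Gamma_\kappa)$, so if $\theta_\kappa(N)$ is defined using an arbitrary isomorphism $f'$, then $\theta_\kappa(N, f') = \alpha_*\,\theta_\kappa(N, f\circ\phi) = \alpha_*\,\theta_\kappa(M,f)$, which is the same orbit as $\theta_\kappa(M,f)$. The main conceptual obstacle, though essentially packaged by Section~\ref{subsection:intro-localization}, is the promotion from space-level to group-level 2-connectedness via the Hopf surjection; the rest is naturality and orbit-space bookkeeping.
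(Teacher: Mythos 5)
Your proof is correct and follows essentially the same route as the paper's: use the homology cobordism $W$ as a common receptacle, deduce 2-connectedness of $\pi_1(M)\to\pi_1(W)\leftarrow\pi_1(N)$ via the Hopf surjection on $H_2$ (the paper packages this inside Corollary~\ref{corollary:localization-basic-consequences}\ref{item:homology-equiv-lcsq}, citing that $K(\pi,1)$ is built from the space by attaching cells of dimension $\ge 3$), invoke the localization property to get the isomorphisms on $\widehat{(-)}/\widehat{(-)}_\kappa$, use $\partial[W]=[M]-[N]$ and exactness for $(W,\partial W)$ to identify $i_*[M]=j_*[N]$ in $H_3(W)$, and close with the orbit-space bookkeeping for part~\ref{item:homology-cob-inv-theta-mod-aut}. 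No gaps.
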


We remark that the isomorphism $\phi$ in~\ref{item:homology-cob-inv-lcsq}
and~\ref{item:homology-cob-inv-theta} depends on a choice of a homology
cobordism.  The statement~\ref{item:homology-cob-inv-theta-mod-aut} provides
an invariant independent of choice.

The proof of Theorem~\ref{theorem:results-homology-cob} is given in
Section~\ref{section:homology-cobordism-invariance}.  It is a
straightforward consequence of the definition of the invariant and basic
properties of homology localization.

\subsection{Determination of transfinite lower central quotients}
\label{subsection:results-determination}

Define the set of homology classes which are realizable by $\theta_\kappa$ to be
\begin{equation}
  \cR_\kappa(\Gamma) = \bigg\{
    \theta\in H_3(\widehat\Gamma/\widehat\Gamma_\kappa) \,\,\bigg|\,\,
  \begin{tabular}{@{}c@{}}
    $\theta=\theta_\kappa(M)$ for some closed 3-manifold $M$ 
    \\
    equipped with $\widehat{\pi_1(M)}/\widehat{\pi_1(M)}_\kappa \isomto \widehat\Gamma/\widehat\Gamma_\kappa$
  \end{tabular}
  \bigg\}.
  \label{equation:realizable-classes}
\end{equation}
Not all homology classes are necessarily realizable.  That is,
$\cR_\kappa(\Gamma) \ne H_3(\widehat\Gamma/\widehat\Gamma_\kappa)$ in general.
Nor is $\cR_\kappa(\Gamma)$ necessarily a subgroup.  See
Theorem~\ref{theorem:main-realization} below, and
Sections~\ref{section:tb-finite-computation}
and~\ref{section:tb-transfinite-computation}.

Nonetheless, one can
straightforwardly verify that the projection
$\widehat\Gamma/\widehat\Gamma_{\kappa+1} \to
\widehat\Gamma/\widehat\Gamma_{\kappa}$ induces a function
$\cR_{\kappa+1}(\Gamma) \to \cR_\kappa(\Gamma)$.  Although
$\Coker\{\cR_{\kappa+1}(\Gamma) \to \cR_\kappa(\Gamma)\}$ is not well defined in
the usual way because of the lack of a natural group structure, we can define a
notion of \emph{vanishing in the cokernel} as follows: 

\begin{definition}
  \label{definition:vanishing-in-coker}
  We say that a class $\theta\in \cR_\kappa(\Gamma)$ \emph{vanishes in}
  $\Coker\{\cR_{\kappa+1}(\Gamma) \to \cR_\kappa(\Gamma)\}$ if $\theta$ lies in
  the image of $\cR_{\kappa+1}(\Gamma) \to \cR_\kappa(\Gamma)$.
\end{definition}

That is, the invariant $ \theta_\kappa(M)$ vanishes in the cokernel if there is
a closed $3$-manifold $N$ for which $\theta_{\kappa + 1}(N)$ is defined
(relative to~$\Gamma$) and the image of $\theta_{\kappa + 1}(N)$ is
$\theta_\kappa(M)$ under the quotient induced homomorphism below.
\[
\begin{tikzcd}[column sep=-1ex]
  \theta_{\kappa+1}(N)\arrow[d,mapsto]
  & {\in} & \cR_{\kappa+1}(\Gamma)  \ar[d]
  & {\subset} & H_3(\widehat{\Gamma}/\widehat{\Gamma}_{\kappa + 1})
  \\
  \theta_\kappa(M)
  & {\in} & \cR_\kappa(\Gamma)
  & {\subset} & H_3(\widehat{\Gamma}/\widehat{\Gamma}_\kappa)
\end{tikzcd}
\]

We now state the second main result.

\begin{theoremalpha}
  \label{theorem:main-determination-lcsq-lift}
  Suppose $M$ is a closed 3-manifold and $\pi=\pi_1(M)$ is endowed with
  an isomorphism $f\colon \widehat\pi/\widehat\pi_\kappa \isomto
  \widehat\Gamma/\widehat\Gamma_\kappa$.  Then the following are equivalent.
  \begin{enumerate}
    \item There exists a lift $\widehat\pi/\widehat\pi_{\kappa+1}
    \isomto \widehat\Gamma/\widehat\Gamma_{\kappa+1}$ of $f$ which is an
    isomorphism.
    \item The invariant $\theta_\kappa(M)$ vanishes in
    $\Coker\{\cR_{\kappa+1}(\Gamma) \to \cR_\kappa(\Gamma)\}$.
  \end{enumerate}
\end{theoremalpha}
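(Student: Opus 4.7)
The direction $(1)\Rightarrow(2)$ is a direct consequence of the definition of $\theta_\kappa$. If $\hat f\colon \widehat\pi/\widehat\pi_{\kappa+1} \isomto \widehat\Gamma/\widehat\Gamma_{\kappa+1}$ lifts $f$, then $\theta_{\kappa+1}(M, \hat f)$ is a well-defined element of $\cR_{\kappa+1}(\Gamma)$, and naturality of $H_3$ under the quotient $\widehat\Gamma/\widehat\Gamma_{\kappa+1}\twoheadrightarrow \widehat\Gamma/\widehat\Gamma_\kappa$ sends this class to $\theta_\kappa(M,f)$. Hence $\theta_\kappa(M)$ vanishes in $\Coker\{\cR_{\kappa+1}(\Gamma) \to \cR_\kappa(\Gamma)\}$.

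For the substantive direction $(2)\Rightarrow(1)$, my plan is to reduce the statement to the homology cobordism invariance supplied by Theorem~\ref{theorem:results-homology-cob}. By hypothesis there is a closed 3-manifold $N$, with $\sigma=\pi_1(N)$, and an isomorphism $g\colon \widehat\sigma/\widehat\sigma_{\kappa+1}\isomto \widehat\Gamma/\widehat\Gamma_{\kappa+1}$, whose level-$\kappa$ reduction $\bar g$ satisfies $\theta_\kappa(N,\bar g) = \theta_\kappa(M,f)$ in $H_3(\widehat\Gamma/\widehat\Gamma_\kappa)$. I would construct a homology cobordism $W$ from $M$ to $N$, equipped with a map $W \to B(\widehat\Gamma/\widehat\Gamma_\kappa)$ extending the classifying maps of $M$ (via $f$) and $N$ (via $\bar g$) on the boundary. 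Given such $W$, applying Theorem~\ref{theorem:results-homology-cob}\,\ref{item:homology-cob-inv-lcsq} at level $\kappa+1$ yields an isomorphism $\widehat\pi/\widehat\pi_{\kappa+1}\isomto\widehat\sigma/\widehat\sigma_{\kappa+1}$; post-composition with $g$ gives the desired lift $\hat f$ of $f$, while the reference map on $W$ guarantees compatibility with $f$ and $\bar g$ at level $\kappa$.

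The construction of $W$ is where the main obstacle lies. The equality $\theta_\kappa(M,f) = \theta_\kappa(N,\bar g)$ in $H_3(\widehat\Gamma/\widehat\Gamma_\kappa)$, combined with the vanishing $\Omega_3^{\SO}(\mathrm{pt})=0$, yields a raw oriented cobordism $W_0$ with $\partial W_0 = M \sqcup (-N)$ and a map $W_0 \to B(\widehat\Gamma/\widehat\Gamma_\kappa)$ extending the two boundary classifying maps. The hard part is to modify $W_0$ into a genuine homology cobordism while preserving this reference map. I would perform 1- and 2-handle surgeries in the interior, guided by the kernels and cokernels of $H_*(\partial W_0) \to H_*(W_0)$. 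The crucial compatibility input is the 2-connectivity characterization of homology-localization-inducing maps reviewed in Section~\ref{subsection:intro-localization}: a surgery that is sufficiently 2-connected can be carried out without altering the homotopy class of the map to $B(\widehat\Gamma/\widehat\Gamma_\kappa)$, since the localization is insensitive to such modifications. This surgery-theoretic realization is the technical heart of the proof.
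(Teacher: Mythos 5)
Your direction $(1)\Rightarrow(2)$ is correct and matches the paper.

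Your direction $(2)\Rightarrow(1)$ has a genuine gap: you try to upgrade the raw bordism $W_0$ (over $B(\widehat\Gamma/\widehat\Gamma_\kappa)$) to a \emph{homology cobordism} between $M$ and $N$, and then apply Theorem~\ref{theorem:results-homology-cob}. But such an upgrade is not available in general, and is in fact asking for far more than the theorem concludes. The hypothesis $\theta_\kappa(M,f)=\theta_\kappa(N,\bar g)$ in $H_3(\widehat\Gamma/\widehat\Gamma_\kappa)$ only says $M$ and $N$ are bordant over $\widehat\Gamma/\widehat\Gamma_\kappa$; it does not make them homology cobordant (otherwise a single bordism-level invariant would determine the homology cobordism class, which is absurd). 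The conclusion of the theorem is only a compatible isomorphism of the level-$(\kappa+1)$ quotients of the localized groups, which is a much weaker statement, and the paper gets it without ever producing a homology cobordism. Your surgery step is also not justified: surgeries in the interior need controls on $\pi_1(W_0)\to\widehat\Gamma/\widehat\Gamma_\kappa$ and $H_2(W_0)\to H_2(\widehat\Gamma/\widehat\Gamma_\kappa)$ that are not supplied here, and ``the localization is insensitive to 2-connected modifications'' requires both groups to be finitely presented, which $\widehat\Gamma/\widehat\Gamma_\kappa$ is not.

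The correct route (Theorem~\ref{theorem:determination-next-lcsq} in the paper) works directly with the bordism $W_0$, \emph{not} a homology cobordism. One uses Poincar\'e duality of $W_0$ and the fact that $H_1(M;A)\to H_1(\widehat\Gamma/\widehat\Gamma_\kappa;A)$ and $H_1(N;A)\to H_1(\widehat\Gamma/\widehat\Gamma_\kappa;A)$ are isomorphisms (given the level-$\kappa$ isomorphisms $f$ and $\bar g$), to deduce that $\Ker\{H^2(W_0;A)\to H^2(M;A)\}=\Ker\{H^2(W_0;A)\to H^2(N;A)\}$ for any coefficient group $A$. Taking $A=\widehat\pi_\kappa/\widehat\pi_{\kappa+1}$, this equality transfers the vanishing of the obstruction to lifting $M\to B(\widehat\pi/\widehat\pi_\kappa)$ along $B(\widehat\pi/\widehat\pi_{\kappa+1})\to B(\widehat\pi/\widehat\pi_\kappa)$ (which certainly vanishes for $M$) to the vanishing of the corresponding obstruction for $N$. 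One then uses locality of $\widehat\pi/\widehat\pi_{\kappa+1}$ to promote the lift $G\to\widehat\pi/\widehat\pi_{\kappa+1}$ to $\widehat G/\widehat G_{\kappa+1}\to\widehat\pi/\widehat\pi_{\kappa+1}$, and a symmetry/central-extension argument to show this map is a lift of $f^{-1}g$ and is an isomorphism.
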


As stated in Theorem~\ref{theorem:main-determination-lcsq-nonlift} below, it
is possible to remove the restriction in
Theorem~\ref{theorem:main-determination-lcsq-lift} that the next stage
isomorphism $\widehat\pi/\widehat\pi_{\kappa+1} \cong
\widehat\Gamma/\widehat\Gamma_{\kappa+1}$ is a lift, by taking the value of
$\theta_\kappa(M)$ modulo the action of
$\Aut(\widehat\Gamma/\widehat\Gamma_\kappa)$, which is independent of the choice
of~$\widehat\pi/\widehat\pi_\kappa \isomto
\widehat\Gamma/\widehat\Gamma_\kappa$. To state the result, we use the following
definition:  a class $\theta$ \emph{vanishes in} $\Coker\{\cR_{\kappa+1}(\Gamma)
\to \cR_\kappa(\Gamma)/\Aut(\widehat\Gamma/\widehat\Gamma_\kappa)\}$ if it lies
in the image of the composition $\cR_{\kappa+1}(\Gamma) \to \cR_\kappa(\Gamma)
\to \cR_\kappa(\Gamma)/\Aut(\widehat\Gamma/\widehat\Gamma_\kappa)$.

\begin{theoremalpha}
  \label{theorem:main-determination-lcsq-nonlift}
  Suppose $M$ is a closed 3-manifold with $\pi=\pi_1(M)$ which admits an
  isomorphism $\widehat\pi/\widehat\pi_\kappa \isomto
  \widehat\Gamma/\widehat\Gamma_\kappa$. Then the following are equivalent.
  \begin{enumerate}
    \item\label{item:iso-kappa+1-nonlift} $\widehat\pi/\widehat\pi_{\kappa+1}$
    is isomorphic to $\widehat\Gamma/\widehat\Gamma_{\kappa+1}$ \textup{(}via
    any isomorphism not required to be a lift\textup{)}.
    \item\label{item:theta-kappa-vanishes-mod-aut} The invariant
    $\theta_\kappa(M)$ vanishes in $\Coker\{\cR_{\kappa+1}(\Gamma) \to
    \cR_{\kappa}(\Gamma) / \Aut(\widehat\Gamma/\widehat\Gamma_\kappa)\}$.  
  \end{enumerate}
\end{theoremalpha}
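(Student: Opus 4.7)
The plan is to reduce the equivalence to the already-established Theorem~\ref{theorem:main-determination-lcsq-lift}, which handles the case where the isomorphism at level $\kappa+1$ is required to be a lift of a fixed~$f$. The key naturality fact I will rely on, immediate from Definition~\ref{definition:theta-kappa}, is that for any automorphism $\alpha$ of $\widehat\Gamma/\widehat\Gamma_\kappa$ one has $\theta_\kappa(M,\alpha\circ f)=\alpha_*\bigl(\theta_\kappa(M,f)\bigr)$. Thus modifying the identification $f$ by an automorphism moves $\theta_\kappa(M,f)$ within its orbit under $\Aut(\widehat\Gamma/\widehat\Gamma_\kappa)$, and so within a single class of $\cR_\kappa(\Gamma)/\Aut(\widehat\Gamma/\widehat\Gamma_\kappa)$.

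For $(1)\Rightarrow(2)$, I would start from an isomorphism $g\colon \widehat\pi/\widehat\pi_{\kappa+1}\isomto \widehat\Gamma/\widehat\Gamma_{\kappa+1}$ and let $\bar g$ denote the isomorphism it induces at level~$\kappa$. By construction $g$ is a lift of $\bar g$, so Theorem~\ref{theorem:main-determination-lcsq-lift} applied with the isomorphism~$\bar g$ shows that $\theta_\kappa(M,\bar g)$ vanishes in $\Coker\{\cR_{\kappa+1}(\Gamma)\to \cR_\kappa(\Gamma)\}$, and hence a fortiori vanishes after further quotienting by $\Aut(\widehat\Gamma/\widehat\Gamma_\kappa)$. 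Since $\bar g\circ f^{-1}$ is an automorphism of $\widehat\Gamma/\widehat\Gamma_\kappa$, the naturality identity shows that $\theta_\kappa(M,\bar g)$ and $\theta_\kappa(M,f)$ represent the same class in $\cR_\kappa(\Gamma)/\Aut(\widehat\Gamma/\widehat\Gamma_\kappa)$, which establishes~(2).

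For $(2)\Rightarrow(1)$, I would suppose $\theta_\kappa(M,f)$ vanishes in the $\Aut$-quotient cokernel, and unwind the definition to produce an automorphism $\alpha$ of $\widehat\Gamma/\widehat\Gamma_\kappa$ such that $\alpha_*\bigl(\theta_\kappa(M,f)\bigr)$ lies in the image of $\cR_{\kappa+1}(\Gamma)\to \cR_\kappa(\Gamma)$. Setting $f':=\alpha\circ f$ and applying the naturality identity, $\theta_\kappa(M,f')$ vanishes in the unadorned cokernel $\Coker\{\cR_{\kappa+1}(\Gamma)\to \cR_\kappa(\Gamma)\}$. Theorem~\ref{theorem:main-determination-lcsq-lift} applied with the isomorphism~$f'$ then produces an isomorphism $\widehat\pi/\widehat\pi_{\kappa+1}\isomto \widehat\Gamma/\widehat\Gamma_{\kappa+1}$ lifting~$f'$, proving~(1).

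The main obstacle is essentially already resolved by Theorem~\ref{theorem:main-determination-lcsq-lift}; what remains here is a bookkeeping argument tracking how $\theta_\kappa(M,f)$ transforms under changes of the identification~$f$. The only nontrivial verification is the naturality identity $\theta_\kappa(M,\alpha\circ f)=\alpha_*\bigl(\theta_\kappa(M,f)\bigr)$, and this is immediate from functoriality of $H_3(-)$ applied to the composition appearing in Definition~\ref{definition:theta-kappa}.
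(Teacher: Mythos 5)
Your proof is correct and follows essentially the same route as the paper's: both directions reduce to Theorem~\ref{theorem:main-determination-lcsq-lift}, with the residual work being bookkeeping via the identity $\theta_\kappa(M,\alpha\circ f)=\alpha_*\theta_\kappa(M,f)$. The only cosmetic difference is that for $(1)\Rightarrow(2)$ the paper records directly that $\theta_\kappa(M,g_0)$ is the image of $\theta_{\kappa+1}(M,g)$ under $\cR_{\kappa+1}(\Gamma)\to\cR_\kappa(\Gamma)$, whereas you route this observation through a second invocation of Theorem~\ref{theorem:main-determination-lcsq-lift}; both are valid.
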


The proof is straightforward, using
Theorem~\ref{theorem:main-determination-lcsq-lift}\@.

\begin{proof}[Proof of Theorem~\ref{theorem:main-determination-lcsq-nonlift}]
  Suppose $g\colon \widehat\pi/\widehat\pi_{\kappa+1} \isomto
  \widehat\Gamma/\widehat\Gamma_{\kappa+1}$ is an isomorphism.  
  Let $g_0\colon \widehat\pi/\widehat\pi_\kappa \isomto
  \widehat\Gamma/\widehat\Gamma_\kappa$ be the isomorphism induced by~$g$, and consider $\theta_{\kappa+1}(M)=\theta_{\kappa+1}(M,g)$ and $\theta_\kappa(M)=\theta_\kappa(M,g_0)$.
  Then
  $\theta_\kappa(M)$ is the image of $\theta_{\kappa+1}(M)$ under
  $\cR_{\kappa+1}(\Gamma) \to \cR_{\kappa}(\Gamma)$.  This shows
  (1)${}\Rightarrow{}$(2).  
  
  For the converse, suppose the invariant $\theta_\kappa(M)=\theta_\kappa(M,f)$ vanishes in the cokernel of $\cR_{\kappa+1}(\Gamma) \to \cR_\kappa(\Gamma)/\Aut(\widehat\Gamma/\widehat\Gamma_\kappa)$ where $f\colon \widehat\pi/\widehat\pi_\kappa \isomto \widehat\Gamma/\widehat\Gamma_\kappa$ is an arbitrary isomorphism.
  By composing
  an automorphism on $\widehat\Gamma/\widehat\Gamma_\kappa$ with $f$, we may
  assume that $\theta_\kappa(M)$ lies in the image of $\cR_{\kappa+1}(\Gamma)$.
  By Theorem~\ref{theorem:main-determination-lcsq-lift}, there is a lift
  $\widehat\pi/\widehat\pi_{\kappa+1} \isomto
  \widehat\Gamma/\widehat\Gamma_{\kappa+1}$ of~$f$.
\end{proof}

The notion of vanishing in the cokernel generalizes to an equivalence relation
$\sim$ on the set $\cR_\kappa(\Gamma)$, which we describe below.  Recall that if
$\theta\in \cR_\kappa(\Gamma)$, we have $\theta=\theta_\kappa(M)$ for some
closed 3-manifold $M$ equipped with an isomorphism $f\colon
\widehat{\pi_1(M)}/\widehat{\pi_1(M)}_\kappa \isomto
\widehat\Gamma/\widehat\Gamma_\kappa$.  Let $I_\theta$ be the image of the
composition
\[
  \cR_{\kappa+1}(\pi_1(M)) \to \cR_{\kappa}(\pi_1(M))
  \xrightarrow[f_*]{\cong} \cR_{\kappa}(\Gamma).
\]
We show that $\{I_\theta \mid \theta\in \cR_\kappa(\Gamma)\}$ is a partition of
the set~$\cR_\kappa(\Gamma)$ in
Lemma~\ref{lemma:partition-on-realizable-classes}. Consider the associated
equivalence relation:

\begin{definition}
  \label{definition:equiv-rel-realzable-classes}
  Define $\sim$ on~$\cR_\kappa(\Gamma)$ by $\theta\sim\theta'$ if $\theta' \in
  I_\theta$.
\end{definition}

We prove the following result in
Section~\ref{subsection:proof-main-determination-rel}.

\begin{corollaryalpha}
  \label{corollary:main-determination-equiv-rel}
  Suppose $M$ and $N$ are closed 3-manifolds with $\pi=\pi_1(M)$ and
  $G=\pi_1(N)$, which are equipped with isomorphisms
  $\widehat\pi/\widehat\pi_\kappa \isomto \widehat\Gamma/\widehat\Gamma_\kappa$
  and $\widehat G/\widehat G_\kappa \isomto
  \widehat\Gamma/\widehat\Gamma_\kappa$.  Then, there is an isomorphism
  $\widehat\pi/\widehat\pi_{\kappa+1} \isomto \widehat G/\widehat G_{\kappa+1}$
  which is a lift of the composition $\widehat\pi/\widehat\pi_\kappa \isomto
  \widehat\Gamma/\widehat\Gamma_\kappa \isomto \widehat G/\widehat G_\kappa$ if
  and only if $\theta_\kappa(M) \sim \theta_\kappa(N)$ in~$\cR_\kappa(\Gamma)$.
\end{corollaryalpha}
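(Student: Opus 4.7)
The plan is to reduce the corollary to Theorem~\ref{theorem:main-determination-lcsq-lift}, applied with the 3-manifold $N$ itself playing the role of the reference 3-manifold $Y$. To set this up, I would write $f\colon \widehat\pi/\widehat\pi_\kappa \isomto \widehat\Gamma/\widehat\Gamma_\kappa$ and $h\colon \widehat G/\widehat G_\kappa \isomto \widehat\Gamma/\widehat\Gamma_\kappa$ for the two given isomorphisms, and set $g = h^{-1}\circ f$, which is precisely the composition that appears in the statement of the corollary. Directly from Definition~\ref{definition:theta-kappa}, changing the reference from $\Gamma$ with identification $f$ to $G$ with identification $g$ amounts to composing the pushforward of $[M]$ in group homology with $h_*^{-1}$, so $\theta_\kappa(M, g) = h_*^{-1}\theta_\kappa(M, f)$ in $H_3(\widehat G/\widehat G_\kappa)$.

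Next, I would apply Theorem~\ref{theorem:main-determination-lcsq-lift} in this relabeled setting: a lift of $g$ to an isomorphism $\widehat\pi/\widehat\pi_{\kappa+1} \isomto \widehat G/\widehat G_{\kappa+1}$ exists if and only if $\theta_\kappa(M, g)$ lies in the image of the projection-induced map $\cR_{\kappa+1}(G) \to \cR_\kappa(G)$. Pushing this criterion forward under $h_*$, it is equivalent to the condition that $\theta_\kappa(M) = \theta_\kappa(M, f)$ lies in the image of the composition
\[
  \cR_{\kappa+1}(G) \to \cR_\kappa(G) \xrightarrow{\;h_*\;} \cR_\kappa(\Gamma).
\]
Since $N$ equipped with $h$ is by construction a witness for $\theta_\kappa(N) \in \cR_\kappa(\Gamma)$, this image is precisely $I_{\theta_\kappa(N)}$ by the definition preceding Definition~\ref{definition:equiv-rel-realzable-classes}.

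To conclude, I would invoke Lemma~\ref{lemma:partition-on-realizable-classes}, which states that the sets $\{I_\theta\}$ partition $\cR_\kappa(\Gamma)$ and so make the relation $\sim$ symmetric. The derived criterion $\theta_\kappa(M) \in I_{\theta_\kappa(N)}$ is then equivalent to $\theta_\kappa(N) \in I_{\theta_\kappa(M)}$, which by Definition~\ref{definition:equiv-rel-realzable-classes} is exactly $\theta_\kappa(M) \sim \theta_\kappa(N)$ in $\cR_\kappa(\Gamma)$.

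The main obstacle is essentially bookkeeping: tracking the change of reference manifold and making sure the realization sets and pushforward maps line up under the isomorphism $h_*$. This is purely formal, following from Definition~\ref{definition:theta-kappa} and the functoriality of homology localization used in Theorem~\ref{theorem:results-homology-cob}, but one must be careful not to conflate the asymmetric formulation ``$\theta' \in I_\theta$'' of $\sim$ with its symmetric consequence from the partition, since only after Lemma~\ref{lemma:partition-on-realizable-classes} can the roles of $M$ and $N$ in the derived criterion be freely interchanged.
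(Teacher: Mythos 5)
Your proof is correct and takes essentially the same route as the paper's: apply Theorem~\ref{theorem:main-determination-lcsq-lift} with one of the two manifold groups playing the role of the reference group and translate the realization sets using the induced bijections. The only difference is cosmetic — the paper applies the theorem to $(N,f^{-1}g)$ over the reference group $\pi=\pi_1(M)$, which directly yields $\theta_\kappa(N)\in I_{\theta_\kappa(M)}$, i.e.\ $\theta_\kappa(M)\sim\theta_\kappa(N)$ by definition, whereas you apply it to $M$ over $G=\pi_1(N)$ and obtain $\theta_\kappa(M)\in I_{\theta_\kappa(N)}$, then invoke Lemma~\ref{lemma:partition-on-realizable-classes} once more to symmetrize; both are valid and equally short.
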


Note that for a class $\theta\in \cR_\kappa(\Gamma)$, we have $\theta \sim
\theta_\kappa(Y)$ if and only if $\theta$ vanishes in the cokernel of
$\cR_{\kappa+1}(\Gamma) \to \cR_\kappa(\Gamma)$.  Here $\theta_\kappa(Y)$ is
defined using the identity map $\widehat{\pi_1(Y)}/\widehat{\pi_1(Y)}_\kappa \to
\widehat\Gamma/\widehat\Gamma_\kappa$.  So,
Corollary~\ref{corollary:main-determination-equiv-rel} generalizes
Theorem~\ref{theorem:main-determination-lcsq-lift}.

\subsection{Milnor invariants of $3$-manifolds}
\label{subsection:results-milnor-inv}

Now we define Milnor invariants of 3-manifolds.  It combines the features of
Theorem~\ref{theorem:main-determination-lcsq-nonlift} and
Corollary~\ref{corollary:main-determination-equiv-rel} in a natural way.  Once
again, we remind the reader of our hypothesis.  We fix a 3-manifold $Y$ and let
$\Gamma = \pi_1(Y)$.  We assume that $M$ is a 3-manifold with $\pi = \pi_1(M)$,
$\kappa$ is an ordinal, and we have an isomorphism $f \colon
\widehat\pi/\widehat\pi_\kappa \isomto \widehat\Gamma/\widehat\Gamma_\kappa$.
The invariant $\theta_\kappa(M) \in \cR_\kappa(\Gamma)$ was defined in
Definition~\ref{definition:theta-kappa}.  Here $\cR_\kappa(\Gamma)  \subset
H_3(\widehat\Gamma/\widehat\Gamma_\kappa)$ is the subset of realizable classes
defined by~\eqref{equation:realizable-classes}.

The following is a coarser version of the equivalence relation~$\sim$ on
$\cR_\kappa(\Gamma)$ in Definition~\ref{definition:equiv-rel-realzable-classes}.

\begin{definition}\label{definition:milnor-equivalence-rel}
  Let $\theta$, $\theta'\in \cR_\kappa(\Gamma)$.  Write $\theta \approx \theta'$
  if there is $\gamma \in \Aut(\widehat{\Gamma}/\widehat{\Gamma}_\kappa)$ such
  that $\gamma_*(\theta') \sim \theta$. That is, choosing a 3-manifold $M$
  equipped with an isomorphism $f\colon
  \widehat{\pi_1(M)}/\widehat{\pi_1(M)}_\kappa \isomto
  \widehat{\Gamma}/\widehat{\Gamma}_\kappa$ that satisfies
  $\theta=\theta_\kappa(M)$, we have $\theta \approx \theta'$ if and only if
  there is $\gamma \in \Aut(\widehat{\Gamma}/\widehat{\Gamma}_\kappa)$ such that
  \[
    \gamma_*(\theta') \in \Im\big\{\cR_{\kappa+1}(\pi_1(M)) \rightarrow
    \cR_\kappa(\pi_1(M)) \xrightarrow[f_*]{\cong} \cR_{\kappa}(\Gamma)\big\}.
  \]
\end{definition}

Since ${\sim}$ is an equivalence relation and
$\Aut(\widehat\Gamma/\widehat\Gamma_\kappa)$ is a group, it follows that
${\approx}$ is an equivalence relation too.

\begin{definition}\label{definition:milnor-invariant}
  The \emph{Milnor invariant of length $\kappa$} for $M$ is defined by
  \[
    \bar\mu_\kappa(M) := [\theta_\kappa(M)] \in \cR_{\kappa}(\Gamma)/{\approx}.
  \]
  Here $[\theta_\kappa(M)]$ is the equivalence class
  of~$\theta_\kappa(M)\in\cR_\kappa(\Gamma)$ under~${\approx}$.
\end{definition}

We have that $\theta$ vanishes in
$\Coker\{\cR_{\kappa+1}(\Gamma) \to
\cR_{\kappa}(\Gamma)/\Aut(\widehat\Gamma/\widehat\Gamma_\kappa)\}$ in the sense
of Section~\ref{subsection:results-determination} if and only if $\theta\approx
\theta_\kappa(Y)$ in~$\cR_\kappa(\Gamma)$.  If
$\theta_\kappa(M)\approx\theta_\kappa(Y)$, we say that
\emph{$\bar\mu_\kappa(M)$ vanishes}, or \emph{$M$ has vanishing Milnor invariant
of length~$\kappa$}.

\begin{theoremalpha}
  \label{theorem:main-milnor-inv}
  Let $M$ be a 3-manifold such that
  $\widehat{\pi_1(M)}/\widehat{\pi_1(M)}_\kappa \cong
  \widehat{\Gamma}/\widehat{\Gamma}_\kappa$.  Then $\bar\mu_\kappa(M)$ is a
  well-defined homology cobordism invariant, and the following are equivalent. 
  \begin{enumerate}
    \item\label{item:milnor-inv-vanish} $\bar{\mu}_{\kappa}(M)$ vanishes.
    \item\label{item:milnor-inv-lcsq-isom}
    $\widehat{\pi_1(M)}/\widehat{\pi_1(M)}_{\kappa+1} \cong
    \widehat\Gamma/\widehat\Gamma_{\kappa+1}$ \textup{(}via any isomorphism not
    required to be a lift\textup{)}.
    \item\label{item:milnor-inv-next-defined} The invariant
    $\bar\mu_{\kappa+1}(M)$ is defined.
  \end{enumerate}
  
  In addition, for $M$ and $N$ such that
  $\widehat{\pi_1(M)}/\widehat{\pi_1(M)}_\kappa \cong
  \widehat{\pi_1(N)}/\widehat{\pi_1(N)}_\kappa \cong
  \widehat{\Gamma}/\widehat{\Gamma}_\kappa$, the following two conditions are
  equivalent.
  \begin{enumerate}[resume]
    \item\label{item:milnor-inv-equal} $\bar\mu_{\kappa}(M) =
    \bar\mu_{\kappa}(N)$ in $\cR_\kappa(\Gamma)/{\approx}$.
    \item\label{item:milnor-inv-two-lcsq-isom}
    $\widehat{\pi_1(M)}/\widehat{\pi_1(M)}_{\kappa+1} \cong
    \widehat{\pi_1(N)}/\widehat{\pi_1(N)}_{\kappa+1}$.
  \end{enumerate}
\end{theoremalpha}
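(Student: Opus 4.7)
The plan is to derive Theorem \ref{theorem:main-milnor-inv} almost entirely by repackaging Theorems \ref{theorem:main-determination-lcsq-lift}, \ref{theorem:main-determination-lcsq-nonlift}, Corollary \ref{corollary:main-determination-equiv-rel}, and Theorem \ref{theorem:results-homology-cob}, the main point being that the equivalence relation ${\approx}$ was engineered precisely to encode the indeterminacy of the marking $f$ and the ambiguity captured by the $\Aut(\widehat\Gamma/\widehat\Gamma_\kappa)$-action.

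First I will verify that $\bar\mu_\kappa(M) = [\theta_\kappa(M,f)] \in \cR_\kappa(\Gamma)/{\approx}$ is well-defined, i.e.\ independent of the choice of $f$. Any two choices $f, f'$ of an isomorphism $\widehat\pi/\widehat\pi_\kappa \isomto \widehat\Gamma/\widehat\Gamma_\kappa$ differ by some $\gamma \in \Aut(\widehat\Gamma/\widehat\Gamma_\kappa)$, so $\theta_\kappa(M,f') = \gamma_*\theta_\kappa(M,f)$, and hence the two classes agree modulo $\approx$ by the very definition of $\approx$. For homology cobordism invariance I invoke Theorem \ref{theorem:results-homology-cob}\ref{item:homology-cob-inv-theta}: if $M$ and $N$ are homology cobordant and $f$ is any marking of $M$, then $\theta_\kappa(M,f) = \theta_\kappa(N, f\circ\phi)$ as elements of $H_3(\widehat\Gamma/\widehat\Gamma_\kappa)$, and a fortiori as elements of $\cR_\kappa(\Gamma)/{\approx}$.

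For the first equivalence chain, the implication \ref{item:milnor-inv-vanish}${}\Leftrightarrow{}$\ref{item:milnor-inv-lcsq-isom} follows directly from Theorem \ref{theorem:main-determination-lcsq-nonlift}: I will observe that $\bar\mu_\kappa(M)$ vanishes (i.e.\ $\theta_\kappa(M)\approx\theta_\kappa(Y)$) if and only if some $\gamma\in\Aut(\widehat\Gamma/\widehat\Gamma_\kappa)$ carries $\theta_\kappa(Y)$ into $I_{\theta_\kappa(M)}$, which by Definition \ref{definition:equiv-rel-realzable-classes} is exactly the condition that $\theta_\kappa(M)$ vanishes in the cokernel $\Coker\{\cR_{\kappa+1}(\Gamma) \to \cR_\kappa(\Gamma)/\Aut(\widehat\Gamma/\widehat\Gamma_\kappa)\}$ that appears in Theorem \ref{theorem:main-determination-lcsq-nonlift}. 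The implication \ref{item:milnor-inv-lcsq-isom}${}\Leftrightarrow{}$\ref{item:milnor-inv-next-defined} is a tautology, since $\bar\mu_{\kappa+1}(M)$ is defined precisely when $M$ admits a marking $\widehat\pi/\widehat\pi_{\kappa+1} \isomto \widehat\Gamma/\widehat\Gamma_{\kappa+1}$.

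For \ref{item:milnor-inv-equal}${}\Leftrightarrow{}$\ref{item:milnor-inv-two-lcsq-isom}, fix markings $f,g$ of $M,N$. By definition of ${\approx}$, \ref{item:milnor-inv-equal} says there is $\gamma\in\Aut(\widehat\Gamma/\widehat\Gamma_\kappa)$ with $\gamma_*\theta_\kappa(N,g) \sim \theta_\kappa(M,f)$. Since $\gamma_*\theta_\kappa(N,g) = \theta_\kappa(N, \gamma\circ g)$, this is the hypothesis of Corollary \ref{corollary:main-determination-equiv-rel} with the marking $g$ replaced by $\gamma\circ g$, which yields an isomorphism $\widehat{\pi_1(M)}/\widehat{\pi_1(M)}_{\kappa+1} \isomto \widehat{\pi_1(N)}/\widehat{\pi_1(N)}_{\kappa+1}$. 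Conversely, given any such isomorphism, composing with existing markings gives a pair of markings at level $\kappa{+}1$ differing by an automorphism of $\widehat\Gamma/\widehat\Gamma_{\kappa+1}$, which projects to an automorphism $\gamma$ of $\widehat\Gamma/\widehat\Gamma_\kappa$ making the $\kappa$-level isomorphism a lift; applying Corollary \ref{corollary:main-determination-equiv-rel} in the reverse direction gives $\gamma_*\theta_\kappa(N) \sim \theta_\kappa(M)$, hence \ref{item:milnor-inv-equal}. The only subtle point, and the one I would take care with, is the bookkeeping in this last step to ensure the automorphism produced on the $\kappa$-level really does arise from projecting an automorphism on the $(\kappa{+}1)$-level, which is guaranteed because every automorphism of $\widehat\Gamma/\widehat\Gamma_\kappa$ that appears is realized by lifts of chosen markings, not by an arbitrary abstract automorphism.
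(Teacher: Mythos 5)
Your proposal follows the paper's approach for most of the theorem --- well-definedness from the definition of $\approx$, homology cobordism invariance from Theorem~\ref{theorem:results-homology-cob}, the equivalence of \ref{item:milnor-inv-vanish}--\ref{item:milnor-inv-next-defined} via Theorem~\ref{theorem:main-determination-lcsq-nonlift}, and the forward direction of \ref{item:milnor-inv-equal}$\Rightarrow$\ref{item:milnor-inv-two-lcsq-isom} via Corollary~\ref{corollary:main-determination-equiv-rel} exactly as the paper does. All of that is fine.

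The flaw is in your backward direction \ref{item:milnor-inv-two-lcsq-isom}$\Rightarrow$\ref{item:milnor-inv-equal}. You write that "composing with existing markings gives a pair of markings at level $\kappa+1$ differing by an automorphism of $\widehat\Gamma/\widehat\Gamma_{\kappa+1}$." But under hypothesis \ref{item:milnor-inv-two-lcsq-isom} neither $M$ nor $N$ is assumed to admit any marking at level $\kappa+1$: we are only told $\widehat{\pi_1(M)}/\widehat{\pi_1(M)}_{\kappa+1}$ and $\widehat{\pi_1(N)}/\widehat{\pi_1(N)}_{\kappa+1}$ are isomorphic to \emph{each other}, not to $\widehat\Gamma/\widehat\Gamma_{\kappa+1}$. (Indeed, if they were, $\bar\mu_\kappa(M)$ would vanish by \ref{item:milnor-inv-vanish}$\Leftrightarrow$\ref{item:milnor-inv-lcsq-isom}, which is not part of the hypothesis here.) So the automorphism of $\widehat\Gamma/\widehat\Gamma_{\kappa+1}$ you appeal to does not exist, and your subsequent "bookkeeping" concern is aimed at the wrong place. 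The correct argument is shorter and stays entirely at level $\kappa$: take the isomorphism given by \ref{item:milnor-inv-two-lcsq-isom}, restrict it to an isomorphism $\phi\colon\widehat{\pi_1(N)}/\widehat{\pi_1(N)}_\kappa\isomto\widehat{\pi_1(M)}/\widehat{\pi_1(M)}_\kappa$ (which by construction lifts to level $\kappa{+}1$), use the marking $f\circ\phi$ for $N$, and apply Corollary~\ref{corollary:main-determination-equiv-rel} directly to conclude $\theta_\kappa(M,f)\sim\theta_\kappa(N,f\circ\phi)$. This is already an $\approx$-equivalence (take $\gamma=\id$), so no automorphism --- at either level --- needs to enter the argument.
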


\begin{proof}
  The equivalence of
  \ref{item:milnor-inv-vanish}--\ref{item:milnor-inv-next-defined} is the
  conclusion of Theorem~\ref{theorem:main-determination-lcsq-nonlift}.

  Suppose \ref{item:milnor-inv-equal} holds.  Fix an isomorphism $f\colon \widehat{\pi_1(M)}/\widehat{\pi_1(M)}_\kappa \isomto \widehat{\Gamma}/\widehat{\Gamma}_\kappa$.
  By definition, $\theta_\kappa(M,f) \sim \gamma_*(\theta_\kappa(N,g))$ for some $\gamma\in\Aut(\widehat{\Gamma}/\widehat{\Gamma}_\kappa)$ and $g\colon \widehat{\pi_1(N)}/\widehat{\pi_1(N)}_\kappa \isomto \widehat{\Gamma}/\widehat{\Gamma}_\kappa$.
  Then $\gamma_*(\theta_\kappa(N,g)) = \theta_\kappa(N,\gamma\circ g)$.
  By Corollary~\ref{corollary:main-determination-equiv-rel}, it follows that \ref{item:milnor-inv-two-lcsq-isom} holds.
  Conversely, when \ref{item:milnor-inv-two-lcsq-isom} holds, let $\phi$ be the induced isomorphism $\widehat{\pi_1(N)}/\widehat{\pi_1(N)}_{\kappa} \isomto \widehat{\pi_1(M)}/\widehat{\pi_1(M)}_{\kappa}$.
  Since $\phi$ lifts, $\theta_\kappa(M,f) \sim \theta_\kappa(N,f\circ\phi)$ by Corollary~\ref{corollary:main-determination-equiv-rel}\@. So \ref{item:milnor-inv-equal} holds.
\end{proof}

Examples showing the nontriviality of the 3-manifold $\bar\mu_\kappa$-invariant
of transfinite length are given in Section~\ref{subsection:results-nontrivial-milnor}.
See Theorem~\ref{theorem:main-tb-milnor}\@.

Section~\ref{section:classical-milnor-invariant} explains how classical Milnor
invariants are special cases of the above theory associated to finite ordinals.
(See also~\cite{Orr:1989-1,Levine:1989-1,Igusa-Orr:2001-1}.)

Section~\ref{subsection:results-transfinite-grope} below states that the
$\bar\mu_\kappa$-invariant connects to a notion of transfinite gropes, with
details in Section~\ref{section:transfinite-stallings-dwyer}.

\subsection{A transfinite tower interpretation}
\label{subsection:results-transfinite-tower}

Corollary~\ref{corollary:main-determination-equiv-rel} and
Theorem~\ref{theorem:main-milnor-inv} may be viewed as classifications of towers
of transfinite lower central quotients of 3-manifold groups.  Briefly, we
address the following problem: classify extensions of length $\kappa+1$, by
3-manifold groups, of the length $\kappa$ tower of the transfinite lower central
quotients

\begin{equation}
  \label{equation:transfinite-lcsq-tower}
  \begin{tikzcd}[sep=18pt]
    \widehat\Gamma/\widehat\Gamma_{\kappa}
    \ar[r,dash pattern=on 3pt off 2pt on .5pt off 2pt on .5pt off 2pt
      on .5pt off 2pt on 50pt]
    &
    \widehat\Gamma/\widehat\Gamma_{\omega}
    \ar[r,dash pattern=
      on .5pt off 1pt on .5pt off 1.2pt on .5pt off 1.4pt on .5pt off 1.6pt
      on .5pt off 1.8pt on .5pt off 2pt on .5pt off 2.2pt on 50pt]
    &
    \widehat\Gamma/\widehat\Gamma_{2}
    \ar[r]
    &
    \widehat\Gamma/\widehat\Gamma_{1}
    \ar[r,equal]
    &[-10pt]
    \{1\}
    \\[-10pt]
    & & \Gamma/\Gamma_2 \ar[u,equal] & \Gamma/\Gamma_1 \ar[u,equal] &
  \end{tikzcd}
\end{equation}
of a given 3-manifold group $\Gamma=\pi_1(Y)$.

To be precise, we introduce some abstract terminology defined as follows:
\begin{enumerate}[(i)]
  \item A \emph{length $\kappa$ tower} in a category $\cC$ is a functor $A$ of
  the (opposite) category of ordinals $\{\lambda\mid \lambda\le\kappa\}$, with
  arrows $\lambda\to\lambda'$ for $\lambda'\le\lambda$ as morphisms, into~$\cC$.
  Denote it by~$\{A(\lambda)\}_{\lambda\le\kappa}$ or $\{A(\lambda)\}$.
  \item A \emph{$\kappa$-equivalence} between two towers $\{A(\lambda)\}$
  and~$\{A'(\lambda)\}$ is a natural equivalence $\phi = \{\phi_\lambda\colon
  A(\lambda) \isomto A'(\lambda)\}_{\lambda\le\kappa}$ between the two functors,
  that is, each $\phi_\lambda$ is an equivalence and $\phi_\lambda$ is a lift of
  $\phi_{\lambda'}$ for $\lambda' \le \lambda \le \kappa$. Say $\{A(\lambda)\}$
  and~$\{A'(\lambda)\}$ are \emph{$\kappa$-equivalent} if there is a
  $\kappa$-equivalence between them.
  \item A \emph{length $\kappa+1$ extension} of a length $\kappa$ tower
  $\{A(\lambda)\}_{\lambda\le \kappa}$ is a length $\kappa+1$ tower
  $\{B(\lambda)\}_{\lambda\le\kappa+1}$ equipped with a $\kappa$-equivalence
  between $\{A(\lambda)\}_{\lambda\le\kappa}$ and
  $\{B(\lambda)\}_{\lambda\le\kappa}$.
  \item Two length $\kappa+1$ extensions $\{B(\lambda)\}_{\lambda\le \kappa+1}$
  and $\{B'(\lambda)\}_{\lambda\le \kappa+1}$ of a tower of length $\kappa$,
  $\{A(\lambda)\}_{\lambda\le \kappa}$, are \emph{equivalent} if  the composition
  $B(\kappa)\isomto A(\kappa) \isomto B'(\kappa)$ lifts to an equivalence
  $B(\kappa+1)\isomto B'(\kappa+1)$.
\end{enumerate}

In this paper, towers and their extensions will always be transfinite lower
central quotient towers $\{\widehat\pi/\widehat\pi_\lambda\}_{\lambda\le\kappa}$
of 3-manifold groups~$\pi$.  (In this case,
$\{\widehat\pi/\widehat\pi_\lambda\}_{\lambda\le\kappa}$ and $\{\widehat
G/\widehat G_\lambda\}_{\lambda\le\kappa}$ are $\kappa$-equivalent if and only
if $\widehat\pi/\widehat\pi_\kappa$ and $\widehat G/\widehat G_\kappa$ are
isomorphic.)  We define a \emph{length $\kappa+1$ extension
of~\eqref{equation:transfinite-lcsq-tower} by a 3-manifold group} to be a length
$\kappa+1$ extension of the form $\{\widehat\pi/\widehat\pi_\lambda\}$ where
$\pi=\pi_1(M)$ for some closed 3-manifold~$M$.

For towers of 3-manifold groups, the following two problems are formulated
naturally:

\begin{enumerate}
  \item Classify length $\kappa+1$ extensions of a given fixed
  tower of length $\kappa$, modulo equivalence of extensions in the sense of~(iv).
  \item Classify length $\kappa+1$ towers whose length $\kappa$ subtowers are
  $\kappa$-equivalent to a given fixed tower of length $\kappa$, modulo
  $(\kappa+1)$-equivalence in the sense of~(ii).
\end{enumerate}

The following results are immediate consequences of
Corollary~\ref{corollary:main-determination-equiv-rel} and
Theorem~\ref{theorem:main-milnor-inv}\@.

\begin{corollaryalpha}
  \label{corollary:main-tower-classifications}
  For every ordinal $\kappa$, the following hold.

  \begin{enumerate}
    \item \label{item:tower-classification-lifts}
    The set of classes
    \[
      \left\{
        \begin{tabular}{@{}c@{}}
          length $\kappa+1$ extensions of~\eqref{equation:transfinite-lcsq-tower}\\
          by 3-manifold groups
        \end{tabular}
      \right\}
      \bigg/
      \begin{tabular}{@{}c@{}}
        equivalence of length \\
        $(\kappa+1)$-extensions of~\eqref{equation:transfinite-lcsq-tower}
      \end{tabular}
    \]
    is in one-to-one correspondence with $\cR_\kappa(\Gamma)/\mathord{\sim}$,
    via the invariant~$\theta_\kappa$.

    \item \label{item:tower-classification-nonlifts}
    The set of classes
    \[
      \left\{
        \begin{tabular}{@{}c@{}}
          length $\kappa+1$ towers of 3-manifold groups \\
          with length $\kappa$ subtower $\kappa$-equivalent
          to~\eqref{equation:transfinite-lcsq-tower}
        \end{tabular}
      \right\}      
      \bigg/
      \begin{tabular}{@{}c@{}}
        $(\kappa+1)$-equivalence
      \end{tabular}
    \]
    is in one-to-one correspondence with $\cR_\kappa(\Gamma)/\mathord{\approx}$,
    via the 3-manifold Milnor invariant~$\bar\mu_\kappa$.

  \end{enumerate}
\end{corollaryalpha}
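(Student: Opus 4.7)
The plan is to deduce both statements directly from the definitions in~(i)--(iv) combined with Corollary~\ref{corollary:main-determination-equiv-rel} and Theorem~\ref{theorem:main-milnor-inv}.  The key preliminary observation, which is the parenthetical remark just after~(iv) in the excerpt, is that for a lower central quotient tower of a 3-manifold group a $\lambda$-equivalence is completely determined by its top-level isomorphism, because the quotient maps $\widehat\pi/\widehat\pi_{\lambda'}\twoheadrightarrow\widehat\pi/\widehat\pi_{\lambda}$ (for $\lambda\le\lambda'$) are natural and hence impose no extra data below the top level.  I would begin by explicitly noting this reduction, since the rest of the argument consists of translating the categorical language of extensions and towers into the concrete $(M,f)$ data used in the earlier theorems.

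For part~\ref{item:tower-classification-lifts}, I would first unpack the data: by the preliminary observation, a length $\kappa+1$ extension of~\eqref{equation:transfinite-lcsq-tower} by a 3-manifold group amounts to a pair $(M,f)$ where $M$ is a closed 3-manifold and $f\colon \widehat\pi/\widehat\pi_\kappa \isomto \widehat\Gamma/\widehat\Gamma_\kappa$ with $\pi=\pi_1(M)$.  The assignment $(M,f)\mapsto \theta_\kappa(M,f)$ is well-defined and surjective onto $\cR_\kappa(\Gamma)$ by the very definition~\eqref{equation:realizable-classes}.  Next, by~(iv), two extensions $(M,f)$ and $(N,g)$ are equivalent exactly when the composition $g^{-1}\circ f\colon \widehat\pi/\widehat\pi_\kappa \isomto \widehat G/\widehat G_\kappa$ (with $G=\pi_1(N)$) lifts to an isomorphism at level $\kappa+1$, and Corollary~\ref{corollary:main-determination-equiv-rel} identifies this condition with $\theta_\kappa(M,f)\sim\theta_\kappa(N,g)$ in $\cR_\kappa(\Gamma)$.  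Therefore $\theta_\kappa$ descends to the claimed bijection onto $\cR_\kappa(\Gamma)/{\sim}$.

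For part~\ref{item:tower-classification-nonlifts}, I would instead consider the assignment sending a length $\kappa+1$ tower $\{\widehat\pi/\widehat\pi_\lambda\}_{\lambda\le\kappa+1}$ with $\widehat\pi/\widehat\pi_\kappa \cong \widehat\Gamma/\widehat\Gamma_\kappa$ to $\bar\mu_\kappa(M)\in \cR_\kappa(\Gamma)/{\approx}$, where $\pi=\pi_1(M)$.  The hypothesis guarantees $\bar\mu_\kappa(M)$ is defined, and surjectivity is again immediate from~\eqref{equation:realizable-classes}.  By the preliminary observation, two such towers (from $M$ and $N$) are $(\kappa+1)$-equivalent if and only if $\widehat\pi/\widehat\pi_{\kappa+1}\cong \widehat G/\widehat G_{\kappa+1}$; by the equivalence \ref{item:milnor-inv-equal}${}\Leftrightarrow{}$\ref{item:milnor-inv-two-lcsq-isom} in Theorem~\ref{theorem:main-milnor-inv}, this happens exactly when $\bar\mu_\kappa(M)=\bar\mu_\kappa(N)$ in $\cR_\kappa(\Gamma)/{\approx}$.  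Hence $\bar\mu_\kappa$ descends to a bijection onto $\cR_\kappa(\Gamma)/{\approx}$.  Since the analytic content has already been carried out in Corollary~\ref{corollary:main-determination-equiv-rel} and Theorem~\ref{theorem:main-milnor-inv}, the only care needed is the verification of the preliminary observation; no step presents a genuine obstacle.
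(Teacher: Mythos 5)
Your proposal is correct and takes essentially the same route the paper has in mind: the paper asserts this corollary is an immediate consequence of Corollary~\ref{corollary:main-determination-equiv-rel} and Theorem~\ref{theorem:main-milnor-inv}, and relegates the key reduction to the parenthetical remark after the definitions in Section~\ref{subsection:results-transfinite-tower} (the statement that two lower central quotient towers are $\kappa$-equivalent iff the top levels are isomorphic, which follows because an isomorphism at level $\kappa$ preserves the lower central series and hence descends uniquely to all lower levels). You have simply made the translation dictionary between categorical tower-language and concrete $(M,f)$-data explicit, which is exactly what the authors leave implicit.
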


\begin{remark}
  The two classifications in
  Corollary~\ref{corollary:main-tower-classifications}\ref{item:tower-classification-lifts}
  and~\ref{item:tower-classification-nonlifts} are indeed \emph{not} identical.
  More precisely, the natural surjection from the set of classes in
  Corollary~\ref{corollary:main-tower-classifications}\ref{item:tower-classification-lifts}
  onto that in~\ref{item:tower-classification-nonlifts}, or equivalently the
  surjection $\cR_\kappa(\Gamma)/{\sim} \to \cR_\kappa(\Gamma)/{\approx}$, is
  \emph{not} injective in general.  In fact, for the first transfinite ordinal
  $\omega$, Theorem~\ref{theorem:main-tb-omega} below presents an explicit
  3-manifold example for which $\cR_\omega(\Gamma)/{\sim}$ is an infinite set
  but $\cR_\omega(\Gamma)/{\approx}$ is a singleton.
\end{remark}

\subsection{Transfinite gropes and the invariants}
\label{subsection:results-transfinite-grope}

In this paper, we also introduce a previously unexplored notion of
\emph{transfinite gropes} (see Section~\ref{subsection:transfinite-grope}), and
relate them to the transfinite Milnor invariants.  Once again, this extends well
known results concerning classical Milnor invariant of links and the existence
of finite (asymmetric) gropes.  For instance, Freedman and
Teichner~\cite{Freedman-Teichner:1995-2} and Conant, Schneiderman and Teichner
as summarized in~\cite{Conant-Teichner-Schneiderman:2011-1}, as well as work of
the first author~\cite{Cha:2018-1}.

In~\cite{Freedman-Teichner:1995-2}, for finite $k$, a grope corresponding to the
$k$th term of the lower central series is called a grope of class~$k$.  Briefly,
we extend this to the case of an arbitrary transfinite ordinal $\kappa$, to
define a notion of a \emph{grope of \textup{(}transfinite\textup{)}
class~$\kappa$}.  We say that a 4-dimensional cobordism $W$ between two
3-manifolds $M$ and $N$ is a \emph{grope cobordism of class~$\kappa$} if
$H_1(M)\to H_1(W)$ and $H_1(N)\to H_1(W)$ are isomorphisms and the cokernels of
$H_2(M)\to H_2(W)$ and $H_2(N)\to H_2(W)$ are generated by homology classes
represented by gropes of class~$\kappa$. See
Definitions~\ref{definition:transfinite-grope},
\ref{definition:class-of-homology-class-in-H_2}
and~\ref{definition:cobordism-of-class-kappa} for precise descriptions.

Transfinite gropes give another characterization of the equivalent properties in
Theorems~\ref{theorem:main-determination-lcsq-nonlift}
and~\ref{theorem:main-milnor-inv}, as stated below.

\begin{theorem-named}[Addendum to
  Theorems~\ref{theorem:main-determination-lcsq-nonlift}
  and~\ref{theorem:main-milnor-inv}]
  
  Suppose $M$ is a closed 3-manifold such that
  $\widehat{\pi_1(M)}/\widehat{\pi_1(M)}_\kappa$ is isomorphic
  to~$\widehat\Gamma/\widehat\Gamma_\kappa$. Then the following is equivalent to
  the properties \textup{\ref{item:iso-kappa+1-nonlift}} and
  \textup{\ref{item:theta-kappa-vanishes-mod-aut}} in
  Theorem~\ref{theorem:main-determination-lcsq-nonlift}, and to the properties
  \ref{item:milnor-inv-vanish}--\ref{item:milnor-inv-next-defined} of
  Theorem~\ref{theorem:main-milnor-inv}.

  \begin{enumerate}[start=0]
    \item\label{item:kappa+1-cobordism} There is a grope cobordism of class
    $\kappa+1$ between $M$ and another closed 3-manifold $N$ satisfying
    $\widehat{\pi_1(N)} / \widehat{\pi_1(N)}_{\kappa+1} \cong \widehat\Gamma /
    \widehat\Gamma_{\kappa+1}$.
  \end{enumerate}
\end{theorem-named}

Its proof is given in Section~\ref{subsection:transfinite-grope}.

As a key ingredient of the proof, we develop and use a transfinite
generalization of a well-known theorem of Stallings and
Dwyer~\cite{Stallings:1965-1,Dwyer:1975-1}. Since we believe that it will be
useful for other applications in the future as well, we present the statement
here.

\begin{theorem-named}[Theorem~\ref{theorem:transfinite-stallings-dwyer}]
  Let $\kappa>1$ be an arbitrary ordinal.  Suppose $f\colon \pi\to G$ be a group
  homomorphism inducing an isomorphism $H_1(\pi) \isomto H_1(G)$.  In addition,
  if $\kappa$ is a transfinite ordinal, suppose $G$ is finitely generated.  Then
  $f$ induces an isomorphism $\widehat\pi/\widehat\pi_{\kappa} \isomto \widehat
  G/\widehat G_{\kappa}$ if and only if $f$ induces an epimorphism
  \begin{equation*}
      H_2(\widehat\pi)\to H_2(\widehat G)/\Ker\{H_2(\widehat G)\rightarrow
      H_2(\widehat G/\widehat G_{\lambda})\}
  \end{equation*}
  for all ordinals $\lambda<\kappa$.
\end{theorem-named}

See also Corollaries~\ref{corollary:transfinite-stallings-dwyer},
\ref{corollary:transfinite-stallings-dwyer-without-localization-in-H_2}
and~\ref{corollary:transfinite-lcsq-grope} in
Section~\ref{section:transfinite-stallings-dwyer}.

\subsection{Realization of the invariants}
\label{subsection:results-realization}

Our next result is an algebraic characterization of the classes in~$\cR_\kappa(\Gamma)$.
Denote by $tH_*(-)$ the torsion subgroup of~$H_*(-)$.

\begin{theoremalpha}
  \label{theorem:main-realization}
  Let $\kappa\ge 2$ be an arbitrary ordinal.  A class $\theta\in
  H_3(\widehat\Gamma/\widehat\Gamma_\kappa)$ lies in $\cR_\kappa(\Gamma)$ if and
  only if the following two conditions hold.
  \begin{enumerate}
    \item The cap product
    \[
      \cap\,\theta\colon tH^2(\widehat\Gamma/\widehat\Gamma_\kappa)
      \to tH_1(\widehat\Gamma/\widehat\Gamma_\kappa) \cong tH_1(\Gamma)
    \]
    is an isomorphism.
    \item The composition
    \[
      H^1(\widehat\Gamma/\widehat\Gamma_\kappa)
      \xrightarrow{\cap\,\theta} H_2(\widehat\Gamma/\widehat\Gamma_\kappa)
      \xrightarrow{\text{pr}}
      H_2(\widehat\Gamma/\widehat\Gamma_\kappa) / 
      \Ker\{H_2(\widehat\Gamma/\widehat\Gamma_\kappa)
      \rightarrow H_2(\widehat\Gamma/\widehat\Gamma_\lambda)\} 
    \]
    is surjective for all ordinals $\lambda<\kappa$.
  \end{enumerate}
\end{theoremalpha}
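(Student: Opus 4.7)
The plan is to prove the two directions separately, using Poincar\'e duality on $M$ for the necessity, and a bordism-theoretic realization followed by surgery for the sufficiency, with the transfinite Stallings--Dwyer theorem of Section~\ref{section:transfinite-stallings-dwyer} linking the algebraic conditions (1)--(2) to the existence of an isomorphism $\widehat{\pi_1(M)}/\widehat{\pi_1(M)}_\kappa \isomto \widehat\Gamma/\widehat\Gamma_\kappa$.

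\textbf{Necessity.} Suppose $\theta = \theta_\kappa(M,f)$, and set $N = \widehat\Gamma/\widehat\Gamma_\kappa$ and $\pi=\pi_1(M)$. Let $g\colon M\to K(N,1)$ be the classifying map, so $g_*[M]=\theta$. By naturality of the cap product, the operator $\cap\,\theta$ factors as $g_*\circ(\cap[M])\circ g^*$. Since $\kappa\ge 2$, the composition $H_1(\pi)\to H_1(\widehat\pi) \to H_1(\widehat\pi/\widehat\pi_\kappa) \isomto H_1(N)$ is an isomorphism, and dually $g^*$ is an isomorphism on $tH^2$ by the universal coefficient theorem. Combined with Poincar\'e duality $\cap[M]\colon H^*(M)\isomto H_{3-*}(M)$, this gives condition~(1) on the torsion part. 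For condition~(2), the same factorization rewrites the composition of interest as $g_*\colon H_2(M)\to H_2(N)$ followed by the projection modulo $\Ker\{H_2(N)\to H_2(\widehat\Gamma/\widehat\Gamma_\lambda)\}$, and its surjectivity for all $\lambda<\kappa$ is exactly the $H_2$-epimorphism criterion of the transfinite Stallings--Dwyer theorem applied to $\pi\to N$, which holds because by hypothesis $\widehat\pi/\widehat\pi_\kappa\isomto N$.

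\textbf{Sufficiency.} Assume $\theta\in H_3(N)$ satisfies (1) and (2). Because $\Omega^{\SO}_i=0$ for $i=1,2,3$, the Atiyah--Hirzebruch spectral sequence yields $\Omega^{\SO}_3(K(N,1))\cong H_3(N)$, so there exist a closed oriented 3-manifold $M_0$ and a map $g_0\colon M_0\to K(N,1)$ with $(g_0)_*[M_0]=\theta$. I would then modify $M_0$ by a sequence of 1-surgeries along simple closed curves representing elements of $\ker(\pi_1(M_0)\to N)$: the classifying map extends over each surgery trace because $K(N,1)$ is aspherical and the attaching loops lie in the kernel, and the fundamental class continues to represent $\theta$ because the trace is an oriented $K(N,1)$-bordism. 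Conditions (1) and (2) are used to choose the surgeries so that the resulting $g\colon M\to K(N,1)$ induces an isomorphism on $H_1$ and satisfies the $H_2$-epimorphism for every $\lambda<\kappa$; the transfinite Stallings--Dwyer theorem then delivers an isomorphism $f\colon\widehat{\pi_1(M)}/\widehat{\pi_1(M)}_\kappa\isomto N$ with $\theta=\theta_\kappa(M,f)$.

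\textbf{Main obstacle.} The technical core is the surgery construction in the sufficiency direction for transfinite~$\kappa$, where one cannot in general realize the $H_2$-epimorphism condition for all $\lambda<\kappa$ by a single finite sequence of 3-dimensional surgeries. I expect to resolve this by a transfinite induction that exploits the finite-presentability features of Vogel--Levine localization recalled in Section~\ref{section:localization}: at each ordinal stage only finitely many generators and relations of the relevant truncated quotient must be controlled, so finitely many surgeries suffice to realize condition~(2) modulo the next stage, and the tower of finitely presented truncations then assembles to the required isomorphism on $\widehat{(\cdot)}/\widehat{(\cdot)}_\kappa$.
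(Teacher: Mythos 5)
Your necessity direction is essentially the paper's: Poincar\'e duality on $M$, naturality of the cap product under the classifying map, and the $H_2$-epimorphism criterion (the paper packages the latter as Lemma~\ref{lemma:transfinite-stallings-dwyer-main-argument}, an abstract group-level form of the transfinite Stallings--Dwyer argument applied to the composition $\widehat\pi\to\widehat\pi/\widehat\pi_\kappa\isomto\widehat\Gamma/\widehat\Gamma_\kappa$). That part is fine modulo routine bookkeeping.

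The sufficiency direction has a genuine gap, and you have also invented an obstacle that is not there. First, the real gap: you propose to upgrade an arbitrary bordism representative $(M_0, g_0)$ of $\theta$ to one inducing an $H_1$-isomorphism by ``a sequence of 1-surgeries along simple closed curves representing elements of $\ker(\pi_1(M_0)\to N)$.'' This does not work as stated. Surgering circles in $\ker\pi_1$ cannot create new first-homology classes needed for surjectivity, and more seriously, killing $\ker(H_1(M_0)\to H_1(N))$ by surgery runs afoul of torsion: unless the torsion linking form on the kernel has the right structure, surgering a torsion-representing circle produces new torsion rather than removing it. Condition~(1) is precisely the Poincar\'e-dual control that makes such a normalization possible, and the paper invokes Turaev's homology surgery lemma (Lemma~\ref{lemma:turaev-homology-surgery}) for exactly this purpose. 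You need either to cite that lemma or to reconstruct its proof; asserting ``conditions (1) and (2) are used to choose the surgeries'' leaves the hardest step unjustified. Moreover Turaev's lemma requires a finitely generated target group, and $\widehat\Gamma/\widehat\Gamma_\kappa$ is typically not finitely generated; the paper handles this by passing through a finitely generated approximation $Q\hookrightarrow\widehat\Gamma/\widehat\Gamma_\kappa$ (Lemma~\ref{lemma:approximation-local-lcsq}) through which the classifying map factors and on which $\cap\,\theta$ restricts to an isomorphism of $tH^2\to tH_1$. That approximation device is missing from your sketch and is not automatic.

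Second, the obstacle you flag under ``Main obstacle'' is illusory. Condition~(2) is a statement about the homology class $\theta$ alone, not about the candidate 3-manifold. Once you have \emph{any} $(M,\phi)$ with $\phi_*[M]=\theta$ and $\phi_*:H_1(M)\isomto H_1(\widehat\Gamma/\widehat\Gamma_\kappa)$ an isomorphism, the cap-product diagram factors $\mathrm{pr}\circ(\cap\,\theta)$ through $H_2(M)\to H_2(\pi)\to H_2(\widehat\pi)\to H_2(\widehat\Gamma/\widehat\Gamma_\kappa)/K_\lambda$; the first two arrows are surjective, so surjectivity of $\mathrm{pr}\circ(\cap\,\theta)$ for each $\lambda<\kappa$ immediately gives the hypothesis of Lemma~\ref{lemma:transfinite-stallings-dwyer-main-argument}. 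No transfinite induction on surgeries is needed; condition~(2) is inherited for free after the single $H_1$-normalization step.
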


We remark that the definition of $\cR_\kappa(\Gamma)$ given in~\eqref{equation:realizable-classes} still makes sense even when $\Gamma$ is not a 3-manifold group.
(In this case $\cR_\kappa(\Gamma)$ may be empty.)
Theorem~\ref{theorem:main-realization} holds for any finitely presented group~$\Gamma$.

The conditions (1) and (2) in Theorem~\ref{theorem:main-realization} may be viewed as Poincar\'e duality imposed properties of the given class $\theta$ with respect to the cap product.
Also note that if $\kappa$ is a discrete ordinal, ``for all ordinals $\lambda<\kappa$'' in (2) can be replaced with ``for $\lambda=\kappa-1$.''

For a finite ordinal $\kappa$, Theorem~\ref{theorem:main-realization} is
essentially due to Turaev~\cite{Turaev:1984-1}.  Our new contribution in
Theorem~\ref{theorem:main-realization} is to extend his result transfinitely.

The proof of Theorem~\ref{theorem:main-realization} is given in
Section~\ref{section:realization-transfinite-inv}.  Among other ingredients, the
transfinite generalization of the Stallings-Dwyer
theorem~\cite{Stallings:1965-1,Dwyer:1975-1} stated above as
Theorem~\ref{theorem:transfinite-stallings-dwyer}  plays a key role in the proof
of Theorem~\ref{theorem:main-realization}.

\subsection{Universal $\theta$-invariant}
\label{subsection:results-universal-inv}

By generalizing the approach of Levine's work on links~\cite{Levine:1989-1}, we
define and study what we call the \emph{universal $\theta$-invariant} of a
3-manifold.

Once again, fix a closed 3-manifold $Y$ and let $\Gamma=\pi_1(Y)$.
Now suppose $M$ is a closed 3-manifold with $\pi=\pi_1(M)$ which admits an
isomorphism $f\colon \widehat{\pi} \isomto \widehat{\Gamma}$.

\begin{definition}
  \label{definition:theta-final}
  Define $\widehat\theta(M)\in H_3(\widehat\Gamma)$ to be the image of the
  fundamental class $[M]\in H_3(M)$ under the composition
  \[
    H_3(M)\to H_3(\pi) \to H_3(\widehat\pi) \xrightarrow[\cong]{f_*}
    H_3(\widehat\Gamma).
  \]
\end{definition}

Also, define the set of realizable classes in $H_3(\widehat\Gamma)$ by 
\[
  \widehat\cR(\Gamma) = \bigg\{\widehat\theta(M)\in H_3(\widehat\Gamma)
  \, \bigg| \, 
  \begin{tabular}{@{}c@{}}
    $M$ is a closed 3-manifold equipped with \\
    an isomorphism $\widehat{\pi_1(M)} \isomto \widehat\Gamma$
  \end{tabular} \bigg\}.
\]
Note that the value of $\widehat\theta(M)$ in the orbit space
$\widehat\cR(\Gamma)/\Aut(\widehat\Gamma)$ is determined by $M$, independent of
the choice of the isomorphism~$f$.

We remark that if $M$ is equipped with an isomorphism $f\colon \widehat\pi
\isomto \widehat\Gamma$ so that $\widehat\theta(M)$ is defined, then $f$ induces
an isomorphism $\widehat\pi/\widehat\pi_\kappa \isomto
\widehat\Gamma/\widehat\Gamma_\kappa$, and thus the invariant $\theta_\kappa(M)$
is defined for all ordinals~$\kappa$.  Moreover, $\theta_\kappa(M)$ is the image
of $\widehat\theta(M)$ under $\widehat\cR(\Gamma) \to \cR_\kappa(\Gamma)$
induced by the projection $\widehat\Gamma \to
\widehat\Gamma/\widehat\Gamma_\kappa$.  Since this factors through
$\cR_{\kappa+1}(\Gamma)$, it follows that $\theta_\kappa(M)$ vanishes in the
cokernel of $\cR_{\kappa+1}(\Gamma)\to \cR_\kappa(\Gamma)$, or equivalently
$\theta_\kappa(M) \sim \theta_\kappa(Y)$ in $\cR_\kappa(\Gamma)$, for every
ordinal~$\kappa$.  Consequently, $\bar\mu_\kappa(M)$ vanishes for all~$\kappa$
if $\widehat\theta(M)$ is defined.  It seems to be hard to prove or disprove the
converse.

Similarly to the $\theta_\kappa$-invariants (see Theorem~\ref{theorem:results-homology-cob}), $\widehat\theta(M) \in
\widehat\cR(\Gamma)/\Aut(\widehat\Gamma)$ is a homology cobordism invariant.  We prove this in
Theorem~\ref{theorem:homology-cob-final-inv}\@.  Also, we prove a realization theorem characterizing homology classes in
$\widehat\cR(\Gamma)$, which is analogous to Theorem~\ref{theorem:main-realization}\@.

\begin{theoremalpha}
  \label{theorem:main-realization-final-inv}
  A homology class $\theta\in H_3(\widehat\Gamma)$ lies in $\widehat\cR(\Gamma)$
  if and only if the following two conditions hold.
  \begin{enumerate}
    \item The cap product\/ $\cap\,\theta\colon tH^2(\widehat\Gamma) \to
    tH_1(\widehat\Gamma) \cong tH_1(\Gamma)$ is an isomorphism.
    \item The cap product\/ $\cap\,\theta\colon H^1(\widehat\Gamma) \to
    H_2(\widehat\Gamma)$ is surjective.
  \end{enumerate}
\end{theoremalpha}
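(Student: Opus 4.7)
The plan is to deduce conditions (1) and~(2) from Poincar\'e duality on $M$ transported along the classifying map, and conversely to build $M$ by surgery on a 3-manifold representing $\theta$. For the necessity, suppose $\theta=\widehat\theta(M)$ with $f\colon\widehat{\pi_1(M)}\isomto\widehat\Gamma$ as in Definition~\ref{definition:theta-final}, and let $c\colon M\to K(\widehat\Gamma,1)$ be the classifying map composed with~$f$, so $c_*[M]=\theta$. The Vogel--Levine map $\pi_1(M)\to\widehat{\pi_1(M)}$ is 2-connected, and Hopf's formula supplies the surjection $H_2(M)\twoheadrightarrow H_2(\pi_1(M))$; hence $c_*$ is an isomorphism on $H_1$ and surjective on~$H_2$. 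Universal coefficients then give that $c^*$ is an isomorphism on $H^1$ and on $tH^2$. Combining the projection formula $\cap\,\theta=c_*\circ(\cap[M])\circ c^*$ with the Poincar\'e duality isomorphisms $H^1(M)\isomto H_2(M)$ and $tH^2(M)\isomto tH_1(M)$ on the closed oriented 3-manifold~$M$, conditions (1) and~(2) for~$\cap\,\theta$ follow immediately.

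For the sufficiency, suppose $\theta\in H_3(\widehat\Gamma)$ satisfies (1) and~(2). Since $\Omega_3^{\mathrm{SO}}(X)\to H_3(X)$ is an isomorphism, choose a closed oriented 3-manifold $M_0$ with a map $\phi_0\colon M_0\to K(\widehat\Gamma,1)$ such that $(\phi_0)_*[M_0]=\theta$. The task is to modify $M_0$ through a finite sequence of surgeries so that the induced homomorphism $\phi_\#\colon\pi_1(M)\to\widehat\Gamma$ becomes 2-connected, arranged so that the reference map to $K(\widehat\Gamma,1)$ extends across each handle and the image of the fundamental class in $H_3(\widehat\Gamma)$ is preserved. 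Once this is achieved, the Vogel--Levine localization yields $\widehat{\pi_1(M)}\isomto\widehat\Gamma$ (via the universal property together with the idempotency $\widehat{\widehat\Gamma}=\widehat\Gamma$), and then $\widehat\theta(M)=\theta$ holds by construction.

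The main obstacle is to verify that this surgery procedure actually terminates, and here conditions (1) and~(2) supply precisely the required data. Running the necessity argument in reverse, condition~(2) forces $(\phi_0)_*\colon H_2(M_0)\to H_2(\widehat\Gamma)$ to already be surjective: in the factorization $c_*\circ(\cap[M_0])\circ\phi_0^*$ the composite is onto $H_2(\widehat\Gamma)$ because $\cap\,\theta$ surjects onto $H_2(\widehat\Gamma)$ itself---not merely modulo kernels of projections to $\widehat\Gamma/\widehat\Gamma_\lambda$---while the first two factors are Poincar\'e duality isomorphisms; condition~(1) provides the analogous control on torsion. These surjectivities bound the kernel that must be killed to make $\phi_\#$ 2-connected, closing the construction in finitely many surgeries in a manner closely parallel to the proof of Theorem~\ref{theorem:main-realization}. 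The strengthened conditions here compensate for the fact that $\widehat\Gamma$ need not be finitely presented: condition~(2) pins down $H_2(\widehat\Gamma)$ directly, bypassing the truncations $H_2(\widehat\Gamma)/\Ker\{H_2(\widehat\Gamma)\to H_2(\widehat\Gamma/\widehat\Gamma_\lambda)\}$ that appear in the proof of Theorem~\ref{theorem:main-realization}.
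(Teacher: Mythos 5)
Your necessity argument is correct and matches the paper's. The sufficiency direction, however, has a genuine gap at exactly the point you hand-wave past: the passage from ``$\pi_1(M)\to\widehat\Gamma$ is 2-connected'' to ``$\widehat{\pi_1(M)}\isomto\widehat\Gamma$.'' Theorem~\ref{theorem:localization-basic-facts}\ref{item:localization-2-connected-map} grants this implication only for 2-connected maps between \emph{finitely presented} groups, and $\widehat\Gamma$ is not finitely presented in general. There is no quick fix from ``the universal property together with the idempotency $\widehat{\widehat\Gamma}=\widehat\Gamma$''; the universal property gives a map $\widehat{\pi_1(M)}\to\widehat\Gamma$, but establishing bijectivity is precisely the content needing the finite-presentation hypothesis. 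Relatedly, the surgery step itself has no available engine over $\widehat\Gamma$: Turaev's Lemma~\ref{lemma:turaev-homology-surgery}, which is what actually makes $H_1(M)\to H_1(X)$ an isomorphism by bordism, requires $\pi_1(X)$ to be finitely generated, and $\widehat\Gamma$ is not.

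The paper avoids both problems by a step your proposal omits entirely: writing $\widehat\Gamma=\colim_\ell P(\ell)$ as a colimit of finitely presented groups along 2-connected maps (Theorem~\ref{theorem:localization-basic-facts}\ref{item:localization-colimit}), factoring $\theta$ through some $H_3(P)$, and then---crucially---further passing along the colimit so that $H_2(P)\to H_2(\widehat\Gamma)$ is an \emph{isomorphism}, not merely a surjection. Only then is Turaev applied over $B(P)$, and only then does condition~(2) transported to $P$ yield a genuinely 2-connected homomorphism $\pi_1(M)\to P$ between finitely presented groups, so that Theorem~\ref{theorem:localization-basic-facts}\ref{item:localization-2-connected-map} kicks in. Without arranging $H_2(P)\cong H_2(\widehat\Gamma)$ first, knowing $H_2(\pi_1(M))\to H_2(\widehat\Gamma)$ is onto does \emph{not} give $H_2(\pi_1(M))\to H_2(P)$ onto (the transition maps $H_2(P(\ell))\to H_2(P(\ell+1))$ are surjective but not injective). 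Finally, note that your claim that the argument is ``closely parallel to the proof of Theorem~\ref{theorem:main-realization}'' is the opposite of what the paper says: it explicitly introduces the sufficiency proof of Theorem~\ref{theorem:main-realization-final-inv} with ``Our argument will be different from the proof of Theorem~\ref{theorem:main-realization},'' and indeed the proof of Theorem~\ref{theorem:main-realization} goes through $\widehat\Gamma/\widehat\Gamma_\kappa$ (local by Lemma~\ref{lemma:lcsq-of-local-group-is-local}) and the transfinite Stallings--Dwyer machinery, while here one works with the colimit system directly.
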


We prove Theorem~\ref{theorem:main-realization-final-inv} in
Section~\ref{section:final-transfinite-invariant}.

We remark that Levine proved a realization theorem for his link invariant which
lives in $H_3(\widehat F)$~\cite{Levine:1989-1}: for all $\theta\in H_3(\widehat
F)$, there is a link $L$ for which his invariant is defined and equal
to~$\theta$. Theorem~\ref{theorem:main-realization-final-inv} says that in case
of general 3-manifolds, not all homology classes in $H_3$ are necessarily
realizable.  An example is given in Section~\ref{section:tb-computation-final}.

It is an open problem whether Levine's link invariant in~\cite{Levine:1989-1} is
nontrivial.  In Theorem~\ref{theorem:main-tb-final} below, for the 3-mainfold
case, we show that $\widehat\theta(M)$ is nontrivial.

\subsection{A torus bundle example}
\label{subsection:results-torus-bundle}

This section gives a complete and careful analysis of one example which illustrates the full collection of transfinite
invariants considered in this paper. For the underlying $3$-manifold, a torus bundle over a circle, the fundamental group of the fiber is a module over the group ring of covering translations, facilitating our computation of the group localization.

We thus compute and analyze the full array of invariants under consideration ---  $\theta$, $\overline\mu$ and $\widehat\theta$. Moreover, this example illustrates several fundamental features of the invariants, including: (i)~\emph{nontriviality} of $\theta_\omega$ for the first transfinite ordinal~$\omega$, (ii)~\emph{nontriviality} of $\widehat\theta$ even when all finite length $\theta$ and $\overline\mu$ vanish, and (iii)~\emph{torsion} values of finite length $\theta$ and~$\overline\mu$.

Let $Y$ be the torus bundle over $S^1$ with monodromy $\sbmatrix{-1 & 0 \\ 0 &
-1}$.  That is, viewing $S^1$ as the unit circle in the complex plane,
\[
  Y = S^1\times S^1\times [0,1] / (z^{-1},w^{-1},0) \sim (z,w,1).
\]
Let $\Gamma=\pi_1(Y)$ be the fundamental group of the torus bundle.  In our
earlier work~\cite{Cha-Orr:2011-1}, we computed the homology
localization~$\widehat\Gamma$.  Using this, it is not hard to compute its
transfinite lower central quotients and see that $\widehat\Gamma$ is
transfinitely nilpotent.  Indeed, $\widehat\Gamma_{\omega+1}$ is trivial.
Our computation starts from this.

\subsubsection*{The first transfinite invariant}

For the first transfinite ordinal $\omega$, we compute the homology
$H_3(\widehat\Gamma/\widehat\Gamma_\omega)$ and its subset of realizable classes
$\cR_\omega(\Gamma)$.  Moreover we completely determine the two equivalence
relations $\sim$ and $\approx$ on $\cR_\omega(\Gamma)$, which were defined in
Sections~\ref{subsection:results-determination}
and~\ref{subsection:results-milnor-inv}.  The computation especially tells us the
following.

\begin{theoremalpha}
  \label{theorem:main-tb-omega}
  For the torus bundle group $\Gamma$, the following hold. 
  \begin{enumerate}
    \item \label{item:main-tb-omega-theta}
    The set $\cR_\omega(\Gamma)/{\sim}$ of equivalence classes of realizable
    values of $\theta_\omega$ is infinite.  Consequently, by
    Corollary~\ref{corollary:main-tower-classifications}\ref{item:tower-classification-lifts},
    there are infinitely many distinct equivalence classes of length $\omega+1$
    extensions, by 3-manifolds, of the length $\omega$ tower
    $\{\widehat\Gamma/\widehat\Gamma_\lambda\}_{\lambda\le\omega}$ of the torus
    bundle~$Y$ (in the sense of
    Section~\ref{subsection:results-transfinite-tower}).

    \item \label{item:main-tb-omega-mu}
    The set $\cR_\omega(\Gamma)/{\approx}$ is a singleton. Consequently,
    $\bar\mu_\omega(M) \in \cR_\omega(\Gamma)/{\approx}$ vanishes whenever it is
    defined.  Also, for all closed 3-manifold groups $\pi$ such that the length
    $\omega$ tower $\{\widehat\pi/\widehat\pi_\lambda\}_{\lambda\le\omega}$ is
    $\omega$-equivalent to that of $\Gamma$ (in the sense of
    Section~\ref{subsection:results-transfinite-tower}), the length $\omega+1$
    tower $\{\widehat\pi/\widehat\pi_\lambda\}_{\lambda\le\omega+1}$ is
    automatically $(\omega+1)$-equivalent to that of~$\Gamma$, by
    Corollary~\ref{corollary:main-tower-classifications}\ref{item:tower-classification-nonlifts}.
    \end{enumerate}
\end{theoremalpha}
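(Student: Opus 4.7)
My plan is to reduce both claims to explicit computations involving $\widehat\Gamma$, whose structure was determined in~\cite{Cha-Orr:2011-1}. Because the monodromy acts as $-I$ on the fiber $\Z^2$, direct computation gives $\Gamma_k = 2^{k-1}\Z^2$ for $k \ge 2$, so $\Gamma/\Gamma_k \cong (\Z/2^{k-1})^2 \rtimes \Z$. Since Vogel-Levine localization preserves finite nilpotent quotients, the same description holds for $\widehat\Gamma/\widehat\Gamma_k$, and passing to the inverse limit identifies $\widehat\Gamma/\widehat\Gamma_\omega$ with a completion of the form $\Z_2^2 \rtimes \Z$. The assertion $\widehat\Gamma_{\omega+1} = 1$ quoted in the paper gives $\widehat\Gamma/\widehat\Gamma_{\omega+1} = \widehat\Gamma$, so the relevant projection is simply $\widehat\Gamma \to \widehat\Gamma/\widehat\Gamma_\omega$.

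The next step is to compute $H_3(\widehat\Gamma/\widehat\Gamma_\omega)$ and $H_3(\widehat\Gamma)$, together with the map between them induced by the above projection, using the Lyndon-Hochschild-Serre spectral sequence of the semidirect-product extension over the base $\Z$. With these homology groups in hand, Theorem~\ref{theorem:main-realization} carves out $\cR_\omega(\Gamma) \subset H_3(\widehat\Gamma/\widehat\Gamma_\omega)$ as those classes satisfying the two cap-product conditions (Poincar\'e duality on torsion $H^2$, and surjectivity of the cap product modulo kernels to each earlier stage). In particular, one then has an explicit description of the image of $\cR_{\omega+1}(\Gamma) \to \cR_\omega(\Gamma)$, which by definition is the $\sim$-equivalence class of $\theta_\omega(Y)$.

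For part~\ref{item:main-tb-omega-theta}, the plan is to exhibit a homomorphism from $H_3(\widehat\Gamma/\widehat\Gamma_\omega)$ to an infinite target (either $\Z$ or $\Z_2$, detected from the spectral sequence computation) which is constant on each $\sim$-class but takes infinitely many distinct values on $\cR_\omega(\Gamma)$. To realize these values, I would construct infinitely many closed $3$-manifolds whose fundamental groups have the prescribed $\omega$-stage quotient but distinct $(\omega+1)$-stage extensions---these can be produced, for instance, by suitable Dehn surgeries on $Y$ or by the Ivanov-Mikhailov examples $M_k$ referenced in the acknowledgments. The appeal to Corollary~\ref{corollary:main-tower-classifications}\ref{item:tower-classification-lifts} then gives the claimed infinitude of extensions.

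For part~\ref{item:main-tb-omega-mu}, I would analyze $\Aut(\widehat\Gamma/\widehat\Gamma_\omega)$: because $-I$ is central, the full group $\mathrm{GL}_2(\Z_2)$ acts by automorphisms on the completed fiber and commutes with the $\Z$-action, and there are additional automorphisms twisting the base. I would verify that the induced action on $H_3$ includes multiplication by arbitrary $2$-adic units (via the determinant) together with a base sign change, and that this action is transitive on the set of $\sim$-classes identified in part~(1), collapsing $\cR_\omega(\Gamma)/{\approx}$ to a single point. The main obstacle is precisely this last verification: one must describe the $\Aut$-action on $H_3(\widehat\Gamma/\widehat\Gamma_\omega)$ finely enough to prove transitivity on the realizable $\sim$-orbits while simultaneously showing that no automorphism descends into the subgroup used to detect $\sim$-classes in part~(1). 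A subsidiary difficulty is justifying the identification $\widehat\Gamma/\widehat\Gamma_\omega \isomto \varprojlim_k \widehat\Gamma/\widehat\Gamma_k$ needed to pin down the completion, which should follow from finite presentability of $\Gamma$ and the pro-nilpotent description of the tower.
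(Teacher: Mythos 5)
Your outline correctly identifies the broad strategy---compute $\widehat\Gamma/\widehat\Gamma_\omega$, compute $H_3$ via the Wang sequence, apply Theorem~\ref{theorem:main-realization} to describe $\cR_\omega(\Gamma)$, pin down the $\sim$-classes, and analyze the automorphism action on $H_3$ for part~(2). That is what the paper does. However, there is a fatal error at the foundational step: you identify $\widehat\Gamma/\widehat\Gamma_\omega$ with the pro-nilpotent completion $\varprojlim_k \Gamma/\Gamma_k = \Z_2^2 \rtimes \Z$ (with $\Z_2$ the $2$-adic integers). This is false, and you flag the ``subsidiary difficulty of justifying the identification $\widehat\Gamma/\widehat\Gamma_\omega \isomto \varprojlim_k \widehat\Gamma/\widehat\Gamma_k$''---the difficulty is irresolvable because the two groups differ. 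The group $\widehat\Gamma/\widehat\Gamma_\omega$ equals $\overline\Gamma$, the \emph{image} of $\widehat\Gamma$ in the completion, not the completion itself (Section~\ref{subsection:closure-in-completion}), and by Levine's Theorem~\ref{theorem:closure-in-completion-metabelian} this image is the Ore/Cohn module localization $S^{-1}(\Z^2)\rtimes\Z$, which here is $\Z_{(2)}^2 \rtimes \Z$ with $\Z_{(2)}$ the integers localized at the prime $2$ (odd denominators), not the $2$-adics.

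This substitution changes everything downstream. With $\Z_{(2)}$, one gets $H_3(\widehat\Gamma/\widehat\Gamma_\omega) = H_2(\Z_{(2)}^2) = \Z_{(2)}$, $\cR_\omega(\Gamma) = \Z_{(2)}^\times$, and the $\sim$-class of $\theta$ is $\{\theta,-\theta\}$, giving $\cR_\omega(\Gamma)/{\sim} \cong \Z_{(2)}^\times/\{\pm1\}$, which is infinite. With $\Z_2$ (2-adics), $H_2(\Z_2^2) = \Z_2 \otimes_\Z \Z_2$ is an enormous uncountable group, and the cap-product realizability criteria of Theorem~\ref{theorem:main-realization} would select a different set of classes from a different group; your spectral-sequence computation would not land on the correct $H_3$. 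For part~(2), the transitivity argument via $\mathrm{GL}_2$ is essentially the right move, but it must run over $\mathrm{GL}_2(\Z_{(2)})$, and the induced $H_3$-action is multiplication by the determinant in $\Z_{(2)}^\times$; the paper uses the explicit automorphisms $\phi_{p/q} = \left[\begin{smallmatrix}p/q & 0\\0 & 1\end{smallmatrix}\right]$ on the fiber for this. There is also a smaller gap in part~(1): you describe only the $\sim$-class of $\theta_\omega(Y)$ as the image of $\cR_{\omega+1}(\Gamma)\to\cR_\omega(\Gamma)$, but the equivalence relation $\sim$ partitions \emph{all} of $\cR_\omega(\Gamma)$, and determining $I_\theta$ for $\theta\ne\theta_\omega(Y)$ requires exhibiting a realizing $3$-manifold for $\theta$ and computing the image of $\cR_{\omega+1}(\pi_1(M_\theta))$; the paper handles this by noting $\theta_\omega(Y,\phi_{p/q}) = p/q$ realizes every class using $Y$ itself with varying reference isomorphism.
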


Theorem~\ref{theorem:main-tb-omega}\ref{item:main-tb-omega-theta} illustrates
that the transfinite $\theta$-invariant of length $\omega$ provides highly
nontrivial information, even when the transfinite Milnor invariant $\bar\mu$ of
the same length vanishes. Examples with nonvanishing transfinite Milnor
invariants will be given in
Section~\ref{subsection:results-nontrivial-milnor} below.  See
Theorem~\ref{theorem:main-tb-milnor}\@.

The tower interpretations in Theorem~\ref{theorem:main-tb-omega} particularly
tell us the following: there are 3-manifold groups $\pi$ such that there is an
isomorphism $\widehat\pi/\widehat\pi_\omega \isomto
\widehat\Gamma/\widehat\Gamma_\omega$ which does not lift to an isomorphism
between $\widehat\pi/\widehat\pi_{\omega+1}$ and
$\widehat\Gamma/\widehat\Gamma_{\omega+1}$ but
$\widehat\pi/\widehat\pi_{\omega+1}$ and
$\widehat\Gamma/\widehat\Gamma_{\omega+1}$ are isomorphic.

Theorem~\ref{theorem:main-tb-omega} is an immediate consequence of
Theorem~\ref{theorem:tb-transfinite-computation} and
Corollary~\ref{corollary:tb-omega+1-tower-isom}, which presents the outcome
of our computation for $\widehat\Gamma/\widehat\Gamma_\omega$.  See
Section~\ref{section:tb-transfinite-computation} for full details.

\subsubsection*{The universal invariant}

We also carry out computation of the invariant $\widehat\theta$ over the
homology localization of torus bundle group.  Among consequences of the
computation, we have the following.

\begin{theoremalpha}
  \label{theorem:main-tb-final}
  For the torus bundle group $\Gamma$, the set
  $\widehat\cR(\Gamma)/\Aut(\widehat\Gamma)$ of realizable values of
  $\widehat\theta$ modulo the automorphism action is infinite.  This detects the
  existence of infinitely many distinct homology cobordism classes of closed
  3-manifolds $M$ with $\pi=\pi_1(M)$, such that $\widehat\pi \cong
  \widehat\Gamma$ and thus $\theta_\kappa(M)$ is defined and vanishes in
  $\Coker\{\cR_{\kappa+1}(\Gamma) \to \cR_\kappa(\Gamma)\}$ for all
  ordinals~$\kappa$.  In particular, for every ordinal $\kappa$, the Milnor
  invariant $\bar\mu_\kappa(M)$ vanishes for these 3-manifolds~$M$.
\end{theoremalpha}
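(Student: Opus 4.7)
The plan is to build on the explicit computation of the homology localization $\widehat\Gamma$ of the torus bundle group carried out in~\cite{Cha-Orr:2011-1}.  Since $\widehat\Gamma$ is transfinitely nilpotent with $\widehat\Gamma_{\omega+1}=1$, the group $\widehat\Gamma$ is known as a concrete extension of $\widehat\Gamma/\widehat\Gamma_\omega$ by an abelian kernel, and one can then compute $H_3(\widehat\Gamma)$ together with its $\Aut(\widehat\Gamma)$-action by a Lyndon--Hochschild--Serre / Hopf-formula analysis based on this extension and its finite length subquotients (which were already treated in the earlier $\theta_\omega$ computation of Theorem~\ref{theorem:main-tb-omega}).

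Next, using Theorem~\ref{theorem:main-realization-final-inv}, I would identify the realizable subset $\widehat\cR(\Gamma)\subset H_3(\widehat\Gamma)$.  The two cap-product conditions translate into explicit algebraic constraints on classes $\theta\in H_3(\widehat\Gamma)$, and from these one exhibits an infinite family $\{\theta_n\}_{n\ge 1}\subset \widehat\cR(\Gamma)$ representing pairwise distinct orbits in $\widehat\cR(\Gamma)/\Aut(\widehat\Gamma)$.  I expect this orbit separation to be the main obstacle: one must produce $\Aut(\widehat\Gamma)$-invariants fine enough to distinguish the $\theta_n$.  A natural strategy is to reduce modulo suitable characteristic quotients of $\widehat\Gamma$ (for example the finite lower central quotients, or the torsion/linking piece inherited from the base circle of the bundle) and extract a numerical invariant from the image of $\theta$ via pairing with a characteristic cohomology class already available from the torus bundle structure.

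Once infinitely many orbits are produced, Theorem~\ref{theorem:main-realization-final-inv} immediately supplies, for each $\theta_n$, a closed 3-manifold $M_n$ equipped with an isomorphism $\widehat{\pi_1(M_n)}\isomto \widehat\Gamma$ and $\widehat\theta(M_n)=\theta_n$.  The homology cobordism invariance of $\widehat\theta$ modulo $\Aut(\widehat\Gamma)$ (Theorem~\ref{theorem:homology-cob-final-inv}) then guarantees that the $M_n$ lie in pairwise distinct homology cobordism classes.

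Finally, the remaining assertions come essentially for free from the observations recorded at the end of Section~\ref{subsection:results-universal-inv}.  For each $M_n$, the mere existence of $\widehat\theta(M_n)$ implies that $\theta_\kappa(M_n)$ arises as the image of a class in $\cR_{\kappa+1}(\Gamma)$ under the canonical map $\cR_{\kappa+1}(\Gamma)\to\cR_\kappa(\Gamma)$; hence $\theta_\kappa(M_n)$ vanishes in $\Coker\{\cR_{\kappa+1}(\Gamma)\to\cR_\kappa(\Gamma)\}$, and by Theorem~\ref{theorem:main-milnor-inv} (together with Theorem~\ref{theorem:main-determination-lcsq-nonlift}), $\bar\mu_\kappa(M_n)$ vanishes for every ordinal~$\kappa$.
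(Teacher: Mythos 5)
Your outline is structurally correct and parallels the paper's, but it leaves the decisive step unresolved. You correctly identify orbit separation in $\widehat\cR(\Gamma)/\Aut(\widehat\Gamma)$ as the main obstacle, but the strategies you sketch for it would not succeed, and no actual argument is given.

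The concrete issue: the paper computes $H_3(\widehat\Gamma)=(\Z_{(2)}/\Z)\times\Z$ and $\widehat\cR(\Gamma)=(\Z_{(2)}/\Z)\times\{\pm1\}$, so the infiniteness must come from the odd-torsion subgroup $tH_3(\widehat\Gamma)=\Z_{(2)}/\Z$. Your first idea---reducing to finite lower central quotients---cannot detect this: by Theorem~\ref{theorem:tb-finite-computation}, $H_3(\Gamma/\Gamma_k)\cong(\Z_{2^{k-1}})^4$ is a $2$-group for all finite $k$, so the maps $H_3(\widehat\Gamma)\to H_3(\widehat\Gamma/\widehat\Gamma_k)$ annihilate the entire $\Z_{(2)}/\Z$ you need to separate. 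Your second idea (a "torsion/linking piece from the base circle") points in the wrong direction: in the Wang sequence the $\Z$ factor of $H_3(\widehat\Gamma)$ is the piece attached to the base $S^1$, whereas the torsion $\Z_{(2)}/\Z$ arises from $H_3(\cA)$, i.e.\ from the fiber-direction central subgroup generated by $[u,v]$. What the paper actually does, and what your outline lacks, is to prove a \emph{constraint on the automorphism action} rather than to construct distinguishing invariants: writing $\widehat\Gamma=\colim\Gamma(\ell)$, every automorphism restricts to a $2$-connected map $\Gamma(\ell)\to\Gamma(r\ell)$ of the explicit form in Lemma~\ref{lemma:tb-aut-general-form}, forcing $[u,v]\mapsto[u,v]^{\delta_f r^2}$ with $\delta_f=\pm1$, and hence (Lemma~\ref{lemma:tb-aut-on-tH_3}) the induced action on $tH_3(\widehat\Gamma)=\Z_{(2)}/\Z$ is multiplication by $\delta_f\in\{\pm1\}$. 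Combined with the similar analysis of the $\Z$ factor (Lemmas~\ref{lemma:tb-aut-eliminate-j}--\ref{lemma:tb-aut-clean-form-action}), the whole action on $H_3(\widehat\Gamma)$ factors through the Klein four-group, so the infinite set $\Z_{(2)}/\Z\times\{\pm1\}$ has infinitely many orbits. Without a substitute for this finite-image argument, your proposal does not establish the theorem. The remainder of your proposal---realization via Theorem~\ref{theorem:main-realization-final-inv}, homology cobordism invariance of $\widehat\theta$ via Theorem~\ref{theorem:homology-cob-final-inv}, and the vanishing of $\bar\mu_\kappa$ from the existence of $\widehat\theta$---is correct and matches the paper.
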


This illustrates that the invariant $\widehat\theta$ is highly nontrivial for
3-manifolds for which all (transfinite) Milnor type invariants vanish.

This may be compared with the case of Levine's link invariant $\theta(L)\in
H_3(\widehat F)$ where $F$ is a free group~\cite{Levine:1989-1}.  (For
$0$-surgery on a link, this invariant is equivalent to J. Y. Le Dimet's link
concordance invariant defined in~\cite{LeDimet:1988-1}.)  The fundamental
question of Levine in~\cite{Levine:1989-1}, which is still left open, asks
whether $\theta(L)$ can be nontrivial.  Due to Levine's realization result
in~\cite{Levine:1989-1}, this is equivalent to whether $H_3(\widehat F)$ is
nontrivial.  Our result shows that in the case of general 3-manifold groups, the
answer is affirmative, even modulo the automorphism action.

Theorem~\ref{theorem:main-tb-final} is a consequence of
Theorems~\ref{theorem:tb-final-realization} and~\ref{theorem:tb-final-aut}.
Indeed, in Section~\ref{section:tb-computation-final}, we provide a complete
computation of $\cR(\widehat\Gamma)$ and the action of $\Aut(\widehat\Gamma)$.

\subsubsection*{Finite length invariants}

The torus bundle example also reveals interesting aspects of finite length case
of the Milnor type invariant.  Our computation of the invariant $\theta_k$ for
finite $k$ proves the following result.

\begin{theoremalpha}
  \label{theorem:main-tb-finite}
  For every finite $k$, the set of realizable classes $\cR_k(\Gamma)$ is finite,
  and thus the set of equivalence classes $\cR_k(\Gamma)/\mathord{\sim}$ is
  finite.  Moreover,
  \[
    2\le \# (\cR_k(\Gamma)/\mathord{\sim}) \le 7\cdot 2^{4(k-2)}+1.
  \]
  Consequently, by
  Corollary~\ref{corollary:main-tower-classifications}\ref{item:tower-classification-lifts},
  the number of equivalence classes of length $k+1$ extensions, by 3-manifold
  groups, of the length $k$ tower $\Gamma/\Gamma_k \to \cdots \to
  \Gamma/\Gamma_1$ (in the sense of
  Section~\ref{subsection:results-transfinite-tower}) is between $2$ and $7\cdot
  2^{4(k-2)}+1$ inclusively.
\end{theoremalpha}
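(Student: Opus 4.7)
The plan is to unwind the problem via a direct computation of the lower central series of $\Gamma$, a Lyndon--Hochschild--Serre computation of $H_3(\Gamma/\Gamma_k)$ with its cap-product structure, and an application of Theorem~\ref{theorem:main-realization} to characterize the realizable classes.

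Since $k$ is finite, Corollary~\ref{corollary:localization-basic-consequences} identifies $\widehat\Gamma/\widehat\Gamma_k$ with the ordinary lower central quotient $\Gamma/\Gamma_k$. From the presentation $\Gamma = \langle a, b, t \mid [a,b],\; tat^{-1}a,\; tbt^{-1}b\rangle$, the commutator identity $[t, a] = a^{-2}$ (and symmetrically for $b$) gives, by induction on $k$, that $\Gamma_k = \langle a^{2^{k-1}}, b^{2^{k-1}}\rangle$ for all $k \geq 2$. Hence
\[
  G_k := \Gamma/\Gamma_k \;\cong\; (\Z/2^{k-1})^2 \rtimes_{-I} \Z.
\]
Although $G_k$ is infinite, its integral homology in degrees $\geq 2$ is a finite abelian group, so $\cR_k(\Gamma)\subseteq H_3(G_k)$ is automatically finite.

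I would then compute $H_3(G_k)$ and the relevant cap-product pairings on $H^*(G_k)$ via the Lyndon--Hochschild--Serre spectral sequence for $1 \to A \to G_k \to \Z \to 1$ with $A = (\Z/2^{k-1})^2$. Since $\mathrm{cd}(\Z) = 1$, the sequence collapses at $E^2$, yielding
\[
  0 \to H_3(A)_\Z \to H_3(G_k) \to H_2(A)^\Z \to 0
\]
together with analogous descriptions of $H^1, H^2, tH^2$ and $tH_1$. The homology $H_*(A)$ is obtained from K\"unneth and decomposed into $(\pm 1)$-isotypical summands under the $-I$-action, making each term explicit up to a controlled extension.

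Applying Theorem~\ref{theorem:main-realization}, a class $\theta \in H_3(G_k)$ lies in $\cR_k(\Gamma)$ iff (i)~$\cap\,\theta\colon tH^2(G_k) \to tH_1(G_k) = (\Z/2)^2$ is an isomorphism and (ii)~the composition $H^1(G_k) \xrightarrow{\cap\,\theta} H_2(G_k) \to H_2(G_k)/\Ker\{H_2(G_k)\to H_2(G_{k-1})\}$ is surjective. A direct enumeration of $\theta$ satisfying these Poincar\'e-duality conditions, followed by coarsening with the partition~${\sim}$ of Definition~\ref{definition:equiv-rel-realzable-classes}, produces the upper bound $7 \cdot 2^{4(k-2)}+1$. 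For the lower bound of $2$, it suffices by Theorem~\ref{theorem:main-determination-lcsq-lift} to exhibit a single closed 3-manifold $M$ with $\widehat{\pi_1(M)}/\widehat{\pi_1(M)}_k \cong \Gamma/\Gamma_k$ for which no isomorphism lifts to the $(k+1)$-th quotient; such an $M$ can be obtained by a carefully chosen Dehn surgery on a nullhomologous curve in $Y$, or by truncating the explicit family of 3-manifolds used in Section~\ref{section:tb-transfinite-computation} to witness the infinitude of $\cR_\omega(\Gamma)/{\sim}$. The chief technical obstacle is the combinatorial enumeration producing the precise constant $7 \cdot 2^{4(k-2)}+1$: while $|H_3(G_k)|$ follows directly from the collapsed spectral sequence, identifying exactly which classes satisfy the realizability conditions and then counting the resulting ${\sim}$-equivalence classes demands careful bookkeeping across the $-I$-isotypical decomposition of $H_*(A)$ and the induced extension structure on~$H_3(G_k)$.
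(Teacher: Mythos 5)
Your overall strategy is sound and parallels the paper's in outline -- reduce to $\Gamma/\Gamma_k \cong (\Z/2^{k-1})^2 \rtimes_{-I}\Z$, compute $H_3$, apply Theorem~\ref{theorem:main-realization}, and then count $\sim$-classes -- but you diverge from the paper and also misjudge the difficulty on two points.

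First, the computational engine: you propose to run the Lyndon--Hochschild--Serre spectral sequence for the HNN extension $1\to A\to G_k\to\Z\to1$. The paper explicitly avoids this in favor of an explicit cellular chain complex for $B(\Z_d)\times B(\Z_d)\times_{\mathrm{HNN}} S^1$, precisely because the realizability criterion of Theorem~\ref{theorem:main-realization} requires computing cap products $tH^2\to tH_1$ and $H^1\to H_2/K_{k-1}$, and these are far easier to extract from an explicit chain-level diagonal approximation than from the multiplicative structure of a (Wang) spectral sequence plus its extension problems. Your route is not wrong, but the ``careful bookkeeping across the $(\pm1)$-isotypical decomposition\dots and the induced extension structure on $H_3$'' you flag as a chief obstacle is exactly the part the cellular model sidesteps. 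In fact you underestimate a further subtlety: because $2$ divides the orders of all the groups in play, the $-I$-action is not diagonalizable over $\Z/2^{k-1}$ and the $(\pm1)$-isotypical decomposition you invoke does not cleanly split the homology of $A$; the paper instead carries the precise generators $\xi_1,\xi_2,\xi_3,\zeta$ through the chain computation.

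Second, your assessment of where the constant $7\cdot 2^{4(k-2)}+1$ comes from is off. You anticipate a ``direct enumeration'' of $\sim$-classes to produce it, but in the paper this bound is a deliberate overcount: one shows $\#\cR_k(\Gamma) = 2^{4k-5}$, $\#\Im\{\cR_{k+1}\to\cR_k\}=2^{4k-8}$, observes that the $\sim$-class of $\theta_k(Y)$ is precisely $\Im\{\cR_{k+1}\to\cR_k\}$, and bounds every remaining class by a singleton, giving $\#(\cR_k/{\sim}) \le (\#\cR_k - \#\Im) + 1 = 7\cdot2^{4(k-2)}+1$. The paper even remarks this estimate is not sharp. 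No precise enumeration of the partition is required or performed.

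Third, the lower bound: you propose to ``exhibit a single closed $3$-manifold $M$\dots by a carefully chosen Dehn surgery,'' or by truncating a family from Section~\ref{section:tb-transfinite-computation}. This is both unnecessary and, in the second suggestion, problematic: there is no ready-made family in that section with distinct finite-length $\theta_k$. No geometric construction is needed at all. Once the realizability criterion of Theorem~\ref{theorem:main-realization} is verified for some $\theta \in H_3(\Gamma/\Gamma_k)$ not in $\Im\{\cR_{k+1}\to\cR_k\}$ (and one checks, via Theorem~\ref{theorem:tb-finite-computation}\ref{item:tb-finite-aut}, that $\Aut(\Gamma/\Gamma_k)$ preserves that image), the existence of a realizing $3$-manifold is \emph{automatic} from the definition of $\cR_k(\Gamma)$; that is exactly the content of the ``if'' direction of Theorem~\ref{theorem:main-realization}, whose proof already invokes Turaev's homology surgery. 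Invoking Theorem~\ref{theorem:main-determination-lcsq-lift} and then hunting for an $M$ by surgery duplicates machinery the realization theorem has already packaged.
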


Theorem~\ref{theorem:main-tb-finite} is a consequence of
Theorem~\ref{theorem:tb-finite-computation} and
Corollary~\ref{corollary:tb-finite-tower-estimate} in
Section~\ref{section:tb-finite-computation}, which provide a more detailed
description of the structure of~$\cR_k(\Gamma)$ and related objects.

\begin{remark}
  Recall that, for $m$-component links with vanishing Milnor invariants of
  length${}<k$, the Milnor invariants of length $k$ are integer-valued, and
  consequently, they are either all trivial, or have infinitely many values.
  (Indeed the Milnor invariants of length $k$ span a free abelian group of known
  finite rank.  See~\cite{Orr:1989-1}.)  However, for the torus bundle case in
  Theorem~\ref{theorem:main-tb-finite}, it turns out that the finite length
  $\theta_k$ invariants live in torsion groups, in fact, finite 2-groups. This
  leads us to questions related to potential applications to link concordance, and
  to the higher order Arf invariant conjecture asked by Conant, Schneiderman and
  Teichner.  In the final section of this paper, we discuss these questions,
  together with other questions arising from our work.
\end{remark}

\subsection{Modified torus bundle examples}
\label{subsection:results-nontrivial-milnor}

We now consider a family of modified torus bundles $\{M_r \mid r$ is an odd
integer$\}$, to show that transfinite Milnor invariants of 3-manifolds are
nontrivial in general.

The modified torus bundles are obtained by changing just one entry in the
monodromy matrix of the previous example~$Y$: $M_r$ has monodromy $\sbmatrix{ -1
& r \\ 0 & -1}$.  That is,
\[
  M_r = S^1\times S^1\times [0,1] / (z^{-1}, z^r w^{-1}, 0) \sim (z,w,1).
\]
We remark that discussions with Sergei Ivanov and Roman Mikhailov led us to
consider this modification.  They studied the Bousfield-Kan completion of
$3$-manifold groups, with $\pi_1(M_r)$ as main
examples~\cite{Ivanov-Mikhailov:2019-1}.

Fix an odd integer $d$, and choose $Y=M_d$ as the basepoint manifold, to which
other manifolds $M_r$ are to be compared.  Let $\Gamma=\pi_1(M_d)$.  We prove
the following.

\begin{theoremalpha}
  \label{theorem:main-tb-milnor}
  For any odd integer $r$, $\bar\mu_k(M_r)\in \cR_k(\Gamma)/{\approx}$ is
  defined and vanishes for every finite~$k$.  Moreover,
  $\widehat{\pi_1(M_r)}/\widehat{\pi_1(M_r)}_\omega \cong
  \widehat\Gamma/\widehat\Gamma_\omega$ and thus $\bar\mu_{\omega}(M_r)$ is
  defined.  But $\bar\mu_{\omega}(M_r) = \bar\mu_{\omega}(M_s)$ in
  $\cR_\omega(\Gamma)/{\approx}$ if and only if the rational number~$|r/s|$ is a
  square.
  
  In particular, the set of realizable values $\cR_\omega(\Gamma)/{\approx}$ of
  the 3-manifold Milnor invariant is infinite, and there are infinitely many
  homology cobordism classes of $3$-manifolds with the same finite length Milnor
  invariants but distinct Milnor invariants of length~$\omega$.
\end{theoremalpha}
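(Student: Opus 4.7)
The plan is to exploit the semidirect product structure $\pi_r := \pi_1(M_r) = \Z^2 \rtimes_{A_r} \Z$ with monodromy $A_r = \sbmatrix{-1 & r\\ 0 & -1}$, and to track how the parameter $r$ propagates through the transfinite lower central quotients and into the $H_3$ invariant. A direct abelianization check gives $H_1(M_r)\cong\Z\oplus\Z/4$ and $H_2(M_r)\cong H_3(M_r)\cong\Z$ for all odd $r$, independent of $r$, so all such manifolds at least share the same classical homological data.

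For the vanishing of $\bar\mu_k(M_r)$ for finite $k$, I would proceed by induction on $k$. Given an isomorphism $\widehat{\pi_r}/\widehat{\pi_r}_k \cong \widehat\Gamma/\widehat\Gamma_k$, which for finite $k$ reduces to $\pi_r/(\pi_r)_k\cong\Gamma/\Gamma_k$ by Section~\ref{subsection:intro-localization}, I would verify that the reductions of $A_r$ and $A_d$ modulo the appropriate power of~$2$ governing $(\pi_r)_{k+1}$ become conjugate in the corresponding finite matrix group via a diagonal matrix of the form $\mathrm{diag}(1,u)$ with $u$ determined by $r/d$ to the right precision. This provides a lift of the level-$k$ isomorphism to level $k+1$ and, by Theorem~\ref{theorem:main-milnor-inv}, forces $\bar\mu_k(M_r)$ to vanish. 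The resulting finite-step lifts cannot, however, be assembled into a single $2$-connected homomorphism $\pi_r\to\pi_d$: if they could, the localized Stallings property from Section~\ref{subsection:intro-localization} would give $\widehat{\pi_r}/\widehat{\pi_r}_\kappa\cong\widehat\Gamma/\widehat\Gamma_\kappa$ for all ordinals~$\kappa$, contradicting the nonvanishing of $\bar\mu_\omega(M_r)$ asserted in the second half of the theorem.

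Passing to level $\omega$, the coherent family of finite-step lifts just constructed, together with the transfinite Stallings--Dwyer theorem (Theorem~\ref{theorem:transfinite-stallings-dwyer}), yields an isomorphism $\widehat{\pi_r}/\widehat{\pi_r}_\omega\isomto\widehat\Gamma/\widehat\Gamma_\omega$, so $\bar\mu_\omega(M_r)$ is defined. Then I would compute $\theta_\omega(M_r)\in H_3(\widehat\Gamma/\widehat\Gamma_\omega)$ via Definition~\ref{definition:theta-kappa} by pushing the fundamental class of $M_r$ through the explicit description of $\widehat\Gamma/\widehat\Gamma_\omega$ already established for the $r=0$ torus bundle in Section~\ref{section:tb-transfinite-computation}, adapted to $\Gamma=\pi_1(M_d)$. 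The $r$-dependence enters via the rationalized fiber $\Z^2\otimes\Q$, on which $A_r$ and $A_s$ are conjugate over $\Q$ by $\mathrm{diag}(1,r/s)$.

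The technical heart of the proof is to show that the equivalence $\bar\mu_\omega(M_r)=\bar\mu_\omega(M_s)$ in $\cR_\omega(\Gamma)/{\approx}$, which by definition uses both the action of $\Aut(\widehat\Gamma/\widehat\Gamma_\omega)$ and the indeterminacy from $\cR_{\omega+1}(\Gamma)\to\cR_\omega(\Gamma)$, holds precisely when $|r/s|\in(\Q^\times)^2$. The square class appears because the induced action of a $\Q^\times$-rescaling on the relevant class in $H_3$ turns out to be by the square of the scalar, reflecting the fact that $H_3$ of a $2$-step nilpotent quotient of the fiber pairs the fiber coordinates symmetrically. The ``if'' direction is realized by exhibiting explicit automorphisms of $\widehat\Gamma/\widehat\Gamma_\omega$ giving the necessary square rescalings; the ``only if'' direction, which I expect to be the most delicate step, requires ruling out further collapses by using the realization criterion Theorem~\ref{theorem:main-realization} to control the image of $\cR_{\omega+1}(\Gamma)$ in $H_3(\widehat\Gamma/\widehat\Gamma_\omega)$. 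Once this is established, infinitely many inequivalent square classes of odd integers (for instance, odd squarefree $r$) produce infinitely many distinct values of $\bar\mu_\omega$, and Theorem~\ref{theorem:results-homology-cob} promotes this to infinitely many distinct homology cobordism classes with all finite-length Milnor invariants equal.
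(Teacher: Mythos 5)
Your overall roadmap matches the paper closely: vanishing of the finite-length invariants, establishing the level-$\omega$ isomorphism so that $\bar\mu_\omega$ is defined, computing $\theta_\omega(M_r)$ by a diagonal rescaling of the rationalized/localized fiber yielding $\theta_\omega(M_r)=r/d\in\Z_{(2)}^\times$, and then identifying $\cR_\omega(\Gamma)/{\approx}$ with square classes. Your route to the level-$\omega$ isomorphism (transfinite Stallings--Dwyer applied to assembled finite-stage lifts) differs from the paper's, which instead computes $\widehat\Gamma/\widehat\Gamma_\omega=\Z_{(2)}^2\rtimes_{h_d}\Z$ directly from Levine's metabelian closure result (Theorem~\ref{theorem:closure-in-completion-metabelian} on $\overline{A\rtimes G}=S^{-1}A\rtimes G$ for residually nilpotent metabelian groups) and never needs to glue finite stages. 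Your route would give the abstract isomorphism, but you need the explicit model $\Z_{(2)}^2\rtimes_{h_d}\Z$ anyway to do the $H_3$ and automorphism computations, so in practice the paper's shortcut is the natural one.

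There is, however, a genuine gap in the key step. You attribute the square-class phenomenon to a general feature of $H_3$, saying the action of a $\Q^\times$-rescaling is ``by the square of the scalar'' because ``$H_3$ of a $2$-step nilpotent quotient of the fiber pairs the fiber coordinates symmetrically.'' That identification is not correct as stated: a diagonal automorphism $\mathrm{diag}(\alpha,\beta)$ of $\Z_{(2)}^2$ acts on $H_2(\Z_{(2)}^2)\cong\Lambda^2\Z_{(2)}^2\cong\Z_{(2)}$ by $\alpha\beta$, which is not a square unless $\alpha=\beta$. The reason only squares occur in this setting is the commutation constraint imposed by the nonzero off-diagonal entry $d$ of the monodromy: any $\phi\in\Aut(\Z_{(2)}^2\rtimes_{h_d}\Z)$ restricts to $\Z_{(2)}^2$, sends $t$ to $t^\epsilon\cdot(\text{fiber element})$, and must satisfy the matrix identity $\phi|_{\Z_{(2)}^2}\cdot h_d=h_d^\epsilon\cdot\phi|_{\Z_{(2)}^2}$. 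Because $d\neq 0$, comparing entries forces $\phi|_{\Z_{(2)}^2}=\sbmatrix{\alpha & \beta\\ 0 & \epsilon\alpha}$, and the induced map on $H_3\cong H_2(\Z_{(2)}^2)$ is then $\epsilon\cdot\det\phi|_{\Z_{(2)}^2}=\alpha^2$. Your symmetry argument, as worded, would apply equally to the unmodified torus bundle with $d=0$, where the automorphism group in fact acts transitively on $\cR_\omega$ (and the paper shows $\bar\mu_\omega$ is always trivial there), so it must be incorrect. Relatedly, you lean on Theorem~\ref{theorem:main-realization} for the ``only if'' direction to control the image of $\cR_{\omega+1}\to\cR_\omega$; that does determine the coarse relation $\sim$ (which is just $\theta\sim\pm\theta$), but the delicate part of ``only if'' is bounding the automorphism action, which is exactly the upper-triangularity constraint above and is not visible to the realization criterion. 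You should make the constraint from $d\neq 0$ the centerpiece of that step.
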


Indeed, we show that $\cR_\omega(\Gamma)/{\approx}$ is equal to
$\Z_{(2)}^\times=\{a/b\in \Q \mid a, b\in 2\Z+1\}$ modulo the (multiplicative)
subgroup $\pm(\Z_{(2)}^\times)^2 =\{\pm \alpha^2 \mid \alpha\in
\Z_{(2)}^\times\}$. So, $\cR_\omega(\Gamma)/{\approx}$ can be naturally
identified with the set of odd positive integers $r$ with no repeated primes in
the factorization. Such an $r$ corresponds to the value of the length $\omega$
Milnor invariant $\bar\mu_\omega(M_{rd})$ of the 3-manifold~$M_{rd}$. See
Theorem~\ref{theorem:modified-tb-comp}.  So, the modified torus bundles
explicitly realize nontrivial values of the transfinite Milnor invariant
$\bar\mu_\omega$ over the group~$\Gamma=\pi_1(M_d)$.

The following is a consequence of Theorem~\ref{theorem:main-tb-milnor} combined
with
Corollary~\ref{corollary:main-tower-classifications}\ref{item:tower-classification-nonlifts}:
there are infinitely many 3-manifold groups $\pi$, such that the lower central
series quotient towers $\{\widehat\pi/\widehat\pi_\kappa\}_{\kappa\le\omega}$ of
length $\omega$ are mutually $\omega$-equivalent (in the sense of
Section~\ref{subsection:results-transfinite-tower}), but the length
$\omega+1$ towers $\{\widehat\pi/\widehat\pi_\kappa\}_{\kappa\le\omega+1}$ are
not pairwise $(\omega+1)$-equivalent.

\section{Homology localization of groups}
\label{section:localization}

In this section we review basic facts on the homology localization of groups,
and prove some results which will be useful in later sections.  All results in
this section were known to J. P. Levine.  We include these results here for
completeness, since group localization and the results herein plays an essential
role in this paper.

\subsection{Preliminaries}
\label{subsection:localization-prelim}

We begin with the definition of the homology localization which we use.
Recall that a group homomorphism $\pi\to G$ is \emph{2-connected} if it induces
an isomorphism on $H_1(-;\Z)$ and an epimorphism on $H_2(-;\Z)$.  Let $\Omega$
be the collection of 2-connected homomorphisms between finitely presented
groups. A group $\Gamma$ is \emph{local with respect to $\Omega$}, or simply
\emph{local} if, for every $\alpha\colon A \to B$ in $\Omega$ and every
homomorphism $f\colon A\to \Gamma$, there is a unique homomorphism $g\colon B\to
\Gamma$ satisfying $g\circ\alpha=f$.
\[
  \begin{tikzcd}[sep=large]
    A \ar[r,"\alpha"] \ar[d,"f" left] & B \ar[ld,dashed,"g" below right]
    \\
    \Gamma
  \end{tikzcd}
\]
A \emph{localization with respect to $\Omega$} is defined to be a pair
$(E,\iota)$ of a functor $E$ from the category of groups to the full subcategory
of local groups and a natural transformation $\iota=\{\iota_G\colon G \to
E(G)\}$ satisfying the following: for each homomorphism $f\colon G\to \Gamma$
with $\Gamma$ local, there is a unique homomorphism $g\colon E(G) \to \Gamma$
such that $g\circ\iota_G = f$.
\[
  \begin{tikzcd}[sep=large]
    G \ar[r,"\iota_G"] \ar[d,"f" left] & E(G) \ar[ld,dashed,"g" below right]
    \\
    \Gamma
  \end{tikzcd}
\]
In this paper, we denote $E(G)$ by~$\widehat G$.

It is a straightforward exercise that a localization is unique if it exists. The
existence of a localization with respect to $\Omega$ is due to Vogel and Levine.
Indeed, in his unpublished manuscript~\cite{Vogel:1978-1}, Vogel developed a
general theory of localization of spaces with respect to homology, and its group
analogue is the localization we discuss.  In~\cite{Levine:1989-2}, Levine
developed an alternative approach using certain systems of equations over a
group to define a notion of ``algebraic closure.''  He showed that it exists and
is equal to the localization with respect to the subset of our $\Omega$
consisting of $\alpha\colon A\to B$ in $\Omega$ such that $\alpha(A)$ normally
generates~$B$. Although the modified closure with respect to our $\Omega$ (that
is, omitting the normal closure condition) was known to Levine, this theory
first appeared with proof in a paper~\cite{Cha:2004-1}.  As a useful overview on
homology localization for geometric topologists, the readers are referred
to~\cite[Section~2]{Cha-Orr:2009-1}.

The following properties of the homology localization $\widehat\pi$ are essential
for our purpose. 

\begin{theorem}[\cite{Levine:1989-1,Cha:2004-1}]
  \phantomsection\label{theorem:localization-basic-facts}
  \leavevmode\Nopagebreak
  \begin{enumerate}
    \item\label{item:localization-2-connected-map}
    If $\pi \to G$ is a 2-connected homomorphism between finitely presented
    groups, then it induces an isomorphism $\widehat\pi \isomto \widehat G$.
    \item\label{item:localization-colimit}
    For a finitely presented group $G$, there is a sequence
    \[
      G=P(1)\rightarrow P(2) \rightarrow \cdots \rightarrow P(k) \rightarrow \cdots
    \]
    of 2-connected homomorphisms of finitely presented groups $P(k)$ such that
    the localization $G\to \widehat G$ is equal to the colimit homomorphism $G
    \to \colim_k P(k)$. Consequently, $G\to \widehat G$ is 2-connected.
    \end{enumerate}
\end{theorem}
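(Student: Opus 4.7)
The plan is to handle the two parts separately: the first is a formal consequence of the universal properties of localization, while the second requires an explicit inductive construction of $\widehat G$ as a filtered colimit of finitely presented groups.

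For~\ref{item:localization-2-connected-map}, since $f\colon\pi\to G$ lies in $\Omega$ and $\widehat\pi$ is local, the map $\iota_\pi\colon\pi\to\widehat\pi$ extends uniquely along $f$ to a homomorphism $g\colon G\to\widehat\pi$; the universal property of $\iota_G$ then produces a unique $h\colon\widehat G\to\widehat\pi$ with $h\iota_G=g$. In the opposite direction, functoriality of localization gives $\widehat f\colon\widehat\pi\to\widehat G$ with $\widehat f\iota_\pi=\iota_G f$. To see $h\widehat f=\id_{\widehat\pi}$, compute $h\widehat f\iota_\pi=h\iota_G f=gf=\iota_\pi$ and invoke the universal property of $\iota_\pi$ into the local group $\widehat\pi$. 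Symmetrically, $(\widehat f g)f=\widehat f\iota_\pi=\iota_G f$, so uniqueness in the local property of $\widehat G$ along $f\in\Omega$ gives $\widehat f g=\iota_G$; hence $\widehat f h\iota_G=\widehat f g=\iota_G$, which forces $\widehat f h=\id_{\widehat G}$.

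For~\ref{item:localization-colimit}, I would inductively construct the sequence starting from $P(1)=G$. At stage $k$, I select a triple $(\alpha,\phi)$ consisting of a $2$-connected $\alpha\colon A\to B$ between finitely presented groups and a map $\phi\colon A\to P(k)$, and declare $P(k+1)$ to be the pushout $P(k)\cup_A B$. This pushout is finitely presented, and the structure map $P(k)\to P(k+1)$ is $2$-connected (pushouts of $2$-connected maps along arbitrary maps remain $2$-connected, as one sees by Mayer--Vietoris on the presentation $2$-complexes). A carefully interleaved diagonal enumeration ensures that every such triple involving a finitely generated source $A$ is resolved at some finite stage, and, by adjoining further $2$-connected relations, forces any two candidate lifts of the same problem to be identified at a later stage.

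The resulting colimit $P=\colim_k P(k)$ is therefore local: any $\phi\colon A\to P$ with $A$ finitely generated factors through some $P(k)$, so the enumeration guarantees both existence and uniqueness of the extension to $B$. The universal property of $\iota_G$ supplies $\widehat G\to P$, while the local property of $\widehat G$ yields compatible maps $P(k)\to\widehat G$ assembling into $P\to\widehat G$; the composites are identities by the standard uniqueness arguments, so $P\cong\widehat G$. The final assertion that $G\to\widehat G$ is $2$-connected then follows because each $P(k)\to P(k+1)$ is $2$-connected and both $H_1(-;\Z)$ and $H_2(-;\Z)$ commute with filtered colimits of groups.

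The main obstacle is the interleaved enumeration in~\ref{item:localization-colimit}: one must schedule the countably many extension problems so that every problem arising at the colimit level is solved at some finite stage, \emph{and} so that uniqueness of lifts is enforced by additional $2$-connected data, all while keeping each $P(k)$ finitely presented. Handling uniqueness is the subtlest point, since distinct candidate lifts must be equated by further $2$-connected relations encoded into subsequent stages of the construction.
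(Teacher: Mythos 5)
Your proof of part~\ref{item:localization-2-connected-map} is exactly the standard universal-property diagram chase; the paper does not reprove this either but cites the same argument in \cite[Proposition 6.4]{Cha:2004-1} and \cite[Proposition 5]{Levine:1989-1}.

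For part~\ref{item:localization-colimit}, the paper again gives no proof but points to \cite[Proposition 6.6]{Cha:2004-1} and \cite[Proposition 6]{Levine:1989-1}, where the tower is built by adjoining formal solutions of systems of acyclic equations at each finite stage. Your construction is genuinely different in flavor: you push out along \emph{arbitrary} maps $\alpha\in\Omega$ rather than the specific equation-adjoining pushouts, and verify 2-connectedness of the stage maps via a Mayer--Vietoris argument applied to the homotopy pushout of aspherical models together with Hopf's theorem (your phrase ``presentation 2-complexes'' is a bit loose here, since you really need to compose with the projection of the homotopy pushout to $B(\pi_1)$ to land in group homology and get the surjection on $H_2$, but the computation you indicate is correct). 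Both constructions produce the same local group, but yours is closer in spirit to Vogel's original fibrewise localization and is arguably cleaner as a categorical existence argument; the equation-based version pays off later in the paper, where acyclic systems of equations are used directly (e.g.\ in the proof of Lemma~\ref{lemma:H1-surj-localization-surj}).

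The one point you flag as an obstacle --- enforcing uniqueness of lifts --- is in fact already contained in your existence enumeration, and you should make this explicit rather than leaving it as an open worry. Given $g_1,g_2\colon B\to P(k)$ with $g_1\alpha = g_2\alpha$, form the double $D = B *_A B$ and consider the fold map $\nabla\colon D\to B$. Since one of the structure maps $B\to D$ is a section of $\nabla$, the fold map is split surjective on all homology, and on $H_1$ it is an isomorphism because $H_1(D)\cong H_1(B)\oplus_{H_1(A)}H_1(B)\cong H_1(B)$ (using that $\alpha_*\colon H_1(A)\to H_1(B)$ is iso); hence $\nabla\in\Omega$. The pair $(g_1,g_2)$ induces $D\to P(k)$, and pushing out along $\nabla$ identifies $g_1$ with $g_2$ in the resulting finitely presented group. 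Thus there are no ``further 2-connected relations'' to invent: once every triple $(\beta\in\Omega,\ \psi\colon \operatorname{src}(\beta)\to P(m),\ m)$ appears in the diagonal enumeration, both existence and uniqueness of lifts in the colimit follow, because the $\nabla$'s are already among the $\beta$'s.
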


Theorem~\ref{theorem:localization-basic-facts}\ref{item:localization-2-connected-map}
is obtained by a routine standard argument using the universal
properties given in the definitions.  We omit the details.  For instance,
see~\cite[Proposition~6.4]{Cha:2004-1}, \cite[Proposition~5]{Levine:1989-1}. The
proof of
Theorem~\ref{theorem:localization-basic-facts}\ref{item:localization-colimit}
is not straightforward and uses the actual construction of the localization.
See~\cite[Proposition~6.6]{Cha:2004-1}, \cite[Proposition~6]{Levine:1989-1}.

\begin{corollary}
  \phantomsection\label{corollary:localization-basic-consequences}
  \leavevmode\Nopagebreak
  \begin{enumerate}
    \item\label{item:local-lcsq-ordinary-lcsq}
    For a finitely presented group $G$, the homomorphism $G\to \widehat G$
    induces an isomorphism $G/G_k \to \widehat G/\widehat G_k$ for
    each~$k<\infty$.
    \item\label{item:homology-equiv-lcsq}
    A homology equivalence $X \to Y$ between finite CW-complexes $X$ and $Y$
    with $\pi=\pi_1(X)$ and $G=\pi_1(Y)$ gives rise to isomorphisms $\widehat\pi
    \isomto \widehat G$ and $\widehat \pi/\widehat \pi_\kappa \isomto \widehat
    G/\widehat G_\kappa$ for each ordinal~$\kappa$.
  \end{enumerate}
\end{corollary}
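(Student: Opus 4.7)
The plan is to derive both statements directly from Theorem~\ref{theorem:localization-basic-facts} and the classical Stallings theorem on lower central series quotients. The two parts are essentially independent: (1) is a compatibility result between the localization construction and the finite lower central series, and (2) is a direct consequence of (1) combined with the Hopf exact sequence translating a homological hypothesis on spaces into a 2-connectedness hypothesis on groups.

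For part~\ref{item:local-lcsq-ordinary-lcsq}, I would start by invoking Theorem~\ref{theorem:localization-basic-facts}\ref{item:localization-colimit} to write $G \to \widehat G$ as the sequential colimit of 2-connected homomorphisms
\[
  G = P(1) \to P(2) \to \cdots
\]
between finitely presented groups.  Since a composition of 2-connected maps is 2-connected, each $G \to P(j)$ is 2-connected, so by the Stallings theorem already cited in Section~\ref{subsection:intro-localization} each induced map $G/G_k \to P(j)/P(j)_k$ is an isomorphism for $k<\infty$.  The remaining step is to identify $\widehat G/\widehat G_k$ with the sequential colimit $\colim_j P(j)/P(j)_k$; this reduces to verifying that the lower central series functor $(-)_k$ commutes with sequential (filtered) colimits of groups, which holds because any $k$-fold commutator in a filtered colimit can be lifted to a $k$-fold commutator in some stage $P(j)$.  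Combining these yields $G/G_k \isomto \widehat G/\widehat G_k$.

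For part~\ref{item:homology-equiv-lcsq}, I would first verify that the induced map $\pi \to G$ on fundamental groups is 2-connected.  The isomorphism $H_1(\pi) \isomto H_1(G)$ is immediate from $H_1(\pi) \cong H_1(X)$ and $H_1(G) \cong H_1(Y)$.  For the surjection $H_2(\pi) \twoheadrightarrow H_2(G)$, I would apply the Hopf exact sequence $\pi_2(Z) \to H_2(Z) \to H_2(\pi_1(Z)) \to 0$ to both $X$ and $Y$, which together with the given isomorphism $H_2(X) \isomto H_2(Y)$ forces the bottom arrow in the natural square to be surjective.  Since $X$ and $Y$ are finite CW-complexes, $\pi$ and $G$ are finitely presented, so Theorem~\ref{theorem:localization-basic-facts}\ref{item:localization-2-connected-map} gives $\widehat\pi \isomto \widehat G$.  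Functoriality of the lower central series (including at limit ordinals, where both sides are defined by the same intersection) then produces the compatible isomorphisms $\widehat\pi/\widehat\pi_\kappa \isomto \widehat G/\widehat G_\kappa$ for every ordinal $\kappa$.

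The only nontrivial step is the commutation of lower central series with sequential colimits in part~\ref{item:local-lcsq-ordinary-lcsq}; I expect this to be the main point requiring care, though it is a routine verification once one writes out what an element of $(\colim_j P(j))_k$ looks like.  Everything else is a direct application of Theorem~\ref{theorem:localization-basic-facts}, Stallings' theorem, and the Hopf exact sequence.
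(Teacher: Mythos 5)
Both parts of your proposal are correct, and part~\ref{item:homology-equiv-lcsq} is essentially the paper's own argument: your invocation of the Hopf exact sequence to get $H_2(Z)\twoheadrightarrow H_2(\pi_1(Z))$ is just a rephrasing of the paper's observation that $K(\pi_1(Z),1)$ is built from $Z$ by attaching cells of dimension ${}\ge 3$, and the remaining steps (finite CW implies finitely presented, invoke Theorem~\ref{theorem:localization-basic-facts}\ref{item:localization-2-connected-map}, functoriality of the transfinite lower central series) match the paper exactly. For part~\ref{item:local-lcsq-ordinary-lcsq}, however, you take a detour. You apply Stallings at each approximation stage $G\to P(j)$, and then pass to the colimit, which requires you to verify that the lower central series functors $(-)_k$ and the quotients $G/G_k$ commute with sequential colimits of groups. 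That verification is routine and your argument is sound, but it is avoidable: Theorem~\ref{theorem:localization-basic-facts}\ref{item:localization-colimit} already records as a consequence that $G\to \widehat G$ is \emph{itself} 2-connected (it is a sequential colimit of 2-connected maps, and both $H_1$ and $H_2$ commute with filtered colimits), and Stallings' theorem has no finiteness hypothesis on the target, so it applies directly to $G\to \widehat G$ and immediately yields $G/G_k\isomto \widehat G/\widehat G_k$. The paper takes this one-step route. What your longer route buys is nothing extra here, but the colimit-commutation fact you isolate is a useful lemma in its own right; the cost is that you re-derive a special case of why $H_1$ and $H_2$ commute with filtered colimits inside the group-theoretic setting rather than citing it once at the level of the map $G\to \widehat G$.
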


\begin{proof}
  (1)~From
  Theorem~\ref{theorem:localization-basic-facts}\ref{item:localization-colimit},
  it follows that $G\to \widehat G$ is 2-connected.   By Stallings'
  Theorem~\cite{Stallings:1965-1}, $G\to \widehat G$ induces an isomorphism
  $G/G_k \to \widehat G/\widehat G_k$. 
  
  (2) The induced homomorphism $\pi \to G$ is 2-connected, since $K(\pi,1)$ and
  $K(G,1)$ are obtained by attaching cells of dimension${}\ge 3$ to $X$ and~$Y$.
  Since $X$ and $Y$ are finite, it follows that $\widehat\pi \cong \widehat G$
  by
  Theorem~\ref{theorem:localization-basic-facts}\ref{item:localization-2-connected-map}.
  Therefore $\widehat\pi/\widehat\pi_\kappa \cong \widehat G/\widehat G_\kappa$
  for every~$\kappa$.
\end{proof}

\subsection{Acyclic equations and induced epimorphisms on localizations}
\label{subsection:acyclic-equations-surjectivity}

J. P. Levine first proved the following result in~\cite{Levine:1994-1}, in much
greater generality than stated here.  His proof involved group localization
determined by closure with respect to contractible equations, not acyclic
equations. For this reason, we include a brief proof here.  However, this Lemma
was certainly known to Levine.

\begin{lemma}
  \label{lemma:H1-surj-localization-surj}
  If a group homomorphism $\pi\to G$ induces an epimorphism $H_1(\pi)\to H_1(G)$
  and if $G$ is finitely generated, then it induces an epimorphism $\widehat\pi
  \to \widehat G$.
\end{lemma}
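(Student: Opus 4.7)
My goal is to show $\widehat{f}(\widehat{\pi}) = \widehat{G}$. The plan consists of a reduction to the finitely presented case, followed by an inductive argument using the colimit construction of $\widehat{G}$ from Theorem~\ref{theorem:localization-basic-facts}\ref{item:localization-colimit}.

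For the reduction, since $f_{*}\colon H_{1}(\pi) \to H_{1}(G)$ is surjective and $G$ is finitely generated, I would choose finitely many $y_{1}, \ldots, y_{n} \in \pi$ whose images generate $H_{1}(G)$, then replace $\pi$ by the subgroup $\pi' := \langle y_{1}, \ldots, y_{n}\rangle$; this only shrinks $\widehat{f}(\widehat{\pi})$, so it suffices to prove the lemma with $\pi'$ in place of $\pi$. Further replacing $\pi'$ by a finitely generated free group mapping onto it (which still preserves $H_{1}$-surjectivity) reduces to the case where $\pi$ is finitely presented.

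For the main argument, I would write $\widehat{G} = \colim_{k} P(k)$, where $G = P(1) \to P(2) \to \cdots$ are 2-connected homomorphisms between finitely presented groups, and prove inductively that $P(k) \subset \widehat{f}(\widehat{\pi})$ for every~$k$. For the base case $k = 1$, given $g \in G$, write $g = f(y)\,c$ with $y \in \pi$ and $c \in [G,G]$; iteratively applying the same decomposition to the commutator factors of~$c$ via the $H_{1}$-surjectivity produces a nested-commutator expression for~$g$ whose ``leaves'' lie in $\widehat{f}(\widehat{\pi})$ and whose ``tail'' lies in arbitrarily deep terms of the lower central series of $\widehat{G}$. This nested expression organizes into an acyclic system of equations over~$\pi$ whose unique solution in $\widehat{\pi}$---guaranteed by the local structure of~$\widehat{\pi}$---provides a preimage of~$g$ under~$\widehat{f}$. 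The inductive step is analogous: each new element of $P(k+1)$ arises as the unique solution to a 2-connected acyclic system of equations over $P(k) \subset \widehat{f}(\widehat{\pi})$, and the inductive hypothesis lets me translate this system to one over $\widehat{\pi}$ with a unique solution mapping to the specified element.

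The main obstacle is precisely this translation of acyclic systems from $\widehat{G}$ to $\widehat{\pi}$: one must systematically rewrite the $P(k)$-elements appearing in the equations as $\widehat{f}(\widehat{\pi})$-elements times deeper commutators, applying the $H_{1}$-surjection iteratively, while verifying that the resulting system still satisfies the 2-connectedness condition demanded by Vogel--Levine localization. This step relies essentially on the explicit colimit construction in Theorem~\ref{theorem:localization-basic-facts}\ref{item:localization-colimit} and on the characterization of local groups via unique solutions to acyclic systems of equations.
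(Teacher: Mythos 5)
Your proposal is in the same conceptual family as the paper's proof (both use the equation-based characterization of Vogel--Levine localization), but the key mechanism you describe diverges from what actually works, and the divergence is where the gap lies.

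The paper's argument fixes a finite generating set $\{a_1,\ldots,a_k\}$ of $G$ once and for all, writes $a_j = f(b_j)\,c_j$ with $c_j \in [G,G]$, and then---crucially---expresses each $c_j$ as a commutator word $u_j(a_1,\ldots,a_k)$ \emph{in the original generators themselves}. This yields a \emph{finite, self-referential} acyclic system $S_0 = \{y_j = b_j\cdot u_j(y_1,\ldots,y_k)\}_{j=1}^k$ over $\pi$, whose image under $f$ has $\{a_j\}$ as a solution. For an arbitrary $g\in\widehat G$, one then takes the acyclic system $S$ over $G$ defining $g$ (Theorem~\ref{theorem:closure-vs-localization}\ref{item:localization-consists-of-solutions}) and \emph{substitutes} each occurrence of $a_j$ in $S$ by $b_j u_j$; the combined system $S' = S_0\cup S[\text{substituted}]$ is acyclic over $\pi$, has a solution in $\widehat\pi$, and the uniqueness clause pins down the image of that solution as $g$. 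No induction over the colimit tower is needed, and no finiteness hypothesis on $\pi$ is used.

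By contrast, your base case "iteratively applying the same decomposition to the commutator factors of $c$" describes an unbounded unwinding rather than a closed finite system: each round of decomposition introduces fresh commutators that demand further decomposition, and the "tail in arbitrarily deep lower central terms" does not converge to anything inside the Vogel--Levine localization---that limit naturally belongs to the nilpotent completion $\widetilde G = \varprojlim G/G_k$, and $\widehat G$ is not a quotient of $\widetilde G$ (nor vice versa). To make your sketch an actual acyclic system you would have to stop the unwinding after one step and write the commutator factors \emph{back in terms of the generators being defined}, which is exactly the paper's move; without that self-reference, your process has no finite presentation. The induction over $P(k)$, and the preliminary reduction to finitely presented $\pi$, are both correct but superfluous: Theorem~\ref{theorem:closure-vs-localization}\ref{item:localization-consists-of-solutions} already handles an arbitrary element of $\widehat G$ in one shot, and the paper's argument never needs $\pi$ to be finitely generated, let alone finitely presented.
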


This proof of Lemma~\ref{lemma:H1-surj-localization-surj} depends on an
equation-based approach to the localization. In what follows, we give a quick
review of definitions and results we need.  Fix a group~$G$. Following the idea
of Levine~\cite{Levine:1989-1} (see also
Farjoun-Orr-Shelah~\cite{Farjoun-Orr-Shelah:1989}) consider a system
$S=\{x_i=w_i\}$ of equations of the form
\[
  x_i = w_i(x_1,\ldots,x_n), \quad i=1,\ldots,n
\]
where each $x_i$ is a formal variable and $w_i=w_i(x_1,\ldots,x_n)$ is an
element of the free product $G*F$ of $G$ and the free group $F=F\langle
x_1,\ldots x_n\rangle$ on $x_1,\ldots,x_n$.  A \emph{solution} $\{g_i\}_{i=1}^n$
to the system $S$ is defined to be an ordered tuple of $n$ elements $g_i\in G$
such that $g_i = w_i(g_1,\ldots,g_n)$ for $i=1,\ldots,n$. A group homomorphism
$\phi\colon G\to \Gamma$ induces $\phi*\id \colon G*F \to \Gamma*F$, which sends
a system of equations $S$ over $G$ to a system
$\phi(S):=\{x_i=(\phi*\id)(w_i)\}$ over~$\Gamma$. If $\{g_i\}$ is a
solution to $S$, then $\{\phi(g_i)\}$ is a solution to~$\phi(S)$.

Following~\cite[Definition~4.1]{Cha:2004-1}, we say an equation
$x_i=w_i(x_1,\ldots,x_n)$ is \emph{null-homologous}, or \emph{acyclic}, if $w_i$
lies in the kernel of the projection 
\[
  G*F \to F \to H_1(F)=F/[F,F].
\]
A group $G$ is \emph{$\Z$-closed} if every system of acyclic equations over $G$
has a unique solution in~$G$.  We remark that these definitions are variations
of Levine's notion of contractible equations and algebraic closure
in~\cite{Levine:1989-1}.

\begin{theorem}[\cite{Levine:1989-1,Cha:2004-1}] 
  \phantomsection\label{theorem:closure-vs-localization}
  \leavevmode\Nopagebreak
  \begin{enumerate}
    \item\label{item:local-iff-closed}
    A group $G$ is local if and only if $G$ is $\Z$-closed.  In particular,
    every system of acyclic equations over $\widehat G$ has a unique solution
    in~$\widehat G$.
    \item\label{item:localization-consists-of-solutions}
    Every element in $\widehat G$ is a solution of a system of acyclic equations
    over~$G$.  More precisely, for each $g\in \widehat G$, there is a system of
    acyclic equations $S=\{x_i = w_i\}_{i=1}^n$ over $G$ such that the system
    $\iota_G(S)$ over $\widehat G$ has a solution $\{g_i\}_{i=1}^n$ with
    $g_1=g$.
  \end{enumerate}    
\end{theorem}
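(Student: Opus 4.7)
The plan is to set up a dictionary between systems of acyclic equations over $G$ and certain 2-connected homomorphisms of finitely presented groups, and deduce both parts from it. Given a system $S = \{x_i = w_i\}_{i=1}^n$ over $G$, let $y_1,\ldots,y_m$ be the $G$-letters appearing in the $w_i$, set $A = F\langle y_1,\ldots,y_m\rangle$ and $F = F\langle x_1,\ldots,x_n\rangle$, and form $B_S = (A * F)/\langle\!\langle x_i w_i^{-1}\rangle\!\rangle$ by interpreting $w_i \in A * F$ via the letter substitution; the canonical map $f\colon A \to G$ sends $y_j$ to its $G$-value. I would first verify that $A \to B_S$ is 2-connected between finitely presented groups: acyclicity forces each relator $x_i w_i^{-1}$ to abelianize to $x_i$ modulo $H_1(A)$, killing the new generators in homology and giving $H_1(A) \isomto H_1(B_S)$; on the presentation 2-complex, the cellular boundary matrix of the new 2-cells restricts to the identity on the new 1-cells, so attaching them does not create new $H_2$ classes, which suffices for $H_2$-surjectivity of the groups. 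Moreover, homomorphisms $B_S \to \Gamma$ extending a given $A \to \Gamma$ correspond bijectively to solutions $\{g_i\}$ of $S$ in $\Gamma$, via $x_i \mapsto g_i$.

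For part (1), the dictionary essentially identifies ``local'' with ``$\Z$-closed.'' If $G$ is local, then the unique extension of $f\colon A \to G$ to $B_S \to G$ produces the unique solution of $S$ in $G$, so $G$ is $\Z$-closed. Conversely, assume $G$ is $\Z$-closed and let $\alpha\colon A' \to B'$ be 2-connected of finitely presented groups with a map $f'\colon A' \to G$; I would rewrite the presentation of $B'$ over $A'$ in the form $B' = A' * F/\langle\!\langle x_i w_i^{-1}\rangle\!\rangle$ with each $w_i$ acyclic, push it forward under $f'$ to obtain an acyclic system over $G$, and use its unique solution to construct the unique extension of $f'$ to $B'$. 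The ``in particular'' statement about $\widehat G$ is then immediate, since $\widehat G$ is local by the definition of localization.

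For part (2), I would apply Theorem~\ref{theorem:localization-basic-facts}\ref{item:localization-colimit} to write $\widehat G = \colim_k P(k)$ with $P(1) = G$ and each $P(k) \to P(k+1)$ a 2-connected homomorphism of finitely presented groups. By the reverse dictionary applied to $P(k)$, each $P(k+1)$ is obtained from $P(k)$ by adjoining solutions of an acyclic system $S_k$ over $P(k)$. I would induct on $k$: assuming every element of $P(k) \subset \widehat G$ is a coordinate of a solution (in $\widehat G$) of some acyclic system over $G$, merging $S_k$ with the inductive systems for the $P(k)$-coefficients that appear in $S_k$ produces a single acyclic system over $G$ whose solution in $\widehat G$ realizes any given element of $P(k+1)$, with that element reparameterized to the first coordinate if desired.

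The main technical obstacle throughout is the reverse direction of the dictionary needed in both (1) and (2): showing that every 2-connected $\alpha\colon A \to B$ of finitely presented groups admits a presentation of the form $B = A * F/\langle\!\langle x_i w_i^{-1}\rangle\!\rangle$ with the $w_i$ acyclic. I expect this step to consume most of the work, requiring a careful Tietze-style argument that exploits the fact that $H_1$-isomorphism together with $H_2$-surjectivity is precisely what permits balancing new generators against relators in this normal form.
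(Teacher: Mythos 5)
Your forward dictionary---building a $2$-connected inclusion $A\hookrightarrow B_S$ of finitely presented groups from an acyclic system $S$, and matching extensions $B_S\to\Gamma$ with solutions of the pushed-forward system---is correct and cleanly argued, and it does establish ``local $\Rightarrow$ $\Z$-closed.'' But the weak point you flag is a genuine gap, and I think it is worse than ``a careful Tietze-style argument.'' The reverse-dictionary claim---that every $2$-connected $\alpha\colon A\to B$ of finitely presented groups has a presentation $B=(A*F\langle x_1,\dots,x_n\rangle)/\langle\!\langle x_iw_i^{-1}\rangle\!\rangle$ with the $w_i$ acyclic---implicitly asserts a \emph{balanced} relative presentation of $B$ over $A$. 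What $2$-connectedness gives you is $H_1(B,A)=H_2(B,A)=0$, hence by the Euler-characteristic bound that the relative deficiency is $\le 0$; achieving equality is an efficiency question and does not follow from vanishing of \emph{integral} relative $H_1$ and $H_2$. Concretely, in a presentation with $n$ new generators and $m$ relators, $H_1(B,A)=0$ forces the abelianization matrix $\Z^m\to\Z^n$ to be onto (so $m\ge n$), but $H_2(B,A)=0$ only tells you that the kernel is hit by $3$-cells in $\Z$-homology, not that a redundant relator $r_m$ actually lies in $\langle\!\langle r_1,\dots,r_{m-1}\rangle\!\rangle$ and can be deleted by a Tietze move. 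The cancellation you would need is a $\pi_2$-level statement, not an $H_2(-;\Z)$-level one; this is precisely the sort of obstruction that makes efficiency and Andrews--Curtis questions hard, and I do not see how to close it by abstract Tietze manipulations.

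The published proofs (Cha, Theorems 5.2 and 6.1, Corollary 6.3, following Levine, Propositions 3 and 6) do not derive a normal form for arbitrary $2$-connected maps. They instead construct the localization directly by transfinitely adjoining solutions of acyclic systems, verify that the result is simultaneously local and $\Z$-closed, and then match it against the universal property. This also suggests a cheaper route for your part~(2): the tower $\{P(k)\}$ in Theorem~\ref{theorem:localization-basic-facts}\ref{item:localization-colimit} is, in the cited construction, built precisely by adjoining equation solutions, so an element of $\widehat G$ can be traced back to a (merged) acyclic system over $G$ by unwinding the construction itself. Your merging induction across the tower is sound; it should not rest on the unproved reverse dictionary. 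Likewise for part~(1), the direction ``$\Z$-closed $\Rightarrow$ local'' is more safely obtained by comparing $G$ with $\widehat G$ using~(2) and uniqueness of solutions, rather than by trying to present an arbitrary $B$ in balanced form over $A$.
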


For the proof of
Theorem~\ref{theorem:closure-vs-localization}\ref{item:local-iff-closed},
see~\cite[Theorems~5.2 and~6.1, Corollary~6.3]{Cha:2004-1}.  For the proof of
Theorem~\ref{theorem:closure-vs-localization}\ref{item:localization-consists-of-solutions},
see~\cite[Theorem~6.1, Proposition~6.6]{Cha:2004-1}.  We remark that these
proofs follow Levine's approach in~\cite[Propositions~3 and~6]{Levine:1989-1}.

\begin{proof}[Proof of Lemma~\ref{lemma:H1-surj-localization-surj}]
  
  Suppose $f\colon \pi\to G$ induces an epimorphism $f_*\colon H_1(\pi)\to
  H_1(G)$.  Fix a finite set $\{a_1,\ldots,a_k\}$ which generates~$G$.  We begin
  by writing equations over $G$ which have $\{a_j\}$ as a solution.  Let $F=F\langle
  y_1,\ldots,y_k\rangle$.  For each $a_j$, since $f_*$ is surjective,
  $a_j=f(b_j)\cdot c_j$ for some $b_j\in \pi$ and $c_j\in [G,G]$.  Write $c_j$
  as a product of commutators in the generators~$a_i^{\pm1}$, to choose a word
  $u_j = u_j(y_1,\ldots,y_k)$ in $[F,F]$ such that $c_j = u_j(a_1,\ldots,a_k)$.
  Let $S_0$ be the system of the acyclic equations $\{y_j = b_j \cdot
  u_j\}_{j=1}^k$ over~$\pi$.  Then $\{a_j\}$ is a solution to the system $f(S_0)
  = \{y_j = f(b_j)\cdot u_j\}$ over~$G$. Applying $\iota_G\colon G\to \widehat
  G$, it follows that $\{\iota_G(a_j)\}$ is a solution to the system
  $\iota_Gf(S_0)$ over~$\widehat G$.
  
  Now, to show that $\widehat f\colon \widehat\pi \to \widehat G$ is surjective,
  fix $g\in \widehat G$.  By
  Theorem~\ref{theorem:closure-vs-localization}\ref{item:localization-consists-of-solutions},
  there is a system $S=\{x_i=w_i\}_{i=1}^n$ of acyclic equations over $G$, with
  $w_i\in G*F\langle x_1,\ldots,x_n\rangle$, such that the system $\iota_G(S)$
  over $\widehat G$ has a solution $\{g_i\}$ with $g_i\in \widehat G$, $g_1=g$.
  Substitute each occurrence of the generator $a_j$ in the word $w_i$ with
  $b_j\cdot u_j$, to obtain a new word $v_i=v_i(x_n,\ldots,x_n,y_1,\ldots,y_k)$.
  Now consider the system of $n+k$ equations $S'=\{x_i=v_i\}_{i=1}^n \cup
  \{y_j=b_j u_j\}_{j=1}^k$, over the group~$\pi$.  Apply the homomorphism
  $\iota_\pi\colon \pi \to \widehat\pi$ to obtain the system $\iota_\pi(S')$
  over~$\widehat\pi$.  By
  Theorem~\ref{theorem:closure-vs-localization}\ref{item:localization-consists-of-solutions},
  $\iota_\pi(S')$ has a solution in $\widehat\pi$, say $\{r_i\}_{i=1}^n \cup
  \{s_j\}_{j=1}^k$.  That is, $r_i=\iota_\pi v_i(r_1,\ldots,r_n,s_1,\ldots,s_k)$
  and $s_j=\iota_\pi b_j \cdot u_j(s_1,\ldots,s_k)$.  Now, apply
  $\smash{\widehat f\colon \widehat\pi \to \widehat G}$ to the
  system~$\iota_\pi(S')$. By the functoriality of the localization, we have
  $\widehat f \iota_\pi(S') = \iota_G f(S')$, and it has $\{\widehat f(r_i)\}
  \cup \{\widehat f(s_j)\}$ as a solution in~$\widehat G$.   The last $k$
  equations of $\iota_Gf(S')$ form the system $\iota_Gf(S_0)$.  By the
  uniqueness of a solution for $\iota_G f(S_0)$, we have $\widehat
  f(s_j)=\iota_G(a_j)$. By the uniqueness of a solution for $f(S')$, it follows
  that $\widehat f(r_i)=g_i$.  In particular, $\widehat f(r_1)=g_1=g$.  This
  proves that $\widehat f\colon \widehat\pi \to \widehat G$ is surjective.
\end{proof}

\subsection{Transfinite lower central quotients of local groups are local}
\label{subsection:lcsq-of-local-group-is-local}

It is well known that a nilpotent group is local, or equivalently the lower
central quotient $G/G_k$ of an arbitrary group $G$ is local for all finite~$k$.
(See, for instance,~\cite[p.~573]{Levine:1989-1}.)  But it is no longer true for
the ordinary transfinite lower central quotients~$G/G_\kappa$.  For instance,
for a free group $F$ of rank${}>1$, $F/F_\omega \cong F$ and this is not local.
However, for local groups the following is true.

\begin{lemma}
  \label{lemma:lcsq-of-local-group-is-local}
  If $G$ is a local group, then $G/G_\kappa$ is local for every
  ordinal~$\kappa\ge 1$.  In particular, $\widehat G/\widehat G_\kappa$ is
  local for every group~$G$.
\end{lemma}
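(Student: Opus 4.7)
The plan is to prove the equivalent characterization that $G/G_\kappa$ is $\mathbb{Z}$-closed (Theorem~\ref{theorem:closure-vs-localization}\ref{item:local-iff-closed}) by transfinite induction on $\kappa$. The base case $\kappa=1$ is immediate since $G/G_1$ is trivial. For the inductive step, I assume that $G/G_\mu$ is $\mathbb{Z}$-closed for every $\mu$ with $1\le \mu<\kappa$, and verify that every acyclic system $S=\{x_i=w_i\}_{i=1}^n$ over $H:=G/G_\kappa$ has a unique solution in $H$.

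\emph{Existence} comes essentially for free from the hypothesis on $G$: lift each $H$-letter appearing in the words $w_i$ to an arbitrary preimage in $G$, producing a system $\tilde S=\{x_i=\tilde w_i\}$ over $G$. Since lifting does not alter the image in $F/[F,F]$, the system $\tilde S$ remains acyclic, and the $\mathbb{Z}$-closedness of $G$ yields a unique solution $\{g_i\}\subseteq G$ whose projection $\{\bar g_i\}$ in $H$ solves $S$.

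\emph{Uniqueness} is where the induction does the real work. Given two solutions $\{\bar g_i\},\{\bar h_i\}$ of $S$ in $H$, I project to each intermediate quotient $G/G_\mu$ for $\mu<\kappa$; the projected pair solves the acyclic system projected from $S$, so by the inductive hypothesis the two projections agree. Consequently $\bar g_i\bar h_i^{-1}$ lies in $G_\mu/G_\kappa$ for every $\mu<\kappa$, hence in $\big(\bigcap_{\mu<\kappa}G_\mu\big)/G_\kappa$. When $\kappa$ is a limit ordinal, this intersection equals $G_\kappa$ by definition, and the two solutions coincide. When $\kappa=\lambda+1$ is a successor, the intersection equals $G_\lambda$, so the elements $c_i:=\bar g_i\bar h_i^{-1}$ lie in the central abelian subgroup $C:=G_\lambda/G_{\lambda+1}$ of $H$. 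Writing $\bar g_i=c_i\bar h_i$, centrality of the $c_j$'s lets me evaluate
\[
  w_i(c_1\bar h_1,\ldots,c_n\bar h_n)
  = \Big(\prod_{j} c_j^{e_{ij}}\Big)\cdot w_i(\bar h_1,\ldots,\bar h_n),
\]
where $e_{ij}$ denotes the exponent sum of $x_j$ in $w_i$; acyclicity forces each $e_{ij}$ to vanish, so $w_i(\bar g_1,\ldots)=w_i(\bar h_1,\ldots)$, and the two solution equations give $\bar g_i=\bar h_i$ and $c_i=1$.

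The ``In particular'' clause then follows by applying the main statement to the tautologically local group $\widehat G$ in place of $G$. The step I expect to require the most care is the successor case of the uniqueness argument, where one must track how centrality of $C$ combined with acyclicity of $w_i$ conspire to annihilate the correction factor $\prod_j c_j^{e_{ij}}$; the limit case, in contrast, is handled immediately by the definition $G_\kappa=\bigcap_{\mu<\kappa}G_\mu$ at limit ordinals.
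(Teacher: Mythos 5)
Your proposal is correct and follows essentially the same route as the paper's own proof: existence by lifting the acyclic system to the local group $G$ and projecting the solution down, and uniqueness by transfinite induction, splitting into the limit case (handled by $G_\kappa=\bigcap_{\mu<\kappa}G_\mu$) and the successor case (where centrality of $G_\lambda/G_{\lambda+1}$ plus acyclicity — i.e., all exponent sums $e_{ij}$ vanish — kills the correction factor). The only cosmetic difference is that you spell out the exponent sums $e_{ij}$ explicitly, while the paper compresses the same observation into the remark that the image of $w_i$ in $F$ lies in $[F,F]$.
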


We will use the equation-based approach to prove this.

\begin{proof}
  Suppose $S=\{x_i = w_i(x_1,\ldots,x_n)\}_{i=1}^n$ is a system of acyclic
  equations over~$G/G_\kappa$.  It suffices to show that $S$ has a unique
  solution in~$G/G_\kappa$.  For the existence, lift $S$ to a system over $G$,
  by replacing each element of $G/G_\kappa$ which appears in the words $w_i$
  with a pre-image in~$G$.  Since $G$ is local, there is a solution for the
  lift, and the image of the solution under the projection $G\to G/G_\kappa$ is
  a solution for~$S$.

  To prove the uniqueness, we proceed by transfinite induction.  First, for
  $\kappa=1$, $G/G_\kappa=\{e\}$ and thus everything is unique.  Suppose
  $\kappa\ge 2$ and suppose the solution of a system of acyclic equations
  over~$G/G_\lambda$ is unique for all~$\lambda<\kappa$.    Suppose $\{x_i =
  g_i\}$ and $\{x_i=g_i'\}$ are two solutions in $G/G_\kappa$ for a given
  system~$S$ of acyclic equations.

  Suppose that $\kappa$ is a discrete ordinal. Let $p\colon G/G_\kappa \to
  G/G_{\kappa-1}$ be the projection.  Since $\{x_i=p(g_i)\}$ and
  $\{x_i=p(g_i')\}$ are solutions of $p(S)$ over $G/G_{\kappa-1}$,
  $p(g_i)=p(g_i')$ by the uniqueness over~$G/G_{\kappa-1}$. So $g_i'=g_i c_i$
  for some $c_i \in G_{\kappa-1}/G_\kappa$.  Since $G_{\kappa-1}/G_\kappa$ is
  central in $G/G_\kappa$ and the image of $w_i$ under $(G/G_\kappa)*F \to F$
  lies in $[F,F]$, it follows that
  \[
    g_i' = w_i(g_1',\ldots,g_n') = w_i(g_1 c_1,\ldots,g_n c_n)
    = w_i(g_1,\ldots,g_n)= g_i.
  \]
  
  Now, suppose $\kappa$ is a limit ordinal.  For $\lambda<\kappa$, let
  $p_\lambda\colon G/G_\kappa \to G/G_\lambda$ be the projection.  Since
  $\{x_i=p_\lambda(g_i)\}$ and $\{x_i=p_\lambda(g_i')\}$ are solutions of
  $p_\lambda(S)$, it follows that $p_\lambda(g_i) = p_\lambda(g_i')$ by the
  uniqueness of a solution over~$G/G_\lambda$. That is, $g_i^{-1}g_i' \in \Ker p
  = G_\lambda/G_\kappa$ for each $\lambda<\kappa$. Since $G_\kappa =
  \bigcap_{\lambda<\kappa} G_\lambda$, it follows that $g_i=g_i'$
  in~$G/G_\kappa$.
\end{proof}

We remark that when $\kappa$ is a discrete ordinal in the above proof, the
existence of a solution can also be shown under an induction hypothesis that
$G/G_{\kappa-1}$ is local, without assuming that $G$ is local.  Indeed, if
$\{x_i=h_i\}$ is a solution for $p(S)$ over $G/G_{\kappa-1}$, then for any
choice of $h_i' \in p^{-1}(h_i) \subset G/G_\kappa$, it turns out that the
elements $g_i = w_i(h_1',\ldots,h_n')$ form a solution $\{x_i=g_i\}$ for the
given $S$, by a similar argument to the uniqueness proof.
See~\cite[Proposition~1(c)]{Levine:1989-1}.  On the other hand, when $\kappa$ is
a limit ordinal, the assumption that $G$ is local is essential for the existence
(and necessary --- recall the example of $F \cong F/F_\omega$).

\subsection{Closure in the completion}
\label{subsection:closure-in-completion}

For a group $G$, let $\widetilde G = \smash{\varprojlim}_{k<\infty} G/G_k$ be the nilpotent completion.
It is well known that $\widetilde G$ is a local group, essentially by Stallings' theorem.
Therefore, there is a unique homomorphism $\widehat G \to \widetilde G$ making the following diagram commutative:
\[
  \begin{tikzcd}
    G \ar[r]\ar[d] & \widehat G \ar[ld]
    \\
    \widetilde G
  \end{tikzcd}
\]
Following Levine's approach in~\cite{Levine:1989-2}, define $\overline G = \Im\{\widehat G\to \overline G\}$.
We call $\overline G$ the \emph{closure in the completion}.

It is straightforward to verify that $\Ker\{\widehat G \to \overline G\}=\widehat G_\omega$, that is, $\overline G \cong \widehat G/\widehat G_\omega$, using Stallings' theorem.

For later use, we will discuss a special case of a metabelian extension.
Let $G$ be an abelian group and $A$ be a $\Z G$-module.
Denote the semi-direct product by~$A\rtimes G$. Let $\epsilon\colon \Z G \to \Z$ be the augmentation map, and $I:=\Ker \epsilon$ be the augmentation ideal.
Then the lower central subgroup $(A\rtimes G)_{k+1}$ is equal to~$I^k A$, so $(A\rtimes G)/(A\rtimes G)_k = (A/I^kA)\rtimes G$.
It follows that $\widetilde{A\rtimes G} = \widetilde A\rtimes G$, where $\widetilde A := \smash{\varprojlim}_{k<\infty} A/I^kA$ is the $I$-adic completion.
Also, $A\rtimes G$ is residually nilpotent if and only if $\bigcap_{k<\infty} I^k A = 0$.

Let $S:=\{r\in \Z G \mid \epsilon(r)=1\}$, and let $S^{-1}A=\{a/s \mid a\in A,\, s\in S\}$ be the classical localization with respect to~$S$.
By mutiplying $a$ and $s$ by~$-1$, one sees that $S^{-1}A$ is equal to the localization with respect to a larger subset $\{r\in \Z G \mid \epsilon(r)=\pm1\}$.

\begin{theorem}
  \label{theorem:closure-in-completion-metabelian}
  Suppose $\bigcap_{k<\infty} I^k A = 0$.
  Then $\overline{A\rtimes G} = S^{-1}A \rtimes G$.
\end{theorem}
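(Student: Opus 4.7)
The plan is to identify both $\overline{A\rtimes G}$ and $S^{-1}A \rtimes G$ as subgroups of $\widetilde{A\rtimes G} = \widetilde A \rtimes G$ and prove both inclusions. A preliminary observation is that $S^{-1}A$ embeds as a $\Z G$-submodule of $\widetilde A$: every $s = 1-t$ with $t\in I$ acts invertibly on $\widetilde A$ via the geometric series $s^{-1} = \sum_{k\ge 0} t^k$ (each partial sum gives the inverse on $A/I^{k+1}A$), and the hypothesis $\bigcap_k I^k A = 0$ makes $A\hookrightarrow \widetilde A$ injective. Hence $S^{-1}A\hookrightarrow \widetilde A$ as well.

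For the inclusion $S^{-1}A\rtimes G \subseteq \overline{A\rtimes G}$, fix $a\in A$ and write $s = 1 + \sum_i n_i(g_i-1)\in S$. Consider the equation over $A\rtimes G$
\[
  x = (a,1)\cdot \prod_i [(0,g_i),\,x]^{-n_i}.
\]
This is acyclic because the right-hand side is a product of an element of $A\rtimes G$ with commutators involving $x$, so the total exponent of $x$ is zero. By Theorem~\ref{theorem:closure-vs-localization}\ref{item:local-iff-closed}, it has a unique solution $\hat x\in \widehat{A\rtimes G}$. Using the commutator identity $[(0,g_i),(y,h)] = ((g_i-1)y,\,1)$, which holds because $G$ is abelian, the image of $\hat x$ in $\widetilde A\rtimes G$ must lie in $\widetilde A\times\{1\}$ and satisfy $sy = a$, forcing $y = a/s\in S^{-1}A$. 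Since $G\subseteq \overline{A\rtimes G}$ trivially, this yields $S^{-1}A\rtimes G \subseteq \overline{A\rtimes G}$.

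For the reverse inclusion, I will show that $S^{-1}A\rtimes G$ is local. Granting this, the universal property of localization factorizes $A\rtimes G \to S^{-1}A\rtimes G$ through $\widehat{A\rtimes G}$, and composing the factored map with $S^{-1}A\rtimes G \hookrightarrow \widetilde A\rtimes G$ recovers the canonical map $\widehat{A\rtimes G}\to \widetilde{A\rtimes G}$ by uniqueness (both targets are local); consequently $\overline{A\rtimes G} \subseteq S^{-1}A\rtimes G$. To prove $S^{-1}A\rtimes G$ is $\Z$-closed, take an acyclic system $\{y_i = w_i(y_1,\dots,y_n)\}$. Project to $G$: since $G$ is abelian and the exponent sums of the $y_j$ in $w_i$ vanish by acyclicity, the projected system becomes $\bar y_i = g_i$ with $g_i\in G$ determined by the constants in~$w_i$. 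Writing each unknown as $(a_i,g_i)$ and expanding the word via the semi-direct product formula $(m_1,h_1)\cdots(m_k,h_k) = (\sum_i h_1\cdots h_{i-1}m_i,\,h_1\cdots h_k)$, the $S^{-1}A$-components satisfy a linear system $(I-R)a = c$, where $R = (r_{ij})$ is an $n\times n$ matrix over $\Z G$ and $c\in (S^{-1}A)^n$. The key computation is that $\epsilon(r_{ij})$ equals the exponent sum of $y_j$ in $w_i$, which vanishes; hence $r_{ij}\in I$. Therefore $\epsilon(\det(I-R)) = \det(I_n) = 1$, so $\det(I-R)\in S$ is a unit in $S^{-1}\Z G$, which makes $I-R$ invertible as an operator on $(S^{-1}A)^n$ by Cramer's rule. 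This yields the required unique solution.

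The main obstacle is the linear-algebra step underlying $\Z$-closure of $S^{-1}A\rtimes G$: carefully extracting the $\Z G$-coefficients $r_{ij}$ when an arbitrary word in $(S^{-1}A\rtimes G) * F$ is expanded after substitution, and verifying that the acyclicity hypothesis forces these coefficients into $I$. Once this augmentation calculation is in hand, everything else reduces to standard manipulations with the equation-based characterization of $\widehat{A\rtimes G}$ and the universal property of localization.
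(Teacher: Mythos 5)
Your proof is correct and follows the same overall decomposition as the paper's: show $S^{-1}A\rtimes G\subseteq\overline{A\rtimes G}$ by exhibiting an acyclic one-variable equation whose unique solution in $\widehat{A\rtimes G}$ maps to $(a/s,1)$, use $\bigcap_k I^kA=0$ to embed $S^{-1}A\rtimes G$ in $\widetilde{A\rtimes G}$, and obtain the reverse inclusion from the universal property once $S^{-1}A\rtimes G$ is known to be local. Your commutator form of the equation $x=(a,1)\prod_i[(0,g_i),x]^{-n_i}$ and the paper's word $a\prod_i g_ix^{n_i}g_i^{-1}$, with $1-s=\sum n_ig_i$, amount to the same thing. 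The one genuinely different component is the statement that $S^{-1}A\rtimes G$ is local: the paper simply cites \cite[Theorem~A.2]{Cha-Orr:2011-1} (Cohn localization of a module over an abelian, hence local, group yields a local semidirect product), whereas you re-prove it directly by checking $\Z$-closedness: project an acyclic system to $G$, note the $G$-components are forced by abelianness and zero exponent sums, then reduce the $A$-components to the affine system $(I-R)a=c$ over $\Z G$ and observe that $\epsilon(r_{ij})$ equals the exponent sum of $y_j$ in $w_i$, hence vanishes, so $\det(I-R)\in S$ is invertible over $S^{-1}\Z G$. That augmentation calculation is correct (it is essentially the fundamental formula of Fox calculus applied after pushing to $G$), and your identification of it as the main thing requiring care is accurate. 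Your route buys self-containedness at the cost of this bookkeeping; the paper's route is shorter and modular but rests on an external result. The two are otherwise the same proof.
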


Theorem~\ref{theorem:closure-in-completion-metabelian} is due to Levine~\cite[Proposition~3.2]{Levine:1994-1}.
Indeed, he gave a proof (of a more general statement) for the localization defined in~\cite{Levine:1989-1,Levine:1989-2}, but just by modifying it slightly, his argument applies to the case of the localization we use (which is defined in~\cite{Cha:2004-1}) as well.

For concreteness and for the reader's convenience, we provide a quick proof.

\begin{proof}[Proof of Theorem~\ref{theorem:closure-in-completion-metabelian}]
  By~\cite[Theorem~A.2]{Cha-Orr:2011-1}, $S^{-1}A \rtimes G$ is a local group, since $S^{-1}A$ is the cohn localization of the $\Z G$-module $A$ and $G$ is abelian and thus local.
  It follows that there is a unique homomorphism $\widehat{A\rtimes G} \to S^{-1}A\rtimes G$ making the following diagram commutative:
  \[
    \begin{tikzcd}
      A\rtimes G \ar[r]\ar[d] & \widehat{A\rtimes G} \ar[ld]
      \\
      S^{-1}A \rtimes G
    \end{tikzcd}
  \]

  We claim that $\widehat{A\rtimes G} \to S^{-1}A\rtimes G$ is surjective.
  To show this, it suffices to verify that every $a/s\in S^{-1}A$ lies in the image of $\widehat{A\rtimes G}$.
  Observe that $x=a/s$ a solution of the equation $x = w(x)$, where $w(x) = a+(1-s)x$.
  Write $1-s=\sum_i n_i g_i$, $n_i\in \Z$, $g_i\in G$.
  Then, in multiplicative notation, $w(x) = a\prod_i g_i x^{n_i} g_i^{-1}$, a word in $(A\rtimes G)*F\langle x\rangle$.
  Since $\epsilon(s)=1$, we have $\sum_i n_i = 0$.
  That is, the equation $x=w(x)$ over $A\rtimes G$ is acyclic.
  Therefore, there is a solution $z\in \widehat{A\rtimes G}$ for $x=w(x)$, and $z$ must be sent to $a/s\in S^{-1}A$, since $a/s$ is a solution for $x=w(x)$ in the local group $S^{-1}A\rtimes G$.
  This proves the claim.

  We claim that $A \to \widetilde A = \smash{\varprojlim}_{k<\infty} A/I^kA$ factors through~$S^{-1}A$.  To show this, it suffices to prove that every $s\in S$ is invertible in $\widetilde{\Z G} = \smash{\varprojlim}_{k<\infty} \Z G/I^k$.
  Indeed this is a known fact verified by an elementary argument as follows.
  Since $\epsilon(s)=1$, $1-s \in I$.
  So, writing $(1-s)^k = 1-r_k\cdot s$ with $r_k\in \Z G$, we have $r_k\cdot s \equiv 1 \bmod I^k$.
  Also, $r_{k+1} \equiv r_k \bmod I^k$, so $(r_k)\in \smash{\varprojlim}_{k<\infty} \Z G/I^k$ is a multiplicative inverse of the given~$s$.
  
  By the second claim, there is a natural homomorphism $S^{-1}A \rtimes G \to \widetilde A \rtimes G = \widetilde{A\rtimes G}$.
  Since $\bigcap I^kA =0$, the map $A\to \widetilde A$ is injective, and thus $S^{-1}A \to \widetilde A$ is injective.
  It follows that $S^{-1}A \rtimes G \to \widetilde{A\rtimes G}$ is injective.

  Now, consider
  \[
    \widehat{A\rtimes G} \to S^{-1}A \rtimes G \to \widetilde{A\rtimes G}.
  \]
  The first arrow is surjective by the first claim, and the second arrow is injective, so $S^{-1}A \rtimes G$ is the image of $\widehat{A\rtimes G}$ in $\widetilde{A\rtimes G}$.
  That is, $S^{-1}A \rtimes G = \overline{A\rtimes G}$.
\end{proof}

\section{Invariance under homology cobordism}
\label{section:homology-cobordism-invariance}

In this section we give a proof of
Theorem~\ref{theorem:results-homology-cob}, which says that
$\theta_\kappa$ is invariant under homology cobordism.  Indeed, it is a
straightforward consequence of the definition and the key property of the
homology localization.  We provide details for concreteness.

\begin{definition}
  Two closed 3-manifolds $M$ and $N$ are \emph{homology cobordant} if there is a
  4-manifold $W$ such that $\partial W = M\sqcup -N$ and the inclusions induce
  isomorphisms $H_*(M) \cong H_*(W) \cong H_*(N)$.  Such a 4-manifold $W$ is
  called a \emph{homology cobordism}.
\end{definition}

Fix a group $\Gamma$ and an ordinal~$\kappa$.  Recall that for a closed
3-manifold $M$ with $\pi=\pi_1(M)$ which is equipped with an isomorphism
$f\colon \widehat\pi/\widehat\pi_\kappa \isomto
\widehat\Gamma/\widehat\Gamma_\kappa$, the invariant $\theta_\kappa(M)$ is
defined to be the image of the fundamental class of $M$ under
\[
  H_3(M)\to H_3(\pi)\to H_3(\widehat\pi) \to H_3(\widehat\pi/\widehat\pi_\kappa) \xrightarrow{f_*} H_3(\widehat\Gamma/\widehat\Gamma_\kappa).
\]

\begin{proof}[Proof of Theorem~\ref{theorem:results-homology-cob}] Suppose  $M$
  and $N$ are homology cobordant closed 3-manifolds with $\pi=\pi_1(M)$,
  $G=\pi_1(N)$.
  Theorem~\ref{theorem:results-homology-cob}\ref{item:homology-cob-inv-lcsq}
  asserts that there is an isomorphism $\phi\colon \widehat G/\widehat G_\kappa
  \isomto \widehat\pi/\widehat\pi_\kappa$.  Let $W$ be a homology cobordism
  between $M$ and~$N$.  Then, by
  Corollary~\ref{corollary:localization-basic-consequences}\ref{item:homology-equiv-lcsq},
  the inclusions of $M$ and $N$ into $W$ induce isomorphisms
  $\widehat\pi/\widehat\pi_\kappa \cong
  \widehat{\pi_1(W)}/\widehat{\pi_1(W)}_\kappa \cong \widehat G/\widehat
  G_\kappa$.  Let $\phi\colon \widehat G/\widehat G_\kappa \isomto
  \widehat\pi/\widehat\pi_\kappa$ be the composition.  This is the promised isomorphism.

  Suppose $f\colon \widehat\pi/\widehat\pi_\kappa \isomto
  \widehat\Gamma/\widehat\Gamma_\kappa$ is an isomorphism.  Let
  $\theta_\kappa(M)$ and $\theta_\kappa(N)$ be the invariants defined using the
  isomorphisms $f$ and $f\circ \phi$.
  Theorem~\ref{theorem:results-homology-cob}\ref{item:homology-cob-inv-theta}
  asserts that $\theta_\kappa(M)=\theta_\kappa(N)$ in
  $H_3(\widehat\Gamma/\widehat\Gamma_\kappa)$.  To show this, consider the
  following commutative diagram.
  \[
    \begin{tikzcd}[row sep=large]
      H_3(M) \ar[r] \ar[d,"i_*" left] &
      H_3(\widehat\pi/\widehat\pi_\kappa) \ar[d,"\cong" right]
      \ar[rd,"\cong" below left,"f_*" above right]
      \\
      H_3(W) \ar[r] &
      H_3(\widehat{\pi_1(W)}/\widehat{\pi_1(W)}_\kappa) \ar[r,"\cong"]
      & H_3(\widehat\Gamma/\widehat\Gamma_\kappa)
      \\
      H_3(N) \ar[r] \ar[u,"j_*" left] &
      H_3(\widehat G/\widehat G_\kappa) \ar[u,"\cong" right] 
      \ar[ru,"\cong" above left,"(f\circ \phi)_*" below right]
    \end{tikzcd}
  \]
  Since the fundamental classes satisfy $i_*[M]-j_*[N]=\partial[W]=0$ in
  $H_3(W)$, $\theta_\kappa(M)-\theta_\kappa(N)=0$ in
  $H_3(\widehat\Gamma/\widehat\Gamma_\kappa)$.
  
  From this, it also follows that $\theta_\kappa(M)=\theta_\kappa(N)$ in
  $H_3(\widehat\Gamma/\widehat\Gamma_\kappa) /
  \Aut(\widehat\Gamma/\widehat\Gamma_\kappa)$ even when $\theta_\kappa(M)$ and
  $\theta_\kappa(N)$ are defined using arbitrarily given isomorphisms
  $\widehat\pi/\widehat\pi_\kappa \isomto \widehat\Gamma/\widehat\Gamma_\kappa$
  and $\widehat G/\widehat G_\kappa \isomto
  \widehat\Gamma/\widehat\Gamma_\kappa$ (not necessarily the above $f$ and
  $f\circ\phi$), since the orbit of $\theta_\kappa(-)$ under the action of
  $\Aut(\widehat\Gamma/\widehat\Gamma_\kappa)$ is independent of the choice of
  the isomorphism.  This shows
  Theorem~\ref{theorem:results-homology-cob}\ref{item:homology-cob-inv-theta-mod-aut}.
\end{proof}

\section{Bordism and transfinite lower central quotients}
\label{section:bordism-and-lcsq}

The goal of this section is to prove 
Theorem~\ref{theorem:main-determination-lcsq-lift} and Corollary~\ref{corollary:main-determination-equiv-rel}.

\subsection{Proof of Theorem~\ref{theorem:main-determination-lcsq-lift}}

Recall that Theorem~\ref{theorem:main-determination-lcsq-lift} says that if $M$
is a closed 3-manifold with $\pi=\pi_1(M)$ endowed with an isomorphism $f\colon
\widehat\pi/\widehat\pi_\kappa \isomto \widehat\Gamma/\widehat\Gamma_\kappa$,
the following are equivalent:
\begin{enumerate}
  \item There exists a lift $\widehat\pi/\widehat\pi_{\kappa+1}
  \isomto \widehat\Gamma/\widehat\Gamma_{\kappa+1}$ of $f$ which is an
  isomorphism.
  \item The invariant $\theta_\kappa(M)$ vanishes in
  $\Coker\{\cR_{\kappa+1}(\Gamma) \to \cR_\kappa(\Gamma)\}$.
\end{enumerate}

In our proof, it is essential to use the fact that $H_3(-)$ is isomorphic to the
oriented bordism group~$\Omega^{SO}_3(-)$, to obtain a 4-dimensional bordism
from condition~(2).  More specifically, for another closed 3-manifold $N$ with
$G=\pi_1(N)$ equipped with $g\colon \widehat G/\widehat G_\kappa \isomto
\widehat\Gamma/\widehat\Gamma_\kappa$, we have
$\theta_\kappa(N)=\theta_\kappa(M)$ in
$H_3(\widehat\Gamma/\widehat\Gamma_\kappa)$ if and only if there is a
4-dimensional bordism $W$ between $(M,f)$ and $(N,g)$ over the
group~$\widehat\Gamma/\widehat\Gamma_\kappa$.  The core of the proof of
Theorem~\ref{theorem:main-determination-lcsq-lift} consists of careful analysis
of the relationship of such a bordism $W$ and the involved fundamental groups.

We begin with a general lemma, for which 4-dimensional duality plays a
crucial role.

\begin{lemma}
  \label{lemma:bordism-H^2-ker-inclusion}
  Suppose $W$ is a 4-dimensional cobordism between two closed 3-manifolds $M$
  and~$N$.  That is, $\partial W=N\sqcup -M$.  Suppose $A$ is an arbitrary
  abelian group.  Let $\partial\colon H_2(W,\partial W;A) \to H_1(\partial W;A)$
  be the boundary homomorphism of the long exact sequence of $(W,\partial W)$.
  If the composition
  \[
    \begin{tikzcd}[sep=4ex]
      \Im \partial \ar[r,hook] &
      H_1(\partial W;A) = H_1(M;A)\oplus H_1(N;A) \ar[r,"p",two heads] &
      H_1(M;A)
    \end{tikzcd}
  \]
  of the inclusion and the projection $p$ is injective, then 
  \[
    \Ker\{H^2(W;A)\rightarrow H^2(M;A)\} \subset
    \Ker\{H^2(W;A)\rightarrow H^2(N;A)\}.
  \]
\end{lemma}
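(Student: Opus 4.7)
The plan is to convert the cohomological statement into a homological one about the connecting homomorphism $\partial$, by means of Poincaré-Lefschetz duality; once that translation is in place, the hypothesis applies almost verbatim. Let $D_W\colon H^2(W;A) \isomto H_2(W,\partial W;A)$ be cap product with the relative fundamental class $[W,\partial W]$, and let $D_{\partial W}\colon H^2(\partial W;A) \isomto H_1(\partial W;A)$ be Poincaré duality on the closed oriented 3-manifold $\partial W = N\sqcup -M$, which splits as the sum of the Poincaré duality isomorphisms $D_M$ and $D_N$ on the two boundary components. The standard naturality of Poincaré-Lefschetz duality yields (up to sign) the identity $D_{\partial W}\circ r = \partial\circ D_W$, where $r\colon H^2(W;A)\to H^2(\partial W;A)$ is restriction and $\partial$ is the connecting homomorphism of the long exact sequence of the pair $(W,\partial W)$.

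Given $x\in \Ker\{H^2(W;A)\to H^2(M;A)\}$, I would set $\alpha := D_W(x)\in H_2(W,\partial W;A)$. By the naturality identity above, together with its compatibility with the splitting $\partial W = N\sqcup -M$, the $H_1(M;A)$-component of $\partial\alpha$ equals $D_M(x|_M)$ up to sign, which is zero. Equivalently, $p(\partial\alpha)=0$ in the notation of the lemma. Since $\partial\alpha$ lies in $\Im\partial$, and the composition $\Im\partial\hookrightarrow H_1(\partial W;A) \xrightarrow{p} H_1(M;A)$ is injective by assumption, we conclude $\partial\alpha = 0$. In particular the $H_1(N;A)$-component of $\partial\alpha$ vanishes, which by the same naturality means $x|_N = 0$, so $x\in \Ker\{H^2(W;A)\to H^2(N;A)\}$.

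The only subtle point is to verify the commutativity of restriction-to-boundary with the connecting homomorphism under Poincaré-Lefschetz duality, with the correct identification of orientations on the two boundary components (the sign convention arising from $\partial W = N\sqcup -M$). This is a classical naturality fact and does not present a real obstacle; once it is in place, the argument is formal and the hypothesis is used in exactly one step, to deduce $\partial\alpha = 0$ from $p(\partial\alpha) = 0$.
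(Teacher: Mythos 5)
Your proof is correct, and it is essentially the same argument as the paper's: both pass through Poincaré--Lefschetz duality to identify the restriction maps $H^2(W;A)\to H^2(M;A)$ and $H^2(W;A)\to H^2(N;A)$ with the compositions $p\circ\partial$ and $q\circ\partial$ after dualizing, and then invoke the injectivity of $p|_{\Im\partial}$ at precisely the same step (the paper phrases this in terms of subgroups, $\Ker\{H^2(W)\to H^2(M)\}=PD_W(\Ker p\circ\partial)=PD_W(\Ker\partial)\subset PD_W(\Ker q\circ\partial)$, whereas you phrase it element-wise by chasing a class $x$). The sign issue you flag concerning $\partial W = N\sqcup -M$ is harmless, as you note, since it only rescales by $\pm1$ and does not change kernels.
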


\begin{proof}
  Consider the following diagram.
  \[
    \begin{tikzcd}[row sep=large]
      H^2(W;A) \ar[r,"k^*"] &
      H^2(\partial W;A) \ar[r,equal] &
      H^2(M;A)\oplus H^2(N;A) \ar[r,"i^*"] &
      H^2(M;A)
      \\
      H_2(W,\partial W;A) \ar[r,"\partial" below]
        \ar[u,"PD_W" left,"\cong" right] &
      H_1(\partial W;A) \ar[r,equal]
        \ar[u,"PD_{\partial W}" left,"\cong" right] &
      H_1(M;A)\oplus H_1(N;A) \ar[r,"p" below]
        \ar[u,"PD_{M}\oplus PD_{N}" left,"\cong" right] &
      H_1(M;A) \ar[u,"PD_M" left,"\cong" right]
    \end{tikzcd}
  \]
  Here $i^*$ and $k^*$ are inclusion-induced, and $PD_\bullet$ denotes the
  Poincare duality isomorphism, that is, $PD_\bullet^{-1}(c) = c\cap [\bullet]$
  where $[\bullet]$ is the fundamental class.  The left and middle squares
  commute since $\partial[W] = [\partial W] = [M]\oplus[N]$.   The right square
  commutes since $i^*$ is equal to the projection onto the first factor.
  
  We have
  \begin{equation}
    \label{equation:H^2-ker-identity-for-bordism}
    \begin{aligned}
      \Ker\{H^2(W;A)\xrightarrow{i^* k^*}H^2(M;A)\} &= PD_W(\Ker p\circ\partial)
        &&\text{by the diagram,}
      \\
      & = PD_W(\Ker \partial) &&\text{since $p|_{\Im\partial}$ is injective.}
    \end{aligned}
  \end{equation}
  Apply the same argument to $N$ in place of $M$ to obtain
  \begin{equation}
    \label{equation:H^2-ker-inclusion-for-bordism}
    \Ker\{H^2(W;A)\rightarrow H^2(N;A)\} = PD_W(\Ker q\circ\partial)
    \supset PD_W(\Ker \partial)
  \end{equation}
  where
  \[
    q\colon H_1(\partial W;A)= H_1(M;A)\oplus H_2(N;A) \to H_1(N;A)
  \]
  is the projection onto the second factor.
  From~\eqref{equation:H^2-ker-identity-for-bordism}
  and~\eqref{equation:H^2-ker-inclusion-for-bordism}, the conclusion follows
  immediately.
\end{proof}

Theorem~\ref{theorem:main-determination-lcsq-lift} will be proven as a
consequence of the following result.

\begin{theorem}
  \label{theorem:determination-next-lcsq}
  Suppose $\kappa\ge 2$ and $M$ and $N$ are closed 3-manifolds with
  $\pi=\pi_1(M)$ and $G=\pi_1(N)$ which are endowed with isomorphisms $f\colon
  \widehat\pi/\widehat\pi_\kappa \isomto \widehat\Gamma/\widehat\Gamma_\kappa$
  and $g\colon \widehat G/\widehat G_\kappa \isomto
  \widehat\Gamma/\widehat\Gamma_\kappa$.  Define $\theta_\kappa(M)$ and
  $\theta_\kappa(N)$ using $f$ and~$g$.  If $\theta_\kappa(M)=\theta_\kappa(N)$
  in $H_3(\widehat\Gamma/\widehat\Gamma_\kappa)$, then the isomorphism
  \[
    f^{-1}g\colon \widehat G/\widehat G_\kappa \xrightarrow{\cong}
    \widehat\pi/\widehat\pi_\kappa
  \]
  lifts to an isomorphism
  \[
    \widehat G/\widehat G_{\kappa+1} \xrightarrow{\cong}
    \widehat\pi/\widehat\pi_{\kappa+1}.
  \]
\end{theorem}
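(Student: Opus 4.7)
The plan is to realize the equality $\theta_\kappa(M)=\theta_\kappa(N)$ by a $4$-dimensional bordism between $M$ and $N$ and then use the inclusions of the two boundary components, together with the transfinite Stallings--Dwyer theorem (Theorem~\ref{theorem:transfinite-stallings-dwyer}), to transport the isomorphism $f^{-1}g$ from stage~$\kappa$ to stage~$\kappa+1$.

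First, since $\Omega^{\mathrm{SO}}_3(X) \cong H_3(X)$, the hypothesis $\theta_\kappa(M)=\theta_\kappa(N)$ produces a compact oriented $4$-manifold $W$ with $\partial W = N\sqcup(-M)$ and a reference map $r\colon W \to K(\widehat\Gamma/\widehat\Gamma_\kappa,1)$ whose restrictions to $M$ and $N$ factor through the classifying and localization maps composed with $Bf$ and $Bg$ respectively. Setting $P=\pi_1(W)$, this gives at the level of fundamental groups a commutative diagram
\[
  \begin{tikzcd}
    \pi \ar[r,"i_*"]\ar[d] & P \ar[d,"r_*"] & G \ar[l,"j_*" above]\ar[d]\\
    \widehat\pi/\widehat\pi_\kappa \ar[r,"f","\cong" below] & \widehat\Gamma/\widehat\Gamma_\kappa & \widehat G/\widehat G_\kappa \ar[l,"g" above,"\cong" below]
  \end{tikzcd}
\]
so it suffices to show that $i_*$ and $j_*$ each induce isomorphisms on $\widehat{(\cdot)}/\widehat{(\cdot)}_{\kappa+1}$; pasting these produces the required lift of~$f^{-1}g$.

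Second, I would apply Theorem~\ref{theorem:transfinite-stallings-dwyer} at stage $\kappa+1$ separately to $i_*$ and to $j_*$. The required $H_1$-isomorphisms would be arranged by interior $1$- and $2$-handle surgery on $W$: since $r$ identifies $H_1(M)$, $H_1(N)$, and $H_1(\widehat\Gamma/\widehat\Gamma_\kappa)$ with one another, adding $1$-handles to surject onto the right $H_1$ and then surgering along interior loops in $\Ker(r_*)$ simultaneously improves both sides without altering $\partial W$ or the homology class of the reference map. For the $H_2$-epimorphism condition at each $\lambda\le\kappa$, the key tool is Lemma~\ref{lemma:bordism-H^2-ker-inclusion}: applied on the cover of $W$ corresponding to the quotient $\widehat P/\widehat P_\lambda$ (equivalently, with coefficients in a suitable $\Z[\widehat\Gamma/\widehat\Gamma_\lambda]$-module) and combined with Poincar\'e--Lefschetz duality $H^2(W;A)\cong H_2(W,\partial W;A)$, it transfers the $H_2$-surjectivity condition between $M$ and~$N$, yielding the required surjectivity onto $H_2(\widehat P)/\Ker\{H_2(\widehat P)\to H_2(\widehat P/\widehat P_\lambda)\}$ for both~$i_*$ and~$j_*$.

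The main obstacle I anticipate is the translation of Lemma~\ref{lemma:bordism-H^2-ker-inclusion} from its constant-coefficient formulation into the twisted $H_2$-condition of Theorem~\ref{theorem:transfinite-stallings-dwyer}. Identifying the appropriate family of local coefficient systems, verifying that the $H_1$-injectivity hypothesis of the lemma survives the surgeries on both boundary components, and checking that $4$-dimensional duality correctly matches the kernels $\Ker\{H_2(\widehat P)\to H_2(\widehat P/\widehat P_\lambda)\}$ appearing in Stallings--Dwyer --- particularly at transfinite ordinals $\lambda$, where control over $H_2$ of the homology localizations is substantially subtler --- form the technical heart of the argument.
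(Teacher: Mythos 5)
Your opening move is correct and matches the paper: realize $\theta_\kappa(M)=\theta_\kappa(N)$ by an oriented bordism $W$ over $\widehat\Gamma/\widehat\Gamma_\kappa$, and bring in Lemma~\ref{lemma:bordism-H^2-ker-inclusion}. But from there you take a route that has a genuine gap.

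You write that it suffices to show $i_*\colon\pi\to P$ and $j_*\colon G\to P$ each induce isomorphisms on $\widehat{(\cdot)}/\widehat{(\cdot)}_{\kappa+1}$, and you plan to establish this via transfinite Stallings--Dwyer at stage $\kappa+1$. That would indeed suffice, but it is not obtainable from what you have. The hypothesis only hands you a bordism whose reference map lands in $B(\widehat\Gamma/\widehat\Gamma_\kappa)$; this carries no information at level $\kappa+1$. Concretely, the $H_2$-surjectivity condition required by Theorem~\ref{theorem:transfinite-stallings-dwyer} for $\lambda=\kappa$, namely surjectivity of $H_2(\widehat\pi)\to H_2(\widehat P)/\Ker\{H_2(\widehat P)\to H_2(\widehat P/\widehat P_\kappa)\}$ (and its analogue for $j_*$), is precisely $(\kappa+1)$-level data. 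There is no reason this should hold for either $i_*$ or $j_*$, and the surgeries you propose to improve $H_1$ do nothing to create it. Worse, Lemma~\ref{lemma:bordism-H^2-ker-inclusion} is a symmetric kernel-equality statement: it can move a vanishing condition from one boundary component to the other, but only if you have something to start from on one side; it cannot manufacture surjectivity out of nothing. In effect your plan asks $W$ (after surgery) to be a grope cobordism of class $\kappa+1$ in the sense of Definition~\ref{definition:cobordism-of-class-kappa}, which is stronger than, not implied by, the hypothesis.

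The paper escapes this circularity with a different and essential idea: obstruction theory. Set $A=\widehat\pi_\kappa/\widehat\pi_{\kappa+1}$, which is central in $\widehat\pi/\widehat\pi_{\kappa+1}$, so the principal fibration $B(\widehat\pi/\widehat\pi_{\kappa+1})\to B(\widehat\pi/\widehat\pi_\kappa)$ has untwisted coefficients. The map $M\to B(\widehat\pi/\widehat\pi_\kappa)$ lifts tautologically through $\pi\to\widehat\pi\to\widehat\pi/\widehat\pi_{\kappa+1}$, so the obstruction class $o_M\in H^2(M;A)$ vanishes. Because both boundary maps factor through $W\to B(\widehat\Gamma/\widehat\Gamma_\kappa)$, both $o_M$ and $o_N$ are images of the \emph{same} class in $H^2(W;A)$, and the kernel equality of Lemma~\ref{lemma:bordism-H^2-ker-inclusion} (Claim~2 of the paper's proof) then forces $o_N=0$. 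The resulting lift $N\to B(\widehat\pi/\widehat\pi_{\kappa+1})$ is promoted to a group homomorphism $\widehat G/\widehat G_{\kappa+1}\to\widehat\pi/\widehat\pi_{\kappa+1}$ using locality of $\widehat\pi/\widehat\pi_{\kappa+1}$ (Lemma~\ref{lemma:lcsq-of-local-group-is-local}), and a symmetric construction plus a central-commutator calculation shows the composition is the identity, so the lift is an isomorphism. Note that Theorem~\ref{theorem:transfinite-stallings-dwyer} never enters the paper's proof of this particular theorem; the heavy lifting is done by untwisted ordinary $H^2$ of the $4$-manifold and the accident that $o_M$ is trivially zero. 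Your anticipated obstacle (twisted coefficients, duality on covers, and control of $H_2$ of localizations) is indeed where your plan would founder, but the deeper problem is the absent starting surjectivity.
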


\begin{proof}
  Since $H_3(\widehat\Gamma/\widehat\Gamma_\kappa)$ is equal to the oriented
  bordism group $\Omega^{SO}_3(\widehat\Gamma/\widehat\Gamma_\kappa)$, there
  exists a 4-dimensional bordism $W$, over
  $\widehat\Gamma/\widehat\Gamma_\kappa$, between $M$ and~$N$.  We begin with
  some claims.

  \begin{claim-named}[Claim~1] For any abelian group $A$, the inclusions
    $i\colon M\hookrightarrow W$ and $j\colon N\hookrightarrow W$ induce
    injections $i_*\colon H_1(M;A)\to H_1(W;A)$ and $j_*\colon H_1(N;A)\to
    H_1(W;A)$.
  \end{claim-named}

  \begin{subproof}
    To show this, consider the following commutative diagram.
    \[
      \hphantom{H_1(W;A)={}}
      \begin{tikzcd}[column sep=scriptsize,row sep=large]
        \llap{$H_1(M;A)={}$} H_1(\pi;A) \ar[r,"\cong" below] \ar[d,"i_*" left] &
        H_1(\widehat\pi;A) \ar[r,"\cong" below] & 
        H_1(\widehat\pi/\widehat\pi_\kappa;A) \ar[r,"f_*","\cong" below]&
        H_1(\widehat\Gamma/\widehat\Gamma_\kappa;A)
        \\
        \llap{$H_1(W;A)={}$} H_1(\pi_1(W);A) \ar[rrru,bend right=12]
      \end{tikzcd}
    \]
    When $A=\Z$, the leftmost horizontal arrow is an isomorphism by
    Theorem~\ref{theorem:localization-basic-facts}, and the middle horizontal
    arrow is an isomorphism too since $\kappa\ge 2$.  The rightmost horizontal
    arrow, $f_*$, is an isomorphism since so is~$f$.  Therefore, the composition
    $H_1(M;A)\to H_1(\widehat\Gamma/\widehat\Gamma_\kappa;A)$ is an isomorphism
    for $A=\Z$, and consequently it is an isomorphism for an arbitrary $A$ by
    the universal coefficient theorem. From this and the above diagram, it
    follows that $i_*$ is injective.  The injectivity of $j_*$ is shown in the
    same way, using $N$ in place of~$M$. This proves Claim~1.
  \end{subproof}

  \begin{claim-named}[Claim~2]
    For any abelian group $A$,
    \[
      \Ker\{H^2(W;A)\xrightarrow{i^*} H^2(M;A)\} =
      \Ker\{H^2(W;A)\xrightarrow{j^*} H^2(N;A)\}.
    \]
  \end{claim-named}

  \begin{subproof}
    To show this, use notations of Lemma~\ref{lemma:bordism-H^2-ker-inclusion}.
    Let $\partial \colon H_2(W,\partial W;A)\to H_1(W;A)$ be the boundary map,
    and let $p$ and $q$ be the projections of $H_1(W;A) = H_1(M;A)\oplus
    H_1(N;A)$ onto the first and second factor respectively.  By
    Lemma~\ref{lemma:bordism-H^2-ker-inclusion}, it suffices to show that the
    restrictions $p|_{\Im \partial}$ and $q|_{\Im\partial}$ are injective.  In
    our case, 
    \[
      \begin{aligned}
        \Im \partial &= \Ker\{H_1(\partial W;A) \rightarrow H_1(W;A)\}
        \\
        & = \{(x,y)\in H_1(M;A)\oplus H_1(N;A) \mid i_*(x) + j_*(y)=0\}
      \end{aligned}
    \]
    where $i_*\colon H_1(M;A)\to H_1(W;A)$ and $j_*\colon H_1(N;A)\to H_1(W;A)$.
    So, for $(x,y)\in \Im\partial$, if $0=p(x,y)=x$, then $j_*(y)=-i_*(x)=0$,
    and thus $y=0$ since $j_*$ is injective by Claim~1.  This shows that
    $p|_{\Im\partial}$ is injective.  The same argument shows that
    $q|_{\Im\partial}$ is injective.  This completes the proof of Claim~2.
  \end{subproof}

  Let $A=\widehat\pi_\kappa/\widehat\pi_{\kappa+1}$, and realize the short exact
  sequence
  \[
    0\to A \to \widehat\pi/\widehat\pi_{\kappa+1} \to
    \widehat\pi/\widehat\pi_{\kappa} \to 1
  \]
  as a fibration $B(\widehat\pi/\widehat\pi_{\kappa+1}) \to
  B(\widehat\pi/\widehat\pi_{\kappa})$ with fiber $B(A)$.  We will use the
  following basic facts from obstruction theory.  A map $f\colon X\to
  B(\widehat\pi/\widehat\pi_{\kappa})$ of a CW-complex $X$ gives an obstruction
  class $o_X\in H^2(X;A)$ which vanishes if and only if there is a lift $X\to
  B(\widehat\pi/\widehat\pi_{\kappa+1})$.  In our case, the coefficient system
  $\{A\}$ is untwisted on $B(\widehat\pi/\widehat\pi_{\kappa})$ since the
  abelian subgroup $A=\widehat\pi_\kappa/\widehat\pi_{\kappa+1}$ is central
  in~$\widehat\pi/\widehat\pi_{\kappa+1}$.  So, $o_X$ determines a homotopy
  class of a map $\phi_X\colon X\to K(A,2)$, which is null-homotopic if and only
  if $f$ lifts.  Conversely, $\phi_X$ determines~$o_X$.  Namely, $o_X$ is the
  image of $\id_A$ under
  \[
    \Hom(A, A) = \Hom(H_2(K(A,2)),A) = H^2(K(A,2);A)
    \xrightarrow{(\phi_X)^*} H^2(X,A).
  \]
  By the naturality of the
  obstruction class $o_X$, $\phi_X$ is the composition
  \[
    X\xrightarrow{f} B(\widehat\pi/\widehat\pi_{\kappa}) \xrightarrow{\phi} K(A,2)
  \]
  where $\phi=\phi_{B(\widehat\pi/\widehat\pi_{\kappa})}$ is the map associated
  to the identity of $B(\widehat\pi/\widehat\pi_{\kappa})$.

  Consider the following specific lifting problem, which is for $X=N$:
  \[
    \begin{tikzcd}[row sep=large]
      & &
      B(\widehat\pi/\widehat\pi_{\kappa+1})\ar[d]
      \\
      N \ar[r] \ar[d,hook,"j"'] \ar[rru,dashed,bend left=15] &
      B(\widehat G/\widehat G_\kappa)
        \ar[r,"\simeq"',"f^{-1}g"] \ar[d,"g"',"\simeq"] &
      B(\widehat\pi/\widehat\pi_\kappa) \ar[r,"\phi"] &
      K(A,2)
      \\
      W \ar[r] &
      B(\widehat\Gamma/\widehat\Gamma_\kappa)
        \ar[ru,"f^{-1}"' yshift=1ex,"\simeq"]
    \end{tikzcd}
  \]
  Here the bottom row is obtained from that $W$ is a bordism
  over~$\widehat\Gamma/\widehat\Gamma_\kappa$.
  
  \begin{claim-named}[Claim~3]
    There exists a lift $N\to B(\widehat\pi/\widehat\pi_{\kappa+1})$.
  \end{claim-named}

  \begin{subproof}
    To prove this, note that the obstruction $o_N$ is the image of $\id_A$ under
    the composition
    \[
      \begin{tikzcd}[row sep=large,column sep=small]
        \Hom(A,A) \ar[r,equal] &
        H^2(K(A,2);A) \ar[r] &
        H^2(B(\widehat\pi/\widehat\pi_{\kappa});A) \ar[r] \ar[d,"(f^{-1})^*"'] &
        H^2(N;A)
        \\
        & & H^2(B\widehat\Gamma/\widehat\Gamma_{\kappa});A) \ar[r] &
        H^2(W;A) \ar[u,"j^*"']
      \end{tikzcd}.
    \]
    Thus, $o_N$ vanishes if and only the image of $\id_A$ in $H^2(W;A)$ lies in
    the kernel of the map $H^2(W;A) \xrightarrow{j^*} H^2(N;A)$.  To show that
    it is the case, consider the following lifting problem for $M$ in place
    of~$N$:
    \[
      \begin{tikzcd}[row sep=large]
        &
        B(\widehat\pi/\widehat\pi_{\kappa+1}) \ar[d]
        \\
        M \ar[r]\ar[d,hook,"i"'] \ar[ru,dashed,bend left=20] &
        B(\widehat\pi/\widehat\pi_\kappa) \ar[r,"\phi"] &
        K(A,2)
        \\
        W \ar[r] &
        B(\widehat\Gamma/\widehat\Gamma_\kappa)
          \ar[u,"f^{-1}"',"\simeq"]
      \end{tikzcd}
    \]
    Since $M\to B(\pi) \to B(\widehat\pi) \to
    B(\widehat\pi/\widehat\pi_{\kappa+1})$ is obviously a lift, the obstruction
    $o_M$ vanishes. On the other hand, by the above argument applied to this
    case, $o_M$ vanishes if and only if the image of $\id_A$ in $H^2(W;A)$ lies
    in the kernel of $H^2(W;A) \xrightarrow{i^*} H^2(M;A)$.  By Claim~2, it
    follows that the image of $\id_A$ is contained in the kernel of $H^2(W;A)
    \to H^2(N;A)$ as well.  That is, the obstruction $o_N$ vanishes too.  This
    proves Claim~3.
  \end{subproof}

  \begin{claim-named}[Claim 4]
    There is a lift $\alpha\colon \widehat G/\widehat G_{\kappa+1}
    \to \widehat\pi/\widehat\pi_{\kappa+1}$ of~$f^{-1}g$.
    \[
      \begin{tikzcd}[sep=large]
        \widehat G/\widehat G_{\kappa+1} \ar[r,dashed,"\alpha"] \ar[d] &
        \widehat\pi/\widehat\pi_{\kappa+1} \ar[d]
        \\
        \widehat G/\widehat G_{\kappa}\ar[r,"f^{-1}g" below,"\cong"] &
        \widehat\pi/\widehat\pi_{\kappa}
      \end{tikzcd}
    \]
  \end{claim-named}

  \begin{subproof}
    To show this, first take the homomorphism $G\to
    \widehat\pi/\widehat\pi_{\kappa+1}$ induced by the lift $N\to
    B(\widehat\pi/\widehat\pi_{\kappa+1})$ in Claim~3.  It is a lift of
    $f^{-1}g\colon \widehat G/\widehat G_\kappa \isomto
    \widehat\pi/\widehat\pi_\kappa$.  Since $\widehat\pi/\widehat\pi_{\kappa+1}$
    is local by Lemma~\ref{lemma:lcsq-of-local-group-is-local}, $G\to
    \widehat\pi/\widehat\pi_{\kappa+1}$ induces a homomorphism $\widehat G \to
    \widehat\pi/\widehat\pi_{\kappa+1}$.  It induces a desired homomorphism
    $\alpha\colon \widehat G/\widehat G_{\kappa+1} \to
    \widehat\pi/\widehat\pi_{\kappa+1}$, since $\widehat G_{\kappa+1}\subset
    \widehat G$ is sent into $(\widehat\pi/\widehat\pi_{\kappa+1})_{\kappa+1} =
    \widehat\pi_{\kappa+1}/\widehat\pi_{\kappa+1}=\{e\}$.
  \end{subproof}

  Our goal is to show that the lift $\alpha$ in Claim~4 is an isomorphism.  For
  this purpose,  exchange the roles of $M$ and $N$ and apply the same argument,
  to obtain a lift of~$g^{-1}f\colon \widehat\pi/\widehat\pi_{\kappa} \to
  \widehat G/\widehat G_{\kappa}$, and call it $\beta\colon
  \widehat\pi/\widehat\pi_{\kappa+1} \to \widehat G/\widehat G_{\kappa+1}$ .
  
  \begin{claim-named}[Claim~5]
    The composition $\alpha\beta \colon
    \widehat\pi/\widehat\pi_{\kappa+1} \to \widehat\pi/\widehat\pi_{\kappa+1}$
    is an isomorphism.
  \end{claim-named}

  \begin{subproof}
    To prove this, consider the following diagram.
    \[
      \begin{tikzcd}[row sep=large]
        1 \ar[r] & \widehat\pi_\kappa/\widehat\pi_{\kappa+1}
          \ar[r]\ar[d,"\alpha\beta|_{\widehat\pi_\kappa/\widehat\pi_{\kappa+1}}"'] &
        \widehat\pi/\widehat\pi_{\kappa+1} \ar[r]\ar[d,"\alpha\beta"] &
        \widehat\pi/\widehat\pi_\kappa \ar[r]\ar[d,"\id"] & 1
        \\
        1 \ar[r] & \widehat\pi_\kappa/\widehat\pi_{\kappa+1} \ar[r] &
        \widehat\pi/\widehat\pi_{\kappa+1} \ar[r] &
        \widehat\pi/\widehat\pi_\kappa \ar[r] & 1
      \end{tikzcd}
    \]
    Here, the right square commutes since $\alpha$ and $\beta$ are lifts of
    $f^{-1}g$ and $g^{-1}f$ and thus $\alpha\beta$ is a lift of the identity. By
    the five lemma, if the left vertical arrow
    $\widehat\pi_\kappa/\widehat\pi_{\kappa+1} \to
    \widehat\pi_\kappa/\widehat\pi_{\kappa+1}$ is an isomorphism, then
    $\alpha\beta$ is an isomorphism too.  We will indeed show that $\alpha\beta$
    restricts to the identity on the (larger)
    subgroup~$\widehat\pi_2/\widehat\pi_{\kappa+1}$.  Suppose $g\in
    \widehat\pi_2/\widehat\pi_{\kappa+1}$.  Write $g$ as a product $g=\prod_i
    [a_i,b_i]$ of commutators, where $a_i$, $b_i\in
    \widehat\pi/\widehat\pi_{\kappa+1}$.  Since $\alpha\beta$ is a lift of the
    identity of~$\widehat\pi/\widehat\pi_{\kappa}$, we have
    $\alpha\beta(a_i)=a_i u_i$ and $\alpha\beta(b_i)=b_i v_i$ for some $u_i$,
    $v_i\in \widehat\pi_\kappa/\widehat\pi_{\kappa+1}$.  Since
    $\widehat\pi_\kappa/\widehat\pi_{\kappa+1}$ is central in
    $\widehat\pi/\widehat\pi_{\kappa+1}$, $[a_iu_i,b_iv_i]=[a_i,b_i]$.  It
    follows that
    \[
      \alpha\beta(g)=\prod_i [\alpha\beta(a_i),\alpha\beta(b_i)]
      = \prod_i [a_i u_i, b_i v_i] = \prod_i [a_i,b_i] = g.
    \]
    This completes the proof of Claim~5.
  \end{subproof}

  Now, by Claim~5, $\alpha$ is injective and $\beta$ is surjective. Exchange the
  roles of $\alpha$ and $\beta$ and apply the same argument, to conclude that
  $\alpha$ is surjective and $\beta$ is injective.   Therefore $\alpha$ and
  $\beta$ are isomorphisms.  This completes the proof of
  Theorem~\ref{theorem:determination-next-lcsq}.
\end{proof}

\begin{proof}[Proof of Theorem~\ref{theorem:main-determination-lcsq-lift}]
  Suppose $M$ is a closed 3-manifold with $\pi=\pi_1(M)$, which is endowed with
  an isomorphism $f\colon \widehat\pi/\widehat\pi_\kappa \isomto
  \widehat\Gamma/\widehat\Gamma_\kappa$.  Suppose $f$ lifts to an isomorphism
  $\tilde f\colon \widehat\pi/\widehat\pi_{\kappa+1} \isomto
  \widehat\Gamma/\widehat\Gamma_{\kappa+1}$.  Then $\theta_{\kappa+1}(M,\tilde f)$ is sent to
  $\theta_\kappa(M,f)$ under $\cR_{\kappa+1}(\Gamma)\to \cR_\kappa(\Gamma)$.
  Therefore $\theta_{\kappa}(M)=\theta_{\kappa+1}(M,\tilde f)$ vanishes in the cokernel of
  $\cR_{\kappa+1}(\Gamma)\to \cR_\kappa(\Gamma)$.

  For the converse, suppose $\theta_{\kappa}(M)=\theta_{\kappa}(M,f)$ vanishes in the cokernel of
  $\cR_{\kappa+1}(\Gamma)\to \cR_\kappa(\Gamma)$. This means that there is a
  closed 3-manifold $N$ with $G=\pi_1(N)$ which is endowed with an isomorphism
  $\tilde g\colon \widehat G/\widehat G_{\kappa+1} \isomto
  \widehat\Gamma/\widehat\Gamma_{\kappa+1}$, such that $\theta_{\kappa+1}(N,\tilde g)$ is
  sent to $\theta_\kappa(M,f)$ under $\cR_{\kappa+1}(\Gamma)\to
  \cR_\kappa(\Gamma)$.  Let $g\colon \widehat
  G/\widehat G_{\kappa} \isomto \widehat\Gamma/\widehat\Gamma_{\kappa}$ be induced by~$\tilde g$.  Then $\theta_\kappa(N,g)=\theta_\kappa(M,f)$, and thus it follows
  that $g^{-1}f$ lifts to an isomorphism $\widehat\pi/\widehat\pi_{\kappa+1}
  \isomto \widehat G/\widehat G_{\kappa+1}$,  by
  Theorem~\ref{theorem:determination-next-lcsq}.  Compose this lift
  with $\tilde g\colon \widehat G/\widehat G_{\kappa+1} \isomto
  \widehat\Gamma/\widehat\Gamma_{\kappa+1}$, to obtain an isomorphism
  $\widehat\pi/\widehat\pi_{\kappa+1} \isomto
  \widehat\Gamma/\widehat\Gamma_{\kappa+1}$ which is a lift of~$f$.
\end{proof}

\subsection{Proof of Corollary~\ref{corollary:main-determination-equiv-rel}}
\label{subsection:proof-main-determination-rel}

Recall from Definition~\ref{definition:equiv-rel-realzable-classes} that the
equivalence relation $\sim$ on $\cR_\kappa(\Gamma)$ is defined as follows.  For
$\theta \in \cR_\kappa(\Gamma)$, there is a closed 3-manifold $M$ with
$\pi=\pi_1(M)$, which is equipped with an isomorphism $f\colon
\widehat\pi/\widehat\pi_{\kappa} \isomto
\widehat\Gamma/\widehat\Gamma_{\kappa}$, such that $\theta_\kappa(M)=\theta$.
Let $I_\theta$ be the image of
\begin{equation}
  \label{equation:projection-from-pi-to-Gamma}
  \cR_{\kappa+1}(\pi) \to \cR_\kappa(\pi) \xrightarrow[f_*]{\approx}
  \cR_\kappa(\Gamma).
\end{equation}

\begin{lemma}
  \label{lemma:partition-on-realizable-classes}
  The set $I_\theta$ is well-defined, and $I_\phi=I_\theta$ whenever $\phi\in
  I_\theta$.
\end{lemma}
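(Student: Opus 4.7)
The plan is to reduce both claims to Theorem~\ref{theorem:determination-next-lcsq}, which upgrades an equality of $\theta_\kappa$-invariants to an isomorphism at the next lower central stage.

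First I would establish that $I_\theta$ is well defined, independent of the choice of realizing pair $(M,f)$. Suppose $(M,f)$ and $(M',f')$ are two pairs with $\theta_\kappa(M,f)=\theta=\theta_\kappa(M',f')$, and write $\pi=\pi_1(M)$, $\pi'=\pi_1(M')$. By Theorem~\ref{theorem:determination-next-lcsq}, the isomorphism $h:=f^{-1}f'\colon \widehat{\pi'}/\widehat{\pi'}_\kappa\isomto \widehat\pi/\widehat\pi_\kappa$ lifts to an isomorphism $\tilde h\colon \widehat{\pi'}/\widehat{\pi'}_{\kappa+1}\isomto \widehat\pi/\widehat\pi_{\kappa+1}$. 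The key observation is that any such $(\kappa+1)$-level isomorphism induces a bijection $\tilde h_*\colon \cR_{\kappa+1}(\pi')\isomto \cR_{\kappa+1}(\pi)$ via post-composition with the reference isomorphism attached to each realizing 3-manifold, and likewise $h_*$ on $\cR_\kappa$. Since these bijections are compatible with the projections $\cR_{\kappa+1}\to \cR_\kappa$, and since $f\circ h=f'$, the images of the two compositions $\cR_{\kappa+1}(\pi)\to \cR_\kappa(\pi)\xrightarrow{f_*}\cR_\kappa(\Gamma)$ and $\cR_{\kappa+1}(\pi')\to \cR_\kappa(\pi')\xrightarrow{f'_*}\cR_\kappa(\Gamma)$ agree.

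For the second claim, suppose $\phi\in I_\theta$. Then $\phi$ is the image of some class $\theta_{\kappa+1}(M',\tilde g)$, where $M'$ is a closed 3-manifold with $\pi'=\pi_1(M')$ and $\tilde g\colon \widehat{\pi'}/\widehat{\pi'}_{\kappa+1}\isomto \widehat\pi/\widehat\pi_{\kappa+1}$, under the composition defining $I_\theta$; equivalently, $\phi=\theta_\kappa(M',f\circ g)$, where $g$ is the $\kappa$-level reduction of $\tilde g$. Thus $(M',f\circ g)$ is a valid realizing pair for $\phi$, so by the first part $I_\phi$ may be computed from this pair. The lift $\tilde g$ now supplies the same type of bijection between $\cR_{\kappa+1}(\pi')$ and $\cR_{\kappa+1}(\pi)$ (and between $\cR_\kappa(\pi')$ and $\cR_\kappa(\pi)$), producing a commutative diagram with common right-hand column $\cR_\kappa(\Gamma)$, from which $I_\phi=I_\theta$ follows immediately.

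The main technical point I expect to require care is the naturality input used repeatedly: that a $(\kappa+1)$-level isomorphism of localizations of 3-manifold groups induces a bijection of realizable-classes sets by post-composing reference isomorphisms, compatibly with the projection to the $\kappa$-level. This is a direct consequence of the definitions~\eqref{equation:realizable-classes} and Definition~\ref{definition:theta-kappa}, but it is the hinge that makes both diagram chases go through. Once it is cleanly spelled out, the remainder is mechanical, with Theorem~\ref{theorem:determination-next-lcsq} providing the essential input in the first step.
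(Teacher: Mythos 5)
Your proposal is correct and follows essentially the same route as the paper: the lift supplied by Theorem~\ref{theorem:determination-next-lcsq} (equivalently, Theorem~\ref{theorem:main-determination-lcsq-lift}), together with the functoriality of $\cR_*(-)$ under isomorphisms of the localized lower central quotients, yields the equality of images. The paper merely packages the two assertions into a single diagram argument by noting that both the well-definedness case ($\theta_\kappa(N,g)=\theta$) and the partition case ($\theta_\kappa(N,g)=\phi$) fall under the common hypothesis $\theta_\kappa(N,g)\in I_\theta$, whereas you treat them in sequence; this is a presentational, not substantive, difference.
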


From Lemma~\ref{lemma:partition-on-realizable-classes}, it follows that the sets
$I_\theta$ form a partition of~$\cR_\kappa(\Gamma)$. On~$\cR_\kappa(\Gamma)$, we
write $\theta\sim\phi$ if $I_\theta=I_\phi$.

\begin{proof}[Proof of Lemma~\ref{lemma:partition-on-realizable-classes}]
  Let $\theta\in \cR_\kappa(\Gamma)$ and $(M,f)$ be as above, and suppose $N$ is a closed 3-manifold with $G=\pi_1(N)$ equipped with an
  isomorphism $g\colon \widehat G/\widehat G_{\kappa} \isomto
  \widehat\Gamma/\widehat\Gamma_{\kappa}$ such that $\theta_\kappa(N,g)$ lies in
  the image of the map~\eqref{equation:projection-from-pi-to-Gamma}.  Then there
  is an isomorphism lift $\widetilde{f^{-1}g}\colon \widehat G/\widehat G_{\kappa+1} \isomto
  \widehat\pi/\widehat\pi_{\kappa+1}$ of $f^{-1}g\colon \widehat G/\widehat
  G_\kappa \isomto \widehat\pi/\widehat\pi_\kappa$, by
  Theorem~\ref{theorem:main-determination-lcsq-lift} applied to~$(N,f^{-1}g)$.
  The induced functions on $\cR_*(-)$ form the following commutative diagram,
  where all horizontal arrows are bijective.
  \[
    \begin{tikzcd}[row sep=large,column sep=scriptsize]
      \cR_{\kappa+1}(G) \ar[rr,"\approx","\widetilde{(f^{-1}g)}_*"'] \ar[d] & &
      \cR_{\kappa+1}(\pi) \ar[d]
      \\
      \cR_{\kappa}(G) \ar[r,"\approx","g_*"'] &
      \cR_{\kappa}(\Gamma) &
      \cR_{\kappa}(\pi) \ar[l,"\approx"',"f_*"] &      
    \end{tikzcd}
  \]
  Indeed, if $P$ is a closed 3-manifold equipped with $\widetilde h\colon \widehat{\pi_1(P)} / \widehat{\pi_1(P)}_{\kappa+1} \isomto \widehat G / \widehat G_{\kappa+1}$ which induces $h\colon \widehat{\pi_1(P)} / \widehat{\pi_1(P)}_{\kappa} \isomto \widehat G / \widehat G_{\kappa}$, then the images of $\theta_{\kappa+1}(P,\widetilde h) \in \cR_{\kappa+1}(G)$ under the arrows in the above diagram are given by several $\theta$-invariants of the same $P$, as shown below:
  \[
    \begin{tikzcd}[row sep=large,column sep=scriptsize]
      \theta_{\kappa+1}(P,\widetilde h)
      \ar[rr,|->,"\widetilde{(f^{-1}g)}_*"] \ar[d,|->] & &
      \theta_{\kappa+1}(P,\widetilde{(f^{-1}g)}\circ\widetilde h) \ar[d,|->]
      \\
      \theta_\kappa(P,h) \ar[r,|->,"g_*"] &
      \theta_{\kappa}(P,gh) &
      \theta_{\kappa}(P,f^{-1}gh) \ar[l,|->,"f_*"'] &      
    \end{tikzcd}
  \]
  As an immediate consequence of the commutativity, we have
  \begin{equation}
    \label{equation:image-equality-for-realizable-classes}
    \Im\{\cR_{\kappa+1}(G) \rightarrow \cR_\kappa(G) \rightarrow
    \cR_\kappa(\Gamma) \} = 
    \Im\{\cR_{\kappa+1}(\pi) \rightarrow \cR_\kappa(\pi) \rightarrow
    \cR_\kappa(\Gamma) \}.
  \end{equation}

  Now, writing $\phi=\theta_\kappa(N,g)$, the left and right hand sides of~\eqref{equation:image-equality-for-realizable-classes} are $I_\phi$ and~$I_\theta$, respectively.
  This shows the assertion $I_\phi=I_\theta$.
  Also, when $\theta_\kappa(N,g)=\theta$, \eqref{equation:image-equality-for-realizable-classes} shows that $I_\theta$ is well-defined, independent of the choice of~$(M,f)$.
\end{proof}

Once we formulate the above setup, it is rather straightforward to obtain
Corollary~\ref{corollary:main-determination-equiv-rel}, which asserts the
following: suppose $M$ and $N$ are closed 3-manifolds with fundamental groups
$\pi=\pi_1(M)$ and $G=\pi_1(N)$, which are equipped with isomorphisms $f\colon
\widehat\pi/\widehat\pi_\kappa \isomto \widehat\Gamma/\widehat\Gamma_\kappa$ and
$g\colon \widehat G/\widehat G_\kappa \isomto
\widehat\Gamma/\widehat\Gamma_\kappa$. Then, $f^{-1}g$ lifts to an isomorphism
$G/\widehat G_{\kappa+1} \isomto \widehat\pi/\widehat\pi_{\kappa+1}$  if and
only if $\theta_\kappa(M) \sim \theta_\kappa(N)$ in~$\cR_\kappa(\Gamma)$.

\begin{proof}[Proof of Corollary~\ref{corollary:main-determination-equiv-rel}]
  
  Let $f_*\colon \cR_\kappa(\pi) \to \cR_\kappa(\Gamma)$ be the induced
  bijection. By definition, $\theta_\kappa(N)$ lies in the subset
  $I_{\theta_\kappa(M)}$ of $\cR_\kappa(\Gamma)$ if and only if $f_*^{-1}
  \theta_\kappa(N) \in \cR_\kappa(\pi)$ lies in the image of
  $\cR_{\kappa+1}(\pi) \to \cR_\kappa(\pi)$; in other words, $f_*^{-1}
  \theta_\kappa(N) = 0$ in the cokernel of $\cR_{\kappa+1}(\pi) \to
  \cR_\kappa(\pi)$.  It is the case if and only if $f^{-1}g$ lifts to an
  isomorphism $\widehat G/\widehat G_{\kappa+1} \isomto
  \widehat\pi/\widehat\pi_{\kappa+1}$, by applying
  Theorem~\ref{theorem:main-determination-lcsq-lift} to~$(N,f^{-1}g)$.
\end{proof}

\section{Transfinite Stallings-Dwyer theorem and transfinite gropes}
\label{section:transfinite-stallings-dwyer}

The goal of this section is to provide \emph{transfinite} generalizations of a
well known result of Stallings \cite{Stallings:1965-1} and
Dwyer~\cite{Dwyer:1975-1}, and relate it with a notion of transfinite gropes
which we define in this section too.  In
Section~\ref{subsection:transfinite-grope}, we prove
the {\em Addendum to Theorems~\ref{theorem:main-determination-lcsq-nonlift} and~\ref{theorem:main-milnor-inv}}, as stated in Section~\ref{subsection:results-transfinite-grope}.  The transfinite generalizations of
the Stallings-Dwyer theorem will also be used in the proof of realization
theorems in Section~\ref{section:realization-transfinite-inv}.

\subsection{Algebraic statements}
\label{subsection:homology-and-transfinite-lcsq}

\begin{theorem}[Transfinite Stallings-Dwyer]
  \label{theorem:transfinite-stallings-dwyer}
  Let $\kappa>1$ be an arbitrary ordinal.  Suppose $f\colon \pi\to G$ is a group
  homomorphism inducing an isomorphism $H_1(\pi) \isomto H_1(G)$.  If $\kappa$
  is an infinite ordinal, suppose $G$ is finitely generated.  Then $f$ induces
  an isomorphism $\widehat\pi/\widehat\pi_{\kappa} \isomto \widehat G/\widehat
  G_{\kappa}$ if and only if $f$ induces an epimorphism
  \begin{equation}
      \label{equation:transfinite-stallings-dwyer-H2-map}
      H_2(\widehat\pi)\to H_2(\widehat G)/\Ker\{H_2(\widehat G)\rightarrow
      H_2(\widehat G/\widehat G_{\lambda})\}
  \end{equation}
  for all ordinals $\lambda<\kappa$.
\end{theorem}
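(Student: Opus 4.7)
The plan is to proceed by transfinite induction on $\kappa$, proving both implications simultaneously. For the base case $\kappa=2$ the surjectivity condition at $\lambda=1$ is vacuous because $\widehat G/\widehat G_1$ is trivial, and the remaining content is that $f$ induces an isomorphism on $\widehat\pi/\widehat\pi_2 = H_1(\widehat\pi) \isomto H_1(\widehat G) = \widehat G/\widehat G_2$, which follows from the assumption on $H_1$ together with the $2$-connectedness of the localization maps (Theorem~\ref{theorem:localization-basic-facts}).

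The central homological tool is the $5$-term exact sequence applied to $1\to\widehat K_\lambda\to\widehat K\to\widehat K/\widehat K_\lambda\to 1$. Since $H_1(\widehat K)\to H_1(\widehat K/\widehat K_\lambda)$ is an isomorphism for $\lambda\ge 2$, the sequence collapses to the identification
\[
  \widehat K_\lambda/\widehat K_{\lambda+1} \;\cong\; \mathrm{Coker}\bigl(H_2(\widehat K) \to H_2(\widehat K/\widehat K_\lambda)\bigr).
\]
For the successor step from $\lambda$ to $\lambda+1$, I would apply the $5$-lemma to the ladder of extensions $1\to\widehat K_\lambda/\widehat K_{\lambda+1}\to\widehat K/\widehat K_{\lambda+1}\to\widehat K/\widehat K_\lambda\to 1$ with $K=\pi,G$. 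Granted the inductive isomorphism $f_\lambda$, the isomorphism $f_{\lambda+1}$ reduces to the leftmost vertical arrow being an isomorphism; via the cokernel identification above and naturality, this condition on the leftmost arrow translates exactly into the epimorphism condition~\eqref{equation:transfinite-stallings-dwyer-H2-map} at $\lambda$.

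For a limit ordinal $\kappa$ with $f_\lambda$ an isomorphism for all $\lambda<\kappa$, injectivity of $f_\kappa$ is formal: any representative $\widetilde x\in\widehat\pi$ of $x\in\ker f_\kappa$ satisfies $f(\widetilde x)\in\widehat G_\kappa = \bigcap_{\lambda<\kappa}\widehat G_\lambda$, and the inductive hypothesis forces $\widetilde x\in\widehat\pi_\lambda$ for every $\lambda<\kappa$, hence $\widetilde x\in\widehat\pi_\kappa$. This is where finite generation of $G$ becomes essential for surjectivity: Lemma~\ref{lemma:H1-surj-localization-surj} yields $\widehat f\colon\widehat\pi\to\widehat G$ surjective, so any $y\in\widehat G/\widehat G_\kappa$ is hit by lifting to $\widehat G$ and pulling back through $\widehat f$. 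The forward direction at the limit is then routine: $f_\kappa$ iso restricts to $f_{\lambda+1}$ iso for every $\lambda+1\le\kappa$ (the transfinite lower central series being intrinsic to the abstract quotient, with images of intersections equalling intersections of images when all subgroups contain $\widehat\pi_\kappa$), so the successor analysis gives the surjectivity condition at each $\lambda<\kappa$.

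The hard part will be the homological translation in the successor step. One must verify via naturality that the image of $H_2(\widehat\pi)$ in $H_2(\widehat G/\widehat G_\lambda)$ always sits inside the image of $H_2(\widehat G)$, so that the isomorphism $\mathrm{Coker}(H_2(\widehat\pi)\to H_2(\widehat\pi/\widehat\pi_\lambda)) \isomto \mathrm{Coker}(H_2(\widehat G)\to H_2(\widehat G/\widehat G_\lambda))$ -- equivalently, the equality of these two images in $H_2(\widehat G/\widehat G_\lambda)$ under the transported iso $(f_\lambda)_*$ -- is captured precisely by the surjectivity of $H_2(\widehat\pi)\to H_2(\widehat G)/\ker\{H_2(\widehat G)\to H_2(\widehat G/\widehat G_\lambda)\}$ and not by any stronger lifting demand to $H_2(\widehat G)$ itself. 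This asymmetric formulation of the hypothesis is what makes the induction close on itself at each successor stage without ever demanding more than what $f_\lambda$ provides.
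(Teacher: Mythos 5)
Your proposal is correct and follows essentially the same route as the paper's proof: you first dispatch surjectivity of $\widehat\pi/\widehat\pi_\kappa\to\widehat G/\widehat G_\kappa$ using $H_1$ together with Lemma~\ref{lemma:H1-surj-localization-surj}, then run a transfinite induction in which the successor step is Stallings' exact sequence (your ``5-term sequence,'' with the cokernel identification $\widehat K_\lambda/\widehat K_{\lambda+1}\cong \Coker(H_2(\widehat K)\to H_2(\widehat K/\widehat K_\lambda))$) plus the snake/5-lemma on the two-row ladder, and the limit step is the formal intersection argument for injectivity. This is the argument the paper gives, including your observation that the image of $H_2(\widehat\pi)$ in $H_2(\widehat G/\widehat G_\lambda)$ automatically lands inside that of $H_2(\widehat G)$, so that the cokernel comparison is exactly condition~\eqref{equation:transfinite-stallings-dwyer-H2-map} and nothing stronger.
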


Note that if $\kappa$ is a discrete ordinal, then the homomorphism
\eqref{equation:transfinite-stallings-dwyer-H2-map} is surjective for all
$\lambda<\kappa$ if and only if it is surjective for $\lambda=\kappa-1$.
Especially, if $\kappa$ is finite, then by
Corollary~\ref{corollary:localization-basic-consequences}\ref{item:local-lcsq-ordinary-lcsq},
Theorem~\ref{theorem:transfinite-stallings-dwyer} specializes to the
Stallings-Dwyer theorem~\cite{Stallings:1965-1, Dwyer:1975-1}: \emph{for a
homomorphism $f\colon \pi\to G$ which induces an isomorphism $H_1(\pi) \isomto
H_1(G)$, $f$ induces an isomorphism $\pi/\pi_{k}
\cong G/G_{k}$ if and only if $f$ induces an epimorphism}
\[
    H_2(\pi)\to H_2(G)/\Ker\{H_2(G)\rightarrow H_2(G/G_{k-1})\}.
\]

Before proving Theorem~\ref{theorem:transfinite-stallings-dwyer} in Section~\ref{subsection:proof-transfinite-stallings-dwyer}, we record some consequences.  We will use the following notation, which is a transfinite generalization of the notation used in Dwyer~\cite[p.~178]{Dwyer:1975-1}.

\begin{definition}[Transfinite Dwyer kernel]
  
  Suppose $G$ is a group, and $\kappa>1$ is an ordinal.  The \emph{transfinite
  Dwyer kernel} is defined by
  \[
    \psi_\kappa(G) = 
    \begin{cases}
      \Ker\{H_2(G)\to H_2(G/G_{\kappa-1})\}
      &\text{if $\kappa$ is a discrete ordinal,}
      \\
      \bigcap_{\lambda<\kappa} \psi_\lambda(G)
      &\text{if $\kappa$ is a limit ordinal.}
    \end{cases}
  \]
  More generally, for a space $X$ with $\pi=\pi_1(X)$, define $\psi_\kappa(X)$
  by
  \[
    \psi_\kappa(X) = 
    \begin{cases}
      \Ker\{H_2(X)\to H_2(\pi)\to H_2(\pi/\pi_{\kappa-1})\}
      &\text{if $\kappa$ is a discrete ordinal,}
      \\
      \bigcap_{\lambda<\kappa} \psi_\lambda(X)
      &\text{if $\kappa$ is a limit ordinal.}
    \end{cases}
  \]
  That is, $\psi_\kappa(BG)=\psi_\kappa(G)$.  
\end{definition}

\begin{corollary}
  \label{corollary:transfinite-stallings-dwyer}
  Suppose $f\colon \pi \to G$ induces an isomorphism $H_1(\pi)\to H_1(G)$,
  $\kappa>1$, and suppose $G$ is finitely presented if $\kappa$ is transfinite.
  If
  \begin{equation}
    \label{equation:transfinite-dwyer-kernel-H2-map}
    H_2(\widehat\pi) \xrightarrow{\hat f_*} H_2(\widehat G) 
    \to H_2(\widehat G)/\psi_\kappa(\widehat G)
  \end{equation}
  is surjective, then $f$ induces an isomorphism $\widehat\pi/\widehat\pi_\kappa
  \isomto \widehat G/\widehat G_\kappa$ 
\end{corollary}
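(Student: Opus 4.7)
The plan is to reduce the corollary immediately to Theorem~\ref{theorem:transfinite-stallings-dwyer} by unpacking the definition of $\psi_\kappa(\widehat G)$. Write $K_\lambda := \Ker\{H_2(\widehat G)\to H_2(\widehat G/\widehat G_\lambda)\}$, so that Theorem~\ref{theorem:transfinite-stallings-dwyer} asks for surjectivity of $H_2(\widehat\pi)\to H_2(\widehat G)/K_\lambda$ for every $\lambda<\kappa$. The task is to check that this follows from surjectivity onto $H_2(\widehat G)/\psi_\kappa(\widehat G)$.

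First I would prove, by transfinite induction on $\kappa>1$, the identity
\[
\psi_\kappa(\widehat G)=\bigcap_{\lambda<\kappa}K_\lambda.
\]
Since $\widehat G_\mu\subset\widehat G_\lambda$ whenever $\lambda\le\mu$, the projection $\widehat G\to\widehat G/\widehat G_\lambda$ factors through $\widehat G/\widehat G_\mu$, so $K_\mu\subset K_\lambda$; in particular the family $\{K_\lambda\}$ is decreasing in $\lambda$. For discrete $\kappa$, $K_{\kappa-1}$ is therefore the smallest among the $K_\lambda$ for $\lambda<\kappa$, and it equals $\psi_\kappa$ by definition. For limit $\kappa$, applying the inductive hypothesis to each $\psi_\lambda=\bigcap_{\mu<\lambda}K_\mu$ and taking the intersection over $\lambda<\kappa$ delivers the claim.

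Given the identity, the inclusion $\psi_\kappa(\widehat G)\subset K_\lambda$ for each $\lambda<\kappa$ yields a further quotient surjection $H_2(\widehat G)/\psi_\kappa(\widehat G)\twoheadrightarrow H_2(\widehat G)/K_\lambda$. Composing with the hypothesized surjection $H_2(\widehat\pi)\to H_2(\widehat G)/\psi_\kappa(\widehat G)$ gives a surjection $H_2(\widehat\pi)\to H_2(\widehat G)/K_\lambda$ for every $\lambda<\kappa$. Since $f$ induces an isomorphism on $H_1$ and (in the transfinite case) $G$ is finitely presented, hence in particular finitely generated, the hypotheses of Theorem~\ref{theorem:transfinite-stallings-dwyer} are satisfied, and its conclusion provides the desired isomorphism $\widehat\pi/\widehat\pi_\kappa\isomto\widehat G/\widehat G_\kappa$.

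I expect the only mild subtlety to be the bookkeeping in the transfinite induction that identifies $\psi_\kappa(\widehat G)$ with $\bigcap_{\lambda<\kappa}K_\lambda$; once the direction of containment among the $K_\lambda$ is pinned down, the argument is purely formal, with all substantive work absorbed by Theorem~\ref{theorem:transfinite-stallings-dwyer}.
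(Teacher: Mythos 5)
Your proposal is correct and takes essentially the same approach as the paper's proof: both reduce to Theorem~\ref{theorem:transfinite-stallings-dwyer} by observing that $\psi_\kappa(\widehat G)\subset\Ker\{H_2(\widehat G)\to H_2(\widehat G/\widehat G_\lambda)\}$ for all $\lambda<\kappa$, so that the hypothesized surjection onto $H_2(\widehat G)/\psi_\kappa(\widehat G)$ factors through each of the target quotients. The only cosmetic difference is that you package the containment into a clean equality $\psi_\kappa(\widehat G)=\bigcap_{\lambda<\kappa}K_\lambda$ proved by transfinite induction, whereas the paper handles the discrete and limit cases of $\kappa$ directly in two short sentences.
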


Note that Corollary~\ref{corollary:transfinite-stallings-dwyer} assumes the
surjectivity of a single
homomorphism~\eqref{equation:transfinite-dwyer-kernel-H2-map}, instead of the
surjectivity of infinitely many
homomorphisms~\eqref{equation:transfinite-stallings-dwyer-H2-map} in
Theorem~\ref{theorem:transfinite-stallings-dwyer}, for the limit ordinal case.

\begin{proof}
  If $\kappa$ is a discrete ordinal, the codomain
  of~\eqref{equation:transfinite-dwyer-kernel-H2-map} is equal to that
  of~\eqref{equation:transfinite-stallings-dwyer-H2-map}, and thus the corollary
  follows from Theorem~\ref{theorem:transfinite-stallings-dwyer}.  If $\kappa$
  is a limit ordinal, $H_2(\widehat G)/\psi_\kappa(\widehat G)$ surjects onto
  $H_2(\widehat G)/\Ker\{H_2(\widehat G)\to H_2(\widehat G/\widehat
  G_\lambda)\}$ for all $\lambda<\kappa$.  From this and
  Theorem~\ref{theorem:transfinite-stallings-dwyer}, the corollary follows.
\end{proof}

In practice, it may be difficult to verify the hypothesis that
\eqref{equation:transfinite-stallings-dwyer-H2-map}
or~\eqref{equation:transfinite-dwyer-kernel-H2-map} is surjective, since
localizations are involved.  The following variation does not involve
localizations in the hypothesis.

\begin{corollary}
  \label{corollary:transfinite-stallings-dwyer-without-localization-in-H_2}
  Suppose $f\colon \pi \to G$ induces an isomorphism $H_1(\pi)\to H_1(G)$.
  Suppose $\kappa>1$ and $G$ is finitely presented.  If
  \begin{equation*}
    H_2(\pi)\xrightarrow{f_*} H_2(G) \to H_2(G)/\psi_\kappa(G)
  \end{equation*}
  is surjective, then $f$ induces an isomorphism $\widehat\pi/\widehat\pi_\kappa
  \isomto \widehat G/\widehat G_\kappa$.
\end{corollary}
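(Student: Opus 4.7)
The plan is to reduce the statement to Corollary~\ref{corollary:transfinite-stallings-dwyer} by producing the required surjectivity of $H_2(\widehat\pi) \to H_2(\widehat G)/\psi_\kappa(\widehat G)$ from the stated hypothesis, using naturality of the localization map together with naturality of the transfinite lower central series.

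First I would observe that, since $G$ is finitely presented, Theorem~\ref{theorem:localization-basic-facts}\ref{item:localization-colimit} gives that $\iota_G\colon G \to \widehat G$ is 2-connected, so in particular $H_2(G) \to H_2(\widehat G)$ is surjective. Next, for every ordinal $\lambda$, the map $G\to \widehat G$ sends $G_\lambda$ into $\widehat G_\lambda$ (by transfinite induction, since any homomorphism preserves lower central subgroups), so there is an induced map $G/G_\lambda \to \widehat G/\widehat G_\lambda$ fitting into a commuting square
\[
\begin{tikzcd}
H_2(G) \ar[r] \ar[d] & H_2(G/G_\lambda) \ar[d] \\
H_2(\widehat G) \ar[r] & H_2(\widehat G/\widehat G_\lambda)
\end{tikzcd}
\]
from which it follows that $\Ker\{H_2(G) \to H_2(G/G_\lambda)\}$ maps into $\Ker\{H_2(\widehat G) \to H_2(\widehat G/\widehat G_\lambda)\}$.

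The second step is to deduce that $H_2(G) \to H_2(\widehat G)$ carries $\psi_\kappa(G)$ into $\psi_\kappa(\widehat G)$. For discrete $\kappa$ this is the previous observation applied to $\lambda = \kappa-1$. For limit $\kappa$ I would proceed by transfinite induction: once the containment $\psi_\lambda(G) \subset \psi_\lambda(\widehat G)$ is known for every $\lambda<\kappa$, the defining identity $\psi_\kappa(G) = \bigcap_{\lambda<\kappa}\psi_\lambda(G)$ forces the image to lie in $\bigcap_{\lambda<\kappa}\psi_\lambda(\widehat G)=\psi_\kappa(\widehat G)$. Combined with the surjectivity of $H_2(G)\to H_2(\widehat G)$, the induced quotient map $H_2(G)/\psi_\kappa(G) \to H_2(\widehat G)/\psi_\kappa(\widehat G)$ is then surjective.

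Finally, post-composing the hypothesis $H_2(\pi) \to H_2(G)/\psi_\kappa(G)$ with this surjection shows that $H_2(\pi) \to H_2(\widehat G)/\psi_\kappa(\widehat G)$ is surjective. By functoriality of the localization, this composition factors through $H_2(\widehat\pi)$, so $H_2(\widehat\pi) \to H_2(\widehat G)/\psi_\kappa(\widehat G)$ is also surjective, and Corollary~\ref{corollary:transfinite-stallings-dwyer} yields the desired isomorphism $\widehat\pi/\widehat\pi_\kappa \isomto \widehat G/\widehat G_\kappa$. I expect no substantial obstacle: the argument is essentially a diagram chase plus the 2-connectedness of $G\to \widehat G$; the only mildly delicate point is the transfinite induction at limit ordinals, which is resolved immediately by the definition of $\psi_\kappa$ as an intersection.
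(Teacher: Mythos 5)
Your proof is correct and takes essentially the same route as the paper: the paper draws the commutative diagram with $H_2(\pi)\to H_2(G)\to H_2(G)/\psi_\kappa(G)$ over $H_2(\widehat\pi)\to H_2(\widehat G)\to H_2(\widehat G)/\psi_\kappa(\widehat G)$, notes that the middle vertical map is surjective since $G\to\widehat G$ is 2-connected, and invokes Corollary~\ref{corollary:transfinite-stallings-dwyer}. The only difference is that you spell out, via transfinite induction, why $\psi_\kappa(G)$ is carried into $\psi_\kappa(\widehat G)$ — a point the paper leaves implicit in asserting the right-hand square of its diagram — which is a harmless and arguably clarifying addition.
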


\begin{proof}
  Consider the following commutative diagram.
  \[
    \begin{tikzcd}[row sep=large]
      H_2(\pi) \ar[r] \ar[d] &
      H_2(G) \ar[r] \ar[d,two heads] &
      H_2(G)/\psi_\kappa(G) \ar[d,two heads]
      \\
      H_2(\widehat\pi) \ar[r] &
      H_2(\widehat G) \ar[r] &
      H_2(\widehat G)/\psi_\kappa(\widehat G)
    \end{tikzcd}
  \]
  Since $G$ is finitely presented, the middle vertical arrow is surjective by
  Theorem~\ref{theorem:localization-basic-facts}\ref{item:localization-colimit},
  and consequently the right vertical arrow is surjective.  It follows that
  bottom horizontal composition is surjective if the top horizontal composition
  is surjective.  So Corollary~\ref{corollary:transfinite-stallings-dwyer}
  implies
  Corollary~\ref{corollary:transfinite-stallings-dwyer-without-localization-in-H_2}.
\end{proof}

\subsection{Transfinite gropes}
\label{subsection:transfinite-grope}

In this subsection we relate transfinite lower central quotients to a
transfinite version of gropes, using the results in
Section~\ref{subsection:homology-and-transfinite-lcsq}.  The main statement is
Corollary~ \ref{corollary:transfinite-lcsq-grope}.  This is a
transfinite generalization of the finite case approach of Freedman and
Teichner~\cite[Section~2]{Freedman-Teichner:1995-2}.

We begin with new definitions.  In what follows, a \emph{symplectic basis} on a
surface of genus $g$ designates a collection of simple closed curves $a_i$,
$b_i$ ($i=1,\ldots,g$) such that $a_i$ and $b_i$ are transverse and intersect
exactly once for all $i$ and $(a_i\cup b_i)\cap(a_j\cup b_j)=\emptyset$ for
$i\ne j$.

\begin{definition}[Transfinite gropes]
  \phantomsection\label{definition:transfinite-grope}
  \leavevmode\Nopagebreak
  \begin{enumerate}
    \item Suppose $\Sigma \to X$ is a map of a connected surface $\Sigma$ with
    connected or empty boundary to a space~$X$.  For a discrete ordinal
    $\kappa>1$, we say that the map $\Sigma \to X$ \emph{supports a grope of
    class~$\kappa$}, or shortly \emph{supports a $\kappa$-grope}, if there is a
    symplectic basis $\{a_i,b_i\}$ on $\Sigma$ such that $a_i$ bounds a
    $(\kappa-1)$-grope in $X$, in the sense defined below, for each~$i$.
    \item A loop $\gamma$ in $X$ \emph{bounds a grope of class $\lambda$}, that
    is, \emph{bounds a $\lambda$-grope}, if either
    \begin{enumerate}
      \item $\lambda=1$,
      \item $\lambda>1$ is a discrete ordinal and there is a
      map of a surface to $X$ which is bounded by $\gamma$ and supports a
      $\lambda$-grope, or
      \item $\lambda$ is a limit ordinal and $\gamma$ bounds a $\mu$-grope for
      each $\mu<\lambda$.
    \end{enumerate}
  \end{enumerate}
\end{definition}

\begin{definition}[The grope class of a second homology class]
  \label{definition:class-of-homology-class-in-H_2}
  Let $\kappa>1$.  A homology class $\sigma\in H_2(X)$ is \emph{represented by
  a $\kappa$-grope}, or \emph{is of class~$\kappa$}, if either
  \begin{enumerate}
    \item $\kappa$ is a discrete ordinal and $\sigma$ is represented by a map
    of a closed surface supporting a $\kappa$-grope, or
    \item $\kappa$ is a limit ordinal and $\sigma$
    is represented by a $\lambda$-grope for every $\lambda<\kappa$.
  \end{enumerate}
\end{definition}

We remark that for finite $k$, if $\Sigma\to X$ supports a $k$-grope in our
sense, then a map of a $k$-grope in the sense
of~\cite[Section~2]{Freedman-Teichner:1995-2} is obtained by stacking the
inductively given surfaces along basis curves, and vice versa.

\begin{proposition}
  \phantomsection\label{proposition:transfinite-grope-lcs-dwyer-kernel}
  \leavevmode\Nopagebreak
  \begin{enumerate}
    \item\label{item:transfinite-grope-lcs}
    For $\kappa\ge 1$, a loop $\gamma$ in $X$ bounds a $\kappa$-grope if and
    only if $[\gamma]\in \pi_1(X)_\kappa$.
    \item\label{item:transfinite-grope-dwyer-kernel}
    For $\kappa>1$, a class $\sigma\in H_2(X)$ lies in the transfinite Dwyer
    kernel $\psi_\kappa(X)$ if and only if $\sigma$ is represented by a
    $\kappa$-grope.    
  \end{enumerate}
\end{proposition}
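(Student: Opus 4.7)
The plan is to prove both parts by transfinite induction on $\kappa$, using Part~(1) throughout Part~(2) to translate between the topological notion of gropes and membership in the lower central series.

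For Part~(1), the case $\kappa=1$ is vacuous. In the limit case, both ``$\gamma$ bounds a $\kappa$-grope'' and ``$[\gamma]\in\pi_1(X)_\kappa$'' are by definition intersections over $\lambda<\kappa$ of the corresponding conditions, so the inductive hypothesis passes through directly. In the discrete successor case $\kappa=\lambda+1$, a map of a genus-$g$ surface with single boundary $\gamma$ and symplectic basis $\{a_i,b_i\}$ into $X$ yields the relation $[\gamma]=\prod_i[a_i,b_i]$ in $\pi_1(X)$; by the inductive hypothesis, each $a_i$ bounds a $\lambda$-grope iff $[a_i]\in\pi_1(X)_\lambda$. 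Combining these, $\gamma$ bounds a $\kappa$-grope iff $[\gamma]$ can be written as a product of such commutators, i.e., iff $[\gamma]\in[\pi_1(X),\pi_1(X)_\lambda]=\pi_1(X)_\kappa$, using the identity $[c,d]^{-1}=[d,c]$ to arrange each commutator's first entry to lie in $\pi_1(X)_\lambda$. For the converse direction, given such a product decomposition one constructs an explicit surface of genus equal to the number of commutators with boundary $\gamma$, mapping its basis curves to realize the decomposition.

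For Part~(2), the base case $\kappa=2$ is automatic since $\psi_2(X)=H_2(X)$ and every closed surface vacuously supports a $2$-grope by Part~(1). The limit case follows at once by matching the two intersections. For the discrete successor $\kappa=\lambda+1$, the ``if'' direction proceeds as follows: when $\Sigma\to X$ supports a $\kappa$-grope with basis $\{a_i,b_i\}$, Part~(1) gives $[a_i]\in\pi_1(X)_\lambda$, so the induced homomorphism $\pi_1(\Sigma)\to\pi_1(X)/\pi_1(X)_\lambda$ kills the $a_i$'s and factors through $\pi_1(\Sigma)/\langle\langle a_i\rangle\rangle\cong F_g$, a free group; since $H_2(F_g)=0$, the image of $[\Sigma]$ in $H_2(\pi_1(X)/\pi_1(X)_\lambda)$ vanishes, giving $\sigma\in\psi_\kappa(X)$.

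For the ``only if'' direction, represent $\sigma\in\psi_\kappa(X)$ by a closed surface $f\colon\Sigma\to X$ using $H_2=\Omega_2^{SO}$. The composite into $K(\pi_1(X)/\pi_1(X)_\lambda,1)$ sends $[\Sigma]$ to zero, so it is null-bordant by a compact oriented 3-manifold over $\pi_1(X)/\pi_1(X)_\lambda$. Applying the Hopf formula to the surface presentation $\pi_1(\Sigma)=F_{2g}/\langle\langle w\rangle\rangle$ with $w=\prod[a_i,b_i]$ translates the vanishing of $\sigma$ in $H_2(\pi_1(X)/\pi_1(X)_\lambda)$ into an identity $w=\prod_j[f_j,k_j]$ in $F_{2g}$ with each $k_j$ mapping into $\pi_1(X)_\lambda$. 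Realize this identity geometrically by stabilizing $\Sigma$ with auxiliary handles, producing a higher-genus surface $\Sigma'$ representing the same homology class and equipped with a symplectic basis $\{a_i',b_i'\}$ whose $a_i'$-curves correspond to the $k_j$'s and hence lie in $\pi_1(X)_\lambda$; Part~(1) then ensures each $a_i'$ bounds a $\lambda$-grope, so $\Sigma'$ supports a $\kappa$-grope. The main obstacle is this last geometric-realization step: converting the algebraic Hopf-formula identity into an honest symplectic basis on a stabilized surface. This is the content of the Freedman--Teichner surgery argument in the finite case, and the essential verification required here is that the same construction carries through uniformly when $\lambda$ is a transfinite ordinal, which holds because the construction invokes only the inductive hypothesis of Part~(1) at stage $\lambda$.
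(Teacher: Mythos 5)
Your overall architecture (transfinite induction, quoting Part~(1) inside Part~(2), handling base, limit, and discrete cases separately) matches the paper's, and Part~(1) and the easy direction / base / limit cases of Part~(2) go through fine: your quotient-to-$F_g$ argument for the ``if'' direction is the same idea as the paper's ``surger the surface along the $a_i$ to get a sphere.''

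The divergence, and the actual weak point, is the ``only if'' direction at a discrete successor $\kappa=\lambda+1$. You route it through the Hopf formula: $w=\prod[x_i,y_i]$ lies in $[F,R]$, so $w$ decomposes as a product of conjugated commutators $[f_j,k_j]$ with $k_j$ mapping into $\pi_1(X)_\lambda$, and you then want to ``stabilize $\Sigma$'' so that the $k_j$ become half of a symplectic basis. You flag this realization step as ``the main obstacle,'' and rightly so: the construction must produce a surface $\Sigma'\to X$ that represents \emph{the same class} $\sigma\in H_2(X)$ as the original $\Sigma$, not merely one whose image in $H_2(\pi_1(X))$ agrees. When $X$ is not aspherical this is not automatic, and you give no argument. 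Asserting that ``the construction invokes only the inductive hypothesis of Part~(1) at stage $\lambda$'' does not close this; the issue is not the ordinal but the compatibility of the realization with the originally given $\sigma$.

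The paper sidesteps this entirely by working bordism-theoretically, following Freedman--Teichner's Lemma~2.3. One builds $B(\pi/\pi_\lambda)$ from $X$ by attaching 2-cells along $\pi_\lambda$-generators and higher cells; since $\sigma$ dies in $H_2(B(\pi/\pi_\lambda))$ and $H_2=\Omega_2^{SO}$, the map $\Sigma\to X\hookrightarrow B(\pi/\pi_\lambda)$ bounds a compact $3$-manifold $R$; removing tubular neighborhoods of the preimages of the new cells' centers yields a bordism \emph{over $X$} between $\Sigma\to X$ and a disjoint union of tori and spheres, and each torus's meridian bounds a disk in $B(\pi/\pi_\lambda)$, hence a $\lambda$-grope in $X$ by Part~(1). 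This automatically preserves the $H_2(X)$-class and delivers the $\kappa$-grope structure without any stabilization or Hopf-formula bookkeeping. You should either adopt this bordism argument, or, if you want to keep the Hopf-formula route, supply the realization step in a form that is explicitly rel~$X$ (stabilizing the given map $\Sigma\to X$ and tracking the $H_2(X)$-class throughout).
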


For finite $\kappa$,
Proposition~\ref{proposition:transfinite-grope-lcs-dwyer-kernel}\ref{item:transfinite-grope-dwyer-kernel}
appeared earlier in~\cite[Lemma~2.3]{Freedman-Teichner:1995-2}.

The following is an immediate consequence of Corollary~\ref{corollary:transfinite-stallings-dwyer-without-localization-in-H_2} and Proposition~\ref{proposition:transfinite-grope-lcs-dwyer-kernel}.

\begin{corollary}
  \label{corollary:transfinite-lcsq-grope}
  Suppose $\kappa>1$ is an arbitrary ordinal and $f\colon X\to Y$ is a map of a
  space $X$ to a finite CW complex $Y$ which induces an isomorphism $H_1(X)
  \isomto H_1(Y)$.  If $\Coker\{H_2(X)\to H_2(Y)\}$ is generated by classes
  represented by $\kappa$-gropes in $Y$, then $f$ induces an isomorphism
  \[
    \widehat{\pi_1(X)}/\widehat{\pi_1(X)}_{\kappa} \isomto
    \widehat{\pi_1(Y)}/\widehat{\pi_1(Y)}_{\kappa}.
  \]
\end{corollary}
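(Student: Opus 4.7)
The plan is to reduce the statement to an application of Corollary~\ref{corollary:transfinite-stallings-dwyer-without-localization-in-H_2} applied to the induced homomorphism $f_*\colon \pi \to G$, where $\pi = \pi_1(X)$ and $G = \pi_1(Y)$. Since $Y$ is a finite CW complex, $G$ is finitely presented; the hypothesis $H_1(X) \isomto H_1(Y)$ identifies with an isomorphism $H_1(\pi) \isomto H_1(G)$ via the Hurewicz/abelianization identification. The only thing left to verify is the $H_2$-surjectivity hypothesis of that corollary, namely that $H_2(\pi) \to H_2(G)/\psi_\kappa(G)$ is surjective.

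To establish this I would combine two ingredients. First, Proposition~\ref{proposition:transfinite-grope-lcs-dwyer-kernel}\ref{item:transfinite-grope-dwyer-kernel} identifies the subgroup of $H_2(Y)$ generated by classes represented by $\kappa$-gropes with the transfinite Dwyer kernel $\psi_\kappa(Y)$.  Thus the hypothesis that $\Coker\{H_2(X) \to H_2(Y)\}$ is generated by $\kappa$-grope classes translates directly into the statement that the composition $H_2(X) \to H_2(Y) \to H_2(Y)/\psi_\kappa(Y)$ is surjective. Second, the classical Hopf-type surjection $H_2(Y) \to H_2(G)$ induced by $Y \to BG$, combined with the very definition of $\psi_\kappa(-)$ as a kernel (and, for limit ordinals $\kappa$, as the intersection $\bigcap_{\lambda < \kappa}\psi_\lambda(-)$), implies that $\psi_\kappa(Y)$ equals the preimage of $\psi_\kappa(G)$ under $H_2(Y) \to H_2(G)$, and hence the induced map $H_2(Y)/\psi_\kappa(Y) \to H_2(G)/\psi_\kappa(G)$ is an isomorphism.

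Combining these two observations, the composition $H_2(X) \to H_2(G)/\psi_\kappa(G)$ is surjective. Since this composition factors through $H_2(\pi)$ (via the Hopf surjection $H_2(X) \to H_2(\pi)$), the map $H_2(\pi) \to H_2(G)/\psi_\kappa(G)$ is itself surjective. An application of Corollary~\ref{corollary:transfinite-stallings-dwyer-without-localization-in-H_2} then yields the desired isomorphism $\widehat\pi/\widehat\pi_\kappa \isomto \widehat G/\widehat G_\kappa$.

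The only step requiring care is the identification $\psi_\kappa(Y) = (H_2(Y) \to H_2(G))^{-1}(\psi_\kappa(G))$ at limit ordinals $\kappa$; this reduces to the elementary fact that preimages commute with intersections, and hence with the transfinite intersection defining $\psi_\kappa$ at a limit ordinal. Everything else is routine bookkeeping, consistent with the claim that this corollary follows immediately from Proposition~\ref{proposition:transfinite-grope-lcs-dwyer-kernel} and Corollary~\ref{corollary:transfinite-stallings-dwyer-without-localization-in-H_2}.
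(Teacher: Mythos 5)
Your proposal is correct and takes exactly the approach the paper intends (it says only that the corollary ``is an immediate consequence of Corollary~\ref{corollary:transfinite-stallings-dwyer-without-localization-in-H_2} and Proposition~\ref{proposition:transfinite-grope-lcs-dwyer-kernel}''); you have simply supplied the routine bookkeeping. The only point worth flagging is that your step identifying $\psi_\kappa(Y)$ with the preimage of $\psi_\kappa(G)$, and thence deducing $H_2(Y)/\psi_\kappa(Y)\cong H_2(G)/\psi_\kappa(G)$, tacitly relies on the surjectivity of $H_2(Y)\to H_2(G)$ for the CW complex $Y$ — you mention this (``Hopf-type surjection'') and it is correct, but it is the one non-formal input beyond the two cited results.
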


\begin{proof}[Proof of
  Proposition~\ref{proposition:transfinite-grope-lcs-dwyer-kernel}]
  
  From the definitions, \ref{item:transfinite-grope-lcs} follows
  straightforwardly by transfinite induction. To prove (2), we proceed by
  transfinite induction too.  Since $\pi_1(X)_{1}=\pi_1(X)$, every $\sigma\in
  H_2(X)$ lies in~$\psi_2(X)$, and is represented by a $2$-grope. So, (2)~holds
  for $\kappa=2$.
  
  Suppose $\kappa>2$ and (2) holds for all ordinals less than~$\kappa$.  If
  $\kappa$ is a limit ordinal, then by definition, $\sigma\in H_2(X)$ is in
  $\psi_\kappa(X)$ if and only $\sigma\in \psi_\lambda(X)$ for all
  $\lambda<\kappa$.  By the induction hypothesis, it holds if and only if
  $\sigma$ is represented by a $\lambda$-grope for all $\lambda<\kappa$.  By the
  definition, it holds if and only if $\sigma$ is represented by a
  $\kappa$-grope.  This shows that (2) holds for~$\kappa$.

  If $\kappa>2$ is a discrete ordinal, the finite case argument given
  in~\cite[Proof of Lemma~2.3]{Freedman-Teichner:1995-2} can be carried out.  We
  provide details for the reader's convenience.  Let $\pi=\pi_1(X)$.  Suppose
  $\sigma\in H_2(X)$ is represented by a $\kappa$-grope, that is, $\sigma$ is
  the class of a map $\Sigma\to X$ of a surface admitting geometrically
  symplectic basis $\{a_i,b_i\}$ such that each $a_i$ bounds a
  $(\kappa-1)$-grope in~$X$. By~(1), $[a_i]\in \pi_1(X)_{\kappa-1}$, and so
  $a_i$ is null-homotopic in $B(\pi/\pi_{\kappa-1})$.  By surgery on $\Sigma$
  along the~$a_i$, it follows that the image of $\sigma$ in
  $H_2(B(\pi/\pi_{\kappa-1}))$ is a spherical class, and thus trivial. This
  shows that $\sigma$ lies in $\psi_\kappa(X)$.  For the converse, suppose a
  class represented by a map $\Sigma\to X$ of a surface $\Sigma$ lies
  in~$\psi_\kappa(X)$.  Attach 2-cells to $X$ along generators of
  $\pi_{\kappa-1}$, and attach more cells of dimension${}\ge3$, to
  construct~$B(\pi/\pi_{\kappa-1})$. Since $\Sigma$ is null-homologous in
  $B(\pi/\pi_{\kappa-1})$ (and since $H_3=\Omega_3^{SO}$), $\Sigma \to X
  \hookrightarrow B(\pi/\pi_{\kappa-1})$ extends to a compact 3-manifold $R$
  bounded by~$\Sigma$.  We may assume that the center of each cell which we
  attached to $X$ is a regular value of $R\to B(\pi/\pi_{\kappa-1})$.  Remove,
  from $R$, tubular neighborhoods of the inverse images of the centers.  This
  gives a bordism over $X$ between $\Sigma \to X$ and a map of a union of tori
  and spheres.  Spheres support a $\kappa$-grope by definition.  Since the
  meridian of each torus bounds a disk in $B(\pi/\pi_{\kappa-1})$, the meridian
  bounds a $(\kappa-1)$-grope in $X$ by~(1). By definition, it follows that the
  tori support a $\kappa$-grope. This completes the proof.
\end{proof}

As an application, we give a proof of the addendum to
Theorems~\ref{theorem:main-determination-lcsq-nonlift}
and~\ref{theorem:main-milnor-inv} stated in
Section~\ref{subsection:results-transfinite-grope}.  We first define a
terminology used in the statement.  Recall that a cobordism $W$ between $M$ and
$N$ is an \emph{$H_1$-cobordism} if inclusions induce isomorphisms $H_1(M)\cong
H_1(W) \cong H_1(N)$. 

\begin{definition}
  \label{definition:cobordism-of-class-kappa}
  Let $\kappa$ be an ordinal.  An $H_1$-cobordism $W$ between $M$ and $N$ is a
  \emph{grope cobordism of class $\kappa$} if each of $\Coker\{H_2(M)\to
  H_2(W)\}$ and $\Coker\{H_2(N)\to H_2(W)\}$ is generated by homology classes in
  $H_2(W)$ represented by $\kappa$-gropes.
\end{definition}

Now, the addendum to Theorems~\ref{theorem:main-determination-lcsq-nonlift}
and~\ref{theorem:main-milnor-inv} says the following: let $\Gamma$ be a group
and $\kappa$ be an arbitrarily given ordinal. Suppose $M$ is a closed 3-manifold
with $\pi=\pi_1(M)$ which is equipped with an isomorphism
$\widehat\pi/\widehat\pi_\kappa \isomto \widehat\Gamma/\widehat\Gamma_\kappa$.
Then the following are equivalent.
\begin{enumerate}[start=0]
  \item There is a grope cobordism of class $\kappa+1$ between $M$ and another
  closed 3-manifold $N$ satisfying $\widehat{\pi_1(N)} /
  \widehat{\pi_1(N)}_{\kappa+1} \cong \widehat\Gamma /
  \widehat\Gamma_{\kappa+1}$.
  \item $\widehat\pi/\widehat\pi_{\kappa+1}$ is isomorphic to
  $\widehat\Gamma/\widehat\Gamma_{\kappa+1}$.
  \item The invariant $\theta_\kappa(M)$ vanishes in
  $\Coker\{\cR_{\kappa+1}(\Gamma) \to \cR_{\kappa}(\Gamma) /
  \Aut(\widehat\Gamma/\widehat\Gamma_\kappa)\}$.
\end{enumerate}

\begin{proof}
  We have already shown that (1) and~(2) are equivalent in
  Section~\ref{subsection:results-determination}.  Suppose (1) holds.  Then
  $M\times [0,1]$ is a grope cobordism of class $\kappa+1$, and thus (0) holds.
  For the converse, suppose $W$ is a grope cobordism of class $\kappa+1$ given
  in~(0). Since $\Coker\{H_2(M)\to H_2(W)\}$ and $\Coker\{H_2(N)\to H_2(W)\}$
  are generated by $(\kappa+1)$-gropes and $H_1(M)\cong H_1(W)\cong H_1(N)$, we
  have
  \[
    \widehat\pi/\widehat\pi_{\kappa+1} \cong
    \widehat{\pi_1(W)}/\widehat{\pi_1(W)}_{\kappa+1} \cong 
    \widehat{\pi_1(N)}/\widehat{\pi_1(N)}_{\kappa+1}
  \]
  by Corollary~\ref{corollary:transfinite-lcsq-grope}.  
  It follows that (1) holds.
\end{proof}

\subsection{Proof of the algebraic statement}
\label{subsection:proof-transfinite-stallings-dwyer}

Now, we prove the main algebraic statement of this section.

\begin{proof}[Proof of Theorem~\ref{theorem:transfinite-stallings-dwyer}]
 
  First, we assert that the surjectivity of $H_1(\pi) \to H_1(G)$ implies that
  $\widehat\pi/\widehat\pi_\kappa \to \widehat G/\widehat G_\kappa$ is
  surjective.  Indeed, if $\kappa$ is finite, then the assertion is a well known
  fact obtained from a standard commutator identity.  For the reader's
  convenience, we describe an outline of the argument.  If $a_i\equiv b_i
  \bmod G_2$, then we have
  \[
    [a_1,[a_2,\ldots,[a_{k-1},a_k]\ldots{}]] \equiv
    [b_1,[b_2,\ldots,[b_{k-1},b_k]\ldots{}]] \mod G_{k+1}.
  \]
  From this it follows that $\pi_k/\pi_{k+1}\to G_k/G_{k+1}$ is surjective for
  all finite~$k$.  The surjectivity of $\pi/\pi_{k}\to G/G_{k}$ is obtained by
  applying the five lemma, inductively, to the following diagram.
  \[
    \begin{tikzcd}[row sep=large]
      1 \ar[r] & \pi_{k-1}/\pi_k \ar[r]\ar[d] & \pi/\pi_k \ar[r]\ar[d] &
      \pi/\pi_{k-1} \ar[r]\ar[d] & 1
      \\
      1 \ar[r] & G_{k-1}/G_k \ar[r] & G/G_k \ar[r] &
      G/G_{k-1} \ar[r] & 1
    \end{tikzcd}
  \]

  When $\kappa$ is an infinite ordinal, since $G$ is assumed to be finitely
  generated, $\widehat\pi \to \widehat G$ is surjective if $H_1(\pi) \to H_1(G)$
  is surjective, by Lemma~\ref{lemma:H1-surj-localization-surj}. It
  follows that $\widehat\pi/\widehat\pi_\kappa \to \widehat G/\widehat G_\kappa$
  is surjective.

  Therefore, under the assumption that $\widehat\pi/\widehat\pi_\kappa \to
  \widehat G/\widehat G_\kappa$ is surjective, it suffices to prove that the
  following two conditions are equivalent:
  \begin{enumerate}
    \item[(i)${}_\kappa$] $\widehat\pi/\widehat\pi_\kappa \to \widehat
    G/\widehat G_\kappa$ is injective, or equivalently is an isomorphism.
    \item[(ii)${}_\kappa$] $H_2(\widehat\pi) \to H_2(\widehat G) /
    K_{\lambda}(\widehat G)$ is surjective for all $\lambda<\kappa$,  where
    \[
      K_\lambda(\widehat G):=\Ker\{H_2(\widehat G)\rightarrow H_2(\widehat
      G/\widehat G_\lambda)\}.
    \]
  \end{enumerate}

  We proceed by transfinite induction on the ordinal~$\kappa$. For $\kappa=2$,
  (i)${}_\kappa$ holds since $\widehat\pi/\widehat\pi_2=H_1(\pi)\cong H_1(G) =
  \widehat G/\widehat G_2$, and (ii)${}_\kappa$ holds too, since $H_2(\widehat
  G)/K_1(\widehat G)$ is trivial.

  Fix an ordinal $\kappa\ge 3$, and let $f\colon \widehat\pi \to \widehat G$ be
  a homomorphism which satisfies the hypothesis of
  Theorem~\ref{theorem:transfinite-stallings-dwyer}.  Suppose that
  (i)${}_{\kappa'}$ and (ii)${}_{\kappa'}$ are equivalent for all
  $\kappa'<\kappa$.

  If $\kappa$ is a discrete ordinal, then we proceed similarly to the original
  argument of Stallings and Dwyer~\cite{Stallings:1965-1,Dwyer:1975-1}, as
  described below. First, note that (ii)${}_\lambda$ holds for all
  $\lambda<\kappa$ if and only if (ii)${}_{\kappa-1}$ holds, when $\kappa$ is
  discrete.  Recall, for a normal subgroup $N$ of a group $\Gamma$, the
  Lyndon-Hochschild-Serre spectral sequence for the short exact sequence $1 \to
  N \to \Gamma \to \Gamma/N \to 1$ gives rise to an exact sequence
  \[
    H_2(\Gamma)\to H_2(\Gamma/N) \to H_0(\Gamma/N;H_1(N))
    \to H_1(\Gamma) \to H_1(\Gamma/N)
  \]
  which is called Stallings' exact sequence~\cite{Stallings:1965-1}.  Apply this
  to $(\Gamma,N)=(\widehat\pi,\widehat\pi_{\kappa-1})$ and $(\widehat G,\widehat
  G_{\kappa-1})$, to obtain the following diagram with exact rows.
  \[
    \begin{tikzcd}[column sep=scriptsize,row sep=large]
      0 \ar[r] &
      H_2(\widehat\pi) / K_{\kappa-1}(\widehat\pi) \ar[r] \ar[d] &
      H_2(\widehat\pi/\widehat\pi_{\kappa-1}) \ar[r] \ar[d] &
      \widehat\pi_{\kappa-1}/\widehat\pi_{\kappa} \ar[r] \ar[d] & 0
      \\
      0 \ar[r] &
      H_2(\widehat G) / K_{\kappa-1}(\widehat G) \ar[r] &
      H_2(\widehat G/\widehat G_{\kappa-1}) \ar[r] &
      \widehat G_{\kappa-1}/\widehat G_{\kappa} \ar[r] & 0
    \end{tikzcd}
  \]
  If (i)${}_\kappa$ holds, then (i)${}_{\kappa-1}$ holds too.  If
  (ii)${}_\kappa$ holds, then (ii)${}_{\kappa-1}$ holds too and consequently
  (i)${}_{\kappa-1}$ holds by the induction hypothesis.  So, in either case, we
  may assume that (i)${}_{\kappa-1}$ holds.  Then the middle vertical arrow of
  the diagram is an isomorphism.  By the snake lemma, it follows that
  \begin{equation}
    \label{equation:proof-stallings-dwyer-ker-coker}
    \Ker\{\widehat\pi_{\kappa-1}/\widehat\pi_\kappa \rightarrow
    \widehat G_{\kappa-1}/\widehat G_{\kappa}\}
    \cong \Coker\{H_2(\widehat\pi)/K_{\kappa-1}(\widehat\pi) \rightarrow
    H_2(\widehat G)/K_{\kappa-1}(\widehat G)\}.
  \end{equation}
  Since $\widehat\pi/\widehat\pi_{\kappa-1} \cong \widehat G/\widehat
  G_{\kappa-1}$ by~(i)${}_{\kappa-1}$, (i)${}_\kappa$ holds if and only if the
  left hand side of~\eqref{equation:proof-stallings-dwyer-ker-coker} is trivial.
  Also, (ii)${}_\kappa$ holds if and only if the right hand side
  of~\eqref{equation:proof-stallings-dwyer-ker-coker} is trivial. It follows
  that (i)${}_\kappa$ and (ii)${}_\kappa$ are equivalent.

  Now, suppose that $\kappa$ is a limit ordinal.  Suppose (i)${}_\kappa$ holds.
  For each $\lambda < \kappa$, since $\kappa$ is a limit ordinal,
  $\lambda+1<\kappa$.  So (i)${}_\kappa$ implies (i)${}_{\lambda+1}$. By the
  induction hypothesis, it follows that (ii)${}_{\lambda+1}$ holds.  In
  particular, $H_2(\widehat\pi) \to H_2(\widehat G) / K_{\lambda}(\widehat G)$
  is surjective.  This shows that (ii)${}_\kappa$ holds.

  For the converse, suppose (ii)${}_\kappa$ holds.  For each $\lambda<\kappa$,
  (ii)${}_\kappa$ implies (ii)${}_\lambda$, and thus (i)${}_\lambda$ holds by
  the induction hypothesis.  That is, $f$ induces an isomorphism
  $\widehat\pi/\widehat\pi_\lambda \isomto \widehat G/\widehat G_\lambda$.
  Therefore, if $g\in \Ker\{\widehat\pi/\widehat\pi_\kappa \to \widehat
  G/\widehat G_\kappa\}$, then $g \in \Ker\{\widehat\pi/\widehat\pi_\kappa \to
  \widehat\pi/\widehat\pi_\lambda\}$ for all $\lambda<\kappa$. Since
  $\widehat\pi_\kappa = \bigcap_{\lambda<\kappa} \widehat\pi_\lambda$, it
  follows that $g$ is trivial.  This proves that $\widehat\pi/\widehat\pi_\kappa
  \to \widehat G/\widehat G_\kappa$ is injective, and thus (i)${}_{\kappa}$
  holds.

  This completes the proof of Theorem~\ref{theorem:transfinite-stallings-dwyer}.
\end{proof}

We remark that the above proof of the equivalence of (i)${}_\kappa$ and
(ii)${}_\kappa$ indeed shows the following statement (just by replacing
$\widehat\pi$ and $\widehat G$ with $P$ and $Z$ below), which we record as a
lemma for later use in this paper.

\begin{lemma}
  \label{lemma:transfinite-stallings-dwyer-main-argument}
  Suppose $\kappa>1$ and $f\colon P\to Z$ is a group homomorphism which induces
  an epimorphism $P/P_\kappa \to Z/Z_\kappa$ and an isomorphism $H_1(P) \isomto
  H_1(Z)$. Then the following are equivalent:
  \begin{enumerate}
    \item[\textup{(i)}] $f$ induces an isomorphism $P/P_\kappa
    \to Z/Z_\kappa$.
    \item[\textup{(ii)}] $f$ induces an epimorphism $H_2(P) \to
    H_2(Z) / K_{\lambda}(Z)$ for all $\lambda<\kappa$,  where
    \[
      K_\lambda(Z):=\Ker\{H_2(Z)\rightarrow H_2(Z/Z_\lambda)\}.
    \]
  \end{enumerate}
\end{lemma}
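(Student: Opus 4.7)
The plan is to prove the equivalence of (i) and (ii) by transfinite induction on $\kappa$, essentially replaying the argument used inside the proof of Theorem~\ref{theorem:transfinite-stallings-dwyer} with $P$ and $Z$ in place of $\widehat\pi$ and $\widehat G$. The key observation is that the surjectivity statement which occupied the first portion of the theorem's proof is now built into the hypothesis, so nothing further is needed there; the argument reduces to the snake-lemma-plus-Stallings'-sequence core.

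For the base case $\kappa = 2$, condition (i) is immediate since $P/P_2 = H_1(P) \cong H_1(Z) = Z/Z_2$, and condition (ii) holds vacuously because $K_1(Z) = \Ker\{H_2(Z) \to H_2(Z/Z_1)\} = H_2(Z)$, so the target of the map in (ii) is trivial. Fix $\kappa \ge 3$ and assume the equivalence for all ordinals strictly less than $\kappa$. For the discrete case, apply Stallings' five-term exact sequence to the extensions $1 \to P_{\kappa-1} \to P \to P/P_{\kappa-1} \to 1$ and $1 \to Z_{\kappa-1} \to Z \to Z/Z_{\kappa-1} \to 1$ to obtain a commutative diagram with exact rows
\[
\begin{tikzcd}[column sep=small]
0 \ar[r] & H_2(P)/K_{\kappa-1}(P) \ar[r]\ar[d] & H_2(P/P_{\kappa-1}) \ar[r]\ar[d] & P_{\kappa-1}/P_\kappa \ar[r]\ar[d] & 0 \\
0 \ar[r] & H_2(Z)/K_{\kappa-1}(Z) \ar[r] & H_2(Z/Z_{\kappa-1}) \ar[r] & Z_{\kappa-1}/Z_\kappa \ar[r] & 0.
\end{tikzcd}
\]
Either (i) or (ii) at stage $\kappa$ forces the corresponding condition at stage $\kappa-1$, so by the inductive hypothesis the middle vertical map is an isomorphism. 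The snake lemma then yields
\[
\Ker\{P_{\kappa-1}/P_\kappa \to Z_{\kappa-1}/Z_\kappa\} \;\cong\; \Coker\{H_2(P)/K_{\kappa-1}(P) \to H_2(Z)/K_{\kappa-1}(Z)\},
\]
and combining this with the assumed surjectivity of $P/P_\kappa \to Z/Z_\kappa$ shows that (i) is equivalent to the vanishing of this kernel, while (ii) is equivalent to the vanishing of the cokernel; note that for discrete $\kappa$ the condition in (ii) for all $\lambda<\kappa$ reduces to the single case $\lambda = \kappa - 1$, using that the quotient $H_2(Z)/K_\lambda(Z)$ is a surjective image of $H_2(Z)/K_{\kappa-1}(Z)$ for $\lambda < \kappa-1$.

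For the limit ordinal case, the implication (i)${}\Rightarrow{}$(ii) is immediate: for each $\lambda<\kappa$ we have $\lambda+1<\kappa$, so (i) at $\kappa$ restricts to (i) at $\lambda+1$, which by the inductive hypothesis yields (ii) at $\lambda+1$, in particular surjectivity of $H_2(P) \to H_2(Z)/K_\lambda(Z)$. Conversely, (ii) at $\kappa$ restricts to (ii) at every $\lambda<\kappa$, so by the inductive hypothesis $f$ induces an isomorphism $P/P_\lambda \isomto Z/Z_\lambda$ for every such $\lambda$. Any element in the kernel of $P/P_\kappa \to Z/Z_\kappa$ therefore lies in $P_\lambda/P_\kappa$ for every $\lambda<\kappa$, and the identity $P_\kappa = \bigcap_{\lambda<\kappa} P_\lambda$ forces it to be trivial, giving (i).

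The only place where genuine work happens is the discrete-ordinal snake-lemma step, and the main obstacle there is verifying that the induced map on the middle terms of the two Stallings sequences is an isomorphism under either hypothesis; this is handled by an inductive use of the equivalence at stage $\kappa-1$, exactly as in the theorem's proof. The limit-ordinal step is purely formal once the intersection property of the transfinite lower central series is in hand.
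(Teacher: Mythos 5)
Your proof is correct and follows essentially the same approach as the paper's. The paper in fact observes that Lemma~\ref{lemma:transfinite-stallings-dwyer-main-argument} is obtained by rereading the transfinite-induction argument inside the proof of Theorem~\ref{theorem:transfinite-stallings-dwyer} with $P$ and $Z$ replacing $\widehat\pi$ and $\widehat G$, which is precisely what you have written out.
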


\section{Realization of transfinite invariants}
\label{section:realization-transfinite-inv}

In this section, we prove Theorem~\ref{theorem:main-realization} stated in
Section~\ref{subsection:results-realization}, which characterizes the realizable
classes $\theta$ in~$H_3(\widehat\Gamma/\widehat\Gamma_\kappa)$.  In the proof
of Theorem~\ref{theorem:main-realization}, we will use the following lemmas. The
first lemma provides a finitely generated approximation of the transfinite lower
central quotients of the localization, along the lines of
Theorem~\ref{theorem:localization-basic-facts}\ref{item:localization-colimit}.

\begin{lemma}
  \label{lemma:approximation-local-lcsq}
  Suppose $G$ is a finitely presented group, $\kappa>1$ is an ordinal, and $H$
  is a finitely generated subgroup in~$\widehat G/\widehat G_\kappa$.  Then $H$
  is contained in a finitely generated subgroup $Q$ in $\widehat G/\widehat
  G_\kappa$ such that the inclusion induces an isomorphism $H_1(Q)\to
  H_1(\widehat G/\widehat G_\kappa)$.
\end{lemma}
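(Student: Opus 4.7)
The plan is to exploit Theorem~\ref{theorem:localization-basic-facts}\ref{item:localization-colimit}: write $G \to \widehat G$ as the colimit of a sequence $G = P(1) \to P(2) \to \cdots$ of $2$-connected homomorphisms between finitely presented groups. Since sequential colimits of groups are computed on underlying sets as the quotient of $\bigsqcup_k P(k)$ by the obvious relation, every element of $\widehat G$ is the image of some element in some $P(k)$.

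Pick finite generators $h_1, \ldots, h_s$ for $H$ and lift each to $\tilde h_i \in \widehat G$. Choose $k_0$ large enough that every $\tilde h_i$ lies in the image of $P(k_0) \to \widehat G$. Then define
\[
  Q := \Im\bigl\{P(k_0) \longrightarrow \widehat G \longrightarrow
  \widehat G/\widehat G_\kappa\bigr\}.
\]
The group $Q$ is finitely generated as a quotient of a finitely generated group, and contains $H$ by construction. What remains is to check that $H_1(Q) \to H_1(\widehat G/\widehat G_\kappa)$ is an isomorphism.

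For this, I would first observe that $P(k_0) \to \widehat G$ is itself $2$-connected. This follows from the cited theorem together with the standard fact that group homology commutes with filtered colimits: the transition maps $H_1(P(k)) \to H_1(P(k+1))$ are all isomorphisms and $H_2(P(k)) \to H_2(P(k+1))$ are all surjective, so passing to the colimit yields $H_1(P(k_0)) \isomto H_1(\widehat G)$ and $H_2(P(k_0)) \twoheadrightarrow H_2(\widehat G)$. Since $\kappa \ge 2$ gives $\widehat G_\kappa \subseteq \widehat G_2$, the quotient $\widehat G \to \widehat G/\widehat G_\kappa$ also induces an isomorphism on $H_1$, so the composition $P(k_0) \to \widehat G/\widehat G_\kappa$ is an $H_1$-isomorphism. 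Now the factorization
\[
  H_1(P(k_0)) \twoheadrightarrow H_1(Q) \longrightarrow
  H_1(\widehat G/\widehat G_\kappa)
\]
has surjective first arrow (because $P(k_0) \twoheadrightarrow Q$ by construction of $Q$ as an image) and bijective composite, forcing the second arrow to be an isomorphism.

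No step is particularly delicate; the only point requiring some care is the $2$-connectivity of $P(k_0) \to \widehat G$, which is a routine filtered-colimit computation on top of Theorem~\ref{theorem:localization-basic-facts}\ref{item:localization-colimit}.
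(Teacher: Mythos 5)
Your proof is correct and follows essentially the same route as the paper: pick $P(k_0)$ from the colimit description whose image in $\widehat G/\widehat G_\kappa$ contains $H$, set $Q$ to be that image, and run the surjection--injection argument on $H_1$. The only cosmetic difference is that you spell out why $P(k_0)\to \widehat G$ is $2$-connected (via homology commuting with filtered colimits), whereas the paper reads that directly off Theorem~\ref{theorem:localization-basic-facts}\textup{(2)}; in fact only the $H_1$-isomorphism part is used.
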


\begin{proof}
  Since $H$ is finitely generated, there is a 2-connected homomorphism $P\to
  \widehat G$ of a finitely presented group $P$ such that the image of $P\to
  \widehat G \to \widehat G/\widehat G_\kappa$ contains $H$, by
  Theorem~\ref{theorem:localization-basic-facts}\ref{item:localization-colimit}.
  Let $Q$ be the image of $P\to \widehat G/\widehat G_\kappa$.  Since  $P\to Q$
  is surjective,  $H_1(P)\to H_1(Q)$ is surjective.  Since the composition $P\to
  Q \to \widehat G/\widehat G_\kappa$ induces an isomorphism on $H_1$,
  $H_1(P)\to H_1(Q)$ is injective, and consequently $H_1(P)\cong H_1(Q) \cong
  H_1(\widehat G/\widehat G_\kappa)$ under the induced homomorphisms.
\end{proof}

\begin{lemma}
  \label{lemma:H_1-surj-imply-local-lcsq-surj}
  Consider any ordinal~$\kappa$.
  Suppose $\pi$ is finitely generated, $G$ is finitely presented, and $f\colon
  \pi \to \widehat G/\widehat G_\kappa$ is a group homomorphism which induces an
  epimorphism $H_1(\pi)\to H_1(\widehat G/\widehat G_\kappa) = H_1(G)$.  Then
  $f$ induces an epimorphism $\widehat\pi\to \widehat G/\widehat G_\kappa$.
\end{lemma}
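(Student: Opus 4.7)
The plan is to reduce this statement to Lemma~\ref{lemma:H1-surj-localization-surj}, which provides localization surjectivity from $H_1$-surjectivity provided the target is finitely generated.  The obvious obstruction is that $\widehat G/\widehat G_\kappa$ need not be finitely generated; the remedy is to use the approximation result Lemma~\ref{lemma:approximation-local-lcsq} to replace $\widehat G/\widehat G_\kappa$ by a finitely generated subgroup in which any given target element is already visible.

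First, by Lemma~\ref{lemma:lcsq-of-local-group-is-local}, the group $\widehat G/\widehat G_\kappa$ is local, so the universal property provides an extension $\widehat f\colon \widehat\pi \to \widehat G/\widehat G_\kappa$ of $f$.  We will show surjectivity of $\widehat f$ one element at a time.  Fix an arbitrary $g \in \widehat G/\widehat G_\kappa$.  Since $\pi$ is finitely generated, the subgroup $H \subset \widehat G/\widehat G_\kappa$ generated by $\Im(f)$ together with $g$ is finitely generated.  Apply Lemma~\ref{lemma:approximation-local-lcsq} to obtain a finitely generated subgroup $Q$ of $\widehat G/\widehat G_\kappa$ containing $H$ such that the inclusion induces an isomorphism $H_1(Q) \isomto H_1(\widehat G/\widehat G_\kappa)$.

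Now $f$ factors as $\pi \xrightarrow{f'} Q \hookrightarrow \widehat G/\widehat G_\kappa$.  From the factorization
\[
  H_1(\pi) \xrightarrow{H_1(f')} H_1(Q) \xrightarrow{\cong} H_1(\widehat G/\widehat G_\kappa),
\]
the hypothesis that the composition is surjective forces $H_1(f')$ to be surjective.  Since $Q$ is finitely generated, Lemma~\ref{lemma:H1-surj-localization-surj} applies to $f'$ and yields a surjection $\widehat{f'}\colon \widehat\pi \twoheadrightarrow \widehat Q$.

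Finally, the inclusion $Q \hookrightarrow \widehat G/\widehat G_\kappa$ extends uniquely to a homomorphism $j\colon \widehat Q \to \widehat G/\widehat G_\kappa$ because the target is local, and by uniqueness the composition $j\circ \widehat{f'}$ agrees with $\widehat f$.  The element $g$ lies in $Q$, so $\iota_Q(g)\in \widehat Q$ satisfies $j(\iota_Q(g)) = g$; surjectivity of $\widehat{f'}$ gives $x\in \widehat\pi$ with $\widehat{f'}(x)=\iota_Q(g)$, whence $\widehat f(x)=g$.  Thus $\widehat f$ is surjective.  Given the two cited lemmas the argument is essentially mechanical; the only conceptual point is choosing the approximating $Q$ to contain both $\Im(f)$ and the prospective preimage target~$g$.
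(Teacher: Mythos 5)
Your proposal is correct and follows essentially the same route as the paper: show $\widehat G/\widehat G_\kappa$ is local via Lemma~\ref{lemma:lcsq-of-local-group-is-local}, use Lemma~\ref{lemma:approximation-local-lcsq} to trap both $f(\pi)$ and the target element in a finitely generated approximation $Q$ with $H_1(Q)\isomto H_1(\widehat G/\widehat G_\kappa)$, apply Lemma~\ref{lemma:H1-surj-localization-surj} to $\pi\to Q$, then push forward along the local extension $\widehat Q\to\widehat G/\widehat G_\kappa$. The paper phrases step two as ``every finitely generated subgroup $H$ is contained in the image,'' whereas you take an arbitrary element $g$ and let $H=\langle\Im f,g\rangle$, but this is the same reduction. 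The only small omission is the degenerate case $\kappa=1$ (Lemma~\ref{lemma:approximation-local-lcsq} is stated for $\kappa>1$), which the paper dispatches by noting $\widehat G/\widehat G_1=\{1\}$ makes the claim vacuous; you should insert that one-line case distinction.
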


\begin{proof}
  Since $\widehat G/\widehat G_\kappa$ is trivial for $\kappa=1$, we may assume
  that $\kappa\ge 2$.  Recall from
  Lemma~\ref{lemma:lcsq-of-local-group-is-local} that the transfinite lower
  central quotient of a local group is local.  So, in our case, $\widehat
  G/\widehat G_\kappa$ is local, and thus there is an induced homomorphism
  $\widehat\pi\to \widehat G/\widehat G_\kappa$ by the universal property
  of~$\widehat\pi$.
  \[
    \begin{tikzcd}[sep=large]
      \pi \ar[r] \ar[d,"f"'] & \widehat\pi \ar[ld,dashed]
      \\
      \widehat G/\widehat G_\kappa
    \end{tikzcd}
  \]
  To show that $\widehat\pi \to \widehat G/\widehat G_\kappa$ is surjective, it
  suffices to prove that every finitely generated subgroup $H$ in $\widehat
  G/\widehat G_\kappa$ is contained in the image of $\widehat\pi \to \widehat
  G/\widehat G_\kappa$.  Since $H$ and $\pi$ are finitely generated, there is a
  finitely generated subgroup $Q$ in $\widehat G/\widehat G_\kappa$ such that
  the inclusion induces an isomorphism $H_1(Q) \isomto H_1(\widehat G/\widehat
  G_\kappa)$ and both $H$ and $f(\pi)$ are contained in~$Q$, by
  Lemma~\ref{lemma:approximation-local-lcsq}.  Consider the following
  commutative diagram.
  \[
    \begin{tikzcd}[column sep=large]
      \pi \ar[r] \ar[d] \ar[rr,bend left=40,"f"] &
      Q \ar[r,hook] \ar[d] &
      \widehat G/\widehat G_\kappa
      \\
      \widehat\pi \ar[r] & \widehat Q \ar[ru]
    \end{tikzcd}
  \]
  Since $H_1(\pi)\to H_1(\widehat G/\widehat G_\kappa)$ is surjective, it
  follows that $H_1(\pi)\to H_1(Q)$ is surjective.  Therefore, by
  Lemma~\ref{lemma:H1-surj-localization-surj}, $\widehat\pi \to
  \widehat Q$ is surjective.  Since the given subgroup $H\subset \widehat
  G/\widehat G_\kappa$ is contained in $Q$, it follows that $H$ is
  contained in the image of~$\widehat\pi$.  This completes the proof.
\end{proof}

Another key ingredient of our proof of Theorem~\ref{theorem:main-realization} is
the following ``homology surgery'' result for 3-manifolds over a \emph{finitely
generated} fundamental group, which is due to Turaev~\cite{Turaev:1984-1}.
Aforementioned in Section~\ref{section:main-results}, we denote the torsion
subgroup of $H_*(-)$ by~$tH_*(-)$.

\begin{lemma}[{Turaev~\cite[Lemma~2.2]{Turaev:1984-1}}]
  \label{lemma:turaev-homology-surgery}
  Suppose $g\colon N \to X$ is a map of a closed 3-manifold $N$ to a CW-complex
  $X$ with finitely generated~$\pi_1(X)$ such that the cap product
  \[
    \cap\, g_*[N]\colon tH^2(X)\to tH_1(X)  
  \]
  is an isomorphism.  Then $(N,g)$ is bordant, over $X$, to a pair $(M,f)$ of a
  closed 3-manifold $M$ and a map $f\colon M\to X$ which induces an isomorphism
  $f_*\colon H_1(M) \isomto H_1(X)$.
\end{lemma}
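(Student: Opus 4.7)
The plan is a two-step surgery argument on $(N,g)$, preserving its bordism class over $X$, that produces a representative $(M,f)$ with $f_*\colon H_1(M)\isomto H_1(X)$. Since $\Omega_3^{SO}(X)=H_3(X)$ for any CW complex $X$, all the moves below are genuine oriented bordisms over~$X$.

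First, I would arrange surjectivity of $g_*$ on $\pi_1$. Using that $\pi_1(X)$ is finitely generated, choose loops $\alpha_1,\ldots,\alpha_r$ in $X$ whose classes generate $\pi_1(X)$. Form the connected sum $N'=N\,\#\,r(S^1\times S^2)$ and extend $g$ to $g'\colon N'\to X$ by mapping the $i$-th $S^1$-factor to $\alpha_i$ and the $S^2$-factor to a point. The trace of these $1$-handle attachments is a bordism over~$X$, and $g'_*\colon \pi_1(N')\to\pi_1(X)$ is now surjective, hence so is $g'_*\colon H_1(N')\to H_1(X)$.

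Second, I would kill $K:=\ker\{g'_*\colon H_1(N')\to H_1(X)\}$ by surgery on embedded circles. Given a generator $x\in K$ represented by an embedded circle $\gamma\subset N'$, the element $g'(\gamma)\in\pi_1(X)$ lies in the commutator subgroup; writing it as a product of commutators and using the $\pi_1$-surjectivity of $g'_*$, I can band-sum $\gamma$ inside a ball with commutators of loops in $N'$ to obtain a homologous circle $\gamma'$ (same class $x\in K$) such that $g'(\gamma')$ is null-homotopic in~$X$. A surgery on $\gamma'$ then extends to a bordism over~$X$, and after finitely many such surgeries the free part of $K$ is eliminated.

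The main obstacle, and the point at which the cap-product hypothesis is essential, is the torsion in~$K$: each such surgery kills $[\gamma']$ but introduces the class of the belt circle in the new manifold, which a priori can contribute to the new kernel. The assumption that $\cap\, g_*[N]\colon tH^2(X)\to tH_1(X)$ is an isomorphism is precisely a Poincar\'e-duality constraint matching torsion classes in $H_1(X)$ with those detected through~$g$; combined with Poincar\'e duality in the $3$-manifold~$N'$ and a careful choice of framings, it lets one pair up torsion generators of $K$ so that each surgery strictly reduces~$|K|$, and the procedure terminates with $f_*\colon H_1(M)\isomto H_1(X)$.
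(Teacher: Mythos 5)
The paper does not actually prove Lemma~\ref{lemma:turaev-homology-surgery}; it is imported wholesale from Turaev~\cite[Lemma~2.2]{Turaev:1984-1}, so there is no internal argument to compare against. Evaluated on its own, your plan points in the right direction (Turaev's own proof is also surgery-theoretic), and your Steps (1) and (2) — stabilizing by $S^1\times S^2$'s to make $g'_*$ surjective on $\pi_1$, and then band-summing with commutator loops so that a primitive infinite-order kernel class is represented by a circle whose image is null-homotopic in $X$ before surgering it away — are correct and standard. But the torsion paragraph, which is the entire content of the lemma, is a hand-wave, and the specific claim that ``each surgery strictly reduces $|K|$'' is not established and is doubtful as stated.

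The concrete issue is this. When you surger a circle $\gamma'$ whose image in $X$ is null-homotopic, the map over the trace $W=N'\times I\cup h^2$ extends across the $2$-handle $h^2$ by a map that is null-homotopic on the whole handle; in particular the belt circle of the dual $1$-handle in $M'$ has null-homotopic image in $X$, so its class in $H_1(M')$, which can perfectly well be of \emph{infinite} order depending on the framing, lands in the new kernel $K'$. So a single torsion surgery can \emph{increase} the kernel, and an induction on $|K|$ does not obviously terminate. What the cap-product hypothesis actually gives you, after unwinding Poincar\'e duality in $N'$ (the factorization $c\cap g'_*[N']=g'_*\bigl((g'^*c)\cap[N']\bigr)$ together with $tH^2\cong\operatorname{Hom}(tH_1,\Q/\Z)$), is the precise algebraic statement that the torsion kernel $K=\ker\{g'_*\colon tH_1(N')\to tH_1(X)\}$ satisfies $K\cap K^\perp=0$ with respect to the torsion linking form $\lambda_{N'}$ — i.e.\ $K$ is a \emph{nondegenerate orthogonal summand} of $(tH_1(N'),\lambda_{N'})$. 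The remaining work, which your proposal does not do, is to show that such a nondegenerate orthogonal summand can always be removed by surgeries along circles null-homotopic in $X$; this requires decomposing $(K,\lambda_{N'}|_K)$ into the standard indecomposable linked cyclic forms (with the usual extra care at the prime $2$, where rank-two indecomposables occur), choosing framings so that each block contributes a lens-space-type piece that bounds, and verifying that this property of $K$ is preserved along the induction (which it is, because $f'_*[M']=g_*[N]$ after each bordism step). Your intuition about where the hypothesis is used is right, but ``pair up torsion generators'' is not the mechanism, and the termination of the algorithm is exactly what needs proof.
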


Now we are ready to start the proof of Theorem~\ref{theorem:main-realization}\@.
Recall from Section~\ref{subsection:results-determination} that the set
$\cR_\kappa(\Gamma)$ of realizable classes is defined to be the collection of
$\theta\in H_3(\widehat\Gamma/\widehat\Gamma_\kappa)$ such that
$\theta=\theta_\kappa(M)$ for some closed 3-manifold $M$ with $\pi=\pi_1(M)$
equipped with an isomorphism $\widehat\pi/\widehat\pi_\kappa \isomto
\widehat\Gamma/\widehat\Gamma_\kappa$.
Here, $\Gamma$ is a fixed finitely presented group.
Let $\kappa\ge 2$.
Theorem~\ref{theorem:main-realization} says that $\theta\in \cR_\kappa(\Gamma)$
if and only if the following two conditions hold.
\begin{enumerate}
  \item The cap product
  \[
    \cap\,\theta\colon tH^2(\widehat\Gamma/\widehat\Gamma_\kappa)
    \to tH_1(\widehat\Gamma/\widehat\Gamma_\kappa) \cong tH_1(\Gamma)
  \]
  is an isomorphism.
  \item The composition
  \[
    H^1(\widehat\Gamma/\widehat\Gamma_\kappa)
    \xrightarrow{\cap\,\theta} H_2(\widehat\Gamma/\widehat\Gamma_\kappa)
    \xrightarrow{\text{pr}}
    H_2(\widehat\Gamma/\widehat\Gamma_\kappa) /
    K_{\lambda}(\widehat\Gamma/\widehat\Gamma_\kappa)
  \]
  is surjective for all $\lambda<\kappa$, where
  $K_{\lambda}(\widehat\Gamma/\widehat\Gamma_\kappa)=\Ker\{H_2(\widehat\Gamma/\widehat\Gamma_\kappa)
  \rightarrow H_2(\widehat\Gamma/\widehat\Gamma_\lambda)\}$.
\end{enumerate}

\begin{proof}[Proof of Theorem~\ref{theorem:main-realization}\@]

  For the only if direction, suppose $\theta\in \cR_\kappa(\Gamma)$.  Choose a
  closed 3-manifold $M$ with $\pi=\pi_1(M)$ and an isomorphism $f\colon
  \widehat\pi/\widehat\pi_\kappa \isomto \widehat\Gamma/\widehat\Gamma_\kappa$
  such that $\theta_\kappa(M)=\theta$.  That is, $\theta=\phi_*[M]$ where
  $\phi_*\colon H_3(M)\to H_3(\widehat\Gamma/\widehat\Gamma_\kappa)$ is induced by
  the composition
  \[
    \phi\colon M\to B\pi \to B\widehat\pi \to B(\widehat\pi/\widehat\pi_\kappa)
    \xrightarrow[\simeq]{f} B(\widehat\Gamma/\widehat\Gamma_\kappa).
  \]
  Then, the following diagram is commutative.
  \[
    \begin{tikzcd}[sep=large]
      tH^2(M) \ar[d,"{\cap\,[M]}"'] &
      tH^2(\widehat\Gamma/\widehat\Gamma_\kappa)
        \ar[l,"\phi^*"']\ar[d,"\cap\,\theta"]
      \\
      tH_1(M) \ar[r,"\phi_*"'] &
      tH_1(\widehat\Gamma/\widehat\Gamma_\kappa)
    \end{tikzcd}
  \]
  The cap product $\cap\,[M]$ is an isomorphism by Poincar\'e duality. The
  bottom arrow $\phi_*$ is an isomorphism since $H_1(M) =
  H_1(\pi)=H_1(\widehat\pi/\widehat\pi_\kappa)$ and $f\colon
  \widehat\pi/\widehat\pi_\kappa \to \widehat\Gamma/\widehat\Gamma_\kappa$ is an
  isomorphism.  From this it also follows that the top arrow $\phi^*$ is an
  isomorphism, since $tH^2(-) = \Ext(H_1(-),\Z)$.  Therefore,
  $\cap\, \theta$ is an isomorphism.  This shows that (1) holds.

  To show that (2) holds, suppose $\lambda<\kappa$ and consider the following
  commutative diagram.
  \begin{equation}
    \label{equation:cap-product-by-fund-class-on-H^1}
    \begin{tikzcd}[row sep=large,column sep=small]
      H^1(M) \ar[d,"{\cap\,[M]}"'] & &
      H^1(\widehat\Gamma/\widehat\Gamma_\kappa)
        \ar[d,"\cap\,\theta"]  \ar[rrd,"\text{pr}\circ(\cap\,\theta)"]
        \ar[ll,"\phi^*"']
      \\
      H_2(M) \ar[rr,"\phi_*"'] \ar[d] & &
      H_2(\widehat\Gamma/\widehat\Gamma_\kappa) \ar[rr,"\text{pr}"'] & &
      H_2(\widehat\Gamma/\widehat\Gamma_\kappa) / 
        K_{\lambda}(\widehat\Gamma/\widehat\Gamma_\kappa)
      \\
      H_2(\pi) \ar[r] & 
      H_2(\widehat\pi) \ar[r] &
      H_2(\widehat\pi/\widehat\pi_\kappa) \ar[u] \ar[rru]
    \end{tikzcd}
  \end{equation}
  By Poincar\'e duality, $\cap\,[M]$ is an isomorphism.  Since
  $H^1(-)=\Hom(H_1(-),\Z)$ and $H_1(M)=H_1(\widehat\pi/\widehat\pi_\kappa) \to
  H_1(\widehat\Gamma/\widehat\Gamma_\kappa)$ is an isomorphism, the top arrow
  $\phi^*$ is an isomorphism. Also, the assumption that $f\colon
  \widehat\pi/\widehat\pi_\kappa \to \widehat\Gamma/\widehat\Gamma_\kappa$ is an
  isomorphism implies that the composition $H_2(\widehat\pi) \to
  H_2(\widehat\pi/\widehat\pi_\kappa) \to
  H_2(\widehat\Gamma/\widehat\Gamma_\kappa) /
  K_\lambda(\widehat\Gamma/\widehat\Gamma_\kappa)$ is surjective, by applying
  Lemma~\ref{lemma:transfinite-stallings-dwyer-main-argument} to the composition
  $\widehat\pi \to \widehat\pi/\widehat\pi_\kappa \isomto
  \widehat\Gamma/\widehat\Gamma_\kappa$.  Since $H_2(M)\to H_2(\pi)$ and
  $H_2(\pi) \to H_2(\widehat\pi)$ are surjective (see
  Theorem~\ref{theorem:localization-basic-facts}\ref{item:localization-colimit}
  for the latter), it follows that the composition
  $\text{pr}\circ(\cap\,\theta)$
  in~\eqref{equation:cap-product-by-fund-class-on-H^1} is surjective. This
  proves that (2) holds.

  It remains to show the if direction.  Suppose (1) and (2) hold for a given
  class $\theta\in H_3(\widehat\Gamma/\widehat\Gamma_\kappa)$.  Since
  $H_3=\Omega^{SO}_3$, there is a map $\psi\colon N\to
  B(\widehat\Gamma/\widehat\Gamma_\kappa)$ of a closed 3-manifold $N$ such that
  $\psi_*[N] = \theta$.
  
  We will invoke Turaev's homology surgery for 3-manifolds
  (Lemma~\ref{lemma:turaev-homology-surgery}) to alter~$(N,\psi)$.
  Note that $\widehat\Gamma/\widehat\Gamma_\kappa$ is not finitely generated in
  general, and thus Lemma~\ref{lemma:turaev-homology-surgery} does
  not apply directly over $B(\widehat\Gamma/\widehat\Gamma_\kappa)$.  So we
  proceed as follows, using a finitely generated approximation.  Apply
  Lemma~\ref{lemma:approximation-local-lcsq} to choose a finitely
  generated subgroup $Q$ in $\widehat\Gamma/\widehat\Gamma_\kappa$ such that the
  inclusion induces an isomorphism $H_1(Q) \isomto
  H_1(\widehat\Gamma/\widehat\Gamma_\kappa)$ and $\pi_1(N)\to
  \widehat\Gamma/\widehat\Gamma_\kappa$ factors through~$Q$.  Let $\psi'\colon N
  \to B(\pi_1(N)) \to B(Q)$ be the composition, and consider the following
  commutative diagram.
  \[
    \begin{tikzcd}[sep=large]
      tH^2(Q) \ar[d,"{\cap\, \psi'_*[N]}"'] \ar[r,"\cong"] &
      tH^2(\widehat\Gamma/\widehat\Gamma_\kappa)
        \ar[d,"\cong"',"{\cap\, \psi_*[N]} = {\cap\, \theta}"] &
      \\
      tH_1(Q) & 
      tH_1(\widehat\Gamma/\widehat\Gamma_\kappa) \ar[l,"\cong"]
    \end{tikzcd}
  \]
  The two horizontal arrows and the right vertical arrow $\cap\,\theta$ are
  isomorphisms, by our choice of $Q$, by the fact $tH^2(-) =
  \Ext(H_1(-),\Z)$ and by the hypothesis~(1).  So $\cap\,\psi'_*[N]$ is an
  isomorphism too.  Now apply Lemma~\ref{lemma:turaev-homology-surgery}
  to $(N,\psi')$ to produce a closed 3-manifold $M$ endowed with a map $M \to
  B(Q)$ which induces an isomorphism on~$H_1$.  Let $\phi\colon M \to B(Q) \to
  B(\widehat\Gamma/\widehat\Gamma_\kappa)$ be the composition.  It induces an
  isomorphism $H_1(M) \isomto H_1(Q) \cong
  H_1(\widehat\Gamma/\widehat\Gamma_\kappa)$.  Also, since $(M,\phi)$ is bordant
  to $(N,\psi)$, we have $\phi_*[M] = \psi_*[N] = \theta$.

  Let $\pi=\pi_1(M)$, and consider $\pi\to \widehat\Gamma/\widehat\Gamma_\kappa$
  induced by~$\phi$.  It gives rise to a homomorphism $\widehat\pi \to
  \widehat\Gamma/\widehat\Gamma_\kappa$ since
  $\widehat\Gamma/\widehat\Gamma_\kappa$ is local by
  Lemma~\ref{lemma:lcsq-of-local-group-is-local}.  Consider the
  diagram~\eqref{equation:cap-product-by-fund-class-on-H^1} again.  Now, we have
  that the composition $\text{pr}\circ(\cap\,\theta)$ is surjective by the
  hypothesis~(2).  Note that this surjection is equal to the composition of the
  six arrows along the counterclockwise outmost path from
  $H^1(\widehat\Gamma/\widehat\Gamma_\kappa)$ to
  $H_2(\widehat\Gamma/\widehat\Gamma_\kappa) /
  K_\lambda(\widehat\Gamma/\widehat\Gamma_\kappa)$
  in~\eqref{equation:cap-product-by-fund-class-on-H^1}.  So, the map
  $H_2(\widehat\pi) \to H_2(\widehat\Gamma/\widehat\Gamma_\kappa) /
  K_\lambda(\widehat\Gamma/\widehat\Gamma_\kappa)$, which is the last one
  applied in the composition, is surjective.  By applying
  Lemma~\ref{lemma:transfinite-stallings-dwyer-main-argument} to $\widehat\pi\to
  \widehat\Gamma/\widehat\Gamma_\kappa$, it follows that $\phi$ induces an
  isomorphism $\widehat\pi/\widehat\pi_\kappa \isomto
  \widehat\Gamma/\widehat\Gamma_\kappa$.  Therefore $\theta=\phi_*[M]$ lies in
  $\cR_\kappa(\Gamma)$.  This completes the proof of
  Theorem~\ref{theorem:main-realization}\@.
\end{proof}

\section{Universal \texorpdfstring{$\theta$}{theta}-invariant}
\label{section:final-transfinite-invariant}

We begin by recalling the definition of the universal $\theta$-invariant from
Definition~\ref{definition:theta-final}. As before, let $\Gamma$ be a
finitely presented group.  Suppose $M$ is a closed 3-manifold with
$\pi=\pi_1(M)$ equipped with an isomorphism $f\colon \widehat\pi \to
\widehat\Gamma$.  Motivated from Levine's link invariant
in~\cite{Levine:1989-1}, define $\widehat\theta(M)\in H_3(\widehat\Gamma)$ to be
the image of $[M]\in H_3(M)$ under
\[
  H_3(M)\to H_3(\pi) \to H_3(\widehat\pi)
  \xrightarrow[\cong]{f_*} H_3(\widehat\Gamma).
\]
The value of $\widehat\theta(M)$ depends on the choice of $f$, while its image
in $H_3(\widehat\Gamma)/\Aut(\widehat\Gamma)$ is independent of the choice
of~$f$.

The following is analogous to Theorem~\ref{theorem:results-homology-cob}.  We
omit the proof, since the argument is exactly the same as that of
Theorem~\ref{theorem:results-homology-cob}.

\begin{theorem}
  \label{theorem:homology-cob-final-inv}
  The invariant $\widehat\theta(M)$ is invariant under homology cobordism in the
  following sense: 
  \begin{enumerate}
    \item If $M$ and $N$ are homology cobordant 3-manifolds with $\pi=\pi_1(M)$
    and $G=\pi_1(N)$, then there is an isomorphism $\phi\colon \widehat G
    \isomto \widehat\pi$, and consequently $\widehat\theta(M)$ is defined if and
    only if $\widehat\theta(N)$ is defined.
    \item When $\widehat\theta(M)$ and $\widehat\theta(N)$ are defined using an
    isomorphism $f\colon \widehat{\pi} \isomto
    \widehat\Gamma$ and the composition $f\circ\phi$, we
    have $\widehat\theta(M) = \widehat\theta(N)$ in
    $H_3(\widehat\Gamma)$.
    \item When $\widehat\theta(M)$ and $\widehat\theta(N)$ are defined using
    arbitrary isomorphisms $\widehat\pi \isomto \widehat\Gamma$ and $\widehat G
    \isomto \widehat\Gamma$, we have $\widehat\theta(M)=\widehat\theta(N)$ in
    $H_3(\widehat\Gamma) / \Aut(\widehat\Gamma)$.
  \end{enumerate}
\end{theorem}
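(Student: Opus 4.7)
The plan is to mimic verbatim the proof of Theorem~\ref{theorem:results-homology-cob}, simply dropping the quotient by $\widehat{(-)}_\kappa$ throughout. The key geometric input is that a homology cobordism $W$ between $M$ and $N$ yields inclusions that are homology equivalences on $M \hookrightarrow W$ and $N \hookrightarrow W$; passing to classifying spaces $K(\pi_1(-),1)$ only adds cells of dimension $\ge 3$, so the inclusion-induced homomorphisms $\pi_1(M) \to \pi_1(W)$ and $\pi_1(N) \to \pi_1(W)$ are 2-connected between finitely presented groups.

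For part~(1), I apply Theorem~\ref{theorem:localization-basic-facts}\ref{item:localization-2-connected-map} to these 2-connected homomorphisms to obtain isomorphisms $\widehat{\pi_1(M)} \isomto \widehat{\pi_1(W)} \isomfrom \widehat{\pi_1(N)}$, and take $\phi \colon \widehat G \isomto \widehat\pi$ to be their composition. This also shows $\widehat\theta(N)$ is defined exactly when $\widehat\theta(M)$ is, since $\widehat{\pi_1(M)} \cong \widehat{\pi_1(N)}$ canonically up to the choice of $W$.

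For part~(2), fix $f\colon \widehat\pi \isomto \widehat\Gamma$ and assemble the commutative diagram
\[
  \begin{tikzcd}[row sep=large]
    H_3(M) \ar[r] \ar[d,"i_*"'] & H_3(\widehat{\pi}) \ar[d,"\cong"]
      \ar[rd,"f_*","\cong"'] &
    \\
    H_3(W) \ar[r] & H_3(\widehat{\pi_1(W)}) \ar[r,"\cong"] & H_3(\widehat\Gamma)
    \\
    H_3(N) \ar[r] \ar[u,"j_*"] & H_3(\widehat{G}) \ar[u,"\cong"']
      \ar[ru,"(f\circ\phi)_*","\cong"'] &
  \end{tikzcd}
\]
whose commutativity is functoriality of localization. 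The bordism identity $i_*[M] - j_*[N] = \partial[W] = 0$ in $H_3(W)$ then forces $\widehat\theta(M) = \widehat\theta(N)$ in $H_3(\widehat\Gamma)$. Part~(3) follows immediately: any two choices of isomorphisms $\widehat\pi \isomto \widehat\Gamma$ and $\widehat G \isomto \widehat\Gamma$ differ from the specific pair $(f, f\circ\phi)$ by post-composition with elements of $\Aut(\widehat\Gamma)$, so the equality from~(2) descends to equality of orbits in $H_3(\widehat\Gamma)/\Aut(\widehat\Gamma)$.

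No substantive obstacle is anticipated; the argument is purely diagrammatic and the two essential inputs---naturality of the localization map $\iota_{(-)}$ and Theorem~\ref{theorem:localization-basic-facts}\ref{item:localization-2-connected-map}---are already in hand. The only point requiring a moment's care is the finite presentability hypothesis needed to invoke the latter, but this is automatic since $M$, $N$, $W$ are all compact manifolds.
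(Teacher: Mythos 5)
Your argument is correct, and it is exactly the route the paper intends: the paper explicitly omits the proof of this theorem, stating that the argument is identical to that of Theorem~\ref{theorem:results-homology-cob}, which is precisely the adaptation you carry out (drop the quotient by $\widehat{(\,\cdot\,)}_\kappa$, use Theorem~\ref{theorem:localization-basic-facts}\ref{item:localization-2-connected-map} on the $2$-connected inclusion-induced maps, and conclude via $i_*[M]-j_*[N]=\partial[W]=0$).
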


Let $\widehat\cR(\Gamma)$ be the collection of classes $\theta\in
H_3(\widehat\Gamma)$ such that there exists a closed 3-manifold $M$ with $\pi =
\pi_1(M)$ endowed with an isomorphism $\widehat\pi \isomto \widehat\Gamma$ for
which $\widehat\theta(M) =\theta$.  We will give a proof of
Theorem~\ref{theorem:main-realization-final-inv} stated in
Section~\ref{subsection:results-universal-inv}.  For the reader's convenience, we
recall the statement:  a homology class $\theta\in H_3(\widehat\Gamma)$ lies in
$\widehat\cR(\Gamma)$ if and only if the following two conditions hold.
\begin{enumerate}
  \item The cap product\/ $\cap\,\theta\colon tH^2(\widehat\Gamma) \to
  tH_1(\widehat\Gamma) \cong tH_1(\Gamma)$ is an isomorphism.
  \item The cap product\/ $\cap\,\theta\colon H^1(\widehat\Gamma) \to
  H_2(\widehat\Gamma)$ is surjective.
\end{enumerate}

\begin{proof}[Proof of Theorem~\ref{theorem:main-realization-final-inv}]

  We will first prove the only if part, using an argument almost identical to
  the proof of Theorem~\ref{theorem:main-realization}\@.  Suppose $M$ is a
  closed 3-manifold with $\pi=\pi_1(M)$ and $f\colon \widehat\pi \isomto
  \widehat\Gamma$ is an isomorphism.  Let $\theta=\widehat\theta(M)\in
  H_3(\widehat\Gamma)$.  That is, $\theta$ is the image of $[M]$ under the map
  induced by the composition $\phi\colon M \to B\pi \to B\widehat\pi
  \xrightarrow{f} B\widehat\Gamma$.  Consider the following commutative diagram:
  \[
    \begin{tikzcd}[sep=large]
      tH^2(M) \ar[d,"{\cap\,[M]}"'] &
      tH^2(\widehat\Gamma)
        \ar[l,"\phi^*"']\ar[d,"\cap\,\theta"]
      \\
      tH_1(M) \ar[r,"\phi_*"'] &
      tH_1(\widehat\Gamma)
    \end{tikzcd}
  \]
  By Poincar\'e duality, $\cap\,[M]$ is an isomorphism. The arrow $\phi_*$ is an
  isomorphism since $f$ is an isomorphism.  Using $tH^2(-) =
  \Ext(H_1(-),\Z)$, it follows that $\phi^*$ is an isomorphism. So, by the
  commutativity, $\cap\, \theta$ is an isomorphism. This shows that (1) holds.
  To show that (2) holds, consider the following commutative diagram:
  \[
    \begin{tikzcd}[sep=large]
      H^1(M) \ar[d,"{\cap\,[M]}"'] &
      H^1(\widehat\Gamma) \ar[d,"\cap\,\theta"] \ar[l,"\phi^*"']
      \\
      H_2(M) \ar[r,"\phi_*"'] &
      H_2(\widehat\Gamma)
    \end{tikzcd}
  \]
  The arrows $\phi^*$ is an isomorphism since $f$ is an isomorphism, and
  $\cap\,[M]$ is an isomorphisms by Poincar\'e duality.  Since $H_2(M) \to
  H_2(\pi)$ and $H_2(\pi) \to H_2(\widehat\pi)$ are surjective, $\phi_*$ is
  surjective.  So $\cap\,\theta$ is surjective.  That is, (2) holds.

  Now, we will prove the if part.  Our argument will be different from the proof
  of Theorem~\ref{theorem:main-realization}\@.  Suppose $\theta\in
  H_3(\widehat\Gamma)$ is a homology class satisfying the conditions~(1)
  and~(2).  Choose a sequence of 2-connected homomorphisms of finitely presented
  groups
  \[
    \Gamma=P(1) \to P(2) \to \cdots \to P(\ell) \to \cdots
  \]
  such that $\widehat\Gamma=\colim_\ell P(\ell)$, by using
  Theorem~\ref{theorem:localization-basic-facts}\ref{item:localization-colimit}.
  Since $H_3(\widehat\Gamma)$ is the colimit of $H_3(P(\ell))$, the class
  $\theta$ lies in the image of $H_3(P(\ell_0))$ for some~$\ell_0$.  Let
  $P=P(\ell_0)$ for brevity.  Denote $P \to \widehat\Gamma$ by $\iota$, and write
  $\theta=\iota_*(\sigma)$, where $\sigma\in H_3(P)$.

  We claim that we may assume that $\iota_*\colon H_2(P) \to
  H_2(\widehat\Gamma)$ is an isomorphism.  To prove this, first recall that
  $H_2(P) \to H_2(\widehat\Gamma)$ is surjective by the choice of the
  sequence~$\{P(\ell)\}$.  Let $N$ be the kernel of $H_2(P) \to
  H_2(\widehat\Gamma)$.  Since $P$ is finitely presented, $H_2(P)$ is a finitely
  generated abelian group, and thus $N$ is finitely generated.  Since
  $H_2(\widehat\Gamma)$ is the colimit of $H_2(P(\ell))$, it follows that the
  image of $N$ under $H_2(P) \to H_2(P(\ell_1))$ is trivial for some
  $\ell_1\ge\ell_0$.  Since $H_2(P) \to H_2(P(\ell_1))$ is surjective, we have
  $H_2(P(\ell_1)) \cong H_2(P)/N \cong H_2(\widehat\Gamma)$.  Replacing $P$ by
  $P(\ell_1)$, the claim is obtained.

  We will use Turaev's homology surgery, over the finitely presented group~$P$.
  Choose a map $\psi\colon N \to BP$ of a closed 3-manifold $N$ such that
  $\psi_*[N] = \sigma$, using that $\Omega^{SO}_3(P) = H_3(P)$.  Consider the
  following commutative diagram:
  \[
    \begin{tikzcd}[sep=large]
      tH^2(P) \ar[d,"{\cap\, \sigma}"']  &
      tH^2(\widehat\Gamma) \ar[l,"\iota^*"'] \ar[d,"{\cap\, \theta}"]
      \\
      tH_1(P) \ar[r,"\iota_*"'] & 
      tH_1(\widehat\Gamma) 
    \end{tikzcd}
  \]
  By condition (1), $\cap\,\theta$ is an isomorphism.  The arrows $\iota_*$ and
  $\iota^*$ are isomorphisms since $H_1(P)\to H_1(\widehat\Gamma)$ is an
  isomorphism by the choice of~$\{P(\ell)\}$ and $tH^2(-) = \Ext(H_1(-),\Z)$.
  So, $\cap\,\sigma$ is an isomorphism.  Apply Turaev's
  Lemma~\ref{lemma:turaev-homology-surgery}, to obtain a map
  $\phi\colon M \to B(P)$ of a closed 3-manifold $M$ with $\pi=\pi_1(M)$ such that
  $(M,\phi)$ is bordant to $(N,\psi)$ over $P$ and $\phi_*\colon H_1(M) \to
  H_1(P)$ is an isomorphism.  We have $\phi_*[M] = \psi_*[N] = \sigma$
  in~$H_3(P)$. Consider the following diagram:
  \[
    \begin{tikzcd}[sep=large]
      H^1(M) \ar[d,"{\cap\,[M]}"'] &
      H^1(P) \ar[d,"{\cap\,\sigma}"] \ar[l,"\phi^*"'] &
      H^1(\widehat\Gamma) \ar[d,"{\cap\,\theta}"] \ar[l,"\iota^*"']
      \\
      H_2(M) \ar[r,"\phi_*"'] &
      H_2(P) \ar[r,"\iota_*"'] &
      H_2(\widehat\Gamma)
    \end{tikzcd}
  \]
  By condition (2), $\cap\,\theta$ is surjective.  The arrows $\iota^*$ and
  $\iota_*$ are isomorphisms by the choice of $\{P(\ell)\}$ and by the claim.
  The arrow $\phi^*$ is an isomorphism since $\phi$ induces an isomorphism
  on~$H_1$.  By Poincar\'e duality, $\cap\,[M]$ is an isomorphism. From these
  facts, it follows that $\phi_*\colon H_2(M) \to H_2(P)$ is surjective. So, by
  Theorem~\ref{theorem:localization-basic-facts}\ref{item:localization-2-connected-map},
  $\phi_*\colon \pi=\pi_1(M) \to P$ induces an isomorphism $\widehat\pi \isomto
  \widehat P$.  Since $\iota$ induces $\widehat P \isomto \widehat \Gamma$, it
  follows that $\iota\phi\colon M \to \widehat\Gamma$ induces an isomorphism
  $\widehat\pi \isomto \widehat\Gamma$.  Since $\phi_*[M] = \sigma$, we have
  $\widehat\theta(M) = \iota_* \phi_*[M] = \iota_*\sigma = \theta$.  This shows
  that $\theta\in \widehat\cR(\Gamma)$.
\end{proof}

\section{The free group case and Milnor's link invariant}
  \label{section:classical-milnor-invariant}

In this section we discuss the case when $\Gamma$ is a free group, and show that
our invariants of finite length applied to the zero framed surgery manifold of a
link are equivalent to Milnor's link invariants and Orr's homotopy theoretic
reformulation of the Milnor invariant.  Most of the results from this section
appear in~\cite{Orr:1989-1, Igusa-Orr:2001-1, Levine:1989-1, Levine:1989-2}.
However, relating prior work to the results herein seems non-trivial.  This
section will highlight and clarify new perspectives on Milnor's link invariants.

We proceed as follows.  Fix a positive integer $m$, and as the ``basepoint''
manifold, let $Y$ be the connected sum of $m$ copies of $S^1\times S^2$.  Then
$\pi_1(Y)=F$, the free group on $m$ generators. In this case, we have the
following useful property. 

\begin{lemma}
  \label{lemma:realization-over-free-group}
  For finite $k\ge 2$,  $\cR_k(F) = H_3(F/F_k)$.
\end{lemma}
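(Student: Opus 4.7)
The approach is to verify the two algebraic criteria of Theorem~\ref{theorem:main-realization} for every class in $H_3(F/F_k)$. Since $F$ is finitely presented, Corollary~\ref{corollary:localization-basic-consequences}\ref{item:local-lcsq-ordinary-lcsq} identifies $\widehat{F}/\widehat{F}_k$ with $F/F_k$, so $\cR_k(F) \subset H_3(F/F_k)$ and it remains to show the reverse inclusion.

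For condition~(1) of Theorem~\ref{theorem:main-realization}, both source and target of the cap product vanish identically. Indeed, $H_1(F/F_k) = F/F_2 \cong \Z^m$ is free abelian, so $tH_1(F/F_k) = 0$, and by the universal coefficient theorem $tH^2(F/F_k) = \Ext(H_1(F/F_k),\Z) = 0$ as well. Hence $\cap\,\theta$ is trivially an isomorphism between zero groups.

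For condition~(2), since $k$ is a discrete ordinal, it suffices to verify surjectivity onto $H_2(F/F_k)/K_{k-1}$. Applying the 5-term (Stallings) exact sequence to $1 \to F_n \to F \to F/F_n \to 1$ and using $H_2(F) = 0$, I obtain the standard isomorphism $H_2(F/F_n) \cong F_n/F_{n+1}$ for $n = k-1, k$. Naturality of this identification with respect to the projection $F/F_k \twoheadrightarrow F/F_{k-1}$ shows that the induced map $H_2(F/F_k) \to H_2(F/F_{k-1})$ corresponds to the map $F_k/F_{k+1} \to F_{k-1}/F_k$ coming from the inclusion $F_k \hookrightarrow F_{k-1}$, which is identically zero since every element of $F_k$ represents $0$ in $F_{k-1}/F_k$. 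Therefore $K_{k-1} = H_2(F/F_k)$, the quotient $H_2(F/F_k)/K_{k-1}$ vanishes, and the surjectivity required in condition~(2) is vacuous.

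Both conditions of Theorem~\ref{theorem:main-realization} are thus automatic for every $\theta \in H_3(F/F_k)$, yielding $\cR_k(F) = H_3(F/F_k)$. There is no real obstacle; the only content is the naturality computation identifying $H_2(F/F_k) \to H_2(F/F_{k-1})$ with the zero map, which forces the relevant Dwyer-type quotient to vanish.
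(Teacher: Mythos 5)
Your proof is correct and follows essentially the same route as the paper's: both apply Theorem~\ref{theorem:main-realization}, dispose of condition~(1) via the torsion-freeness of $H_1(F)\cong\Z^m$, and dispose of condition~(2) by observing that $H_2(F/F_k)\to H_2(F/F_{k-1})$ is the zero map, so that $K_{k-1}=H_2(F/F_k)$ and the target quotient vanishes. The only difference is that you rederive $H_2(F/F_n)\cong F_n/F_{n+1}$ from the Stallings five-term sequence where the paper just cites Hopf's theorem.
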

\begin{proof}
  Recall that $H_2(F/F_k)=F_k/F_{k+1}$ by Hopf's theorem.  Thus the projection
  induces a zero homomorphism $H_2(F/F_{k}) \to H_2(F/F_{k-1})$.  From this
  and the fact that $H_1(F)=\Z^m$ is torsion free, it follows that $\cR_k(F) =
  H_3(F/F_k)$, by Theorem~\ref{theorem:main-realization}.
\end{proof}

So, $\cR_k(F)$ is an abelian group, and consequently $\Coker\{\cR_{k+1}(F) \to
\cR_k(F)\}$ is an abelian group too.  We remark that the structure of
this cokernel was computed in~\cite{Orr:1989-1,Igusa-Orr:2001-1}.  The cokernel, 
$\Coker\{\cR_{k+1}(F) \to \cR_k(F)\} = \Coker\{H_3(F/F_{k+1}) \to H_3(F/F_k)\}$
is a free abelian group of rank $m\cR(m,k)-\cR(m,k+1)$ where
\[
  \cR(m,n) := \frac1n \sum_{d|n}\phi(d)\cdot m^{n/d}
\]
and $\phi(d)$ is the M\"obius function.

The following is another useful feature of the case of the free group~$F$.

\begin{lemma}
  \label{lemma:free-nilpotent-quotient-lift}
  Suppose $\pi$ is a group.  Then, for finite $k\ge 2$, every isomorphism $f\colon
  \pi/\pi_{k} \isomto F/F_{k}$ lifts to an isomorphism $\pi/\pi_{k+1} \isomto
  F/F_{k+1}$ if and only if there exists an isomorphism $\pi/\pi_{k+1} \isomto
  F/F_{k+1}$.
\end{lemma}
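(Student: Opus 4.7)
The forward direction is immediate: given any $f\colon \pi/\pi_k\isomto F/F_k$, the hypothesis produces a lift, which is the desired isomorphism $\pi/\pi_{k+1}\isomto F/F_{k+1}$. For the converse, fix any $f$ and, by hypothesis, some $g\colon \pi/\pi_{k+1}\isomto F/F_{k+1}$; let $g_0\colon \pi/\pi_k\isomto F/F_k$ be the isomorphism induced by $g$. Then $\alpha:=f\circ g_0^{-1}$ is an automorphism of $F/F_k$, and if we can lift $\alpha$ to an automorphism $\tilde\alpha$ of $F/F_{k+1}$, the composite $\tilde\alpha\circ g\colon \pi/\pi_{k+1}\isomto F/F_{k+1}$ is the desired lift of~$f$. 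Thus the lemma reduces to the classical statement that the projection $\Aut(F/F_{k+1})\to \Aut(F/F_k)$ is surjective.

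To prove this surjectivity, write $F=F\langle x_1,\ldots,x_m\rangle$ and let $\alpha\in\Aut(F/F_k)$. Choose arbitrary lifts $y_i\in F/F_{k+1}$ of $\alpha(\bar x_i)\in F/F_k$; since $F/F_{k+1}$ is the free $k$-step nilpotent group on $x_1,\ldots,x_m$, the assignment $\bar x_i\mapsto y_i$ extends uniquely to a homomorphism $\tilde\alpha\colon F/F_{k+1}\to F/F_{k+1}$ whose reduction modulo $F_k/F_{k+1}$ is~$\alpha$. Apply the five lemma to the morphism of central extensions
\[
\begin{tikzcd}[column sep=small]
1 \ar[r] & F_k/F_{k+1} \ar[r] \ar[d] & F/F_{k+1} \ar[r] \ar[d,"\tilde\alpha"] & F/F_k \ar[r] \ar[d,"\alpha"] & 1
\\
1 \ar[r] & F_k/F_{k+1} \ar[r] & F/F_{k+1} \ar[r] & F/F_k \ar[r] & 1
\end{tikzcd}
\]
to reduce to showing that the restriction $\tilde\alpha|_{F_k/F_{k+1}}$ is an automorphism. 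Identifying $\bigoplus_{j\ge 1} F_j/F_{j+1}$ with the free Lie algebra over $\Z$ on $F/F_2\cong\Z^m$, a standard calculation with commutator identities shows that iterated commutators of length $k$ in $F/F_{k+1}$ depend, modulo $F_{k+1}$, only on the classes of their entries in $F/F_2$. Consequently $\tilde\alpha|_{F_k/F_{k+1}}$ coincides with the degree-$k$ component of the free Lie algebra automorphism induced by the abelianization $\alpha_{\mathrm{ab}}\in\Aut(F/F_2)$ of~$\alpha$, and is therefore an automorphism.

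The main obstacle is precisely this identification of $\tilde\alpha|_{F_k/F_{k+1}}$ with the degree-$k$ piece of the Lie algebra action of~$\alpha_{\mathrm{ab}}$; it hinges on the multilinearity of $k$-fold commutators modulo $F_{k+1}$, which is a direct consequence of commutator identities such as $[ab,c]=[a,c]^b[b,c]$ together with the fact that commutators involving a factor in $F_2$ land in a higher term of the lower central series. Once this multilinearity is in place, the remaining five-lemma step is routine.
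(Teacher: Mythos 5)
Your argument is correct, and the overall structure matches the paper's: both proofs observe that the only-if direction is trivial, and both reduce the converse to the surjectivity of the restriction map $\Aut(F/F_{k+1})\to\Aut(F/F_k)$, after which the desired lift of $f$ is the composite $\tilde\alpha\circ g$ exactly as you write it. Where you genuinely diverge is in how this surjectivity is established. You lift $\alpha$ by hand to an endomorphism $\tilde\alpha$ of $F/F_{k+1}$ using freeness, apply the five lemma to the central extensions, and then pin down $\tilde\alpha|_{F_k/F_{k+1}}$ via the classical identification of $\bigoplus_j F_j/F_{j+1}$ with the free Lie ring on $H_1(F)$ and the multilinearity of weight-$k$ commutators modulo $F_{k+1}$. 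The paper instead observes that a self-map of $F$ induces an isomorphism on $F/F_k$ if and only if it does so on $H_1(F)$ --- this is Stallings' theorem, using $H_2(F)=0$ --- and immediately concludes that such a map induces isomorphisms on all $F/F_n$ simultaneously, hence in particular lifts the automorphism to $F/F_{k+1}$. The paper's route is shorter and reuses machinery (Stallings) that is already a central tool throughout the paper; yours is more elementary and self-contained, at the cost of having to invoke the structure of the associated graded Lie ring and verify the multilinearity of higher commutators. Both are sound.
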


\begin{proof}
  The only if part is trivial.  For the if part, observe that a homomorphism
  $F\to F$ induces an isomorphism $F/F_k \to F/F_k$ if and only if it induces
  an isomorphism $H_1(F)\to H_1(F)$, by Stallings'
  Theorem~\cite{Stallings:1965-1}, since $H_2(F)=0$.  It follows that every
  automorphism of $F/F_k$ lifts to an automorphism of $F/F_{k+1}$ for $k\ge
  2$.  The conclusion is a straightforward consequence of this: if $\tilde
  g\colon \pi/\pi_{k+1} \isomto F/F_{k+1}$ is an isomorphism, then choose an
  automorphism lift $\tilde\phi\colon F/F_{k+1} \to F/F_{k+1}$ of the
  automorphism $\phi=fg^{-1}$, where  $g\colon \pi/\pi_{k} \isomto F/F_{k}$ is
  the induced isomorphism.  Then the composition $\tilde\phi \circ \tilde g$
  is an isomorphism which is a lift of $fg^{-1}\circ g = f$.
\end{proof}

Using the results stated in Section~\ref{section:main-results} and the above
lemmas on the free group, we compare the lower central quotients $\pi/\pi_k$ of
a 3-manifold group $\pi=\pi_1(M)$ with the free nilpotent quotient~$F/F_k$.  For
the initial case $k=2$, $\pi/\pi_k$ is isomorphic to $F/F_k$ if and only if
$H_1(\pi)\cong \Z^m$.  The following theorem deals with the induction step.

\begin{theorem}
  \label{theorem:finite-inv-over-free-group}
  Suppose $M$ is a closed 3-manifold with $\pi=\pi_1(M)$, equpped with an
  isomorphism $f\colon \pi/\pi_k \isomto F/F_k$, $k\ge 2$.  Then the following
  are equivalent.
  \begin{enumerate}
    \item The given $f$ lifts to an isomorphism $\pi/\pi_{k+1} \isomto
    F/F_{k+1}$.
    \item There is an isomorphism $\pi/\pi_{k+1} \cong F/F_{k+1}$ (which is not
    necessarily a lift).
    \item The invariant $\theta_k(M,f)$ vanishes in
    $\Coker\{\cR_{k+1}(F) \to \cR_k(F)\}$.
    \item The invariant $\theta_k(M)$ vanishes in $\Coker\{\cR_{k+1}(F) \to
    \cR_k(F)/\Aut(F/F_k)\}$.
    \item The invariant $\theta_k(M,g)$ vanishes in $\Coker\{\cR_{k+1}(F) \to \cR_k(F)\}$ for any isomorphism $g\colon\pi/\pi_k \isomto F/F_k$.
  \end{enumerate}
\end{theorem}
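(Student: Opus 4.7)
The plan is to assemble the equivalences directly from the main determination theorems of Section~\ref{subsection:results-determination} together with the two free-group-specific lemmas just established. The key observation enabling the whole chain is that, because $H_2(F)=0$, every automorphism of $F/F_k$ extends to an automorphism of $F/F_{k+1}$ (this is essentially the content of Lemma~\ref{lemma:free-nilpotent-quotient-lift}), which trivializes the distinction between having a lift and merely admitting an abstract isomorphism at the next stage.

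First I would record that, since $k$ is finite, Corollary~\ref{corollary:localization-basic-consequences}\ref{item:local-lcsq-ordinary-lcsq} gives canonical identifications $\widehat{\pi}/\widehat{\pi}_j \cong \pi/\pi_j$ and $\widehat{F}/\widehat{F}_j \cong F/F_j$ for $j=k$ and $j=k+1$. Hence Theorems~\ref{theorem:main-determination-lcsq-lift} and~\ref{theorem:main-determination-lcsq-nonlift}, taken with $\Gamma = F$, become statements about ordinary lower central quotients. Directly applying these to $(M,f)$ yields (1)$\Leftrightarrow$(3) from Theorem~\ref{theorem:main-determination-lcsq-lift} and (2)$\Leftrightarrow$(4) from Theorem~\ref{theorem:main-determination-lcsq-nonlift}. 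The equivalence (1)$\Leftrightarrow$(2) is exactly Lemma~\ref{lemma:free-nilpotent-quotient-lift}.

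It remains to splice (5) into the chain. The implication (5)$\Rightarrow$(3) is immediate by taking $g=f$. For the converse, suppose (3) holds and let $g\colon \pi/\pi_k \isomto F/F_k$ be an arbitrary isomorphism. By (3)$\Rightarrow$(1), the isomorphism $f$ lifts; by (1)$\Rightarrow$(2), $\pi/\pi_{k+1}\cong F/F_{k+1}$; and then Lemma~\ref{lemma:free-nilpotent-quotient-lift} applied to $g$ produces a lift $\pi/\pi_{k+1}\isomto F/F_{k+1}$ of $g$. Finally, applying (1)$\Rightarrow$(3) of Theorem~\ref{theorem:main-determination-lcsq-lift} with $g$ in place of $f$ shows that $\theta_k(M,g)$ vanishes in $\Coker\{\cR_{k+1}(F)\to \cR_k(F)\}$, establishing (5). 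Equivalently, one can phrase this using the action of $\Aut(F/F_k)$ on $\cR_k(F)=H_3(F/F_k)$: since every such automorphism lifts to $\Aut(F/F_{k+1})$, the action preserves $\Im\{\cR_{k+1}(F)\to \cR_k(F)\}$, so the formation of the cokernel commutes with quotienting by the $\Aut$-action and conditions (3), (4), (5) collapse to a single condition.

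There is no genuine obstacle here: the entire argument is a formal recombination of results already proved. The only substantive input is the lifting of automorphisms from $F/F_k$ to $F/F_{k+1}$ afforded by $H_2(F)=0$ via Stallings' theorem, and that is precisely the distinctive feature of the free group case which makes the five a priori different conditions coincide.
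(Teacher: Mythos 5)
Your argument is correct and follows essentially the same route as the paper: (1)$\Leftrightarrow$(2) via Lemma~\ref{lemma:free-nilpotent-quotient-lift}, (1)$\Leftrightarrow$(3) via Theorem~\ref{theorem:main-determination-lcsq-lift}, (2)$\Leftrightarrow$(4) via Theorem~\ref{theorem:main-determination-lcsq-nonlift}, and then (2)$\Rightarrow$(5)$\Rightarrow$(3) to close the loop. Your explicit invocation of Corollary~\ref{corollary:localization-basic-consequences}\ref{item:local-lcsq-ordinary-lcsq} to justify replacing $\widehat\pi/\widehat\pi_j$ by $\pi/\pi_j$ and your remark at the end about the $\Aut$-action preserving $\Im\{\cR_{k+1}(F)\to\cR_k(F)\}$ are both sound and add clarity, but do not change the structure of the argument.
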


\begin{proof}
  (1)~and (2) are equivalent by Lemma~\ref{lemma:free-nilpotent-quotient-lift}.
  (1)~and (3) are equivalent by
  Theorem~\ref{theorem:main-determination-lcsq-lift}\@. (2)~and (4) are
  equivalent by Theorem~\ref{theorem:main-determination-lcsq-nonlift}\@. It
  follows that (2) implies (3) for any isomorphism~$f$.  In other words, (2)
  implies (5).  Finally, (5) implies (3) obviously.
\end{proof}

Now, we apply the above to links.  For an $m$-component link $L$ in $S^3$, let
$M_L$ be the zero framed surgery manifold of~$L$.  Note that if $L$ is the
trivial link, then $M_L$ is equal to the 3-manifold~$Y$ that we use in this
section.

In~\cite{Milnor:1957-1}, Milnor defined his concordance invariants, which we now call \emph{Milnor's numerical invariants}.
These invariants arise as coefficients of the Magnus expansion evaluated on homotopy classes of longitudes of a link.
More precisely, for an $m$-component link $L$,
the Magnus expansion is defined by sending the $\iota$th meridian to $1+t_\iota$ and extending it multiplicatively, and for a sequence $\iota_1,\ldots,\iota_k$ of integers $\iota_j\in \{1,\ldots,m\}$, Milnor's numerical invariant of length $k$, $\bar{\mu}_L(\iota_1,\ldots, \iota_k)$, is the coefficient of $t_{\iota_1}\cdots t_{\iota_{k-1}}$ in the Magnus expansion of the $\iota_k$th longitude of~$L$.
Milnor's numerical invariants of length $k$ are well-defined as integers for $L$ if all Milnor's numerical invariants of length${}<k$ vanish for~$L$.
One can find details in~\cite{Milnor:1957-1}. 

\begin{theorem}
  \label{theorem:milnor-and-theta_k-for-links}
  Suppose $L$ is a link with $m$ components.  For any finite $k \ge 2$, the following are equivalent.
  \begin{enumerate}
    \item[\textup{(1)$_k$}] There is an isomorphism $\pi_1(S^3\sm
    L)/\pi_1(S^3\sm L)_{k+1} \cong F/F_{k+1}$.
    \item[\textup{(2)$_k$}] The zero linking longitudes of $L$ lie in
    $\pi_1(S^3\sm L)_{k}$.
    \item[\textup{(3)$_k$}] There is an isomorphism $\pi_1(M_L)/\pi_1(M_L)_k
    \cong F/F_k$.
    \item[\textup{(4)$_k$}] Milnor's numerical invariants of length $k+1$ are well-defined for $L$ as integers.
  \end{enumerate}
  If the above \textup{(1)$_k$}--\textup{(4)$_k$} hold, then
  \textup{(1)$_{k+1}$}--\textup{(4)$_{k+1}$} and the following
  \textup{(5)}$_{k+1}$--\textup{(8)}$_{k+1}$ are equivalent.
  \begin{enumerate}
    \item[\textup{(5)$_{k+1}$}] Milnor's numerical invariants of length $k+1$ vanish for~$L$.
    \item[\textup{(6)$_{k+1}$}] For some $f\colon \pi_1(M_L)/\pi_1(M_L)_k
    \isomto F/F_k$, $\theta_k(M_L,f)$ vanishes in $\Coker\{\cR_{k+1}(F) \to \cR_k(F)\}$.
    \item[\textup{(7)$_{k+1}$}] For all $f\colon \pi_1(M_L)/\pi_1(M_L)_k \isomto
    F/F_k$, $\theta_k(M_L,f)$ vanishes in $\Coker\{\cR_{k+1}(F) \to \cR_k(F)\}$.
    \item[\textup{(8)$_{k+1}$}] The invariant $\theta_k(M_L)$ vanishes in $\Coker\{\cR_{k+1}(F) \to \cR_k(F)/\Aut(F/F_k)\}$.
  \end{enumerate}
\end{theorem}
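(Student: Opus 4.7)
The plan is to split the proof into two layers: establishing the equivalence of (1)$_k$--(5)$_{k+1}$ by classical Milnor-Stallings theory, and then invoking Theorem~\ref{theorem:finite-inv-over-free-group} with $M=M_L$ to connect to the $\theta_k$-based conditions (6)$_{k+1}$--(8)$_{k+1}$.

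For the first block, (1)$_k\Leftrightarrow$(2)$_k$ is Milnor's classical theorem from~\cite{Milnor:1957-1}: the canonical meridional surjection $\alpha_{k+1}\colon \pi_1(S^3\sm L)/\pi_1(S^3\sm L)_{k+1}\twoheadrightarrow F/F_{k+1}$ is an isomorphism exactly when the zero-linking longitudes lie in $\pi_1(S^3\sm L)_k$, and any abstract isomorphism can be normalized to $\alpha_{k+1}$ by composition with an automorphism of $F/F_{k+1}$, since any $m$-tuple in $F/F_{k+1}$ whose $H_1$-image is a basis extends to an automorphism via Stallings' theorem. The equivalence (2)$_k\Leftrightarrow$(4)$_k$ is built into the Magnus-expansion definition of Milnor's numerical invariants: $\bar\mu_L(I)$ of length $k+1$ is a well-defined integer precisely when all shorter-length $\bar\mu_L$ vanish, equivalently when the Magnus expansion of each longitude vanishes in degree ${}\leq k-1$, equivalently when the longitudes lie in $\pi_1(S^3\sm L)_k$. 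For (2)$_k\Rightarrow$(3)$_k$: longitudes in $\pi_1(S^3\sm L)_k$ force their normal closure $N = \ker\{\pi_1(S^3\sm L)\twoheadrightarrow \pi_1(M_L)\}$ into $\pi_1(S^3\sm L)_k$, yielding $\pi_1(S^3\sm L)/\pi_1(S^3\sm L)_k\isomto \pi_1(M_L)/\pi_1(M_L)_k$, which combined with $\alpha_k$ being an isomorphism (a consequence of (2)$_{k-1}$, itself implied by (2)$_k$) gives (3)$_k$. The converse (3)$_k\Rightarrow$(2)$_k$ proceeds by induction on $k$: at $k=2$, (3)$_2$ forces $H_1(M_L)=\mathbb{Z}^m$ so pairwise linkings vanish, i.e., (2)$_2$; for the step, (3)$_k$ implies (3)$_{k-1}$, hence (2)$_{k-1}$ by induction, so $\alpha_k$ is an isomorphism, and the projection $\pi_1(S^3\sm L)/\pi_1(S^3\sm L)_k\twoheadrightarrow \pi_1(M_L)/\pi_1(M_L)_k$ becomes a surjection $F/F_k\twoheadrightarrow F/F_k$ which is an isomorphism by Hopficity of $F/F_k$, forcing the image of $N$ to be trivial, i.e., (2)$_k$.

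Assuming (1)$_k$--(4)$_k$, the equivalences among (1)$_{k+1}$--(4)$_{k+1}$ follow upon applying the first block at level $k+1$. Condition (5)$_{k+1}$ joins the chain via Magnus bookkeeping: granted (4)$_k$, vanishing of the length-$(k+1)$ invariants is precisely the condition for length-$(k+2)$ invariants to be well-defined, which is (4)$_{k+1}$. Finally, the bridge from (3)$_{k+1}$ to (6)$_{k+1}$--(8)$_{k+1}$ is a direct application of Theorem~\ref{theorem:finite-inv-over-free-group} to $M=M_L$ with the isomorphism $\pi_1(M_L)/\pi_1(M_L)_k\isomto F/F_k$ from (3)$_k$: conditions (1), (2), (3), (4), (5) of that theorem translate to (3)$_{k+1}$, (3)$_{k+1}$, (6)$_{k+1}$, (8)$_{k+1}$, (7)$_{k+1}$ of ours, respectively.

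The main obstacle is the subtle direction (3)$_k\Rightarrow$(2)$_k$, since an abstract isomorphism $\pi_1(M_L)/\pi_1(M_L)_k\cong F/F_k$ carries no a priori meridional or longitudinal information about the link. The resolution combines (i) the rigidity of $F/F_k$ under Stallings-type automorphisms, allowing normalization of an abstract isomorphism to respect meridians, and (ii) an induction on $k$ which propagates vanishing of Milnor invariants one level at a time, ensuring at each stage that $\alpha_k$ is an isomorphism so that the longitudes' projection into $\pi_1(S^3\sm L)/\pi_1(S^3\sm L)_k$ accurately records their depth in $\pi_1(S^3\sm L)$. Once these classical pieces are assembled, all $\theta_k$-based statements follow immediately from Theorem~\ref{theorem:finite-inv-over-free-group}.
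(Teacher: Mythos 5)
Your proposal is correct and follows essentially the same approach as the paper: Milnor's presentation theorem and Hopficity of $F/F_k$ drive the equivalence of (1)$_k$--(5)$_{k+1}$, and Theorem~\ref{theorem:finite-inv-over-free-group} supplies (6)$_{k+1}$--(8)$_{k+1}$. The paper's handling of (1)$_k$--(4)$_k$ is more compact — it quotes Milnor's presentations $\pi_1(S^3\sm L)/\pi_1(S^3\sm L)_{k+1}\cong\langle F\mid F_{k+1},[w_1,x_1],\ldots,[w_m,x_m]\rangle$ and $\pi_1(M_L)/\pi_1(M_L)_k\cong\langle F\mid F_k,w_1,\ldots,w_m\rangle$ and applies Hopficity once to each, rather than unrolling the implication (3)$_k\Rightarrow$(2)$_k$ into an induction — but the two arguments rest on the same facts and differ only in exposition.
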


From Theorem~\ref{theorem:milnor-and-theta_k-for-links}, it follows that all
Milnor invariants of length $k+1$ are defined without ambiguity if and only if
$\theta_k(M_L)$ is defined, and all Milnor invariants of length $k+1$ vanish if
and only if $\theta_k(M_L)$ vanishes in $\Coker\{\cR_{k+1}(F) \to \cR_k(F)\}$.

\begin{proof}
  
  The equivalence of (1)$_k$--(4)$_k$ is a folklore consequence of
  Milnor's theorem~\cite{Milnor:1957-1}:
  \[
    \pi_1(S^3\sm L)/\pi_1(S^3\sm L)_{k+1} \cong
    \langle F \mid F_{k+1},\, [w_1,x_1], \ldots,  [w_m,x_m]\rangle
  \]
  where $x_i$ and $w_i$ correspond to a meridian and zero linking longitude of the $i$th component of~$L$, respectively.
  Indeed, since $F/F_{k+1}$ is Hopfian, the right hand side, which is a
  quotient of $F/F_{k+1}$, is isomorphic to $F/F_{k+1}$ if and only if
  $[w_i,x_i]\in F_{k+1}$ for all~$i$. A standard application of the Magnus
  expansion, or Hall basis theorem, shows that $[w_i,x_i]\in F_{k+1}$ if and
  only if $w_i \in F_k$.  Also, since $\pi_1(M_L)$ is the quotient of
  $\pi_1(S^3\sm L)$ by the normal subgroup generated by the longitudes, we have
  \[
    \pi_1(M_L)/\pi_1(M_L)_k \cong \langle F \mid F_{k},\, w_1,\ldots,w_m\rangle
  \]
  by Milnor's theorem.  Thus $\pi_1(M_L)/\pi_1(M_L)_k \cong F/F_{k}$ if and only
  if $w_i\in F_k$, and it is the case if and only if $\pi_1(S^3\sm
  L)/\pi_1(S^3\sm L)_{k+1} \cong F/F_{k+1}$ by the above. Also, it is known that
  Milnor's invariants of length $k+1$ are well-defined integers without
  ambiguity if and only if $w_i \in F_{k}$~\cite{Milnor:1957-1}.  This shows
  that (1)$_k$--(4)$_k$ are equivalent.  Milnor also showed that his invariants
  of length $k+1$ vanish if and only if $w_i\in F_{k+1}$~\cite{Milnor:1957-1}.
  It follows that (5)$_{k+1}$ is equivalent to (1)$_{k+1}$--(4)$_{k+1}$.
  
  By Theorem~\ref{theorem:finite-inv-over-free-group}, each of
  (6)$_{k+1}$--(8)$_{k+1}$ is equivalent to (3)$_{k+1}$.  This completes the
  proof.
\end{proof}

In what follows we discuss the relationship of our invariants and the link invariant defined in~\cite{Orr:1989-1}.

Let $L$ be a link for which Milnor's invariants of length${}\le k$ vanish. Let
$E_L$ be the exterior of $L$, and $G=\pi_1(E_L) = \pi_1(S^3\sm L)$.  Let $K_k$
be the mapping cone of the inclusion $\bigvee^m S^1 = B(F) \to B(F/F_k)$, and
let $j\colon B(F/F_k) \to K_k$ be the inclusion.  By Milnor's
result~\cite{Milnor:1957-1} (or by
Theorem~\ref{theorem:milnor-and-theta_k-for-links}), there is an isomorphism
$F/F_\ell \isomto G/G_\ell$ which takes generators of $F$ to meridians, for
$\ell\le k+1$.  When $\ell=k$, this gives rise to a map
\[
  E_L \to B(G) \to B(G/G_k) \xrightarrow{\simeq} B(F/F_k) \xrightarrow{j} K_k
\]
which sends meridians to null-homotopic loops.  So this extends to a map
$\psi\colon S^3\to K_k$.  Denote the homotopy class of this extension
by~$\theta_k(L) = [\psi] \in \pi_3(K_k)$.  This is the invariant defined and
studied in~\cite{Orr:1989-1}.

Recall from the proof of Theorem~\ref{theorem:milnor-and-theta_k-for-links} that $G/G_k\isomto F/F_k$ induces $f\colon \pi_1(M_L)/\pi_1(M_L)_k\isomto F/F_k$.
Consider $\theta_k(M_L)=\theta_k(M_L,f)$.

To compare $\theta_k(L)$ with $\theta_k(M_L)$, we will use arguments which are
already known to experts.

Let $h\colon \pi_3(K_k) \to H_3(K_k)$ be the Hurewicz homomorphism.  Note that
the inclusion $j$ induces an isomorphism $j_*\colon \cR_k(F)=H_3(F/F_k) \to
H_3(K_k)$, since $K_k$ is obtained from $B(F/F_k)$ by attaching $2$-cells. We
claim that our $\theta_k(M_L)$ and Orr's $\theta_k(L)$ are identical in
$H_3(K_k)$. That is, $\theta_k(M_L) = j_*^{-1} h(\theta_k(L))$.

The claim is verified as follows.
Attach $m$ 2-handles to $S^3\times[0,1]$ along the zero-framing of the link $L\subset S^3=S^3\times1$, to obtain a 4-dimensional cobordism $W$ between $S^3$ and~$M_L$.
Let $\phi\colon M_L \to B(F/F_k)$ be the map induced by the above $f\colon \pi_1(M_L)/\pi_1(M_L)_k\isomto F/F_k$.
Since $\phi$ restricts to $\psi\colon E_L\to B(F/F_k)$ and since $W$ is obtained by attaching $m$ dual 2-handles to $M_L\times [0,1]$ along meridians, $M_L \xrightarrow{\phi} B(F/F_k) \xrightarrow{j} K_k$ extends to a map $W\to K_k$ which restricts to $\psi\colon S^3\to K_k$. 
This gives us the following commutative diagram.
\[
  \begin{tikzcd}[row sep=large]
    M_L \ar[r,hook] \ar[d,"\phi"']
    & W \ar[rd]
    & S^3 \ar[l,hook'] \ar[d,"\psi"]
    \\
    |[overlay]| B(F/F_k) \ar[rr,hook,"j"'] & & K_k
  \end{tikzcd}
\]
From the diagram, the assertion $j_*(\theta_k(M_L)) =
h(\theta_k(L))$ follows.

In addition, $\theta_k(M_L)=0$ in the cokernel of $H_3(F/F_k) \to H_3(F/F_k)$ if
and only if $\theta_k(L)=0$ in the cokernel of $\pi_3(K_{k+1}) \to \pi_3(K_k)$.
It follows immediately from the above and from the known fact that the
composition $j_*^{-1}h\colon \pi_3(K_k) \to H_3(F/F_k)$ induces an isomorphism
between the cokernels~\cite{Orr:1989-1,Igusa-Orr:2001-1}.

Consequently, the equivalence of (5), (6) and (7) in
Theorem~\ref{theorem:milnor-and-theta_k-for-links}
subsumes the following result of Orr~\cite{Orr:1989-1}: for a link $L$, the
Milnor invariants of length $k+1$ vanish if and only if $\theta_k(L)=0$ in
$\Coker\{\pi_3(K_{k+1}) \to \pi_3(K_k)\}$.

We remark that the same argument shows that Levine's link invariant $\theta(L)
\in H_3(\widehat F)$  defined in~\cite{Levine:1989-1} can be identified with our
final invariant~$\widehat\theta(M_L)$ of the zero-framed surgery manifold~$M_L$.

\begin{remark}
  Results of this section for general closed 3-manifolds and zero surgery manifolds of links holds for transfinite ordinals $k$, if one uses $\widehat G/\widehat G_k$ instead of $G/G_k$ for $G=F$ and $G=\pi_1(M_L)$, as we always do in this paper.
  More precise, we have the following.

  \begin{enumerate}
    \item Lemma~\ref{lemma:free-nilpotent-quotient-lift} is true for transfinite~$k$.
    To prove this, one uses our Theorem~\ref{theorem:transfinite-stallings-dwyer} instead of Stallings' Theorem in the above proof of Lemma~\ref{lemma:free-nilpotent-quotient-lift} (and use that $H_2(\widehat F)=0$.)
    \item Theorem~\ref{theorem:finite-inv-over-free-group} is true for transfinite~$k$.
    To prove this, one one uses the transfinite version of Lemma~\ref{lemma:free-nilpotent-quotient-lift} instead of Lemma~\ref{lemma:free-nilpotent-quotient-lift} in the above proof of Theorem~\ref{theorem:finite-inv-over-free-group}.
    \item Theorem~\ref{theorem:milnor-and-theta_k-for-links} is true for transfinite $k$, if one removes the conditions~(1)$_k$, (2)$_k$, (4)$_k$ and~(5)$_{k+1}$.
    The proof is the same as the finite case given above.
  \end{enumerate}

  On the other hand, for links, we do not know whether the transfinite case of the full version of Theorem~\ref{theorem:milnor-and-theta_k-for-links} is true.  
  In particular, the following question seems interesting: are our invariants of the zero surgery manifold $M_L$ determined by the homotopy class of the longitudes of $L$, relative to the transfinite lower central series of the group localization? (See the condition (2)$_k$ in Theorem~\ref{theorem:milnor-and-theta_k-for-links}.) 

  We also note that in~\cite{Igusa-Orr:2001-1}, Milnor's link invariants are interpreted as a spanning set for the set of cocycles in $H^3(F/F_k)$, allowing one to compute Milnor's numerical invariants from the Milnor invariants defined and studied in this paper.
  Explicit formulae for these cocycles are derived and evaluated on an {\em Igusa Picture} representing the homology class $\theta_k(M_L)$.  So, we may also ask: can one read the homotopy class of the longitudes using $3$-dimensional cocycles in $H^3(\widehat{F}/\widehat{F}_\kappa)$ for any ordinal~$\kappa$, thus establishing a numerical formulation for transfinite Milnor's invariants of links?
  
  These problems remain open for transfinite ordinals, and possibly hinge on obtaining a deeper computational understanding of the transfinite lower central series of local groups, and especially free local groups.
\end{remark}

\section{Torus bundle example: invariants of finite length}
\label{section:tb-finite-computation}

Let $Y$ be the torus bundle with monodromy $h\colon T^2\to T^2$ given by
$\sbmatrix{-1 & 0 \\ 0 & -1}$.  That is,
\begin{equation}
  \label{equation:torus-bundle}
  Y = T^2\times [0,1] / (h(x),0) \sim (x,0).
\end{equation}
Let $\Gamma=\pi_1(Y)$ be the fundamental group.  The group $\Gamma$ is an HNN
extension $\Z^2 \rtimes \Z$ of $\pi_1(T^2)=\Z^2$ by $\Z=\langle t\rangle$, which
acts on $\Z^2$ by $t(a,b)t^{-1}=(-a,-b)$.

The goal of this section is to study the invariant $\theta_k$ of finite length
over the torus bundle group~$\Gamma$.  The cases of transfinite length
invariants and the final invariant are investigated in
Sections~\ref{section:tb-transfinite-computation},
\ref{section:tb-computation-final} and~\ref{section:modified-tb}.  Readers eager
to see the transfinite case may wish to skip this section on a first
reading.

The following theorem summarizes the result of our computation of finite length
invariants.  In what follows, $\Z_d=\Z/d\Z$ denotes the finite cyclic group of
order~$d$, and $\Z_d^\times=\{r\in \Z_d \,|\, \gcd(r,d)=1\}$ denotes the
multiplicative group of units in~$\Z_d$.

\begin{theorem}
  \phantomsection\label{theorem:tb-finite-computation}
  For finite $k\ge 2$, the following hold.
  \begin{enumerate}
    \item \label{item:tb-finite-H_3}
    The third homology is given by $H_3(\Gamma/\Gamma_k) = (\Z_{2^{k-1}})^4$.

    \item \label{item:tb-finite-realization}
    The set of realizable classes in $H_3(\Gamma/\Gamma_k)$ is given
    by
    \begin{align*}
      \cR_k(\Gamma) &= \begin{cases}
        \{(a,b,c,r)\in (\Z_2)^4 \mid ac+b+r=1 \}
        &\text{for }k=2,
        \\
        (\Z_{2^{k-1}})^3\times \Z_{2^{k-1}}^\times
        &\text{for } 3\le k < \infty.
      \end{cases}
    \end{align*}

    \item \label{item:tb-finite-image-k+1}
    The map $\cR_{k+1}(\Gamma) \to \cR_k(\Gamma)$ induced by the projection
    $\Gamma/\Gamma_{k+1} \to \Gamma/\Gamma_k$ is given by
    \[
      \begin{cases}
      \begin{array}[t]{@{}r@{}c@{}l@{}}
        (\Z_4)^3\times \Z_4^\times
        &{}\longrightarrow{}
        &\{(a,b,c,r)\in (\Z_2)^4 \mid ac+b+r=1\}
        \\
        (a,b,c,r)\hphantom{\{} &{}\longmapsto{}&
        \hphantom{\{}(0,0,0,r)
      \end{array}
      &\text{for } k=2,
      \\
      \leavevmode\vrule height 3ex width 0ex
      \begin{array}[t]{@{}r@{}c@{}l@{}}
        (\Z_{2^{k}})^3\times \Z_{2^k}^\times
        &{}\longrightarrow{}
        &(\Z_{2^{k-1}})^3\times \Z_{2^{k-1}}^\times
        \\
        (a,b,c,r)\hphantom{\{} &{}\longmapsto{}&
        \hphantom{\{}(2a,2b,2c,r)
      \end{array}
      & \text{for }3\le k<\infty.
      \end{cases}
    \]
    \item \label{item:tb-finite-aut} For every automorphism $\phi$ on
    $\Gamma/\Gamma_k$, the induced bijection $\phi_*\colon \cR_k(\Gamma) \to
    \cR_k(\Gamma)$ sends $\Im\{\cR_{k+1}(\Gamma) \to \cR_k(\Gamma)\}$ onto
    itself.  Consequently, $\theta\in \cR_k(\Gamma)$ vanishes in the cokernel of
    $\cR_{k+1}(\Gamma) \to \cR_k(\Gamma)$ if and only if $\theta$ vanishes in
    the cokernel of $\cR_{k+1}(\Gamma) \to \cR_k(\Gamma)/\Aut(\Gamma/\Gamma_k)$.
    
  \end{enumerate}
\end{theorem}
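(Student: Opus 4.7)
My plan is to exploit the semidirect-product description of $\Gamma/\Gamma_k$ directly. From $txt^{-1}=x^{-1}$, $tyt^{-1}=y^{-1}$ and $[x,y]=1$ I would first check that $\Gamma_k = (2^{k-1}\Z)^2\subset\Z^2$ for $k\ge 2$, giving $\Gamma/\Gamma_k\cong(\Z_{2^{k-1}})^2\rtimes\Z$ with $\Z$ acting by $-I$. For part~(1) I would use the Wang sequence of the mapping-torus fibration $F\to B(\Gamma/\Gamma_k)\to S^1$ with $F=B\Z_{2^{k-1}}\times B\Z_{2^{k-1}}$. K\"unneth gives $H_2(F)\cong\Z_{2^{k-1}}$ and $H_3(F)\cong(\Z_{2^{k-1}})^{\oplus 3}$, and since the automorphism $z\mapsto z^a$ of $\Z_n$ acts on $H_{2i-1}(\Z_n)\cong\Z_n$ by $a^i$ (computed from the model $B\Z_n=S^\infty/\Z_n$ and the tautological generator $c\in H^2$), the action of $-I$ is trivial on both $H_2(F)$ and $H_3(F)$. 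The Wang sequence therefore collapses to a short exact sequence
\[
  0 \to H_3(F) \to H_3(\Gamma/\Gamma_k) \to H_2(F) \to 0,
\]
and $H_3(\Gamma/\Gamma_k)\cong(\Z_{2^{k-1}})^4$ follows by comparing with $k=2$ (where $\Gamma/\Gamma_2=(\Z_2)^2\times\Z$ gives the splitting directly by K\"unneth) and inducting via naturality under $\Gamma/\Gamma_{k+1}\twoheadrightarrow\Gamma/\Gamma_k$. The three $H_3(F)$-summands give the coordinates $(a,b,c)$ and the Wang-boundary summand gives~$r$.

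For part~(2) I would apply Theorem~\ref{theorem:main-realization}. Universal coefficients and the same Wang sequence give $tH_1(\Gamma/\Gamma_k)=tH_1(\Gamma)=(\Z_2)^2$ and $tH^2(\Gamma/\Gamma_k)\cong\Ext(tH_1,\Z)=(\Z_2)^2$, and the two generators of $tH^2$ can be identified as Bocksteins of the mod-$2$ classes in $H^1$ dual to $x$ and $y$. Evaluating the cap product with $\theta$ against each of the four Wang coordinates produces a $2\times 2$ matrix over $\Z_2$; its invertibility is exactly condition~(1) of Theorem~\ref{theorem:main-realization}. For $k\ge 3$ only $r$ survives mod~$2$ in the pairing, so nondegeneracy reduces to $r\in\Z_{2^{k-1}}^\times$; for $k=2$ all four coordinates contribute and the determinant works out to $ac+b+r-1\in\Z_2$. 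Condition~(2) of Theorem~\ref{theorem:main-realization} is automatic in both cases, since $H_2(\Gamma/\Gamma_k)/K_{k-1}$ is cyclic of order~$2^{k-1}$ and already hit by a Wang lift of~$r$.

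For part~(3), naturality of the Wang sequence under $\Gamma/\Gamma_{k+1}\twoheadrightarrow\Gamma/\Gamma_k$ reduces the projection $H_3(\Gamma/\Gamma_{k+1})\to H_3(\Gamma/\Gamma_k)$ to the reduction maps $H_{2i-1}(\Z_{2^k})\to H_{2i-1}(\Z_{2^{k-1}})$, each of which equals multiplication by~$2$; this gives the stated formula, with the collapse to $(0,0,0,r)$ at $k=2$ arising from $2\equiv 0\pmod 2$. For part~(4) I would identify $\Aut(\Gamma/\Gamma_k)$ explicitly --- it is generated by the elements of $\mathrm{GL}_2(\Z_{2^{k-1}})$ commuting with $-I$, the involution $t\mapsto t^{-1}$, and twists by $H^1(\Z;(\Z_{2^{k-1}})^2)$ --- and observe that each generator respects the Wang filtration on $H_3$, hence sends $(2\Z_{2^{k-1}})^3\times\Z_{2^{k-1}}^\times$ to itself (respectively the single class $\{r=1\}$ when $k=2$). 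This is exactly the image of $\cR_{k+1}(\Gamma)$ computed in part~(3), so the final assertion of (4) follows.

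The main technical obstacle is the explicit cap-product evaluation in part~(2), in particular pinning down the diagonal term $ac$ in the $k=2$ determinant, which requires keeping track of a Bockstein-product interaction between the $2$-torsion of $(\Z_2)^2$ and the free $S^1$-direction. I expect to handle it by working in a small CW-model of $B((\Z_2)^2\times\Z)$ (or equivalently in a compact $3$-manifold realising the identity class in $\cR_2$) and reading off the Bockstein structure directly; after that, the remaining pieces --- spectral-sequence collapses, naturality under mod-$2^{k-1}$ reduction, and the finite check on automorphism action --- are routine.
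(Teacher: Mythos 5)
Your strategy is genuinely different from the paper's: where the paper builds an explicit CW model of $B(\Gamma/\Gamma_k)$ as an HNN extension over $B(\Z_{2^{k-1}}^2)$ and computes $H_3$ and cap products directly from cellular chains, you use the Wang sequence of the torus bundle fibration and try to derive everything from the known homology of $\Z_n$. That is a reasonable alternative, and your observations about the action of $-I$ on $H_2(F)$ and $H_3(F)$ (trivial in both cases) are correct and do collapse the Wang sequence to a short exact sequence
\[
0 \to H_3(\Z_{2^{k-1}}^2) \to H_3(\Gamma/\Gamma_k) \to H_2(\Z_{2^{k-1}}^2) \to 0,
\]
with $\Z_{2^{k-1}}^3$ on the left and $\Z_{2^{k-1}}$ on the right.

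However, there is a real gap at the very next step: you need this extension of abelian groups to split, and your justification---``comparing with $k=2$ \dots and inducting via naturality under $\Gamma/\Gamma_{k+1}\twoheadrightarrow\Gamma/\Gamma_k$''---does not establish that. For $k=2$ the splitting is clear because $-I = I$ mod $2$ and $\Z_2^2\rtimes\Z = \Z_2^2\times\Z$, so K\"unneth applies. But for $k\ge 3$ one needs to rule out summands of order larger than $2^{k-1}$ in $H_3(\Gamma/\Gamma_k)$, and the Wang sequence alone does not do this (an extension of $\Z_d$ by $\Z_d^3$ can have an element of order $2d$). ``Naturality under the projection'' only tells you how the two copies of the Wang sequence map to each other; since the induced maps on the $\Z_d$-pieces are multiplication by $2$, the image largely sits in the $2$-divisible part of $H_3(\Gamma/\Gamma_k)$, which gives no control over the extension class. (A transfer argument with the index-$2$ subgroup $\Z_d^2\times 2\Z$ only bounds the exponent by $2d$, not by $d$.) The paper avoids this entirely by writing down a small projective resolution of $\Z$ over $\Z[\Gamma/\Gamma_k]$ and reading off $H_3 = (\Z_{2^{k-1}})^4$ directly, with an explicit basis $\xi_1,\xi_2,\xi_3,\zeta$ that also feeds into parts~(2)--(4). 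Without an explicit chain-level computation (or some other argument pinning down the extension class), part~(1) is not yet proved, and it is the load-bearing step.

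Two smaller points. In part~(2), you flag the cap-product evaluation for $k=2$ as your ``main technical obstacle'' and propose to resolve it by working in a small CW-model---which is exactly what the paper does, so this piece is not yet a proof but rather a (correct) intention. In part~(4), you propose to identify a generating set for $\Aut(\Gamma/\Gamma_k)$ explicitly and check each generator; that is feasible but heavier than necessary. The paper's argument is cleaner: any automorphism of $\Gamma/\Gamma_k$ preserves the first rational derived subgroup $\Z_{2^{k-1}}^2$, hence preserves the image of $H_3(\Z_{2^{k-1}}^2)$ in $H_3(\Gamma/\Gamma_k)$, namely $\langle\xi_1,\xi_2,\xi_3\rangle$; combined with the observation that $\zeta$ (being $\theta_k(Y)$) lies in the image of $\cR_{k+1}(\Gamma)$ and that this image is $\langle 2\xi_1,2\xi_2,2\xi_3\rangle+(2\Z+1)\zeta$, the invariance follows without enumerating automorphisms. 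You may want to adopt that route, as a complete description of $\Aut(\Z_{2^{k-1}}^2\rtimes\Z)$ is fussier than it first appears.
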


From Theorem~\ref{theorem:tb-finite-computation}\ref{item:tb-finite-aut}
and Theorems
\ref{theorem:main-determination-lcsq-lift}
and~\ref{theorem:main-determination-lcsq-nonlift}, the following corollary
is immediately obtained.

\begin{corollary}
  Let $k\ge 2$ be finite.  Suppose $M$ is a closed 3-manifold with
  $\pi=\pi_1(M)$ and $f\colon \pi/\pi_k \isomto \Gamma/\Gamma_k$ is an
  isomorphism. Then $f$ lifts to an isomorphism $f\colon \pi/\pi_{k+1} \isomto
  \Gamma/\Gamma_{k+1}$ if and only if there is an isomorphism $\pi/\pi_{k+1}
  \isomto \Gamma/\Gamma_{k+1}$ (which is not required to be a lift).
\end{corollary}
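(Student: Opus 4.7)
The plan is essentially to chain together the three results cited in the hint and observe that for finite $k$ the distinction between $\Gamma/\Gamma_k$ and $\widehat\Gamma/\widehat\Gamma_k$ disappears. The ``only if'' direction is immediate: any lift is, in particular, an isomorphism. So the real content is the ``if'' direction.

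First I would note that, since $k$ is finite and $\Gamma$ and $\pi$ are finitely presented (as fundamental groups of closed 3-manifolds), Corollary~\ref{corollary:localization-basic-consequences}\ref{item:local-lcsq-ordinary-lcsq} gives canonical identifications $\Gamma/\Gamma_k = \widehat\Gamma/\widehat\Gamma_k$ and $\pi/\pi_k = \widehat\pi/\widehat\pi_k$, and likewise at level $k+1$. So the hypothesis $f\colon \pi/\pi_k \isomto \Gamma/\Gamma_k$ is an isomorphism $\widehat\pi/\widehat\pi_k \isomto \widehat\Gamma/\widehat\Gamma_k$, and Theorems~\ref{theorem:main-determination-lcsq-lift} and~\ref{theorem:main-determination-lcsq-nonlift} apply directly.

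Now assume that some isomorphism $\pi/\pi_{k+1}\cong \Gamma/\Gamma_{k+1}$ exists, not necessarily a lift of $f$. By Theorem~\ref{theorem:main-determination-lcsq-nonlift}, this is equivalent to the vanishing of $\theta_k(M)$ in $\Coker\{\cR_{k+1}(\Gamma) \to \cR_k(\Gamma)/\Aut(\Gamma/\Gamma_k)\}$. Here is where the torus-bundle-specific fact enters: Theorem~\ref{theorem:tb-finite-computation}\ref{item:tb-finite-aut} says that, for this particular $\Gamma$, the image of $\cR_{k+1}(\Gamma)\to \cR_k(\Gamma)$ is preserved by the $\Aut(\Gamma/\Gamma_k)$-action, so vanishing modulo the automorphism action is equivalent to vanishing of $\theta_k(M)$ in $\Coker\{\cR_{k+1}(\Gamma) \to \cR_k(\Gamma)\}$ itself. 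Finally, Theorem~\ref{theorem:main-determination-lcsq-lift} turns this cokernel vanishing back into the existence of an isomorphism lift of the given $f$, completing the proof.

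The only nontrivial input is Theorem~\ref{theorem:tb-finite-computation}\ref{item:tb-finite-aut}, which is already available; once it is in hand there is no obstacle. (By contrast with the free-group case of Lemma~\ref{lemma:free-nilpotent-quotient-lift}, where the corresponding statement follows from $H_2(F)=0$ and Stallings' theorem, here the mechanism is the $\Aut$-invariance of the image in $\cR_k(\Gamma)$.)
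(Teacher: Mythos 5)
Your argument is exactly the chain the paper has in mind when it says the corollary is ``immediately obtained'' from Theorem~\ref{theorem:tb-finite-computation}\ref{item:tb-finite-aut} and Theorems~\ref{theorem:main-determination-lcsq-lift} and~\ref{theorem:main-determination-lcsq-nonlift}: identify $\Gamma/\Gamma_k$ with $\widehat\Gamma/\widehat\Gamma_k$ via Corollary~\ref{corollary:localization-basic-consequences}\ref{item:local-lcsq-ordinary-lcsq}, pass from existence of a (non-lift) isomorphism to vanishing of $\theta_k(M)$ modulo $\Aut(\Gamma/\Gamma_k)$, then use Theorem~\ref{theorem:tb-finite-computation}\ref{item:tb-finite-aut} to upgrade this to absolute vanishing in $\Coker\{\cR_{k+1}(\Gamma)\to\cR_k(\Gamma)\}$, and finally convert back to a lift of the given $f$ via Theorem~\ref{theorem:main-determination-lcsq-lift}. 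This is correct and coincides with the paper's intended (unwritten) proof; your closing comparison with the free-group case is also accurate.
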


Using Theorem~\ref{theorem:tb-finite-computation}\ref{item:tb-finite-aut}, we
can also obtain the following estimate of the number of isomorphism classes of
the $(k+1)$st lower central quotients of 3-manifold groups with the same $k$th
lower central quotient as that of the torus bundle.

\begin{corollary}
  \label{corollary:tb-finite-tower-estimate}
  For each finite $k\ge 2$,
  \[
    2 \le
    \# \bigg( \bigg\{ \pi/\pi_{k+1} \,\bigg|\,
    \begin{tabular}{@{}c@{}}
      $\pi=\pi_1(M)$ for a closed 3-manifold\\ $M$
      such that $\pi/\pi_k \cong \Gamma/\Gamma_k$
    \end{tabular}
    \bigg \} \bigg/ \text{isomorphism} \bigg)
    \le 7\cdot 2^{4(k-2)}+1.
  \]
\end{corollary}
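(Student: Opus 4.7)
The plan is to identify the set of isomorphism classes in the corollary with $\cR_k(\Gamma)/\mathord{\approx}$ via the Milnor invariant $\bar\mu_k$, and then bound the size of that quotient using the explicit data supplied by Theorem~\ref{theorem:tb-finite-computation}.

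First, since $\Gamma$ and every $\pi_1(M)$ are finitely presented, Corollary~\ref{corollary:localization-basic-consequences}\ref{item:local-lcsq-ordinary-lcsq} identifies $\widehat{G}/\widehat{G}_k$ with $G/G_k$ for all relevant groups and finite $k$. Therefore Theorem~\ref{theorem:main-milnor-inv}, applied at $\kappa=k$, shows that $M\mapsto \bar\mu_k(M)$ descends to a well-defined bijection between the set of isomorphism classes of $\pi/\pi_{k+1}$ in the statement and $\cR_k(\Gamma)/\mathord{\approx}$: injectivity comes from the equivalence \ref{item:milnor-inv-equal}$\Leftrightarrow$\ref{item:milnor-inv-two-lcsq-isom} in Theorem~\ref{theorem:main-milnor-inv}, and surjectivity from the definition of $\cR_k(\Gamma)$. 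So it suffices to bound $\#(\cR_k(\Gamma)/\mathord{\approx})$.

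Next, I would verify that the $\approx$-equivalence class of $\theta_k(Y)$ coincides with $I_0:=\Im\{\cR_{k+1}(\Gamma)\to \cR_k(\Gamma)\}$. Elements of $I_0$ are $\sim$-equivalent and thus $\approx$-equivalent to $\theta_k(Y)$. Conversely, suppose $\theta\approx\theta_k(Y)$: by definition there is $\gamma\in\Aut(\Gamma/\Gamma_k)$ with $\theta\sim \gamma_*\theta_k(Y)$. By Theorem~\ref{theorem:tb-finite-computation}\ref{item:tb-finite-aut}, $\gamma_*\theta_k(Y)\in I_0$, so Lemma~\ref{lemma:partition-on-realizable-classes} forces $\theta\in I_{\gamma_*\theta_k(Y)}=I_0$.

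Finally, the bounds follow by direct counting. From Theorem~\ref{theorem:tb-finite-computation}\ref{item:tb-finite-realization}, both cases $k=2$ and $k\ge 3$ give $\#\cR_k(\Gamma)=8\cdot 2^{4(k-2)}$. From Theorem~\ref{theorem:tb-finite-computation}\ref{item:tb-finite-image-k+1}, the map $a\mapsto 2a\colon \Z_{2^k}\to \Z_{2^{k-1}}$ has image of size $2^{k-2}$ and $\Z_{2^k}^\times\to \Z_{2^{k-1}}^\times$ is surjective, which together give $\#I_0=2^{4(k-2)}$ in both cases. Since the class of $\theta_k(Y)$ has exactly $\#I_0$ elements and every other $\approx$-class has cardinality $\ge 1$, the upper bound
\[
  \#(\cR_k(\Gamma)/\mathord{\approx})\le 1+(\#\cR_k(\Gamma)-\#I_0)=7\cdot 2^{4(k-2)}+1
\]
follows. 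The lower bound $\ge 2$ is immediate because $\#\cR_k(\Gamma)>\#I_0$ exhibits a class not $\approx$-equivalent to $\theta_k(Y)$. No serious obstacle arises; the substantive content has been absorbed into Theorem~\ref{theorem:tb-finite-computation}.
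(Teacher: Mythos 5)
Your proof is correct and takes essentially the same route as the paper: both reduce the count to $\cR_k(\Gamma)/\mathord{\approx}$ via the Milnor invariant and then obtain $1 + (\#\cR_k(\Gamma) - \#I_0) = 7\cdot 2^{4(k-2)}+1$ from Theorem~\ref{theorem:tb-finite-computation}. The only cosmetic difference is that you identify the $\approx$-class of $\theta_k(Y)$ with $I_0=\Im\{\cR_{k+1}(\Gamma)\to\cR_k(\Gamma)\}$ exactly (using Theorem~\ref{theorem:tb-finite-computation}\ref{item:tb-finite-aut} and Lemma~\ref{lemma:partition-on-realizable-classes}) and count $\approx$-classes directly, whereas the paper first bounds $\#(\cR_k(\Gamma)/\mathord{\sim})$ and then uses $\#(\cR_k(\Gamma)/\mathord{\approx})\le \#(\cR_k(\Gamma)/\mathord{\sim})$, invoking Theorem~\ref{theorem:tb-finite-computation}\ref{item:tb-finite-aut} for the lower bound instead.
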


\begin{proof}
  By Theorem~\ref{theorem:tb-finite-computation}\ref{item:tb-finite-image-k+1}
  and~\ref{item:tb-finite-aut}, there is a class $\theta\in \cR_k(\Gamma)$ which
  does not vanish in the cokernel of $\cR_{k+1}(\Gamma) \to
  \cR_k(\Gamma)/\Aut(\Gamma/\Gamma_k)$.  From this, it follows that there exist
  at least two isomorphism classes of $\pi/\pi_{k+1}$ with $\pi=\pi_1(M)$ for
  some closed 3-manifold $M$ such that $\pi/\pi_k \cong \Gamma/\Gamma_k$, by
  Theorem~\ref{theorem:main-determination-lcsq-nonlift}\@.  This proves the
  lower bound in the statement.

  By
  Theorem~\ref{theorem:tb-finite-computation}\ref{item:tb-finite-realization}
  and~\ref{item:tb-finite-image-k+1}, we have
  \[
    \# \cR_k(\Gamma) = (2^{k-1})^3 \cdot 2^{k-2}, \quad
    \#\Im\{\cR_{k+1}(\Gamma)\rightarrow\cR_k(\Gamma)\}
    = (2^{k-2})^4.
  \]
  By definition, $\theta\in \cR_k(\Gamma)$ is equivalent to $\theta_k(Y)$ if and
  only if $\theta$ lies in the image of $\cR_{k+1}(\Gamma)$.  So, it follows
  that
  \[
    \# (\cR_k(\Gamma)/\mathord{\sim}) \le
    \#\cR_k(\Gamma) - \#\Im\{\cR_{k+1}(\Gamma)\rightarrow\cR_k(\Gamma)\} + 1
    = 7\cdot 2^{4(k-2)}+1.
  \]

  By
  Corollary~\ref{corollary:main-tower-classifications}\ref{item:tower-classification-nonlifts},
  the number of isomorphism classes of $\pi/\pi_{k+1}$ concerned in the
  statement is bounded above by $\# (\cR_k(\Gamma)/{\approx})$, which is in turn
  bounded above by $\# (\cR_k(\Gamma)/\mathord{\sim})$.  From this, the desired
  upper bound is obtained.
\end{proof}

Indeed, by
Corollary~\ref{corollary:main-tower-classifications}\ref{item:tower-classification-lifts},
and by the upper bounded of $\#(\cR_k(\Gamma)/{\sim})$ in the last step of the
above proof, it follows that Theorem~\ref{theorem:main-tb-finite} holds, which
asserts that
\[
  2\le \# \bigg\{ \begin{tabular}{@{}c@{}}
    equivalence classes of \\
    length $k+1$ extensions of $\{\Gamma/\Gamma_\lambda\}_{\lambda\le k}$
  \end{tabular} \bigg\} \le 7\cdot
  2^{4(k-2)} +1.
\]

We remark that the estimates in
Corollary~\ref{corollary:tb-finite-tower-estimate} (and that in
Theorem~\ref{theorem:main-tb-finite}) are not sharp.  Further investigation of
the equivalence relation and automorphism action on $\cR_k(\Gamma)$ gives us
improved bounds.  We do not address this here.

The rest of this section is devoted to the proof of
Theorem~\ref{theorem:tb-finite-computation}.  We begin with the lower central
quotient computation.  For $(a,b)\in \Z^2\subset \Gamma$, we have $[t,(a,b)] =
(-2a,-2b)$.  By using this equation inductively, it follows that the $k$th lower
central subgroup of $\Gamma$ is given by
\begin{equation}
  \label{equation:lcs-Z^2-rtimes-Z}
  \Gamma_k = 2^{k-1}\Z^2 \subset \Z^2 \subset \Gamma.
\end{equation}
Consequently, the lower central quotient is given by
\[
  \Gamma/\Gamma_k = (\Z_{2^{k-1}})^2 \rtimes \Z,
\]
where $\Z=\langle t\rangle$ acts on $(\Z_{2^{k-1}})^2$ by $t(a,b)t^{-1} =
(-a,-b)$.

The remainder of this section is devoted to the proof of Theorem~\ref{theorem:tb-finite-computation}.
In Section~\ref{subsection:tb-finite-homology}, we compute the homology of $\Gamma/\Gamma_k$ and prove Theorem~\ref{theorem:tb-finite-computation}\ref{item:tb-finite-H_3}.
In Section~\ref{subsection:tb-finite-realizable-classes}, we study the cap product structure on $\Gamma/\Gamma_k$ and prove Theorem~\ref{theorem:tb-finite-computation}\ref{item:tb-finite-realization} and~\ref{item:tb-transfinite-image-kappa+1}.
In Section~\ref{subsection:tb-finite-aut}, we study the action of $\Aut(\Gamma/\Gamma_k)$ on $\cR_k(\Gamma)$ and prove Theorem~\ref{theorem:tb-finite-computation}\ref{item:tb-finite-aut}.

\subsection{Cell structure of $B(\Gamma/\Gamma_k)$ and homology}
\label{subsection:tb-finite-homology}

To compute homology of $\Gamma/\Gamma_k$, we will use cellular chain complexes.
Although spectral sequences provide an alternative approach for HNN extensions,
the cellular method turns out to be more efficient for our purpose.  We will use
the following standard facts.

\begin{enumerate}

  \item
  For the finite cyclic group $\Z_d = \langle g\mid g^d \rangle$ of order $d$,
  $B(\Z_d)$ has a cell structure with exactly one $i$-cell $e^i$ in each dimension
  $i\ge 0$.  The boundary operator of the cellular chain complex
  $C_\bullet(B(\Z_d);\Z[\Z_d])$ is given by
  \begin{equation*}
    \partial e^{2i+1} = (1-g)e^{2i}, \quad
    \partial e^{2i} =  (1+g+\cdots+g^{d-1})e^{2i-1}.
  \end{equation*}

  \item
  Let $G=A\rtimes \Z$ be an HNN extension of an abelian group $A$ determined by
  an automorphism $h\colon A\to A$, that is, $\Z=\langle t\rangle$ acts on $A$
  by $tat^{-1} = h(a)$.  For a given cell structure of $B(A)$, we may assume
  that $h$ is realized by a cellular map $h\colon B(A)\to B(A)$.  Then $B(G)$
  has an associated cell structure, whose $n$-cells are of the form $e^p \times
  \epsilon^q$ with $p+q=n$, $q=0,1$, $e^p$ a $p$-cell of~$B(A)$ and $\epsilon^q$
  ($q=0,1$) an abstract $q$-cell.  The boundary operator of $C_\bullet(BG;\Z G)$
  is given by
  \begin{equation*}
    \begin{aligned}
      \partial (e^p\times \epsilon^0) &= (\partial e^p)\times \epsilon^0, \\
      \partial (e^p\times \epsilon^1) &=
        (\partial e^p)\times \epsilon^1
        +(-1)^p (t\cdot e^p \times \epsilon^0 - h(e^p)\times \epsilon^0).
    \end{aligned}
  \end{equation*} 
\end{enumerate}

\def\pb#1#2#3{e^{#1}\times e^{#2}\times\epsilon^{#3}}

Let $d=2^{k-1}$ and write $\Z_d^2=(\Z_d)^2$ for brevity.  Take the product
$B(\Z_d^2) = B(\Z_d)\times B(\Z_d)$ of the cell complex in~(1), and construct
$B(\Gamma/\Gamma_k) = B(\Z_d^2\rtimes\Z)$ using~(2).  Cells of dimension $n$ in
$B(\Gamma/\Gamma_k)$ are of the form $e^i\times e^j\times \epsilon^q$ with
$i+j+q=n$, $q=0,1$.  The negation homomorphism $h(g)=g^{-1}$ on $\Z_d$ induces
(the chain homotopy class of) the chain map $C_\bullet(\Z_d;\Z[\Z_d]) \to
C_\bullet(\Z_d;\Z[\Z_d])$ given by
\[
  h(e^{2k-1}) = (-1)^k g^{-1}e^{2k-1}, \quad 
  h(e^{2k}) = (-1)^k e^{2k}.
\]
and the monodromy $h\colon B(\Z_d^2) \to B(\Z_d^2)$ is given by
$h(e^i\times e^j) = h(e^i)\times h(e^j)$.  Using this together with the above
(1), (2) and the product boundary formula, it is straightforward to compute the
cellular chain complex $C_\bullet(\Gamma/\Gamma_k;\Z[\Gamma/\Gamma_k])$.
Applying the augmentation $\Z[\Gamma/\Gamma_k]\to \Z$, it is seen that
$C_\bullet(\Gamma/\Gamma_k) = C_\bullet(\Gamma/\Gamma_k;\Z)$ has the following
boundary operators in dimension${}\le 4$.
\begin{gather*}
  \partial_1\colon \begin{array}[t]{@{}r@{{}\mapsto{}}l@{}}
    \pb100 & 0\\
    \pb010 & 0\\
    \pb001 & 0
  \end{array},\qquad
  \partial_2\colon \begin{array}[t]{@{}r@{{}\mapsto{}}l@{}}
    \pb200 & d\cdot\pb100\\
    \pb110 & 0\\
    \pb020 & d\cdot\pb010\\
    \pb101 & -2\pb100\\
    \pb011 & -2\pb010
  \end{array},
  \\[.5ex]
  \partial_3\colon \begin{array}[t]{@{}r@{{}\mapsto{}}l@{}}
    \pb300 & 0\\
    \pb210 & d\cdot\pb110\\
    \pb120 & -d\cdot\pb110\\
    \pb030 & 0\\
    \pb201 & \begin{array}[t]{@{}l@{}}d\cdot\pb101\\ \,\, +2\cdot\pb200\end{array}\\
    \pb111 & 0\\
    \pb021 & \begin{array}[t]{@{}l@{}}d\cdot\pb011\\ \,\, +2\cdot\pb020\end{array}\\ 
  \end{array},\qquad
  \partial_4\colon \begin{array}[t]{@{}r@{{}\mapsto{}}l@{}}
    \pb400 & d\cdot\pb300\\
    \pb310 & 0\\
    \pb220 & \begin{array}[t]{@{}l@{}}d\cdot\pb120 \\ \,\,+d\cdot\pb210\end{array}\\
    \pb130 & 0\\
    \pb040 & d\cdot\pb030\\
    \pb301 & 0\\
    \pb211 & d\cdot\pb111\\
    \pb121 & -d\cdot\pb111\\
    \pb031 & 0\\
  \end{array}.
\end{gather*}

The homology groups $H_i(\Z_d^2\rtimes\Z)$ $(i\le 3)$ are immediately obtained
from this.
\begin{align}
  \label{equation:tb-finite-H_1}
  H_1(\Z_d^2\rtimes\Z) & =\Z_2^2,
  \\
  \label{equation:tb-finite-H_2}
  H_2(\Z_d^2\rtimes\Z) &=\Z_2^2\times \Z_d,
  \\
  \label{equation:tb-finite-H_3}
  H_3(\Z_d^2\rtimes\Z) &=\Z_d^4.
\end{align}
This shows Theorem~\ref{theorem:tb-finite-computation}\ref{item:tb-finite-H_3}.
In addition, the four $\Z_d$ factors of $H_3(\Z_d^2\rtimes\Z)$ are respectively
generated by
\begin{equation}
  \label{equation:tb-finite-H_3-generators}
  \begin{aligned}
    \xi_1 &= \pb300,\\
    \xi_2 &= \pb210+\pb120,\\
    \xi_3 &= \pb030,\\
    \zeta &= \pb111.
  \end{aligned}
\end{equation}

Here, the basis element $\zeta\in H_3(\Gamma/\Gamma_k)$ is the image of the
fundamental class $[Y] \in H_3(Y)$ under $H_3(Y)\to H_3(\Gamma/\Gamma_k)$. In
other words, $\theta_k(Y)=\zeta$.  To verify this, observe that $Y$ is a
subcomplex of $B(\Gamma/\Gamma_k)$ consisting of cells $e^i\times e^j\times
\epsilon^q$ with $i, j, q\in \{0,1\}$. By computing $H_i(Y)$ using this
subcomplex, it is seen that $e^1\times e^1\times \epsilon^1$ generates
$H_3(Y)=\Z$.

Also, viewing $B(\Z_d^2)$ as a subcomplex of $B(\Z_d^2\rtimes\Z)$, it is seen
that the subgroup generated by $\xi_1$, $\xi_2$ and $\xi_3$ is the isomorphic
image of $H_3(\Z_d^2)$ under the inclusion-induced map.

The above chain level computation also enables us to compute the
projection-induced homomorphism $H_3(\Gamma/\Gamma_{k+1}) \to
H_3(\Gamma/\Gamma_k)$. First, consider the projection $\Z_{2d} \to \Z_d$.  Abuse
notation to denote the $i$-cells of $B(\Z_{rd})$ and $B(\Z_d)$ by the same
symbol~$e^i$.  A routine computation shows that the induced chain map
$C_\bullet(\Z_{2d})\to C_\bullet(\Z_d)$ is given by $e^{i} \mapsto 2^{\lfloor
i/2 \rfloor} \cdot e^{i}$. (For instance, $e^1\mapsto e^1$ while $e^2\mapsto
2e^2$.)  From this, it follows that the projection
\[
  \Gamma/\Gamma_{k+1}=\Z_{2d}^2\rtimes\Z \to
  \Gamma/\Gamma_{k}=\Z_d^2\rtimes\Z
\]
induces the chain map $C_\bullet(\Gamma/\Gamma_{k+1}) \to
C_\bullet(\Gamma/\Gamma_k)$ given by
\[
  e^i\times e^j\times \epsilon^q \mapsto
  2^{\lfloor i/2 \rfloor+\lfloor j/2 \rfloor} \cdot
  e^i\times e^j\times\epsilon^q.
\]
Therefore, $H_3(\Gamma/\Gamma_{k+1}) \to H_3(\Gamma/\Gamma_{k})$ is the
homomorphism
\begin{equation}
  \label{equation:tb-finite-induced-map-H_3}
  \xi_i \mapsto 2\cdot \xi_i \text{ for }i=1,2,3,\quad \zeta\mapsto \zeta.
\end{equation}

\subsection{Realizable classes}
\label{subsection:tb-finite-realizable-classes}

Now we compute the realizable classes in $H_3(\Gamma/\Gamma_k)$. Fix $\theta\in
H_3(\Z_d^2\rtimes\Z)=H_3(\Gamma/\Gamma_k)$ where $d=2^{k-1}$ with $k\ge 2$ as
before.  To apply Theorem~\ref{theorem:main-realization}, we will investigate
the following cap product maps.
\begin{gather}
  \label{equation:tb-finite-cap-H^2-to-H_1}
  {}\cap\theta\colon tH^2(\Z_d^2\rtimes\Z) \to tH_1(\Z_d^2\rtimes\Z)
  \\
  \label{equation:tb-finite-cap-H^1-to-H_2}
  {}\cap\theta\colon H^1(\Z_d^2\rtimes\Z) \to H_2(\Z_d^2\rtimes\Z)
  / K_{k-1}(\Gamma/\Gamma_k)
\end{gather}
Here, $K_{k-1}(\Gamma/\Gamma_k)$ is the kernel of
$H_2(\Z_d^2\rtimes\Z)=H_2(\Gamma/\Gamma_k) \to H_2(\Gamma/\Gamma_{k-1})$.

\begin{step-named}[Case 1] Suppose $k\ge 3$, that is, $d=2^{k-1}$ is divisible
  by~$4$. 
\end{step-named}

Recall that $H_3(\Z_d^2\rtimes\Z)$ has basis $\{\xi_1,\xi_2,\xi_3,\zeta\}$
described in~\eqref{equation:tb-finite-H_3-generators}.  Let $\theta\in
H_3(\Z_d^2\rtimes\Z)$ be a class which is a linear combination of $\xi_1$,
$\xi_2$ and~$\xi_3$.  Since each $\xi_i$ is of the form $\bullet \times \bullet
\times \epsilon^0$ in~\eqref{equation:tb-finite-H_3-generators}, $\xi_i$ lies
in the image of the inclusion-induced map $i_*\colon H_3(\Z_d^2) \to
H_3(\Z_d^2\rtimes\Z)$.  Write $\theta=i_*(z)$ for some $z\in
H_3(\Z_d^2)$.  Consider the following commutative diagram.
\[
  \begin{tikzcd}[sep=large]
    \llap{$\Z_2^2={}$} tH^2(\Z_d^2\rtimes\Z)
    \ar[r,"{}\cap\theta"] \ar[d,"i^*"']
    & tH_1(\Z_d^2\rtimes\Z) \rlap{${}=\Z_2^2$}
    \\
    \llap{$\Z_d^2={}$} tH^2(\Z_d^2) \ar[r,"{}\cap z"']
    & tH_1(\Z_d^2) \rlap{${}=\Z_d^2$} \ar[u,"i_*"',two heads]
  \end{tikzcd}
\]
Here, $tH_1(\Z_d^2\rtimes \Z)=\Z_2^2$ by~\eqref{equation:tb-finite-H_1},
$H_1(\Z_d^2)=\Z_d^2$ obviously, so $tH^2(\Z_d^2\rtimes \Z) =
\Ext(H_1(\Z_d^2\rtimes \Z),\Z) = \Z_2^2$ and $tH^2(\Z_d^2)=\Z_d^2$.  Let $c\in
tH^2(\Z_d^2\rtimes\Z)$.  Since $2c=0$ and all order 2 elements in
$tH^2(\Z_d^2)=\Z_d^2$ are multiples of~$d/2$, $i^*(c)$ is a multiple of~$d/2$.
So, $i_*(i^*(c) \cap z) = c\cap\theta$ is a multiple of~$d/2$, which is a
multiple of $2$ since $d=2^{k-1}$ with $k\ge 3$. It follows that $c\cap
\theta=0$, since it lies in $tH_1(\Z_d^2\rtimes\Z) = \Z_2^2$.  This shows that
the cap product~\eqref{equation:tb-finite-cap-H^2-to-H_1} is zero.
Also, the cap product~\eqref{equation:tb-finite-cap-H^1-to-H_2} is zero since
$H^1(\Z_d^2)=0$ and the following diagram commutes.
\[
  \begin{tikzcd}[sep=large]
    H^1(\Z_d^2\rtimes\Z)
    \ar[r,"{}\cap\theta"] \ar[d,"i^*"']
    & H_2(\Z_d^2\rtimes\Z)
    \\
    \llap{$0={}$} H^1(\Z_d^2) \ar[r,"{}\cap z"']
    & H_2(\Z_d^2)  \ar[u,"i_*"']
  \end{tikzcd}
\]

Now, consider a class of the form $\theta=r\zeta$ with $r\in \Z$. Since $\zeta$
is the image of the fundamental class $[Y]\in H_3(\Gamma)$, $\zeta$ is
realizable, that is, $\zeta\in\cR_k(\Gamma)$. By
Theorem~\ref{theorem:main-realization}, the cap
product~\eqref{equation:tb-finite-cap-H^2-to-H_1} is an isomorphism for
$\theta=\zeta$.  From this it follows that
\eqref{equation:tb-finite-cap-H^2-to-H_1} is an isomorphism for
$\theta=r\zeta$ if and only if $r$ is odd, since $tH_1(\Z_d^2\rtimes\Z)$ is a
2-group by~\eqref{equation:tb-finite-H_2}. Also, the cap
product~\eqref{equation:tb-finite-cap-H^1-to-H_2} is surjective for the
realizable class $\theta=\zeta\in \cR_k(\Gamma)$, by
Theorem~\ref{theorem:main-realization}\@.  From this it follows that
\eqref{equation:tb-finite-cap-H^1-to-H_2} is surjective for
$\theta=r\zeta$ if $r$ is odd, since $H_2(\Z_d^2\rtimes\Z)$ is a $2$-group.

Combine the above conclusions, to obtain the following: for an arbitrary class
\[
  \theta = a\xi_1 + b\xi_2 + c\xi_3 + r\zeta \in H_3(\Z_d^2\rtimes\Z),
\]
the above \eqref{equation:tb-finite-cap-H^2-to-H_1} is an
isomorphism and \eqref{equation:tb-finite-cap-H^1-to-H_2} is surjective
if and only if $r$ is odd.  Applying Theorem~\ref{theorem:main-realization},
this proves
Theorem~\ref{theorem:tb-finite-computation}\ref{item:tb-finite-realization}
for $k\ge 3$.

\begin{step-named}[Case 2]
  Suppose $k=2$, that is, $d=2^{k-1}=2$.
\end{step-named}

In this case, first note that $\Gamma/\Gamma_{k-1}=\Gamma/\Gamma_1$ is trivial
by definition, and thus the cap
product~\eqref{equation:tb-finite-cap-H^1-to-H_2} is onto the trivial
group.  So, it suffices to determine when the cap
product~\eqref{equation:tb-finite-cap-H^2-to-H_1} is an isomorphism.

Observe that the semi-direct product $\Z_2^2\rtimes\Z$ is equal to the ordinary
product $\Z_2^2\times\Z$ since $-a=a$ in~$\Z_2$.  This enables us to compute the
cap product directly using the standard product cell structures. To prevent
confusion from the semi-direct product case, denote the $i$-cell of $B(\Z)=S^1$
by $u^i$ ($i=0,1$), while cells of $B(\Z_2)$ are denoted by $e^i$ as before.  It
is well known that
\[
  \Delta(e^i)=\sum_{p+q=i} (-1)^{pq} e_p\times e_q, \quad
  \Delta(u^i) = \sum_{p+q=i} u^p\times e^q
\]
are cellular approximations of the diagonal maps $B(\Z_2) \to B(\Z_2)\times
B(\Z_2)$ and $B(\Z) \to B(\Z)\times B(\Z)$, and thus the chain level cup product
of $B(\Z_d)$ and $B(\Z)$ defined using them are given by
\[
  (e^i)^* \cup (e^j)^* = (-1)^{ij} \cdot (e^{i+j})^*, \qquad
  (u^i)^* \cup (u^j)^* = (u^{i+j})^*.
\]
Here and in what follows, for brevity, we use the convention that $e^i=0$ for
$i<0$ and $u^i=0$ for $i\notin\{0,1\}$.  Using this notation, the cap product is
given by
\[
  (e^i)^* \cap e^j = (-1)^{i(j-i)} \cdot e^{j-i}, \qquad
  (u_i)^* \cap u^j = u^{j-i}.
\]
Therefore, the cap product on the product $B(\Z_2^2\times\Z)$ is as follows:
\begin{equation}
  \label{equation:tb-product-cap-product}
  (e^i\times e^j\times u^p)^* \cap (e^k\times e^\ell\times u^q) 
  = (-1)^{jk+pk+pl} \cdot e^{k-i} \times e^{\ell-j} \times u^{q-p}.
\end{equation}

Note that the product cell structure we use here is different from the HNN
extension cell structure we used in Case~1.  To compute the cap product for the
basis elements in~\eqref{equation:tb-finite-H_3-generators} which are
expressed in terms of the cells $e^i\times e^j\times \epsilon^q$, we need to
rewrite them in terms of the product cells $e^i\times e^j\times u^q$.  The three
basis elements $\xi_1$, $\xi_2$ and $\xi_3$
in~\eqref{equation:tb-finite-H_3-generators} are already in this form, since
$\epsilon^0$ is identical with $u^0$.  To make the computation for
$\zeta=e^1\times e^2\times \epsilon^1$ simpler, consider the projection
$\Z^2\rtimes\Z \to \Z_2^2\rtimes \Z = \Z_2^2\times \Z$.  It is straightforward
to verify that this induces the homotopy class of a chain map
\[
  C_\bullet(B(\Z^2\rtimes\Z);\Z[\Z^2\rtimes\Z]) \to
  C_\bullet(B(\Z_2^2\times\Z);\Z[\Z_2^2\times\Z])
\]
which is given by $e^i\times e^j\times \epsilon^q \mapsto e^i\times e^j\times
u^q$ in dimension $i+j+q \le 1$ and by
\[
  \begin{array}{r@{{}\mapsto{}}l}
    e^1\times e^0\times \epsilon^1 
    & e^1\times e^0\times u^1 - e^2\times e^0\times u^0
    \\
    e^0\times e^1\times \epsilon^1 
    & e^0\times e^1\times u^1 - e^0\times e^2\times u^0
    \\
    e^1\times e^1\times \epsilon^1 
    & e^1\times e^1\times u^1 - e^1\times e^2\times u^0
    - e^2 \times (g\cdot e^1) \times u^0
  \end{array}
\]
in dimensions $2$ and~$3$.  So, applying the augmentation, $\zeta=e^1\times
e^1\times \epsilon^1$ is expressed, in the product complex
$C_\bullet(B(\Z_2^2\times\Z);\Z)$, as
\begin{equation}
  \label{equation:tb-finite-fundamental-class}
  \zeta = e^1\times e^1\times u^1 - e^1\times e^2\times u^0
  - e^2 \times e^1 \times u^0.  
\end{equation}

Now we are ready to compute the cap
product~\eqref{equation:tb-finite-cap-H^2-to-H_1}.  By a routine
computation, it is verified that
\begin{align*}
  H^2(\Z_2^2\rtimes\Z) &= \Z_2^2 \quad\text{with basis }
  \{(e^2\times e^0\times u^0)^*,  (e^0\times e^2\times u^0)^*\},
  \\
  tH_1(\Z_2^2\rtimes\Z) &= \Z_2^2 \quad\text{with basis }
  \{e^1\times e^0\times u^0, e^0\times e^1\times u^0\}.
\end{align*}
Let $\theta=a\xi_1+b\xi_2+c\xi_3+r\zeta\in H_3(\Z_2^2\rtimes\Z)$.
From~\eqref{equation:tb-product-cap-product}
and~\eqref{equation:tb-finite-fundamental-class}, it follows that
the cap product $\cap\,\theta$
in~\eqref{equation:tb-finite-cap-H^2-to-H_1} is given by
\[
  \begin{bmatrix}
    a & b-r \\ b-r & c
  \end{bmatrix}
\]
with respect to the above basis.  It follows that $\cap\,\theta$ is an
isomorphism if and only if $ac+b+r$ is odd.  This completes the proof of
Theorem~\ref{theorem:tb-finite-computation}\ref{item:tb-finite-realization}
for $k=2$.

Once $\cR_k(\Gamma)$ is computed as above,
Theorem~\ref{theorem:tb-finite-computation}\ref{item:tb-finite-image-k+1}
follows immediately from the description of the projection-induced homomorphism
in~\eqref{equation:tb-finite-induced-map-H_3}.

\subsection{Automorphism action on the realizable classes}
\label{subsection:tb-finite-aut}

As above, let $d=2^{k-1}$, and write $\Gamma/\Gamma_k = \Z_d^2\rtimes\Z$.
Suppose $\phi\colon \Z_d^2\rtimes\Z \to \Z_d^2\rtimes\Z$ is an automorphism.  It
induces an automorphism $\phi_*\colon H_3(\Gamma/\Gamma_k)\to
H_3(\Gamma/\Gamma_k)$, which restricts to a bijection $\phi_*\colon
\cR_k(\Gamma) \to \cR_k(\Gamma)$.  Our goal is to show
Theorem~\ref{theorem:tb-finite-computation}\ref{item:tb-finite-aut}, which says
that $\phi_*$ sends $\Im\{\cR_{k+1}(\Gamma) \to \cR_k(\Gamma)\}$ onto itself
bijectively.

As the first step, we claim that $\phi$ sends the subgroup $\Z_d^2\subset
\Z_d^2\rtimes \Z$ isomorphically onto $\Z_d^2$ itself.  To see this, observe
that $\Z_d^2$ is the kernel of the horizontal composition in the following
diagram:
\[
  \begin{tikzcd}[row sep=large]
    \Z_d^2\rtimes \Z \ar[r] \ar[d,"\phi"]
    & H_1(\Z_d^2\rtimes\Z) =\Z_d^2\times\Z \ar[r] \ar[d,"\phi_*"]
    & H_1(\Z_d^2\rtimes\Z;\Q) = \Q \ar[d,"\phi_*"]
    \\
    \Z_d^2\rtimes \Z \ar[r]
    & H_1(\Z_d^2\rtimes\Z) =\Z_d^2\times\Z \ar[r]
    & H_1(\Z_d^2\rtimes\Z;\Q) = \Q
  \end{tikzcd}
\]
Since vertical arrows are automorphisms, the claim follows from the
commutativity of the diagram.

Recall from Section~\ref{subsection:tb-finite-homology} that the
subgroup $\langle \xi_1,\xi_2,\xi_3\rangle$ is the (isomorphic) image of
$H_3(\Z_d^2)$ in $H_3(\Z_d^2\rtimes\Z)$.  So, by the claim, $\phi_* \langle
\xi_1,\xi_2,\xi_3\rangle$ is equal to $\langle \xi_1,\xi_2,\xi_3\rangle$.  From
this it follows that
\begin{equation}
  \label{equation:tb-finite-aut-on-xi_i}
  \phi_*\langle 2\xi_1,2\xi_2,2\xi_3\rangle=\langle 2\xi_1,2\xi_2,2\xi_3\rangle.
\end{equation}

In addition, recall from Section~\ref{subsection:tb-finite-homology} that
$\zeta\in H_3(\Z_d^2\rtimes\Z)=H_3(\Gamma/\Gamma_k)$ is the image of the
fundamental class $[Y] \in H_3(\Gamma)$.  The quotient homomorphism $\Gamma \to
\Gamma/\Gamma_k$ factors through $\Gamma/\Gamma_{k+1}$, and $[Y]$ is sent into
$\cR_{k+1}(\Gamma)$ by definition.  From this, it follows that $\zeta$ lies in
the image of $\cR_{k+1}(\Gamma)$.  By
Theorem~\ref{theorem:tb-finite-computation}\ref{item:tb-finite-image-k+1},
\[
  \Im\{\cR_{k+1}(\Gamma) \rightarrow \cR_k(\Gamma)\}
  = \{\eta+r\zeta\mid \eta\in \langle 2\xi_1,2\xi_2,2\xi_3\rangle,\, r\in2\Z+1\}.
\] 
So, $\phi_*(\zeta)=\eta_0+r_0\zeta$ for some $\eta_0\in \langle
2\xi_1,2\xi_2,2\xi_3\rangle$ and some odd~$r_0$.  Now, for an arbitrary
$\eta+r\zeta \in \Im\{\cR_{k+1}(\Gamma) \rightarrow \cR_k(\Gamma)\}$ with
$\eta\in \langle 2\xi_1,2\xi_2,2\xi_3\rangle$ and $r$ odd, we have
\[
  \phi_*(\eta+r\zeta) = \phi_*(\eta) + r\eta_0 +rr_0\zeta.
\]
Here, $\phi_*(\eta) + r\eta_0 \in \langle 2\xi_1,2\xi_2,2\xi_3\rangle$
by~\eqref{equation:tb-finite-aut-on-xi_i}, and $rr_0$ is obviously odd.  This
shows that $\phi_*$ sends $\Im\{\cR_{k+1}(\Gamma) \rightarrow \cR_k(\Gamma)\}$
into itself. Since $\phi_*$ is one-to-one and $\Im\{\cR_{k+1}(\Gamma)
\rightarrow \cR_k(\Gamma)\}$ is a finite set, it follows that $\phi_*$ restricts
to a bijection of $\Im\{\cR_{k+1}(\Gamma) \rightarrow \cR_k(\Gamma)\}$ onto
itself.  This completes the proof of
Theorem~\ref{theorem:tb-finite-computation}\ref{item:tb-finite-aut}.

\section{Torus bundle example: invariants of transfinite length}
\label{section:tb-transfinite-computation}

In this section, we study transfinite invariants over the torus bundle $Y$
defined in~\eqref{equation:torus-bundle}:
\[
  Y = T^2\times [0,1] / (h(x),0) \sim (x,0)
\]
where $h=\sbmatrix{-1 & 0 \\ 0 & -1}$.  As before, let $\Gamma=\pi_1(Y)$. Denote
the ring of integers localized at the prime~$2$ by
\[
  \Z_{(2)}=\{a/b\mid a\in \Z,\, b\in 2\Z+1\}.
\]
Let $\Z_{(2)}^\times =\{a/b \mid a,b\in 2\Z+1\}$ be the multiplicative group of
units.

The following theorem summarizes our computation of transfinite invariants
over the torus bundle.

\begin{theorem}
  \phantomsection\label{theorem:tb-transfinite-computation}
  For transfinite ordinals $\kappa$, the following hold.
  \begin{enumerate}

    \item \label{item:tb-transfinite-H_3}
    The third homology of $\Gamma/\Gamma_\kappa$ is given by
    \begin{align*}
      H_3(\widehat\Gamma/\widehat\Gamma_\kappa) &= 
      \begin{cases}
        \Z_{(2)} &\text{for } \kappa=\omega, \\
        (\Z_{(2)}/\Z)\times \Z &\text{for } \kappa\ge \omega+1.
      \end{cases}
    \end{align*}

    \item \label{item:tb-transfinite-realization}
    The set of realizable classes in $H_3(\Gamma/\Gamma_\kappa)$ is given by
    \[
      \cR_\kappa(\Gamma) =
      \begin{cases}
        \Z_{(2)}^\times
        &\text{for }\kappa=\omega,\\
        (\Z_{(2)}/\Z)\times\{\pm1\} &\text{for }\kappa \ge \omega+1.
      \end{cases}
    \]

    \item \label{item:tb-transfinite-image-kappa+1}
    The map $\cR_{\kappa+1}(\Gamma) \to \cR_\kappa(\Gamma)$ induced by
    $\Gamma/\Gamma_{\kappa+1} \to \Gamma/\Gamma_\kappa$ is given by
    \[
      \begin{cases}
        \begin{array}[t]{r@{}c@{}l@{}}
          (\Z_{(2)}/\Z)\times\{\pm1\}
          & {}\longrightarrow{}
          & \Z_{(2)}^\times
          \\
          (x,\epsilon)\hphantom{\{} & {}\longmapsto{} & \hphantom{\{}\epsilon
        \end{array}
        & \text{for $\kappa=\omega$,}
        \\
        \leavevmode\vrule height 3ex width 0ex
        \begin{array}[t]{r@{}c@{}l@{}}
          (\Z_{(2)}/\Z)\times\{\pm1\}
          & {}\longrightarrow{}
          & (\Z_{(2)}/\Z)\times\{\pm1\}
          \\
          (x,\epsilon)\hphantom{\{} & {}\longmapsto{} & \hphantom{\{}(x,\epsilon)
        \end{array}
        & \text{for $\kappa\ge\omega+1$.}
      \end{cases}
    \]

    \item \label{item:tb-transfinite-equiv-rel}
    On $\cR_\omega(\Gamma)=\Z_{(2)}^\times$, the equivalence relation $\sim$ is
    given by $r\sim r'$ if and only if $r=\pm r'$.  On $\cR_\kappa(\Gamma)$ with $\kappa\ge \omega+1$, $r\sim r'$ for all $r$, $r'\in \cR_\kappa(\Gamma)$.

    \item \label{item:tb-transfinite-aut}
    The automorphism group $\Aut(\widehat\Gamma/\widehat\Gamma_\omega)$ acts on
    $\cR_\omega(\Gamma)$ transitively.  Consequently
    $\cR_\omega(\Gamma)/{\approx}$ is trivial.
  \end{enumerate}
\end{theorem}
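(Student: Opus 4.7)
The plan is to identify $\widehat\Gamma/\widehat\Gamma_\kappa$ explicitly for $\kappa=\omega$ and $\kappa\geq\omega+1$, and then apply the realization theorem (Theorem~\ref{theorem:main-realization}) together with cap-product and automorphism computations to pin down the realizable classes and their equivalence structure.

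For $\kappa=\omega$, Theorem~\ref{theorem:closure-in-completion-metabelian} applied to the metabelian group $\Gamma=\Z^2\rtimes\Z$ (with $t$ acting as $-1$) yields $\widehat\Gamma/\widehat\Gamma_\omega\cong S^{-1}\Z^2\rtimes\Z$, since $(t-1)\Z^2=2\Z^2$ immediately gives $\bigcap_k I^k\Z^2=0$. Here $S=\{r\in\Z[\langle t\rangle]\mid \epsilon(r)=1\}$; computing the action, an integer $m$ equals $\sum n_i(-1)^i$ with $\sum n_i=1$ iff $m$ is odd, so $S^{-1}\Z^2\cong\Z_{(2)}^2$ and $\widehat\Gamma/\widehat\Gamma_\omega\cong\Z_{(2)}^2\rtimes\Z$. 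For $\kappa\geq\omega+1$, I would invoke the explicit description of $\widehat\Gamma$ from \cite{Cha-Orr:2011-1} together with the fact (noted in the introduction to this subsection) that $\widehat\Gamma_{\omega+1}$ is trivial, so $\widehat\Gamma/\widehat\Gamma_\kappa\cong\widehat\Gamma$ for all such $\kappa$.

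With these groups identified, $H_3(\widehat\Gamma/\widehat\Gamma_\omega)$ is computed from the Lyndon-Hochschild-Serre spectral sequence of $1\to\Z_{(2)}^2\to\Z_{(2)}^2\rtimes\Z\to\Z\to 1$: the relevant inputs are $H_2(\Z_{(2)}^2)=\Lambda^2\Z_{(2)}^2\cong\Z_{(2)}$ with trivial $\Z$-action (since $(-1)\wedge(-1)=+1$) and $H_1(\Z_{(2)}^2)$ with the $-1$-action, whose coinvariants are $\Z_2^2$ and invariants vanish, so the sole contribution to $H_3$ is $E_\infty^{1,2}=\Z_{(2)}$. An analogous argument using the central extension $1\to\widehat\Gamma_\omega\to\widehat\Gamma\to\widehat\Gamma/\widehat\Gamma_\omega\to 1$ yields $H_3(\widehat\Gamma)\cong(\Z_{(2)}/\Z)\times\Z$. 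Applying Theorem~\ref{theorem:main-realization}, the torsion cap product $\cap\theta\colon tH^2\cong\Z_2^2\to tH_1\cong\Z_2^2$ reduces to multiplication by $\theta\bmod 2$, which is an isomorphism iff $\theta\in\Z_{(2)}^\times$ for $\kappa=\omega$; for $\kappa\geq\omega+1$ the odd-torsion factor $\Z_{(2)}/\Z$ contributes zero mod~$2$, so the same condition selects $\pm 1$ in the $\Z$-component. Part~(3) then follows from naturality of $H_3$ under the projection, and part~(4) from Corollary~\ref{corollary:main-determination-equiv-rel}: transporting $\Im\{\cR_{\omega+1}(\Gamma)\to\cR_\omega(\Gamma)\}=\{\pm 1\}$ via an automorphism carrying $1$ to $r\in\Z_{(2)}^\times$ gives $\{\pm r\}$, so $r\sim r'$ iff $r=\pm r'$.

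For part~(5), I would analyze $\Aut(\Z_{(2)}^2\rtimes\Z)$ directly: every automorphism preserves the normal subgroup $\Z_{(2)}^2$ (as the kernel of the map to the free part of the abelianization), restricting to some $A\in GL_2(\Z_{(2)})$, and sends $t$ to $(u,t^\epsilon)$ with $\epsilon\in\{\pm 1\}$ (with $u=0$ forced when $\epsilon=-1$ by centrality of $t^2$). Inner automorphisms act trivially on $H_3$, so the induced action on $H_3\cong\Z_{(2)}$ is multiplication by $\det(A)\cdot\epsilon\in\Z_{(2)}^\times$, which is transitive on $\cR_\omega(\Gamma)=\Z_{(2)}^\times$. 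The main obstacle will be verifying the second cap-product condition of Theorem~\ref{theorem:main-realization} in the transfinite setting: tracking the composed surjections $H_2(\widehat\Gamma/\widehat\Gamma_\kappa)\to H_2(\widehat\Gamma/\widehat\Gamma_\lambda)/K_\lambda$ uniformly across all $\lambda<\kappa$ requires carefully matching the Lyndon-Hochschild-Serre computation against the finite-level $H_2$ calculated in Section~\ref{section:tb-finite-computation}, and this cross-referencing is the most technically delicate part of the argument.
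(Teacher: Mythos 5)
Your overall plan aligns with the paper's: identify the transfinite quotients explicitly, run a Lyndon--Hochschild--Serre/Wang computation for $H_3$, and then use Theorem~\ref{theorem:main-realization}, automorphisms, and Corollary~\ref{corollary:main-determination-equiv-rel} to pin down $\cR_\kappa(\Gamma)$ and the relations. Several pieces are spot-on but there are two substantive issues worth flagging.

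\textbf{The route to $\widehat\Gamma/\widehat\Gamma_\omega$.} Your metabelian shortcut via Theorem~\ref{theorem:closure-in-completion-metabelian} is correct and in fact is exactly what the paper does in Theorem~\ref{theorem:modified-tb-localization} for the modified torus bundles. For the $d=0$ case in Section~\ref{section:tb-transfinite-computation}, the paper takes a different route: it works with the explicit colimit $\widehat\Gamma=\colim\Gamma(\ell)$ from~\cite{Cha-Orr:2011-1}, computes $\widehat\Gamma_\omega$ inside that colimit, and passes to the quotient. Either route gives $\Z_{(2)}^2\rtimes\Z$; your version is arguably shorter at this stage though it does not by itself furnish $\widehat\Gamma$ itself.

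\textbf{The gap: the $H_3(\widehat\Gamma)$ computation.} You write that ``an analogous argument using the central extension $1\to\widehat\Gamma_\omega\to\widehat\Gamma\to\widehat\Gamma/\widehat\Gamma_\omega\to1$'' yields $(\Z_{(2)}/\Z)\times\Z$. This is not analogous to the Wang-sequence computation. On the $E^2$ page of the LHS spectral sequence of that central extension, the total-degree-three line has nonzero entries at $(3,0)$ and $(0,3)$ both equal to $\Z_{(2)}$ or $\Z_{(2)}/\Z$, and the entry $E^2_{1,1}=H_1(\Z_{(2)}^2\rtimes\Z;\Z_{(2)}/\Z)\cong\Z_{(2)}/\Z$ is a potential target of $d_2$ from $E^2_{3,0}$; that $d_2$ is (up to sign) cap product with the extension class of the central extension and need not vanish a priori. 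You would have to compute that class and evaluate the cap products, which is essentially the whole content. The paper instead runs the Wang sequence for the HNN extension $1\to\cA\to\widehat\Gamma\to\Z\to1$ (where $\cA=\colim A(\ell)$), whose only relevant differential is $1-t_*$, read off directly on each $H_i(\cA)$ by tracking the colimit of the finite levels $A(\ell)$. That is what makes the computation terminate cleanly.

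\textbf{Minor issues.} Your claim that $u=0$ is forced when $\epsilon=-1$ by centrality of $t^2$ is not correct: in $\Z_{(2)}^2\rtimes\Z$ with monodromy $-I$ there is no constraint relating $u$ and $\epsilon$, because the commutator relation is automatically preserved for any $A\in\mathrm{GL}_2(\Z_{(2)})$ and any $(u,t^\epsilon)$. This does not affect your conclusion (the translation part $u$ acts trivially on $H_3$, and $\det(A)\cdot\epsilon$ ranges over all units), but the rationalization is wrong. Also, describing the torsion cap product as ``multiplication by $\theta\bmod 2$'' is imprecise: the paper's argument is that $\cap\,1$ is an isomorphism because $1=\theta_\omega(Y)$ is realizable, and $\cap(a/d)=(a/d)(\cap\,1)$ is an isomorphism iff $a/d$ is a unit in $\Z_2$; you should phrase it that way rather than as a scalar matrix.

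\textbf{Your closing concern is misplaced.} The second condition of Theorem~\ref{theorem:main-realization} does not require delicate cross-referencing against the finite levels. For $\kappa=\omega$, once you know $\cap\,1$ is surjective onto $H_2/K_\lambda$ for all $\lambda<\omega$, the surjectivity of $\cap(a/d)$ follows because $a/d\in\Z_{(2)}^\times$ is a unit acting on $H_2(\widehat\Gamma/\widehat\Gamma_\omega)=\Z_{(2)}$. For $\kappa\ge\omega+1$, one checks that $\Ker\{H_2(\widehat\Gamma)\to H_2(\widehat\Gamma/\widehat\Gamma_\omega)\}$ is already trivial (because $\Z=H_2(\widehat\Gamma)\hookrightarrow H_2(\widehat\Gamma/\widehat\Gamma_\omega)=\Z_{(2)}$), so the condition for every smaller $\lambda$ collapses to the single condition at $\lambda=\omega$, namely surjectivity onto $\Z$. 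The technically delicate step you anticipated is therefore a one-liner.
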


Combining
Theorem~\ref{theorem:tb-transfinite-computation}\ref{item:tb-transfinite-equiv-rel}
and~\ref{item:tb-transfinite-aut} with
Corollary~\ref{corollary:main-tower-classifications}\ref{item:tower-classification-lifts}
and Theorem~\ref{theorem:main-determination-lcsq-nonlift} respectively, the
following statements are immediately obtained.  Recall that the notion of
extensions of a transfinite lower central quotient tower and their equivalence
were introduced in Section~\ref{subsection:results-transfinite-tower}.

\begin{corollary}
  \phantomsection\label{corollary:tb-omega+1-tower-isom}
  \leavevmode\Nopagebreak
  \begin{enumerate}
    \item\label{item:tb-omega+1-tower}
    The set
    \[
      \bigg\{\begin{tabular}{@{}c@{}}
        length $\omega+1$ extensions, by 3-manifolds,
        \\[.2ex]
        of the length $\omega$ tower $\widehat\Gamma/\widehat\Gamma_\omega
        \to \cdots \to \Gamma/\Gamma_1 = \{1\}$
      \end{tabular}\bigg\} \bigg/
      \begin{tabular}{@{}c@{}}
        equivalence of \\ length $\omega+1$ extensions
      \end{tabular}
    \]
    is in one-to-one correspondence with the infinite set
    \[
      (\Z_{(2)}^\times)_{>0} := \{a/b \mid a,b\in 2\Z+1,\,a,b>0\}.
    \]
    \item\label{item:tb-omega+1-isom}
    If $M$ is a closed 3-manifold with $\pi=\pi_1(M)$ such that
    $\widehat\pi/\widehat\pi_\omega\cong \widehat\Gamma/\widehat\Gamma_\omega$,
    then $\widehat\pi/\widehat\pi_\kappa \cong
    \widehat\Gamma/\widehat\Gamma_\kappa$ for every ordinal $\kappa\ge
    \omega+1$.
  \end{enumerate}
\end{corollary}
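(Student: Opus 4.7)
The plan is to invoke Corollary~\ref{corollary:main-tower-classifications}\ref{item:tower-classification-lifts} with $\kappa=\omega$, which converts the classification of length $\omega+1$ extensions of the length-$\omega$ tower by 3-manifold groups modulo equivalence into the computation of $\cR_\omega(\Gamma)/{\sim}$ via the invariant~$\theta_\omega$. Theorem~\ref{theorem:tb-transfinite-computation}\ref{item:tb-transfinite-realization} gives $\cR_\omega(\Gamma)=\Z_{(2)}^\times$, and~\ref{item:tb-transfinite-equiv-rel} collapses the relation to $r\sim r' \iff r=\pm r'$. Thus $\cR_\omega(\Gamma)/{\sim} = \Z_{(2)}^\times/\{\pm 1\}$, which is canonically in bijection with the positive units $(\Z_{(2)}^\times)_{>0}$; that set is visibly infinite, yielding the claimed correspondence.

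\textbf{Part (2).} I will proceed by transfinite induction on $\kappa\ge\omega$, the base case being the hypothesis. For the first jump $\omega\to\omega+1$, I apply Theorem~\ref{theorem:main-determination-lcsq-nonlift}: Theorem~\ref{theorem:tb-transfinite-computation}\ref{item:tb-transfinite-aut} tells us that $\cR_\omega(\Gamma)/{\approx}$ is a singleton, so $\theta_\omega(M)$ automatically vanishes in the cokernel modulo the automorphism action, yielding some isomorphism $\phi_{\omega+1}\colon \widehat\pi/\widehat\pi_{\omega+1}\xrightarrow{\cong}\widehat\Gamma/\widehat\Gamma_{\omega+1}$.

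For successor steps $\kappa\to\kappa+1$ with $\kappa\ge\omega+1$, I upgrade from mere existence to an actual lift by invoking Theorem~\ref{theorem:main-determination-lcsq-lift} instead. Theorem~\ref{theorem:tb-transfinite-computation}\ref{item:tb-transfinite-equiv-rel} asserts that $\sim$ collapses everything on $\cR_\kappa(\Gamma)$ at each such $\kappa$, so every class---in particular $\theta_\kappa(M,\phi_\kappa)$---vanishes in $\Coker\{\cR_{\kappa+1}(\Gamma)\to\cR_\kappa(\Gamma)\}$, producing a lift $\phi_{\kappa+1}$ of $\phi_\kappa$. Iterating yields a compatible tower $\{\phi_\lambda\}_{\lambda\ge\omega+1}$ of mutually lifting isomorphisms.

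The limit step at $\kappa>\omega+1$ is the only delicate point; I handle it by exploiting that $\widehat\Gamma_{\omega+1}=1$, as recorded at the start of this section on the basis of our earlier computation in~\cite{Cha-Orr:2011-1}. Consequently $\widehat\Gamma/\widehat\Gamma_\lambda=\widehat\Gamma$ for every $\lambda\ge\omega+1$, and the target projections among these groups are the identity of~$\widehat\Gamma$. Since $\phi_\lambda$ lifts $\phi_{\omega+1}$, this forces the source projection $\widehat\pi/\widehat\pi_\lambda\to\widehat\pi/\widehat\pi_{\omega+1}$ to be an isomorphism for every $\omega+1\le\lambda<\kappa$, i.e., $\widehat\pi_\lambda=\widehat\pi_{\omega+1}$. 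Intersecting over $\lambda<\kappa$ yields $\widehat\pi_\kappa=\widehat\pi_{\omega+1}$, and hence $\widehat\pi/\widehat\pi_\kappa\cong\widehat\Gamma=\widehat\Gamma/\widehat\Gamma_\kappa$. The main obstacle is organizing the interplay between the non-lifting jump at~$\omega$ (which only provides \emph{some} isomorphism) and the lifting induction above $\omega+1$, and applying $\widehat\Gamma_{\omega+1}=1$ carefully at limits.
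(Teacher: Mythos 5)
Your proof is correct and uses the same ingredients the paper cites as making the corollary "immediately obtained": for Part~(1), the combination of Corollary~\ref{corollary:main-tower-classifications}\ref{item:tower-classification-lifts} with Theorem~\ref{theorem:tb-transfinite-computation}\ref{item:tb-transfinite-realization}--\ref{item:tb-transfinite-equiv-rel}; for Part~(2), Theorem~\ref{theorem:main-determination-lcsq-nonlift} with the transitivity of the $\Aut$-action from Theorem~\ref{theorem:tb-transfinite-computation}\ref{item:tb-transfinite-aut}, followed by a transfinite induction that the paper does not spell out. Your handling of the limit-ordinal step is the one point that genuinely needs care (the paper leaves it implicit), and your use of $\widehat\Gamma_{\omega+1}=\{1\}$ to force $\widehat\pi_\lambda=\widehat\pi_{\omega+1}$ for $\lambda$ in the interval $[\omega+1,\kappa)$, and then intersecting, is exactly the right way to close it.
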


This illustrates that the classification of tower extensions from length
$\omega$ to $\omega+1$ may have a completely different nature from the
determination of the isomorphism class of the $(\omega+1)$st lower central
quotient for a given $\omega$th lower central quotient.  For the case of the
torus bundle group $\Gamma$,
Corollary~\ref{corollary:tb-omega+1-tower-isom}\ref{item:tb-omega+1-tower} tells
us that the former has infinitely many solutions, while the latter has a unique
solution by
Corollary~\ref{corollary:tb-omega+1-tower-isom}\ref{item:tb-omega+1-isom}.  In
particular, over~$\Gamma$, $\bar\mu_\kappa(M)$ is trivial for all infinite
ordinals $\kappa$ whenever $\bar\mu_\kappa(M)$ is defined.  

We remark that our proof of Theorem~\ref{theorem:main-tb-milnor} in Section~\ref{section:modified-tb} presents modified torus bundle groups, over which there are infinitely many 3-manifolds $M$ with nontrivial~$\bar\mu_\omega(M)$.

The remaining part of this section is devoted to the proof of
Theorem~\ref{theorem:tb-transfinite-computation}. 
In Section~\ref{subsection:tb-localization}, we describe the homology localization~$\widehat\Gamma$ and its transfinite lower central series.
It turns out that the transfinite lower central series stabilizes at length $\omega+1$ with $\widehat\Gamma_{\omega+1}=\{1\}$.
In Section~\ref{subsection:tb-omega-H_3}, we study the homology and cap product structure of $\widehat\Gamma/\widehat\Gamma_\omega$ and prove Theorem~\ref{theorem:tb-transfinite-computation}\ref{item:tb-transfinite-H_3} and~\ref{item:tb-transfinite-realization} for $\kappa=\omega$.
In Sections~\ref{subsection:tb-omega+1-H_3} and~\ref{subsection:tb-omega+1-realizable-classes}, we study the homology and the cap product structure of~$\widehat\Gamma$, respectively, and prove Theorem~\ref{theorem:tb-transfinite-computation}\ref{item:tb-transfinite-H_3} and~\ref{item:tb-transfinite-realization} for $\kappa\ge\omega+1$ and Theorem~\ref{theorem:tb-transfinite-computation}\ref{item:tb-transfinite-image-kappa+1}.
In Section~\ref{subsection:tb-omega-aut}, we study the equivalence relation ${\sim}$ on $\cR_\omega(\Gamma)$ and prove Theorem~\ref{theorem:tb-transfinite-computation}\ref{item:tb-transfinite-equiv-rel} and~\ref{item:tb-transfinite-aut}.

\subsection{Homology localization of the torus bundle group}
\label{subsection:tb-localization}

We start by reviewing the computation of the homology localization
$\widehat\Gamma$ of the torus bundle group~$\Gamma$, from our earlier
work~\cite{Cha-Orr:2011-1}.  The result expresses $\widehat\Gamma$ as a colimit
of finitely presented groups.  (Such a colimit expression of the localization
exists for any finitely presented group by
Theorem~\ref{theorem:localization-basic-facts}\ref{item:localization-colimit},
but finding an explicit description is nontrivial in general.)

For a positive odd integer $\ell$, let
\begin{equation}
  \label{equation:presentation-Gamma(l)}
  \Gamma(\ell) =\langle u, v, t \mid tut^{-1}u,\, tvt^{-1}v,\,
  [u,v]^{\ell^2},\, [[u,v],u],\, [[u,v],v],\, [u,v],t] \rangle.
\end{equation}

It is straightforward to see that $\Gamma(1) = \Gamma$ and the map $\Gamma(\ell)
\to \Gamma(r\ell)$ sending $t$, $u$, and $v$ to $t$, $u^r$, and $v^r$
respectively is a well-defined inclusion for all odd $r$, $\ell\ge 1$.  The
groups $\Gamma(\ell)$ with these inclusions form a direct system.

\begin{theorem}[{\cite[Theorem~3.1]{Cha-Orr:2011-1}}]
  \label{theorem:cha-orr-colim}
  The homology localization of $\Gamma$ is given as
  \[
    \Gamma \to \widehat\Gamma = \colim_{\ell\text{ odd}} \Gamma(\ell).
  \]
\end{theorem}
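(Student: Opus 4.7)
The plan is to verify the two-sided universal property: construct compatible homomorphisms between $L := \colim_\ell \Gamma(\ell)$ and $\widehat\Gamma$ that fix $\Gamma$, and show they are mutually inverse. First, I would verify that each inclusion $\Gamma(\ell) \to \Gamma(r\ell)$ is a 2-connected homomorphism between finitely presented groups. On $H_1$, every $\Gamma(\ell)$ has abelianization $(\Z_2)^2 \oplus \Z$, since $tut^{-1}u,\, tvt^{-1}v$ force $2u = 2v = 0$ while the commutator relations contribute nothing, and the inclusion $u,v,t \mapsto u^r,v^r,t$ acts as the identity on this abelianization because $r$ is odd. For $H_2$, I would exploit the central extension $1 \to \langle [u,v]\rangle \to \Gamma(\ell) \to \Gamma \to 1$ with kernel $\Z_{\ell^2}$, together with the known $H_2(\Gamma)=\Z$ (from Poincar\'e duality on the torus bundle) and the Lyndon-Hochschild-Serre spectral sequence, to compute $H_2(\Gamma(\ell))$ explicitly and verify surjectivity of $H_2(\Gamma) \to H_2(\Gamma(\ell))$. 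By Theorem~\ref{theorem:localization-basic-facts}\ref{item:localization-2-connected-map}, the natural maps $\Gamma(\ell) \to \widehat{\Gamma(\ell)}$ are canonically identified with maps $\Gamma(\ell) \to \widehat\Gamma$, and naturality assembles them into a homomorphism $L \to \widehat\Gamma$.

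For the opposite direction, the key technical step is to show that $L$ itself is local, so that the natural map $\Gamma \to L$ factors uniquely through $\widehat\Gamma$ and provides the inverse $\widehat\Gamma \to L$. Modding out by the central subgroup $Z \subset L$ generated by the images of the $[u,v]$'s identifies $L/Z$ with $\colim_\ell \Gamma \cong \Z_{(2)}^2 \rtimes \Z$, where each transition scales the $\Z^2$-factor by an odd integer. This quotient is local by Theorem~\ref{theorem:closure-in-completion-metabelian} applied to $\Gamma = \Z^2 \rtimes \Z$: since $t$ acts by $-I$, the set $S = \{r \in \Z[t^{\pm 1}] : \epsilon(r)=1\}$ acts on $\Z^2$ as multiplication by odd integers, so $S^{-1}\Z^2 = \Z_{(2)}^2$. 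The central kernel $Z = \colim_\ell \Z_{\ell^2}$ (with transitions $\cdot r^2$) identifies with the divisible $2$-local torsion group $\Z_{(2)}/\Z$. Locality of $L$ then reduces to solvability-with-unique-lift: given an acyclic system over $L$, solve uniquely in $L/Z$ by locality of the quotient, then lift to $L$ using divisibility of $Z$ and finite presentability of the system to control the central obstruction; uniqueness of the lift follows from the same two ingredients.

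The main obstacle will be this locality verification for $L$, since the central $\Z_{(2)}/\Z$ interacts nontrivially with the quotient and the lifting must be done carefully at the level of acyclic systems. Once locality is established, the universal property of $\widehat\Gamma$ produces the inverse $\widehat\Gamma \to L$, and both compositions $\widehat\Gamma \to L \to \widehat\Gamma$ and $L \to \widehat\Gamma \to L$ are endomorphisms of local groups receiving $\Gamma$, hence uniquely determined by their restrictions to $\Gamma$ and so equal to the identity. As a consistency check, the resulting $\widehat\Gamma$ fits in a central extension $1 \to \Z_{(2)}/\Z \to \widehat\Gamma \to \Z_{(2)}^2 \rtimes \Z \to 1$ with $\widehat\Gamma_\omega = \Z_{(2)}/\Z$ and $\widehat\Gamma_{\omega+1} = 1$, which matches the transfinite lower central computations invoked in the later sections of the paper.
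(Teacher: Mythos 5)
This theorem is not proved in the paper under review—it is imported wholesale as \cite[Theorem~3.1]{Cha-Orr:2011-1}—so there is no proof ``of the paper's own'' to compare against. That said, your two-sided universal-property plan is the standard way to prove it, and it aligns with the flavor of the argument the paper does give for the nearby modified case (Theorem~\ref{theorem:modified-tb-localization}). The first half is sound: verifying that each $\Gamma(\ell)\to\Gamma(r\ell)$ is 2-connected (your $H_1$ computation is correct, and either the Wang sequence for $1\to A(\ell)\to\Gamma(\ell)\to\Z\to1$ or your Lyndon--Hochschild--Serre route for the central extension computes $H_2$, showing the transitions are iso on $H_2$, not merely epi), and then assembling the compatible maps $\Gamma(\ell)\to\widehat{\Gamma(\ell)}\cong\widehat\Gamma$ into $L\to\widehat\Gamma$. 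For the closing step (``both compositions are determined by their restriction to $\Gamma$''), note that this is immediate for $\widehat\Gamma\to L\to\widehat\Gamma$ from the universal property, but for $L\to\widehat\Gamma\to L$ you should say a word: a map out of $L$ is determined by its restrictions to the $\Gamma(\ell)$, and each of \emph{those} is determined by its restriction to $\Gamma(1)$ because $\Gamma(1)\to\Gamma(\ell)$ is 2-connected and the target $L$ is local.

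The one genuine misdiagnosis is in your locality argument for $L$, which you say requires ``divisibility of $Z$ and finite presentability of the system to control the central obstruction.'' Neither is needed; there is no obstruction. If $Z$ is \emph{central} and the system is \emph{acyclic}, the lift is automatic: take the unique solution $\{\bar g_i\}$ in $L/Z$, pick arbitrary lifts $\tilde g_i\in L$, and observe that for any central $z_j\in Z$,
\[
  w_i(\tilde g_1 z_1,\ldots,\tilde g_n z_n) = w_i(\tilde g_1,\ldots,\tilde g_n)\cdot\textstyle\prod_j z_j^{e_{ij}} = w_i(\tilde g_1,\ldots,\tilde g_n),
\]
since the exponent sums $e_{ij}$ of $x_j$ in $w_i$ all vanish by acyclicity. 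So the equation $\tilde g_i z_i = w_i(\tilde g_1,\ldots,\tilde g_n)$ pins down $z_i$ uniquely, and $g_i:=w_i(\tilde g_1,\ldots,\tilde g_n)$ is the unique solution. What you actually need, then, is only centrality of $Z$ and locality of $L/Z$; this is precisely the role of \cite[Lemma~A.4]{Cha-Orr:2011-1} that the paper invokes in the proof of Theorem~\ref{theorem:modified-tb-localization}. Relatedly, Theorem~\ref{theorem:closure-in-completion-metabelian} itself does not assert that $\Z_{(2)}^2\rtimes\Z$ is local—its conclusion is an identification of the closure-in-completion; the locality of $S^{-1}A\rtimes G$ is a separate fact (cited in its proof as \cite[Theorem~A.2]{Cha-Orr:2011-1}), so you should invoke that directly. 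With these two corrections your argument goes through.
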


Observe, from the presentation~\eqref{equation:presentation-Gamma(l)}, that
$[u,v]\in \Gamma(\ell)$ generates a finite cyclic subgroup of order $\ell^2$,
which is normal in~$\Gamma(\ell)$, and the quotient of $\Gamma(\ell)$ by this
cyclic subgroup is isomorphic to the semi direct product $\Z^2\rtimes \Z$, where
$\Z^2$ is generated by $u$, $v$ and $\Z$ is generated by~$t$ which acts on
$\Z^2$ by negation. Note that the restriction of $\Gamma(\ell) \to
\Gamma(r\ell)$ on the cyclic subgroup generated by $[u,v]$ is the homomorphism
$\Z_{\ell^2} \to \Z_{(r\ell)^2}$ given by $1\mapsto r^2$, and $\Gamma(\ell) \to
\Gamma(r\ell)$ induces a map $\Z^2\rtimes \Z \to \Z^2\rtimes \Z$ on the
quotients, which is given by $(a,b,c) \mapsto (ra,rb,c)$.  So, if we identify
$\Z_{\ell^2}$ with
\[
  \cyc{\ell^2} := (\tfrac{1}{\ell^2}\Z)/\Z
  = \{ \tfrac{a}{\ell^2} \mid a\in \Z \} / \Z,
\]
under $[u,v] \mapsto \frac{1}{\ell^2}$ and and identify $\Z^2\rtimes\Z$ with
$\frt{\ell}\rtimes\Z$ under $u\mapsto (\tfrac1\ell,0,0)$, $v\mapsto
(0,\tfrac1\ell,0)$, $t\mapsto (0,0,1)$, then we obtain the following commutative
diagram with exact rows.
\begin{equation}
  \label{equation:Gamma(l)-as-extension}
  \begin{tikzcd}[row sep=large]
    1 \ar[r] & \cyc{\ell^2} \ar[r]\ar[d,"\id",hook] & \Gamma(\ell) \ar[r]\ar[d]
    & \frt{\ell}\rtimes\Z \ar[r]\ar[d,"\id",hook] & 1
    \\
    1 \ar[r] & \cyc{(r\ell)^2} \ar[r] & \Gamma(r\ell) \ar[r]
    & \frt{r\ell}\rtimes\Z \ar[r] & 1
  \end{tikzcd}
\end{equation}
Taking colimit, we obtain the following central extension.
\begin{equation}
  \label{equation:Gamma-hat-as-extension}
  1\to \Z_{(2)}/\Z \to \widehat\Gamma \to \Z_{(2)}^2\rtimes\Z \to 1.
\end{equation}

Using this, we can compute the transfinite lower central subgroups
of~$\widehat\Gamma$.

\begin{lemma}
  \label{lemma:tb-transfinite-lcs}
  The first transfinite lower central subgroup $\widehat\Gamma_\omega$ is equal
  to the subgroup $\Z_{(2)}/\Z$.  For $\kappa\ge\omega+1$,
  $\widehat\Gamma_\kappa$ is trivial.
\end{lemma}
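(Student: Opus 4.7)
The strategy is to exploit the central extension
\[
1 \to N \to \widehat\Gamma \to G \to 1
\]
from~\eqref{equation:Gamma-hat-as-extension}, where $N = \Z_{(2)}/\Z$ sits centrally in $\widehat\Gamma$ and $G = \Z_{(2)}^2 \rtimes \Z$. The plan is to compute the lower central series ``modulo~$N$'' on the quotient $G$, and then separately verify that the central subgroup~$N$ always lies in $\widehat\Gamma_k$ for every finite~$k\ge 2$, so that $\widehat\Gamma_k$ is precisely the preimage of~$G_k$.

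First I would compute the lower central series of $G = \Z_{(2)}^2 \rtimes \Z$. Since $t\in \Z=\langle t\rangle$ acts on $\Z_{(2)}^2$ by negation, the commutator $[t,x]=x^{-2}$ for $x\in \Z_{(2)}^2$. A routine induction, entirely analogous to~\eqref{equation:lcs-Z^2-rtimes-Z}, gives $G_k = 2^{k-1}\Z_{(2)}^2$. Since $\bigcap_k 2^k\Z_{(2)}=0$ (as $\Z_{(2)}$ is a discrete valuation ring with uniformizer~$2$), one concludes $G_\omega = 0$.

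The main step---and the only nontrivial point---is to show that $N \subset \widehat\Gamma_k$ for every finite $k\ge 2$. For this I would use the explicit generators $u_\ell$ and $v_\ell$ of $\Gamma(\ell)\subset \widehat\Gamma$ from the presentation~\eqref{equation:presentation-Gamma(l)}, together with the fact that $c_\ell:=[u_\ell,v_\ell]$ is central in $\Gamma(\ell)$ and generates $\cyc{\ell^2}\subset N$. Using the exact relation $[t,v_\ell] = v_\ell^{-2}$ coming from the defining relation $tv_\ell t^{-1} v_\ell = 1$, an easy induction shows
\[
\underbrace{[t,[t,\ldots,[t}_{n\text{ copies}},v_\ell]\cdots]] = v_\ell^{(-2)^n}.
\]
Since $c_\ell$ is central, $[u_\ell, v_\ell^m] = c_\ell^m$, so the iterated commutator
\[
[u_\ell,[t,[t,\ldots,[t,v_\ell]\cdots]]] \in \widehat\Gamma_k
\qquad(\text{with $k-2$ copies of $t$})
\]
equals $c_\ell^{(-2)^{k-2}}$. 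The crucial observation is that $\ell$ is \emph{odd}, so $(-2)^{k-2}$ is a unit in $\Z/\ell^2\Z$, and therefore $c_\ell^{(-2)^{k-2}}$ still generates $\cyc{\ell^2}$. Taking the union over all odd $\ell$ gives $N=\Z_{(2)}/\Z\subset \widehat\Gamma_k$ for every finite~$k\ge 2$. I expect this odd/invertibility argument to be the main obstacle to formulate cleanly, since one must track the commutator identity precisely---but it is ultimately elementary.

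With these two ingredients in hand, the lemma follows quickly. Since $\widehat\Gamma \twoheadrightarrow G$ is surjective, $\widehat\Gamma_k \twoheadrightarrow G_k$, and combined with $N\subset \widehat\Gamma_k$ we get $\widehat\Gamma_k/N = G_k = 2^{k-1}\Z_{(2)}^2$. Therefore
\[
\widehat\Gamma_\omega/N = \bigcap_{k<\omega} \widehat\Gamma_k/N = \bigcap_{k<\omega} 2^{k-1}\Z_{(2)}^2 = 0,
\]
which shows $\widehat\Gamma_\omega = N = \Z_{(2)}/\Z$. Finally, $\widehat\Gamma_{\omega+1} = [\widehat\Gamma,\widehat\Gamma_\omega] = [\widehat\Gamma, N] = 1$ because $N$ is central, and then by transfinite induction (using that all further $\widehat\Gamma_\kappa$ are contained in $\widehat\Gamma_{\omega+1}$) one gets $\widehat\Gamma_\kappa = 1$ for all $\kappa\ge \omega+1$.
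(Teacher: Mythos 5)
Your proof is correct and takes essentially the same approach as the paper's: both show that the quotient $\Z_{(2)}^2\rtimes\Z$ (equivalently each $\frt\ell\rtimes\Z\cong\Z^2\rtimes\Z$) is residually nilpotent, and both use iterated commutators with $t$ together with the oddness of $\ell$ to push the central subgroup $\Z_{(2)}/\Z$ into every finite lower central term. The paper organizes the argument $\ell$-by-$\ell$, proving $\Gamma(\ell)_\omega=\cyc{\ell^2}$ and passing to the colimit at the end, whereas you track the central extension of $\widehat\Gamma$ directly and note $\widehat\Gamma_k=\pi^{-1}(G_k)$ once $N\subset\widehat\Gamma_k$ is established --- a slightly more explicit packaging of the same argument.
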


\begin{proof}
  We claim that $\Gamma(\ell)_\omega$ is the subgroup $\cyc{\ell^2}$.  To prove
  this, we will first verify $\Gamma(\ell)_\omega\subset \cyc{\ell^2}$. Recall
  from~\eqref{equation:lcs-Z^2-rtimes-Z} that the $r$th lower central subgroup
  $(\Z^2\rtimes\Z)_r$ of $\Gamma=\Z^2\rtimes\Z$ is equal to~$(2^{r-1}\Z)^2$. So
  $\Z^2\rtimes\Z$ is residually nilpotent.  That is, $(\Z^2\rtimes\Z)_\omega$ is
  trivial. From this and~\eqref{equation:Gamma(l)-as-extension}, it follows that
  $\Gamma(\ell)_\omega$ lies in the subgroup~$\cyc{\ell^2}$.  For the reverse
  inclusion, first verify that $u^{2^{r-1}} \in \Gamma(\ell)_r$ by induction,
  using the identity $[t,u^{2^{r-1}}] = u^{-2^r}$. So $[u^{2^{r-1}},v] =
  [u,v]^{2^{r-1}}$ lies in $\Gamma(\ell)_{r+1}$. Since $[u,v]$ has order
  $\ell^2$ and $\ell$ is odd, it implies that $[u,v]\in \Gamma(\ell)_{r+1}$.
  Since this holds for all $r$, it follows that $[u,v] \in \Gamma(\ell)_\omega$.
  In other words, $\cyc{\ell^2} \subset \Gamma(\ell)_\omega$.  This shows the
  claim that $\Gamma(\ell)_\omega=\cyc{\ell^2}$.

  From the claim, the promised conclusion $\widehat\Gamma_\omega=\Z_{(2)}/\Z$ is
  obtained by taking colimit.
  
  Since $[u,v]$ is central, $\Gamma(\ell)_{\omega+1}$ is trivial, and thus
  $\Gamma(\ell)_\kappa$ is trivial for all $\kappa\ge \omega+1$. Take colimit to
  obtain that $\widehat\Gamma_{\kappa}$ is trivial for all $\kappa\ge \omega+1$.
\end{proof}

\subsection{Third homology and realizable classes for $\kappa=\omega$}
\label{subsection:tb-omega-H_3}

The goal of this subsection is to investigate the homology and cap product structure of $\widehat\Gamma/\widehat\Gamma_\omega$ and prove Theorem~\ref{theorem:tb-transfinite-computation}\ref{item:tb-transfinite-H_3} and~\ref{item:tb-transfinite-realization} for $\kappa=\omega$.

We begin with homology computation for~$\widehat\Gamma/\widehat\Gamma_\omega$.
By~\eqref{equation:Gamma-hat-as-extension} and
Lemma~\ref{lemma:tb-transfinite-lcs}, we have
$\widehat\Gamma/\widehat\Gamma_\omega = \Z_{(2)}^2\rtimes \Z$ where $\Z$ acts on
$\Z_{(2)}^2$ by negation.
The Lyndon-Hochschild-Serre spectral
sequence for the HNN extension
\[
  1 \to \Z_{(2)}^2 \to \widehat\Gamma/\widehat\Gamma_\omega \to \Z \to 1
\]
gives the Wang exact sequence
\begin{multline}
  \label{equation:wang-sequence-omega}
  H_3(\Z_{(2)}^2)
  \to H_3(\widehat\Gamma/\widehat\Gamma_\omega) \to H_2 (\Z_{(2)}^2)
  \xrightarrow{1-t_*} H_2(\Z_{(2)}^2) \\
  \to H_2(\widehat\Gamma/\widehat\Gamma_\omega)
  \to H_1 (\Z_{(2)}^2) \xrightarrow{1-t_*} H_1(\Z_{(2)}^2)
\end{multline}
where $t_*\colon H_i (\Z_{(2)}^2) \to H_i (\Z_{(2)}^2)$ is the map induced by
negation $(a,b)\mapsto(-a,-b)$ on~$\Z_{(2)}^2$.  Using that $\Z_{(2)}^2$ is the
colimit of $(\tfrac{1}{d}\Z)^2 \cong \Z^2$, it is straightforward to compute
the following homology groups of~$\Z_{(2)}^2$:
\[
  H_3(\Z_{(2)}^2) = 0, \quad   H_2(\Z_{(2)}^2) = \Z_{(2)}, \quad
  H_1(\Z_{(2)}^2) = \Z_{(2)}^2.
\]
Moreover, $1-t_*$ on $H_2(\Z_{(2)}^2)$ is zero, while $1-t_*$ on
$H_1(\Z_{(2)}^2)$ is multiplication by~$2$.  From this
and~\eqref{equation:wang-sequence-omega}, it follows that 
\begin{equation}
  \label{equation:tb-omega-H_i}
  \begin{aligned}
    H_3(\widehat\Gamma/\widehat\Gamma_\omega) &= H_2(\Z_{(2)}^2) = \Z_{(2)}, \\
    H_2(\widehat\Gamma/\widehat\Gamma_\omega) &= H_2(\Z_{(2)}^2) = \Z_{(2)}, \\
    H_1(\widehat\Gamma/\widehat\Gamma_\omega) &= \Z_2^2 \times \Z.
  \end{aligned}
\end{equation}
This shows
Theorem~\ref{theorem:tb-transfinite-computation}\ref{item:tb-transfinite-H_3}
for $\kappa=\omega$.

Now we investigate cap products to compute the set of realizable
classes~$\cR_\omega(\Gamma)$.  First note that $\theta=1 \in \Z_{(2)} =
H_3(\widehat\Gamma/\widehat\Gamma_\omega)$ is the image of the fundamental class
$[Y]\in H_3(\Gamma)$, and thus it lies in~$\cR_\omega(\Gamma)$ by definition.
So, by Theorem~\ref{theorem:main-realization}, for $\theta=1$,
\begin{equation}
  \label{equation:tb-omega-cap-H^2-to-H_1}
  {}\cap \theta \colon tH^2(\widehat\Gamma/\widehat\Gamma_\omega) = \Z_2^2
  \to tH_1(\widehat\Gamma/\widehat\Gamma_\omega) = \Z_2^2
\end{equation}
is an isomorphism.  Also, 
\begin{equation}
  \label{equation:tb-omega-cap-H^1-to-H_2}
  {}\cap\theta\colon H^1(\widehat\Gamma/\widehat\Gamma_\omega) = \Z
  \to H_2(\widehat\Gamma/\widehat\Gamma_\omega) /
  \Ker\{H_2(\widehat\Gamma/\widehat\Gamma_\omega) \rightarrow
  H_2(\widehat\Gamma/\widehat\Gamma_k)\}.  
\end{equation}
is surjective for all finite~$k$.  That is, $\Im\{\cap\,1\} =
H_2(\widehat\Gamma/\widehat\Gamma_\omega)/{\Ker}$.

Consider an arbitrary $\theta := a/d \in \Z_{(2)} =
H_3(\widehat\Gamma/\widehat\Gamma_\omega)$ with $d$ odd.  Then, since the
codomain of~\eqref{equation:tb-omega-cap-H^2-to-H_1} is a finite abelian
2-group, the cap product ${}\cap\theta = ad\cdot (\cap\, 1)$
in~\eqref{equation:tb-omega-cap-H^2-to-H_1} is an isomorphism if and only if $a$
is odd.  Moreover, if $a$ is odd, then the cap product
in~\eqref{equation:tb-omega-cap-H^1-to-H_2} satisfies
\[
  \Im\{\cap\, a/d\} = (a/d)\cdot \Im\{\cap\, 1\} = (a/d)\cdot
  (H_2(\widehat\Gamma/\widehat\Gamma_\omega)/{\Ker}) =
  H_2(\widehat\Gamma/\widehat\Gamma_\omega)/{\Ker}
\]
where the last equality holds since $a/d$ is invertible in
$H_2(\widehat\Gamma/\widehat\Gamma_\omega)=\Z_{(2)}$.

So, by applying Theorem~\ref{theorem:main-realization}, $\theta=a/d\in
H_3(\widehat\Gamma/\widehat\Gamma_\omega)$ lies in $\cR_\omega(\Gamma)$ if and
only if $a$ is odd.  This proves
Theorem~\ref{theorem:tb-transfinite-computation}\ref{item:tb-transfinite-realization}
for $\kappa=\omega$.

\subsection{Third homology for $\kappa\ge\omega+1$}
\label{subsection:tb-omega+1-H_3}

The goal of this subsection is to investigate the homology of~$\widehat\Gamma$ and prove Theorem~\ref{theorem:tb-transfinite-computation}\ref{item:tb-transfinite-H_3} for $\kappa\ge\omega+1$.

Recall that $\widehat\Gamma/\widehat\Gamma_\kappa = \widehat\Gamma$ for
$\kappa\ge\omega+1$ by Lemma~\ref{lemma:tb-transfinite-lcs}, and that
$\widehat\Gamma = \colim \Gamma(\ell)$ by Theorem~\ref{theorem:cha-orr-colim},
where $\Gamma(\ell)$ is the group defined
by~\eqref{equation:presentation-Gamma(l)}.  We
restate~\eqref{equation:presentation-Gamma(l)} for the reader's convenience.
\begin{equation}
  \Gamma(\ell) =\langle u, v, t \mid tut^{-1}u,\, tvt^{-1}v,\,
  [u,v]^{\ell^2},\, [[u,v],u],\, [[u,v],v],\, [u,v],t] \rangle
  \tag{\ref{equation:presentation-Gamma(l)}}
\end{equation}

To understand the homology of $\widehat\Gamma$, it is useful to consider an HNN
extension described below.  Let $A(\ell)$ be the subgroup of $\Gamma(\ell)$ generated
by $u$ and~$v$, following~\cite{Cha-Orr:2011-1}.  From the
presentation~\eqref{equation:presentation-Gamma(l)}, it is immediately seen that
$A(\ell)$ is a normal subgroup of $\Gamma(\ell)$, and $\Gamma(\ell)/A(\ell)$ is the
infinite cyclic group generated by~$t$:
\begin{equation}
  \label{equation:Gamma(l)-as-HNN-extension}
  1 \to A(\ell) \to \Gamma(\ell) \to \Z \to 1
\end{equation}
Note that $\Gamma(\ell) \to \Gamma(r\ell)$ induces an isomorphism
$\Gamma(\ell)/A(\ell) \isomto \Gamma(r\ell)/A(r\ell)=\Z$ sending $t$ to~$t$.
Let $\cA=\colim A(\ell)$, and take the colimit
of~\eqref{equation:Gamma(l)-as-HNN-extension} to obtain the following:
\[
  1 \to \cA \to \widehat\Gamma \to \Z \to 1
\]
The Lyndon-Hochschild-Serre spectral sequence for this HNN extension gives the
following Wang exact sequence:
\begin{equation}
  \label{equation:wang-sequence-Gamma-hat}
  \cdots \to H_i(\cA) \xrightarrow{1-t_*} H_i(\cA) \to H_i(\widehat\Gamma)
  \to H_{i-1}(\cA) \xrightarrow{1-t_*} H_{i-1}(\cA) \to \cdots  
\end{equation}
where $t_*$ is induced by the conjugation by $t$ on~$\cA$.

To compute the homology of $\widehat\Gamma$
using~\eqref{equation:wang-sequence-Gamma-hat}, we first compute the homology
of~$\cA$. From~\eqref{equation:presentation-Gamma(l)}, it follows that $[u,v]\in
A(\ell)$ generates a finite cyclic normal subgroup of order~$\ell^2$, and
$A(\ell)$ is a central extension of this, by the free abelian group of rank two
generated by $u$ and~$v$. Since $u\mapsto u^r$, $v\mapsto v^r$ and $[u,v]\mapsto
[u^r,v^r] = [u,v]^{r^2}$ under $\Gamma(\ell) \to \Gamma(r\ell)$, we have the
following commutative diagram. 

\begin{equation}
  \label{equation:A(l)-as-extension}
  \begin{tikzcd}[row sep=large]
    1 \ar[r] & \Z_{\ell^2} \ar[r]\ar[d,"\cdot r^2"] & A(\ell) \ar[r]\ar[d]
    & \Z^2 \ar[r]\ar[d,"\cdot r"] & 1
    \\
    1 \ar[r] & \Z_{(r\ell)^2} \ar[r] & A(r\ell) \ar[r] & \Z^2 \ar[r] & 1
  \end{tikzcd}
\end{equation}
Consider the Lyndon-Hochschild-Serre spectral sequence of the top row
of~\eqref{equation:A(l)-as-extension}.
\begin{equation}
  \label{equation:spectral-sequence-A(k)}
  E^2_{p,q} = H_p(\Z^2) \otimes H_q(\Z_{\ell^2}) \Longrightarrow H_n(A(\ell))
\end{equation}
The $E^2$ and $E^\infty=E^3$ pages for $q\le 3$ are as follows:
\begin{equation}
  \label{equation:E^2-E^3-for-A(k)}
  E^2 = \, \begin{array}{|@{}l@{}}
    \begin{tikzcd}[sep=tiny,inner sep=-5pt]
      \Z_{\ell^2} & \Z_{\ell^2}^2 & \Z_{\ell^2} \mathstrut \\
      0 & 0 & 0 \\
      \Z_{\ell^2} & \Z_{\ell^2}^2 & \Z_{\ell^2} \\
      \Z & \Z^2 & \Z
      \ar[ull,"d^2_{2,0}"',outer sep=-2pt,near start,shorten >=-1ex,shorten <=-.5ex] \mathstrut 
    \end{tikzcd}
    \\ \hline
  \end{array}
  \quad,\quad
  E^\infty = E^3 = \, \begin{array}{|@{}l@{}}
    \begin{tikzcd}[sep=tiny]
      \Z_{\ell^2} & \Z_{\ell^2}^2 & \Z_{\ell^2} \mathstrut \\
      0 & 0 & 0 \\
      0 & \Z_{\ell^2}^2 & \Z_{\ell^2} \\
      \Z & \Z^2 & \ell^2\Z \mathstrut 
    \end{tikzcd}
    \\ \hline
  \end{array}\quad.
\end{equation}
All entries in~\eqref{equation:E^2-E^3-for-A(k)} are immediately obtained
from~\eqref{equation:spectral-sequence-A(k)}, possibly except $E^3_{p,q}$ for
$(p,q)=(2,0)$ and~$(1,0)$.  To verify these, observe that $E^\infty_{0,1} =
E^3_{0,1}$ mush vanish since $H_1(A(k))=\Z^2$ and $E^\infty_{1,0} = E^2_{1,0} =
\Z^2$. From this it follows that the differential $d^2_{2,0}$ is surjective, so
its kernel $E^3_{0,1}$ is the subgroup $\ell^2\Z$ of~$\Z$.

From the $E^\infty$ page, it follows that $H_2(A(\ell))$ is an extension of
$\Z_{\ell^2}^2=H_1(\Z^2)\otimes H_1(\Z_{\ell^2})$ by $\ell^2\Z \subset \Z =
H_2(\Z^2)$. From~\eqref{equation:A(l)-as-extension}, it follows that
$H_1(\Z^2)\to H_1(\Z^2)$ induced by $A(\ell) \to A(r\ell)$ is multiplication by
$r$, while $H_2(\Z^2)\to H_2(\Z^2)=\Z$ and $H_1(\Z_{\ell^2})\to
H_1(\Z_{(r\ell)^2})$ are multiplication by~$r^2$.
So~\eqref{equation:A(l)-as-extension} gives rise to the following diagram:
\begin{equation}
  \label{equation:exact-sequence-H_2(A(k))}
  \begin{tikzcd}[row sep=large]
    & \makebox[0mm][c]{$H_1(\Z^2)\otimes H_1(\Z_{\ell^2})$}\ar[d,equal] &
    & \Z \rlap{${}= H_2(\Z^2)$} \ar[d,"\cup" description,-,dash pattern=on 0cm off 10cm]
    \\[-4.5ex]
    0 \ar[r] & \Z_{\ell^2}^2 \ar[r]\ar[d,"\cdot r^3"] & H_2(A(\ell)) \ar[r]\ar[d]
    & \ell^2\Z \ar[r]\ar[d,"\cdot r^2","\cong"'] & 0
    \\
    0 \ar[r] & \Z_{(r\ell)^2}^2 \ar[r] & H_2(A(r\ell)) \ar[r] & (r\ell)^2\Z \ar[r] & 0
    \\[-4.5ex]
    & \makebox[0mm][c]{$H_1(\Z^2)\otimes H_1(\Z_{(r\ell)^2})$}\ar[u,equal] &
    & \Z \rlap{${}= H_2(\Z^2)$} \ar[u,"\cap" description,-,dash pattern=on 0cm off 10cm]
  \end{tikzcd}
\end{equation}
The top row of~\eqref{equation:exact-sequence-H_2(A(k))} for $\ell=1$ provides
an isomorphism $H_2(A(1)) = H_2(\Z^2) \isomto \Z$. The colimit map $\Z = 1^2\Z
\to \colim \ell^2\Z$ is an isomorphism, since the map $\cdot r^2$
in~\eqref{equation:exact-sequence-H_2(A(k))} is an isomorphism for all~$r$.  On
the other hand, since the map $\cdot r^3 \colon \Z_{\ell^2}^2 \to
\Z_{(r\ell)^2}^2$ is zero for all large $r$ divided by $\ell^2$,
$\colim \Z_{\ell^2}^2$ vanishes.  So, by taking the colimit
of~\eqref{equation:exact-sequence-H_2(A(k))}, we obtain an isomorphism
\begin{equation}
  \label{equation:tb-omega+1-H_2(A)}
  \Z = H_2(A(1)) \isomto \colim_{\ell\ge 1\text{ odd}} H_2(A(\ell)) = H_2(\cA).  
\end{equation}
Moreover, since $t$ on $A(1)=\Z^2$ is the negation, $t_*$ on $H_2(\cA)=\Z$ is the
identity.

To compute $H_3(\cA)$, first observe that the $E^\infty$ page
in~\eqref{equation:E^2-E^3-for-A(k)} tells us that $H_3(A(\ell))$ is an extension
of $H_3(\Z_{\ell^2})=\Z_{\ell^2}$ by $H_2(\Z^2)\otimes H_1(\Z_{\ell^2})=\Z_{\ell^2}$. Recall
the fact that $H_i(\Z_{\ell^2})=\Z_{\ell^2} \to H_i(\Z_{(r\ell)^2})=\Z_{(r\ell)^2}$ induced
by the injection $(\cdot r^2) \colon \Z_{\ell^2} \to \Z_{(r\ell)^2}$ is multiplication
by~$r^2$ for $i=1,3$.  Also, $(\cdot r)\colon \Z^2\to \Z^2$ induces
multiplication by $r^2$ on $H_2(\Z^2)$.  From this, it follows
that~\eqref{equation:A(l)-as-extension} gives rise to
\begin{equation}
  \label{equation:exact-sequence-H_3(A(k))}
  \begin{tikzcd}[row sep=large]
    & \makebox[0mm][c]{$H_3(\Z_{\ell^2})$} \ar[d,equal] & 
    & \makebox[0mm][c]{$H_2(\Z^2)\otimes H_1(\Z_{\ell^2})$} \ar[d,equal]
    \\[-4.5ex]
    0 \ar[r] & \Z_{\ell^2} \ar[r]\ar[d,"\cdot r^2"] & H_3(A(\ell)) \ar[r]\ar[d]
    & \Z_{\ell^2} \ar[r]\ar[d,"\cdot r^4"] & 0
    \\
    0 \ar[r] & \Z_{(r\ell)^2} \ar[r] & H_3(A(r\ell)) \ar[r] & \Z_{(r\ell)^2} \ar[r] & 0
    \\[-4.5ex]
    & \makebox[0mm][c]{$H_3(\Z_{(r\ell)^2})$} \ar[u,equal] & 
    & \makebox[0mm][c]{$H_2(\Z^2)\otimes H_1(\Z_{(r\ell)^2})$} \ar[u,equal]
  \end{tikzcd}
\end{equation}
Since the vertical map $\cdot r^4$ in~\eqref{equation:exact-sequence-H_3(A(k))}
is trivial if $r$ divided by $\ell$, the colimit of them is trivial.  So, by
taking the colimit of~\eqref{equation:exact-sequence-H_3(A(k))}, we obtain an
isomorphism
\begin{equation}
  \label{equation:tb-omega+1-H_3(A)}
  \Z_{(2)}/\Z = \colim_{\ell\ge 1\text{ odd}} \Z_{\ell^2}
  = \colim_{\ell\ge 1\text{ odd}} H_3(\Z_{\ell^2}) \isomto H_3(\cA).
\end{equation}
Note that the action of $t$ on $\Z_{\ell^2} \subset A(\ell)$ is trivial, since
$t[u,v]t^{-1}=[u^{-1},v^{-1}] = [u,v]$.  It follows that $t_*$ on
$H_3(\cA)=\Z_{(2)}/\Z$ is the identity.

Now, use~\eqref{equation:tb-omega+1-H_2(A)}, \eqref{equation:tb-omega+1-H_3(A)}
and the fact that $1-t_*=0$ on both $H_2(\cA)$ and $H_3(\cA)$, to extract the
following exact sequence from the Wang
sequence~\eqref{equation:wang-sequence-Gamma-hat}.
\begin{equation}
  \label{equation:H_3-Gamma-hat-as-extension}
  \begin{tikzcd}
    0\ar[r] & \Z_{(2)}/\Z \ar[r] & H_3(\widehat\Gamma) \ar[r] & \Z \ar[r] & 0
    \\[-2.5ex]
    & H_3(\cA) \ar[u,equal] & & H_2(\cA) \ar[u,equal]
  \end{tikzcd}
\end{equation}
So, $H_3(\widehat\Gamma)$ is isomorphic to $(\Z_{(2)}/\Z) \times \Z$. To provide
a fixed identification, we use a splitting described below. Recall that
$\Gamma(1)=\Gamma$, so $H_3(\Gamma(1)) = H_3(\Gamma)=H_3(Y)$ where $Y$ is the
torus bundle. Compare~\eqref{equation:H_3-Gamma-hat-as-extension} with the Wang
sequence associated to the exact
sequence~\eqref{equation:Gamma(l)-as-HNN-extension} for $\ell=1$, to obtain the
following commutative diagram:
\[
  \begin{tikzcd}[row sep=large]
    & 0 \ar[r] & H_3(Y) \ar[r,"\cong"]\ar[d] & H_2(A(1)) \ar[r]\ar[d,"\cong"] & 0 
    \\
    0 \ar[r] & H_3(\cA) \ar[r] & H_3(\widehat\Gamma) \ar[r] & H_2(\cA) \ar[r] & 0
  \end{tikzcd}
\]
From this, it is straightforward to see that the composition of the inverses of the two isomorphisms and $H_3(Y) \to H_3(\widehat\Gamma)$ is a splitting.  It gives an identification 
\begin{equation}
  \label{equation:tb-omega+1-H_3}
  H_3(\widehat\Gamma) = (\Z_{(2)}/\Z) \times \Z
\end{equation}
such that $(0,1)\in (\Z_{(2)}/\Z) \times \Z$ represents the image of the
fundamental class~$[Y]$.  This shows
Theorem~\ref{theorem:tb-transfinite-computation}\ref{item:tb-transfinite-H_3}
for $\kappa\ge\omega+1$.

For use in the next subsection, we compute~$H_2(\widehat\Gamma)$ here.  By
taking the abelianization, it is straightforward to see that $H_1(\cA) =
\Z_{(2)}^2$ and $t_*$ on $H_1(\cA)$ is the negation.  So, from the Wang
sequence~\eqref{equation:wang-sequence-Gamma-hat}, it follows that
\begin{equation}
  \label{equation:tb-omega+1-H_2}
  H_2(\widehat\Gamma)=H_2(\cA)=H_2(A(1))=H_2(\Z^2)=\Z.
\end{equation}

\subsection{Realizable classes for $\kappa\ge \omega+1$}
\label{subsection:tb-omega+1-realizable-classes}

In this subsection, we study the set of realizable classes $\cR_\kappa(\Gamma)$ for $\kappa\ge\omega+1$, to prove Theorem~\ref{theorem:tb-transfinite-computation}\ref{item:tb-transfinite-realization} for $\kappa\ge\omega+1$ and Theorem~\ref{theorem:tb-transfinite-computation}\ref{item:tb-transfinite-image-kappa+1}.

To determine realizable classes in $H_3(\widehat\Gamma/\widehat\Gamma_\kappa) =
H_3(\widehat\Gamma)$ for $\kappa\ge \omega+1$, consider the cap product
\begin{equation}
  \label{equation:tb-omega+1-cap-H^2-to-H_1}
  {}\cap\theta\colon tH^2(\widehat\Gamma) \to tH_1(\widehat\Gamma)
  = tH_1(\Gamma) = \Z_2^2.
\end{equation}
If $\theta\in H_3(\widehat\Gamma)=(\Z_{(2)}/\Z)\times\Z$ lies in the subgroup
$\Z_{(2)}/\Z$, then $k\theta = 0$ for some odd $k>0$.  Since
$tH_1(\widehat\Gamma)$ is a 2-group, it follows
that~\eqref{equation:tb-omega+1-cap-H^2-to-H_1} is zero for $\theta\in
\Z_{(2)}/\Z$. On the other hand, $\theta=(0,1) \in
H_3(\widehat\Gamma)=(\Z_{(2)}/\Z)\times\Z$ lies in $\cR_\kappa(\Gamma)$ since
$\theta$ is the image of the fundamental class~$[Y]$.   So
\eqref{equation:tb-omega+1-cap-H^2-to-H_1} is an isomorphism for $\theta=(0,1)$
by Theorem~\ref{theorem:main-realization}.  Since $tH_1(\widehat\Gamma)$ is a
finite abelian 2-group, it follows that
\eqref{equation:tb-omega+1-cap-H^2-to-H_1} is an isomorphism for $\theta=(0,r)$
if and only if $r\in \Z$ is odd.  Combining these observations, it follows that
\eqref{equation:tb-omega+1-cap-H^2-to-H_1} is an isomorphism for $\theta=(x,r)
\in H_3(\widehat\Gamma)=(\Z_{(2)}/\Z)\times\Z$ if and only if $r\in\Z$ is odd.

Now consider the cap product
\begin{equation}
  \label{equation:tb-omega+1-cap-H^1-to-H_2}
  {}\cap\theta\colon H^1(\widehat\Gamma) \to H_2(\widehat\Gamma)
  /\Ker\{H_2(\widehat\Gamma) \rightarrow
  H_2(\widehat\Gamma/\widehat\Gamma_\omega)\}.
\end{equation}
By~\eqref{equation:tb-omega+1-H_2}, $H_2(\widehat\Gamma)=H_2(\Z^2) = \Z$.
By~\eqref{equation:tb-omega-H_i}, $H_2(\widehat\Gamma/\widehat\Gamma_\omega) =
H_2(\Z_{(2)}^2) = \Z_{(2)}$.  Since $\Z^2\hookrightarrow \Z_{(2)}^2$ induces the
standard inclusion on these~$H_2$, $\Ker\{H_2(\widehat\Gamma) \rightarrow
H_2(\widehat\Gamma/\widehat\Gamma_\omega)\}$ is trivial, and thus the codomain
of~\eqref{equation:tb-omega+1-cap-H^1-to-H_2} is equal to
$H_2(\widehat\Gamma)=\Z$.  From this, it follows that the cap
product~\eqref{equation:tb-omega+1-cap-H^1-to-H_2} is zero for $\theta = (x,0)
\in H_3(\widehat\Gamma)=(\Z_{(2)}/\Z)\times\Z$,  since $x$ is torsion. By
Theorem~\ref{theorem:main-realization}, the cap
product~\eqref{equation:tb-omega+1-cap-H^1-to-H_2} is surjective for
$\theta=(0,1)$, since this $\theta$ lies in~$\cR_\kappa(\Gamma)$. So, for a
general class $\theta=(x,r) \in H_3(\widehat\Gamma)=(\Z_{(2)}/\Z)\times\Z$,
\eqref{equation:tb-omega+1-cap-H^1-to-H_2} is surjective if and only if
$r=\pm1$.  By Theorem~\ref{theorem:main-realization}, it is the case if and only
if $\theta\in \cR_\kappa(\Gamma)$. This proves
Theorem~\ref{theorem:tb-transfinite-computation}\ref{item:tb-transfinite-realization}
for $\kappa= \omega+1$.  For $\kappa>\omega+1$, the computation proceeds along
the same lines.  The only exception is that we need to replace
$\Ker\{H_2(\widehat\Gamma) \rightarrow
H_2(\widehat\Gamma/\widehat\Gamma_\omega)\}$
in~\eqref{equation:tb-omega+1-cap-H^1-to-H_2} by $\Ker\{H_2(\widehat\Gamma)
\rightarrow H_2(\widehat\Gamma/\widehat\Gamma_\lambda)\}$ with $\lambda<\kappa$.
But, since the kernel is already trivial for $\lambda=\omega$ by the above
computation, the same argument applies to the case of $\kappa>\omega+1$ as well.
This completes the proof of
Theorem~\ref{theorem:tb-transfinite-computation}\ref{item:tb-transfinite-realization}
for $\kappa\ge \omega+1$.

To compute the map $\cR_{\omega+1}(\Gamma) \to \cR_\omega(\Gamma)$ induced by
the projection $\widehat\Gamma = \widehat\Gamma/\widehat\Gamma_{\omega+1} \to
\widehat\Gamma/\widehat\Gamma_\omega$, recall
from~\eqref{equation:tb-omega+1-H_3} that
$H_3(\widehat\Gamma)=(\Z_{(2)}/\Z)\times\Z$ where the $\Z$ factor is identified
with~$H_2(\cA)=H_2(\Z^2)$ via the Wang
sequence~\eqref{equation:wang-sequence-Gamma-hat}.  Also, recall
from~\eqref{equation:tb-omega-H_i} that
$H_3(\widehat\Gamma/\widehat\Gamma_\omega)=H_2(\Z_{(2)})=\Z_{(2)}$.  So,
$H_3(\widehat\Gamma)=(\Z_{(2)})\times\Z \to
H_3(\widehat\Gamma/\widehat\Gamma_\omega)=\Z_{(2)}$ is given by $(a,r) \mapsto
r$, and $\cR_{\omega+1}(\Gamma) \to \cR_\omega(\Gamma)$ is the restriction. This
shows
Theorem~\ref{theorem:tb-transfinite-computation}\ref{item:tb-transfinite-image-kappa+1}
for $\kappa=\omega$.  Since $\widehat\Gamma/\widehat\Gamma_\kappa =
\widehat\Gamma$ for $\kappa\ge\omega+1$,
Theorem~\ref{theorem:tb-transfinite-computation}\ref{item:tb-transfinite-image-kappa+1}
for $\kappa\ge\omega+1$ is obviously true.

\subsection{Equivalence relation and automorphism action for $\kappa=\omega$}
\label{subsection:tb-omega-aut}

In this subsection, we investigate the equivalence relation ${\sim}$
on~$\cR_\omega(\Gamma)$ and prove Theorem~\ref{theorem:tb-transfinite-computation}\ref{item:tb-transfinite-equiv-rel} and~\ref{item:tb-transfinite-aut}.

Recall that $\cR_\omega(\Gamma) = \Z_{(2)}^\times \subset
H_3(\widehat\Gamma/\widehat\Gamma_\omega) = \Z_{(2)}$ by
Theorem~\ref{theorem:tb-transfinite-computation}\ref{item:tb-transfinite-H_3}
and~\ref{item:tb-transfinite-realization}. Fix $\theta=\frac pq\in
\cR_\omega(\Gamma)$, where $p$,~$q\in 2\Z+1$.

To determine the equivalence class of~$\theta$ as a subset of
$\cR_\omega(\Gamma)$, we will use an automorphism of
$\widehat\Gamma/\widehat\Gamma_\omega$, which is equal to $\Z_{(2)}^2\rtimes \Z$
by Lemma~\ref{lemma:tb-transfinite-lcs}.  Define $\phi_{p/q}\colon
\Z_{(2)}^2\rtimes \Z \to \Z_{(2)}^2\rtimes \Z$ by $\phi_{p/q}(a,b,r) = (\frac
pq\cdot a,b,r)$ for $a,b\in\Z_{(2)}$, $r\in\Z$.  It is straightforward to verify
that $\phi_{p/q}$ is an automorphism with inverse $\phi_{p/q}^{-1}=\phi_{q/p}$.
We claim that $\phi_{p/q}$ induces $1\mapsto \frac pq=\theta$ on
$H_3(\Z_{(2)}^2\rtimes\Z) = \Z_{(2)}$.  To see this, observe that the
restriction of $\phi_{p/q}$ on the subgroup $\Z_{(2)}^2$ induces an automorphism
of $H_2(\Z_{(2)}^2) = \Z_{(2)}$ given by $1\mapsto \frac pq$. Since
$H_3(\Z_{(2)}^2\rtimes\Z) = H_2(\Z_{(2)}^2)$ by~\eqref{equation:tb-omega-H_i},
the claim follows from this.

To avoid confusion, for a closed 3-manifold $M$ with $\pi=\pi_1(M)$ equipped
with an isomorphism $f\colon \widehat\pi/\widehat\pi_\omega \isomto
\widehat\Gamma/\widehat\Gamma_\omega$, denote the invariant $\theta_\omega(M)$
by $\theta_\omega(M,f)$ temporarily.  Then, for $M=Y$, the above claim implies
that $\theta_\omega(Y,\phi_{p/q}) = \theta$, since $1\in
\Z_{(2)}=H_3(\widehat\Gamma/\widehat\Gamma_\omega)$ represents the image of the
fundamental class~$[Y]$.   So, by definition, the equivalence class $I_\theta =
\{\theta'\mid \theta'\sim \theta\}$ of $\theta$ in $\cR_\omega(\Gamma)$ is equal
to the image of the following composition.
\[
  \begin{tikzcd}[sep=large]
    \cR_{\omega+1}(\Gamma) \ar[r]
    & \cR_\omega(\Gamma) \ar[r,"(\phi_{p/q})_*"']
    & \cR_\omega(\Gamma)
    \\[-4.5ex]
    |[overlay]| (\Z_{(2)}/\Z)\times\{\pm1\} \ar[u,equal]
    & \Z_{(2)}^\times \ar[u,equal]
  \end{tikzcd}
\]
Here, the projection-induced map $\cR_{\omega+1}(\Gamma) \to \cR_\omega(\Gamma)$
is $(a,\pm1)\mapsto \pm1$ by
Theorem~\ref{theorem:tb-transfinite-computation}\ref{item:tb-transfinite-image-kappa+1}.
From this, it follows that $I_\theta=\{\theta,-\theta\}$.  This completes the
proof of
Theorem~\ref{theorem:tb-transfinite-computation}\ref{item:tb-transfinite-equiv-rel}.

In addition, using the above argument, it is straightforward to show
Theorem~\ref{theorem:tb-transfinite-computation}\ref{item:tb-transfinite-aut},
which asserts that the action of $\Aut(\widehat\Gamma/\widehat\Gamma_\omega)$ on
the set of realizable classes $\cR_\omega(\Gamma)$ is transitive. Indeed, for an
arbitrary $\theta=p/q \in \Z_{(2)}^\times =\cR_\omega(\Gamma)$, since the
above automorphism $\phi_{p/q}$ on $\widehat\Gamma/\widehat\Gamma_\omega$
satisfies $\phi_{p/q}(1)=\theta$, it follows that $\theta$ and $1$ have the same
orbit. So the action is transitive.

\section{Torus bundle example: the universal \texorpdfstring{$\theta$}{theta}-invariant}
\label{section:tb-computation-final}

We continue the study of the localization of the fundamental group $\Gamma$ of the torus bundle $Y$ defined in~\eqref{equation:torus-bundle}.
The goal of this section is to understand the final invariant $\widehat\theta$ defined over~$\widehat\Gamma$ and prove Theorem~\ref{theorem:main-tb-final}, which we state again below for the reader's convenience.
Recall that $\widehat\cR(\Gamma)$ is the set of realizable values of~$\widehat\theta$.
Theorem~\ref{theorem:main-tb-final} says:
\emph{
  $\widehat\cR(\Gamma)/\Aut(\widehat\Gamma)$ is infinite.
  This detects the existence of infinitely many distinct homology cobordism classes of closed 3-manifolds $M$ with $\pi=\pi_1(M)$, such that $\widehat\pi \cong \widehat\Gamma$, and thus, $\theta_\kappa(M)$ is defined and vanishes in $\Coker\{\cR_{\kappa+1}(\Gamma) \to \cR_\kappa(\Gamma)\}$ for all ordinals~$\kappa$.
  In particular, for every ordinal $\kappa$, the Milnor invariant $\bar\mu_\kappa(M)$ vanishes for these 3-manifolds~$M$.
}

We begin with computation of realizable classes in~$H_3(\widehat\Gamma)$.
Recall that $H_3(\widehat\Gamma)=(\Z_{(2)}/\Z)\times \Z$ by Theorem~\ref{theorem:tb-transfinite-computation}\ref{item:tb-transfinite-H_3}.

\begin{theorem}
  \label{theorem:tb-final-realization}
  $\widehat\cR(\Gamma) = (\Z_{(2)}/\Z) \times \{\pm1\} \subset
  H_3(\widehat\Gamma)$.
\end{theorem}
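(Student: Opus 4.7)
The plan is to invoke the realization criterion of Theorem~\ref{theorem:main-realization-final-inv} and reduce the question to elementary arithmetic, since all the homology computations we need are already in hand. Specifically, I would combine the identification $H_3(\widehat\Gamma) = (\Z_{(2)}/\Z)\times\Z$ from~\eqref{equation:tb-omega+1-H_3} with $H_2(\widehat\Gamma) = \Z$ from~\eqref{equation:tb-omega+1-H_2} and $tH_1(\widehat\Gamma) \cong tH_1(\Gamma) = \Z_2^2$, and then check, for a general class $\theta = (x,r) \in (\Z_{(2)}/\Z)\times\Z$, for which $(x,r)$ both cap product conditions of Theorem~\ref{theorem:main-realization-final-inv} are satisfied.

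The starting point is the observation that the class $\theta_0 := (0,1)$ is realizable, because under the identification of~\eqref{equation:tb-omega+1-H_3} it is by construction the image of the fundamental class $[Y]$ under $H_3(Y)\to H_3(\widehat\Gamma)$. By Theorem~\ref{theorem:main-realization-final-inv} applied to $\theta_0$, both cap products
\[
  \cap\,\theta_0 \colon tH^2(\widehat\Gamma) \to tH_1(\widehat\Gamma), \qquad
  \cap\,\theta_0\colon H^1(\widehat\Gamma) \to H_2(\widehat\Gamma)=\Z
\]
are, respectively, an isomorphism and surjective. Since the cap product is $\Z$-linear in the homology variable, for any $\theta = (x,r)$ we have $\cap\,\theta = \cap\,(x,0) + r\cdot(\cap\,\theta_0)$ in both settings.

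Next I would dispose of the torsion component $x \in \Z_{(2)}/\Z$. Because $x$ has odd order, say $d$, the identity $d\cdot(\cap\,(x,0)) = \cap\,(dx,0) = 0$ forces $\cap\,(x,0) = 0$ on any target that is either a finite $2$-group (hence $d$ is invertible there) or torsion-free (since $\cap\,(x,0)$ must itself be torsion of odd order and land in a torsion-free group). Applied to $tH_1(\widehat\Gamma)=\Z_2^2$ and to $H_2(\widehat\Gamma)=\Z$, this reduces both cap products for $\theta = (x,r)$ to $r\cdot(\cap\,\theta_0)$.

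It remains to determine the admissible values of $r$. Condition~(2) of Theorem~\ref{theorem:main-realization-final-inv} requires $r\cdot(\cap\,\theta_0)$ to be surjective onto $\Z = H_2(\widehat\Gamma)$, which forces $r \in \{\pm 1\}$. Conversely, for $r = \pm 1$, condition~(1) holds automatically: $\cap\,\theta_0$ is an isomorphism on the finite $2$-group $tH_1(\widehat\Gamma)$ and $r = \pm 1$ acts invertibly there, so $r\cdot(\cap\,\theta_0)$ remains an isomorphism. Thus $\widehat\cR(\Gamma)$ consists of exactly the classes $(x,\pm 1)$ with $x \in \Z_{(2)}/\Z$, which is the desired conclusion. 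The step I expect to require the most care is verifying that the torsion $(x,0)$-contribution to the cap product into $H_2(\widehat\Gamma)=\Z$ vanishes cleanly; fortunately this is automatic from the fact that the target is torsion-free, so no genuine obstacle arises.
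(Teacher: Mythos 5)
Your proof is correct and follows essentially the same route as the paper's. The paper's own proof is a one-liner that refers back to the cap-product computation carried out in Section~\ref{subsection:tb-omega+1-realizable-classes} (where $\cR_{\omega+1}(\Gamma)$ was determined), whereas you apply Theorem~\ref{theorem:main-realization-final-inv} directly; the underlying ingredients --- linearity of the cap product in the $H_3$-variable, dismissal of the odd-torsion coordinate by mapping into a finite $2$-group and a torsion-free group, and realizability of $(0,1)$ as the image of $[Y]$ --- are all exactly those used in the paper's referenced computation.
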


\begin{proof}
  In the argument used to prove
  Theorem~\ref{theorem:tb-transfinite-computation}\ref{item:tb-transfinite-H_3}
  in Section~\ref{subsection:tb-omega+1-realizable-classes}, we have shown that
  a homology class $\theta\in H_3(\widehat\Gamma)=(\Z_{(2)}/\Z)\times \Z$ lies
  in $(\Z_{(2)}/\Z) \times \{\pm1\}$ if and only if $\cap\,\theta\colon
  tH^2(\widehat\Gamma) \to tH_1(\widehat\Gamma)$ is an isomorphism and
  $\cap\,\theta\colon H^1(\widehat\Gamma) \to H_2(\widehat\Gamma)$ is an
  epimorphism.  By Theorem~\ref{theorem:main-realization-final-inv}, it follows
  that $\widehat\cR(\Gamma) = (\Z_{(2)}/\Z) \times \{\pm1\}$.
\end{proof}

The next theorem describes the action of $\Aut(\widehat\Gamma)$ on
$H_3(\widehat\Gamma)$ and~$\widehat\cR(\Gamma)$.   To state the result, we use
the following notation.
For a group $G$, denote the abelianization by~$G_{ab}$.
Recall from Section~\ref{subsection:tb-omega+1-H_3} that
$\widehat\Gamma$ is an HNN extension of a subgroup~$\cA$ such that
$\cA_{ab}=\Z_{(2)}^2$ with basis~$\{u,v\}$.  We will show, in
Lemma~\ref{lemma:tb-aut-general-form}, that if $f\colon \widehat\Gamma \to
\widehat\Gamma$ an automorphism, then $f$ induces an automorphism $f_\cA\in
\operatorname{GL}(2,\Z_{(2)})$ on $\cA_{ab}=\Z_{(2)}^2$ satisfying $\det f_\cA =
\pm1$, and $f$ induces an automorphism $f_\Z$ on the quotient
$\widehat\Gamma/\cA = \Z$. Define 
\[
  \delta_f := \det f_\cA  \in \{1,-1\}, \quad
  \epsilon_f := f_\Z(1) \in \{1,-1\}.
\]
One readily sees that $\Aut(\widehat\Gamma) \to \{-1,1\}^2\cong \Z_2^2$ given by
$f\mapsto (\delta_f,\epsilon_f)$ is a surjective group homomorphism onto the
Klein 4-group.  Indeed, for a given pair $(\delta,\epsilon) \in \{1,-1\}^2$, the
automorphism $\Gamma\to \Gamma$ defined by $u\mapsto u^{\delta}$, $v \mapsto v$,
$t \mapsto t^\epsilon$ gives rise to an automorphism $f\colon
\widehat\Gamma \to \widehat\Gamma$ satisfying $(\delta_f,\epsilon_f) =
(\delta,\epsilon)$.

\begin{theorem}
  \label{theorem:tb-final-aut}
  Suppose $f$ is an automorphism on~$\widehat\Gamma$.  Then the induced
  automorphism $f_*$ on $H_3(\widehat\Gamma)=(\Z_{(2)}/\Z) \times \Z$ is given
  by $f_*(a,n) = (\delta_f\cdot a, \delta_f\cdot \epsilon_f\cdot n)$.
  Consequently, there are bijections
  \begin{gather*}
    H_3(\widehat\Gamma)/\Aut(\widehat\Gamma)
    \approx \{(a,n) \in \Z_{(2)} \times \Z \mid 0\le a < 1/2,\, n\ge 0\},
    \\
    \widehat\cR(\Gamma)/\Aut(\widehat\Gamma) 
    \approx \{a\in \Z_{(2)} \mid 0\le a < 1/2\}.
  \end{gather*}
\end{theorem}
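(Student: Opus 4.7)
My plan proceeds in three stages. First, I would prove the auxiliary Lemma~\ref{lemma:tb-aut-general-form}: every automorphism $f$ of $\widehat\Gamma$ preserves the subgroup $\cA$ (characteristic as the kernel of $\widehat\Gamma \twoheadrightarrow H_1(\widehat\Gamma)/\text{torsion} = \Z$), induces $f_\cA \in GL(2, \Z_{(2)})$ on $\cA_{ab} = \Z_{(2)}^2$ with $\det f_\cA = \pm 1$, and acts on the quotient $\widehat\Gamma/\cA = \Z$ by $f_\Z = \pm 1$. The determinant constraint comes from the compatibility of $f$ with the conjugation action of $t$ on $\cA_{ab}$ (namely $-I$): if $f(t) = t^{\epsilon_f} \cdot a$ for some $a \in \cA$, then conjugation by $f(t)$ must also act as $-I$ on $\cA_{ab}$, forcing $f_\cA$ to commute with $-I$ and to preserve the commutator subgroup; a short argument then yields $\det f_\cA \in \{\pm 1\}$. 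Surjectivity of $f \mapsto (\delta_f, \epsilon_f)$ onto $\{\pm 1\}^2$ is realized by the obvious automorphisms (such as $u \mapsto u^{-1}, v \mapsto v, t \mapsto t$, giving $(\delta, \epsilon) = (-1, 1)$).

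Next I would compute the action of $f_*$ on $H_3(\widehat\Gamma) = (\Z_{(2)}/\Z) \oplus \Z$ using the naturality of the Wang SES $0 \to H_3(\cA) \to H_3(\widehat\Gamma) \to H_2(\cA) \to 0$. On the quotient factor $H_2(\cA) = \Z$, $f$ acts by $\delta_f \epsilon_f$: the $\delta_f$ comes from $f|_\cA$ on $H_2(\cA_{ab}) = \wedge^2\Z_{(2)}^2 = \Z$ via determinant, and the $\epsilon_f$ from the orientation dependence of the Wang connecting map on the base~$\Z$. On the subgroup factor $H_3(\cA) = \Z_{(2)}/\Z$, the action is multiplication by $\delta_f$; this stems from $f|_{[\cA,\cA]}$ acting by $\delta_f$ on $[\cA, \cA] = \Z_{(2)}/\Z$ (using the class-$2$ identity $[f(u), f(v)] = [u, v]^{\det f_\cA}$), transported through the identification $H_3(\cA) \cong [\cA, \cA]$ arising from the direct limit $\colim H_3(\Z_{\ell^2})$ of Section~\ref{subsection:tb-omega+1-H_3}. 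I would then check directly that $f_*[Y] = (0, \delta_f \epsilon_f)$: the four generating automorphisms of $\{\pm1\}^2$ all extend to self-diffeomorphisms of $Y$ (for instance $(x, s) \mapsto (x, 1-s)$ realizes $(1,-1)$), and the orientation behavior of these diffeomorphisms matches the sign $\delta_f \epsilon_f$, showing that $f_*$ preserves the splitting through $[Y]$. Combining, we get the formula $f_*(a, n) = (\delta_f a, \delta_f \epsilon_f n)$.

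The main technical obstacle is the first-factor computation. A priori, one must worry whether the induced action of $(f|_\cA)_*$ on $H_3(\cA)$ really equals multiplication by $\delta_f$ rather than $\delta_f^2$ (or some other power), since there is an ambiguity in identifying $H_3$ of a cyclic group with the group itself. To handle this carefully, one would use the LHS spectral sequence for the central extension $1 \to [\cA, \cA] \to \cA \to \cA_{ab} \to 1$, track $f$ on each associated graded piece $E^\infty_{p,q}$ for $p+q=3$, and argue that after passing to the colimit over the finitely generated approximations $A(\ell)$ (where the surviving piece is $E^\infty_{0,3} = \colim H_3(\Z_{\ell^2})$), the identification of $H_3(\cA)$ with $[\cA, \cA]$ makes the action of $f|_\cA$ compatible with multiplication by $\delta_f$.

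Finally, the quotient computations are elementary. Since the action of $\Aut(\widehat\Gamma)$ on $H_3(\widehat\Gamma) = (\Z_{(2)}/\Z) \oplus \Z$ factors through the surjection onto $\{\pm1\}^2$, the orbit of $(a,n)$ is $\{(\pm a, \pm n)\}$ (all four sign combinations occur independently). A fundamental domain is $\{(a,n) : 0 \le a < 1/2,\ n \ge 0\}$: here $-a \equiv 1 - a \pmod 1$ in $\Z_{(2)}/\Z$, and the absence of $1/2$ in $\Z_{(2)}$ ensures there is no interior fixed point of the involution $a \mapsto -a$. Intersecting with $\widehat\cR(\Gamma) = (\Z_{(2)}/\Z) \times \{\pm 1\}$ from Theorem~\ref{theorem:tb-final-realization} immediately yields the second bijection $\widehat\cR(\Gamma)/\Aut(\widehat\Gamma) \approx \{a : 0 \le a < 1/2\}$.
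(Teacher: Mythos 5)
Your overall plan (characterize $2$-connected endomorphisms via $(\delta_f,\epsilon_f)$, compute the action on the two graded pieces of the Wang sequence, then deduce the quotient descriptions) matches the paper's thematic structure, but there is a genuine gap in the middle step, and it is exactly the gap the bulk of the paper's argument (Lemmas~\ref{lemma:tb-aut-eliminate-j}, \ref{lemma:tb-aut-clean-form}, \ref{lemma:tb-aut-clean-form-action}) is devoted to closing.

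Here is the problem. The Wang sequence $0 \to H_3(\cA) \to H_3(\widehat\Gamma) \to H_2(\cA) \to 0$ is natural in $f$, but knowing $f_*$ on the sub $H_3(\cA)$ and on the quotient $H_2(\cA)$ only determines $f_*$ on $H_3(\widehat\Gamma)$ up to a crossed homomorphism $\gamma\colon \Aut(\widehat\Gamma)\to \Hom(\Z,\Z_{(2)}/\Z) = \Z_{(2)}/\Z$ (the upper-right corner of the triangular matrix of $f_*$ with respect to the filtration). The splitting used to write $H_3(\widehat\Gamma) = (\Z_{(2)}/\Z)\times\Z$ comes from the class $[Y]$, which is canonical with respect to $\Gamma = \Gamma(1)$ — \emph{not} canonical with respect to $\widehat\Gamma$ — so there is no a priori reason an automorphism of $\widehat\Gamma$ should preserve it. You propose to verify $\gamma(f) = 0$ by checking that the four sign-combinations $(\delta,\epsilon)$ are realized by self-diffeomorphisms of $Y$. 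That verifies vanishing only for that countable family of automorphisms of $\widehat\Gamma$ induced by mapping classes of $Y$. But $\Aut(\widehat\Gamma)$ is uncountable (Lemma~\ref{lemma:tb-aut-general-form} shows it contains scalings by arbitrary units of $\Z_{(2)}$), and your argument does not address whether automorphisms in the kernel of $f\mapsto(\delta_f,\epsilon_f)$ might still have $\gamma(f) \neq 0$. The paper's resolution is quite different from yours: it shows (Lemma~\ref{lemma:tb-aut-eliminate-j}, via a geometric $T^3$-surgery argument, and Lemma~\ref{lemma:tb-aut-clean-form}, via compositions with inner and elementary automorphisms that leave $f_*[Y]$ unchanged) that one can reduce any $f$ to the clean form $f(t) = t^{\epsilon_f}$, and then (Lemma~\ref{lemma:tb-aut-clean-form-action}) computes $f_*[Y]$ directly by passing to the index-two subgroup $\cA\times 2\Z$ and invoking the K\"unneth formula. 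None of that labor is dispensable.

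A secondary concern: you flag — correctly — that the identification $H_3(\cA)\cong\Z_{(2)}/\Z$ via $\colim H_3(\Z_{\ell^2})$ does not automatically transport multiplication by $\delta_f$ on $[\cA,\cA]$ to multiplication by $\delta_f$ on $H_3(\cA)$, since automorphisms of a finite cyclic group act on its $H_3$ by the \emph{square} of the multiplier. But your proposed resolution ("argue that after passing to the colimit ... the action ... makes $f|_\cA$ compatible with multiplication by $\delta_f$") is stated as a goal, not as an argument; you have in fact identified the exact danger without defusing it. The paper's Lemma~\ref{lemma:tb-aut-on-tH_3} addresses this head-on. You cannot defer it, because a $\delta_f$ vs.\ $\delta_f^2 = 1$ discrepancy on the torsion part changes the orbit structure and hence the announced shape of $H_3(\widehat\Gamma)/\Aut(\widehat\Gamma)$.

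The final elementary computation of the quotients, given the claimed action formula, is fine.
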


The first statement says that the natural map $\Aut(\widehat\Gamma) \to
\Aut(H_3(\widehat\Gamma))$ factors through the Klein 4-group $\{1,-1\}^2$, via
$f\mapsto (\delta_f, \epsilon_f)$.  The two bijections in
Theorem~\ref{theorem:tb-final-aut} are immediately obtained from the first
statement.

The first sentence of Theorem~\ref{theorem:main-tb-final}, which asserts that $\widehat\cR(\Gamma)/\Aut(\widehat\Gamma)$ is infinite, is an immediate consequence of Theorem~\ref{theorem:tb-final-aut}.
Also, from this, the second statement of Theorem~\ref{theorem:main-tb-final} follows immediately by Theorems~\ref{theorem:homology-cob-final-inv}\@.

The remaining part of this section is devoted to the proof of the
first statement of Theorem~\ref{theorem:tb-final-aut}.

Recall that $\Gamma(\ell)$ is the subgroup of $\widehat\Gamma$ given
by~\eqref{equation:presentation-Gamma(l)}, $\Gamma(1)=\Gamma$, and
$\widehat\Gamma$ is the colimit of~$\Gamma(\ell)$.  If $f\colon
\widehat\Gamma\to\widehat\Gamma$ is an automorphism, then for each odd $\ell\ge
1$, $f(\Gamma(\ell))\subset \Gamma(r\ell)$ for some odd~$r\ge 1$, since
$\Gamma(\ell)$ is finitely generated. The restriction $f\colon \Gamma(\ell)\to
\Gamma(r\ell)$ induces isomorphisms on $H_1$ and $H_2$, since so does the
colimit inclusion $\Gamma(\ell) \to \widehat\Gamma$ for every~$\ell$.  So
$f\colon \Gamma(\ell)\to \Gamma(r\ell)$ is 2-connected. Conversely, if $f\colon
\Gamma(\ell)\to \Gamma(r\ell)$ is 2-connected, then it induces an automorphism
$f\colon \widehat\Gamma = \widehat{\Gamma(\ell)} \isomto
\widehat{\Gamma(r\ell)} = \widehat\Gamma$, by
Theorem~\ref{theorem:localization-basic-facts}\ref{item:localization-2-connected-map}.

This leads us to investigate 2-connected homomorphisms $f\colon
\Gamma(\ell)\to \Gamma(r\ell)$.  We begin with a characterization.  Recall from
the presentation~\eqref{equation:presentation-Gamma(l)} that $\Gamma(\ell)$ has
generators $u$, $v$ and~$t$.  Let $A(\ell)$ be the subgroup generated by $u$
and~$v$, as done in Section~\ref{subsection:tb-omega+1-H_3}.

\begin{lemma}
  \label{lemma:tb-aut-general-form}
  A homomorphism $f\colon \Gamma(\ell) \to \Gamma(r\ell)$ is 2-connected if and
  only if $f$ is given by
  \begin{equation}
    \label{equation:tb-aut-general-form}
    \begin{aligned}
      f(t) &= t^\epsilon u^p v^q [u,v]^j \\
      f(u) &= u^a v^b [u,v]^m \\
      f(v) &= u^c v^d [u,v]^n
    \end{aligned}
  \end{equation}
  where $\epsilon,a,b,c,d,j,m,n$ are integers satisfying
  \begin{equation}
    \label{equation:tb-aut-exponent-condition}
    \begin{gathered}
      \epsilon =\pm 1, \quad ad-bc = \pm r^2,
      \\
      2m \equiv aq-bp+ab, \quad
      2n \equiv cq-dp+cd \mod{(r\ell)^2}.
    \end{gathered}
  \end{equation}
\end{lemma}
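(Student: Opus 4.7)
The plan is to first cast any homomorphism $f\colon \Gamma(\ell)\to\Gamma(r\ell)$ into the normal form \eqref{equation:tb-aut-general-form} using the group-theoretic structure of $\Gamma(r\ell)$, then translate each defining relation of $\Gamma(\ell)$ into congruence and divisibility conditions on the exponents, and finally extract the conditions $\epsilon=\pm1$ and $ad-bc=\pm r^2$ from the homological definition of 2-connectedness. For the normal form, note that the sequence \eqref{equation:Gamma(l)-as-HNN-extension} presents $\Gamma(r\ell)$ as an HNN extension of $A(r\ell)=\langle u,v\rangle$ by $\Z=\langle t\rangle$, and the top row of \eqref{equation:A(l)-as-extension} presents $A(r\ell)$ as a central extension of $\Z^2$ by $\langle[u,v]\rangle\cong\Z_{(r\ell)^2}$. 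So every element of $\Gamma(r\ell)$ has a unique normal form $t^k u^a v^b [u,v]^m$ with $k,a,b\in\Z$ and $m\in\{0,\ldots,(r\ell)^2-1\}$, and writing $f(t)$, $f(u)$, $f(v)$ in this form yields the ansatz \eqref{equation:tb-aut-general-form}.

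I would next check well-definedness by verifying each relation of $\Gamma(\ell)$ in turn. The relations $[[u,v],u]$, $[[u,v],v]$, $[[u,v],t]$ all map automatically to $1$ since $[u,v]$ is central in $\Gamma(r\ell)$. Because $A(r\ell)$ is 2-step nilpotent, the bilinearity formula $[u^av^b,u^cv^d]=[u,v]^{ad-bc}$ holds, so $f([u,v]^{\ell^2}) = [u,v]^{(ad-bc)\ell^2}$, which equals $1$ in $\Gamma(r\ell)$ iff $r^2\mid ad-bc$. A direct computation of $f(tut^{-1}u)$, using this bilinearity together with $t^\epsilon u t^{-\epsilon}=u^{-1}$ and $t^\epsilon v t^{-\epsilon}=v^{-1}$ (both valid for $\epsilon=\pm1$), reduces to a single power of $[u,v]$ whose exponent vanishes modulo $(r\ell)^2$ precisely when the stated congruence on $2m$ holds; the companion congruence on $2n$ follows from the symmetric computation for $tvt^{-1}v$.

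To handle 2-connectedness, I would compute $H_1$ and $H_2$ separately. Abelianizing \eqref{equation:presentation-Gamma(l)} gives $H_1(\Gamma(\ell))=\Z\oplus\Z_2\oplus\Z_2$, with $\Z$ generated by $t$ and the two $\Z_2$ summands generated by $u$ and $v$. The induced map $f_*$ on $H_1$ is multiplication by $\epsilon$ on $\Z$ and the matrix $\sbmatrix{a&b\\c&d}\bmod 2$ on $\Z_2^2$, so the $H_1$-isomorphism condition forces $\epsilon=\pm1$ and $ad-bc$ odd. For $H_2$, I would combine the Wang sequence for \eqref{equation:Gamma(l)-as-HNN-extension} with the $H_2(A(\ell))$ computation from \eqref{equation:exact-sequence-H_2(A(k))}: the torsion summand $\Z_{\ell^2}^2\subset H_2(A(\ell))$ is killed by $1-t_*$ since $t$ acts by $-1$ on $H_1(\Z^2)$ and $\ell$ is odd, so $H_2(\Gamma(\ell))$ is identified with the free summand $\ell^2\Z\subset H_2(\Z^2)=\Z$. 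Since $f$ restricts to $A(\ell)\to A(r\ell)$ and induces $\sbmatrix{a&b\\c&d}$ on $H_1(\Z^2)=\Z^2$, it acts as multiplication by $ad-bc$ on $H_2(\Z^2)=\Z$, so the induced map $\ell^2\Z\to(r\ell)^2\Z$ on $H_2$ sends $\ell^2\mapsto(ad-bc)\ell^2$, which is surjective exactly when $ad-bc=\pm r^2$. This equality automatically implies $r^2\mid ad-bc$ and $ad-bc$ odd, subsuming the numerical conditions from the well-definedness step. The main obstacle I anticipate is precisely this $H_2$ step: carefully tracking the generator of $H_2(\Gamma(\ell))$ through the Wang sequence to pin down its identification with $\ell^2\Z\subset H_2(\Z^2)$, and confirming that the twisting by the exponents $j,m,n,p,q$ in the normal form contributes nothing to the induced map on $H_2$.
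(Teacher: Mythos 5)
Your proposal follows the paper's route closely---normal form, relation-checking, then $H_1$ and $H_2$---but there is a genuine gap at the very first step.  The normal form you invoke in $\Gamma(r\ell)$ is $t^ku^av^b[u,v]^m$, which permits a nonzero $t$-exponent, so ``writing $f(u)$ and $f(v)$ in this form'' gives a priori expressions with a $t$-power, not yet the ansatz \eqref{equation:tb-aut-general-form}.  You must justify that $f$ carries $A(\ell)=\langle u,v\rangle$ into $A(r\ell)$.  The paper does this in one line: $A(\ell)$ is the kernel of $\Gamma(\ell)\to H_1(\Gamma(\ell))\otimes\Q$ (its first rational derived subgroup), and this subgroup is carried into its counterpart by \emph{any} group homomorphism, by functoriality of $H_1(-)\otimes\Q$.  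Within your homological setup one could instead argue from $2$-connectedness: $u,v$ are torsion in $H_1(\Gamma(\ell))$, so $f_*(u),f_*(v)$ must be torsion in $H_1(\Gamma(r\ell))$, which forces the $\Z$-coordinate (the $t$-exponent) of each to vanish.  Either argument is fine, but some argument is needed; without it the ansatz is asserted, not derived.

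Everything else is in line with the paper's own proof.  Your translation of the relations into the congruences on $2m$ and $2n$ and into $r^2\mid ad-bc$, and your separation of $\epsilon=\pm1$, $ad-bc$ odd ($H_1$) from $ad-bc=\pm r^2$ ($H_2$), match the paper step for step.  For the identification $H_2(\Gamma(\ell))\cong\ell^2\Z\subset H_2(A(\ell)_{ab})$ you use the Wang sequence, which the paper itself mentions as one of two equivalent routes (the other being to transport $H_2(\Gamma)=H_2(\widehat\Gamma)=\Z$ through the $2$-connected maps $\Gamma\to\Gamma(\ell)\to\widehat\Gamma$).  The ``main obstacle'' you flag---whether the exponents $j,m,n,p,q$ and the sign $\epsilon$ contribute to $H_2(f)$---is real but resolvable: the $[u,v]$-factors die in $A(r\ell)_{ab}$, and $t$ acts by $\pm1$ on the relevant homology of $A(\ell)$, so $1-t_*=1-t_*^{-1}$ and the Wang sequences compare naturally even when $\epsilon=-1$.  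With the first paragraph's gap filled, the rest goes through as you describe.
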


Often we will abuse the notation to denote by $f$ the automorphism of
$\widehat\Gamma$ induced by a 2-connected map $f\colon \Gamma(\ell)\to
\Gamma(r\ell)$. Note that if $f$ is given
by~\eqref{equation:tb-aut-general-form}, then it induces automorphisms
$\frac1r\sbmatrix{a & c \\ b & d}$ on $H_1(\cA)=\Z_{(2)}^2$ and $1\mapsto
\epsilon$ on $\Z=\widehat\Gamma/\cA$.  So, we have
\begin{equation}
  \label{equation:general-form-epsilons}
  \delta_f = \frac{ad-bc}{r^2},\quad \epsilon_f = \epsilon.
\end{equation}

\begin{proof}[Proof of Lemma~\ref{lemma:tb-aut-general-form}] Observe that any
  $g\in \Gamma(\ell)$ can be written as $g = t^\epsilon u^p v^q [u,v]^j$, by
  using the defining relations in~\eqref{equation:presentation-Gamma(l)}.  Also,
  $t^\epsilon u^p v^q [u,v]^j$ lies in the subgroup $A(\ell)$ if and only if
  $\epsilon=0$.  We claim that $f$ sends $A(\ell)$ to $A(r\ell)$.  From the
  claim, it follows that $f(t)$, $f(u)$ and $f(v)$ are of the form
  of~\eqref{equation:tb-aut-general-form} for some exponents (without
  enforcing~\eqref{equation:tb-aut-exponent-condition} for now). 

  To show the claim, consider the \emph{first rational derived subgroup} of a
  group $G$, which is defined to be the kernel of the natural map $G\to
  H_1(G)\otimes \Q$. That is, it is the minimal normal subgroup of $G$ such that
  the quotient is abelian and torsion free.  It is straightforward to see that
  the first rational derived subgroup is characteristic. In our case, for
  $\Gamma(\ell)$, the first rational derived subgroup is equal to $A(\ell)$.  So
  $f(A(\ell))\subset A(r\ell)$, as claimed.

  Next, we claim that a map of the free group on $t$, $u$ and $v$ to
  $\Gamma(r\ell)$ given by~\eqref{equation:tb-aut-general-form} kills relations
  of $\Gamma(\ell)$ if and only if $ad-bc \equiv 0 \bmod{r^2}$ and
  \[
    2m \equiv aq-bp+ab,\quad 2n \equiv cq-dp+cd \mod{(r\ell)^2}.
  \]
  The claim is shown by a routine computation. The map sends the relation
  $tut^{-1}u$ to
  \[
  t^\epsilon u^p v^q [u,v]^j \cdot u^a v^b [u,v]^m \cdot
  (t^\epsilon u^p v^q [u,v]^j)^{-1} \cdot u^p v^q [u,v]^\ell =
  [u,v]^{2m-aq+bp+ab}
  \]
  which is trivial in $\Gamma(r\ell)$ if and only if $(r\ell)^2$ divides
  $2m-aq+bp+ab$. Similarly $(r\ell)^2$ divides $2n-cq+dp+cd$ if and only if
  $tvt^{-1}v$ is sent to the identity.  Also the relation $[u,v]^{\ell^2}$ of
  $\Gamma(\ell)$ is sent to $[u^av^b,u^cv^d]^{\ell^2} = [u,v]^{\ell^2(ad-bc)}$,
  which is trivial if and only if $r^2$ divides $ad-bc$, since $[u,v]$ has order
  $(r\ell)^2$ in~$\Gamma(r\ell)$. This proves the claim.

  Recall that $H_1(\Gamma(\ell)) = (\Z_2)^2 \times \Z$ where the factors are
  generated by $u$, $v$, and $t$ respectively. So, when $f$ is the homomorphism
  given by~\eqref{equation:tb-aut-general-form}, $f_*\colon H_1(\Gamma(\ell))\to
  H_1(\Gamma(r\ell))$ is represented by
  \[
  \begin{bmatrix}
    a & c & p \\
    b & d & q \\
    0 & 0 & \epsilon
  \end{bmatrix}.
  \]
  Therefore, $f$ induces an isomorphism on $H_1$ if and only if
  $\epsilon = \pm1$ and $ad-bc$ is odd.

  To investigate the induced map on $H_2$, first note that
  $A(\ell)_{ab}$ is equal to $\Z^2$ generated by $u$ and~$v$. We will use the
  fact that $H_2(\Gamma(\ell))$ can be identified with the subgroup $\ell^2 \Z
  \subset \Z = H_2(A(\ell)_{ab})$.  This can be proven by investigating the Wang
  sequence for the HNN extension~\eqref{equation:Gamma(l)-as-HNN-extension}.  An
  alternative proof is as follows.  Recall that
  $H_2(\Gamma)=H_2(\widehat\Gamma)=\Z$ by~\eqref{equation:tb-omega+1-H_2}.
  Since $\Gamma=\Gamma(1)\to \Gamma(\ell)$ and $\Gamma(\ell) \to \widehat\Gamma$ are
  2-connected, it follows that $H_2(\Gamma(\ell))$ is equal to $H_2(\Gamma(1))$, which is
  equal to $H_2(A(1))=H_2(\Z^2)$ by~\eqref{equation:tb-omega+1-H_2}.  Note that
  $\Z^2=A(1) \to A(\ell)_{ab}=\Z^2$ is scalar multiplication by~$\ell$.  So,
  $H_2(\Gamma(\ell))$ is the subgroup $\ell^2 \Z \subset \Z = H_2(A(\ell)_{ab})$.
  
  Now, observe that $H_2(A(\ell)_{ab})=\Z \to H_2(A(r\ell)_{ab})=\Z$ induced by
  $f$ given by~\eqref{equation:tb-aut-general-form} is equal to multiplication
  by $ad-bc$. From this, it follows that $f$ induces an epimorphism
  $H_2(\Gamma(\ell)) \to H_2(\Gamma(r\ell))$ if and only if $ad-bc=\pm r^2$.
\end{proof}

Using Lemma~\ref{lemma:tb-aut-general-form}, we will investigate the action of
$\Aut(\widehat\Gamma)$ on the torsion part of $H_3(\widehat\Gamma)$.

\begin{lemma}
  \label{lemma:tb-aut-on-tH_3}
  Suppose $f\colon \widehat\Gamma \to \widehat\Gamma$ is an automorphism.  Then
  the induced automorphism $f_*$ on the torsion subgroup
  $tH_3(\widehat\Gamma)=\Z_{(2)}/\Z$ is multiplication by $\delta_f\in \{\pm1\}$.
\end{lemma}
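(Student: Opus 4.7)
The plan is to use the explicit description of automorphisms from Lemma~\ref{lemma:tb-aut-general-form} to compute the action of $f$ on the central commutator element $[u,v]$, and then translate this to the action on $tH_3(\widehat\Gamma)$ via the colimit identification established in Section~\ref{subsection:tb-omega+1-H_3}.

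First, since $\Gamma(\ell) \subset \widehat\Gamma$ is finitely generated, for each odd $\ell$ there exists an odd $r$ such that $f$ restricts to a 2-connected homomorphism $f\colon \Gamma(\ell) \to \Gamma(r\ell)$, which by Lemma~\ref{lemma:tb-aut-general-form} has the form~\eqref{equation:tb-aut-general-form} with $\delta_f = (ad-bc)/r^2 \in \{\pm 1\}$ by~\eqref{equation:general-form-epsilons}. Exploiting that $[u,v]$ is central in $\Gamma(r\ell)$, a standard commutator expansion yields
\[
  f([u,v]) = [f(u), f(v)] = [u^a v^b,\, u^c v^d] = [u,v]^{ad-bc},
\]
so $f$ restricts on the central cyclic subgroup $\Z_{\ell^2} = \langle[u,v]\rangle \subset A(\ell)$ to the homomorphism $\Z_{\ell^2} \to \Z_{(r\ell)^2}$ sending $1 \mapsto ad-bc = \delta_f r^2$.

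Next, I will invoke the identification of $tH_3(\widehat\Gamma)$ with $H_3(\cA) = \colim_\ell H_3(\Z_{\ell^2}) = \Z_{(2)}/\Z$ from~\eqref{equation:tb-omega+1-H_3(A)} and~\eqref{equation:H_3-Gamma-hat-as-extension}, under which the generator of $H_3(\Z_{\ell^2})=\Z_{\ell^2}$ corresponds to $1/\ell^2$. By functoriality of $H_3$ applied to the restriction $f|_{\Z_{\ell^2}}$, the induced homomorphism $H_3(\Z_{\ell^2}) \to H_3(\Z_{(r\ell)^2})$ carries the generator to the class represented by the coefficient $\delta_f r^2$. Passing to the colimit, $f_*$ sends $1/\ell^2$ to $\delta_f r^2/(r\ell)^2 = \delta_f/\ell^2$, and since $\{1/\ell^2\}$ generate $\Z_{(2)}/\Z$, we conclude $f_* = \delta_f$ on $tH_3(\widehat\Gamma)$.

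The principal technical point is to justify that the sign $\delta_f$ is faithfully recorded by the induced map on $H_3$ of the cyclic-group inclusions. The baseline fact that $\cdot r^2 \colon \Z_{\ell^2} \to \Z_{(r\ell)^2}$ induces multiplication by $r^2$ on $H_3$ is recorded in Section~\ref{subsection:tb-omega+1-H_3}, but the signed case $\cdot(-r^2)$ requires care since the behavior of $H_3$ under negation-of-coefficients on cyclic groups is not entirely formal. This can be resolved either by a direct periodic-resolution computation or via the Pontryagin-dual Bockstein description that governs the identifications of Section~\ref{subsection:tb-omega+1-H_3}; once this is established, the bookkeeping in the colimit is routine.
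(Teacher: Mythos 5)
You follow the same route the paper takes: from Lemma~\ref{lemma:tb-aut-general-form} compute $f([u,v]) = [u,v]^{ad-bc} = [u,v]^{\delta_f r^2}$, pass this to multiplication by $\delta_f r^2$ on $H_3$ of the cyclic subgroup $\Z_{\ell^2} \subset A(\ell)$, and take the colimit of Section~\ref{subsection:tb-omega+1-H_3} to conclude that $f_*$ on $tH_3(\widehat\Gamma) = \Z_{(2)}/\Z$ is multiplication by $\delta_f$. You correctly isolate the one step needing justification --- whether the sign in the coefficient of the cyclic-group homomorphism is faithfully recorded on $H_3$ --- but you then defer it, asserting it ``can be resolved'' by a periodic-resolution or Pontryagin--Bockstein computation. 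That deferral is where the proposal fails: either computation, carried out explicitly, refutes the step rather than confirming it.

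A homomorphism $\phi\colon \Z_m \to \Z_n$, $1 \mapsto k$, with $n \mid mk$, induces on $H_3$ the map $1 \mapsto k^2 m/n$, not $1 \mapsto k$. On cohomology, the generator of $H^2(\Z_n;\Z) \cong \Hom(\Z_n,\Q/\Z)$ pulls back by $km/n$; by the cup-product ring structure the generator $y^2$ of $H^4$ pulls back by $(km/n)^2$, and under $H^4 \cong \Ext(H_3,\Z)$ this dualizes to multiplication by $k^2m/n$ on $H_3$. (Concretely, inversion $g\mapsto g^{-1}$ on $\Z_m$ acts as the \emph{identity} on $H_3(\Z_m)$, not as $-1$.) Substituting $k = \delta_f r^2$, $m=\ell^2$, $n=(r\ell)^2$ yields $1 \mapsto \delta_f^2 r^2 = r^2$ regardless of $\delta_f$, so in the colimit $f_*$ on $\Z_{(2)}/\Z$ would be the identity rather than multiplication by $\delta_f$. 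The paper's own proof makes the same unsupported passage (it asserts without argument that $H_3(\Z_{\ell^2}) \to H_3(\Z_{(r\ell)^2})$ is ``$1 \mapsto \delta_f\cdot r^2$''), so your sketch faithfully reproduces the published argument --- but that argument does not survive the very check you flagged and set aside; simply invoking a resolution or Bockstein computation will not produce the asserted sign.
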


\begin{proof}
  Fix an arbitrary odd~$\ell\ge 1$.  By
  Lemma~\ref{equation:tb-aut-general-form}, the given automorphism $f$ on
  $\widehat\Gamma$ restricts to a 2-connected homomorphism $f|_{\Gamma(\ell)} \colon
  \Gamma(\ell) \to \Gamma(r\ell)$ for some odd~$r\ge 0$, and $f|_{\Gamma(\ell)}$ is of the form
  of~\eqref{equation:tb-aut-general-form}.  We have
  \[
    f|_{\Gamma(\ell)}([u,v]) = [u^a v^b,u^c v^d] = [u,v]^{ad-bc} =
    [u,v]^{\delta_f\cdot r^2}.
  \]
  Recall from Section~\ref{subsection:tb-omega+1-H_3} that $[u,v]\in \Gamma(\ell)$
  generates a subgroup that we identified with~$\Z_{\ell^2}$. So, $f|_{\Gamma(\ell)}$
  induces $\Z_{\ell^2} \to \Z_{(r\ell)^2}$ given by $1\mapsto \delta_f\cdot
  r^2$. It induces the inclusion $H_3(\Z_{\ell^2}) = \Z_{\ell^2} \to
  H_3(\Z_{(r\ell)^2}) = \Z_{(r\ell)^2}$ given by $1\mapsto \delta_f\cdot
  r^2$. This is the map $\frac1{\ell^2} \mapsto
  \delta_f\cdot\frac1{\ell^2}$, when $H_3(\Z_{\ell^2}) = \Z_{\ell^2}$ and
  $H_3(\Z_{(r\ell)^2}) = \Z_{(r\ell)^2}$ are regarded as subgroups of
  $\Z_{(2)}/\Z$ using~\eqref{equation:tb-omega+1-H_3(A)}.
  By~\eqref{equation:H_3-Gamma-hat-as-extension}
  and~\eqref{equation:tb-omega+1-H_3(A)}, it follows that the induced map
  $f_*\colon tH_3(\widehat\Gamma) \to tH_3(\widehat\Gamma)=\Z_{(2)}/\Z$ is
  multiplication by~$\delta_f$.
\end{proof}

By~\eqref{equation:tb-omega+1-H_3}, the $\Z$ factor of
$H_3(\widehat\Gamma)=(\Z_{(2)}/\Z)\times\Z$ is generated by the image of the
fundamental class $[Y] \in H_3(Y)=H_3(\Gamma)$.  The rest of this section is
devoted to understand the action of $\Aut(\widehat\Gamma)$ on this generator.
Since every automorphism of $\widehat\Gamma$ is induced by a 2-connected map
$f\colon \Gamma=\Gamma(1)\to \Gamma(r)$, it suffices to investigate $f_*[Y]\in
H_3(\Gamma(r))$.

Our strategy is to simplify $f$ given in Lemma~\ref{lemma:tb-aut-general-form}
without altering~$f_*[Y]$.  We begin with elimination of the $[u,v]^j$
factor in $f(t)$ in the general form~\eqref{equation:tb-aut-general-form}.

\begin{lemma}
  \label{lemma:tb-aut-eliminate-j}
  Let $f\colon \Gamma(1)\to \Gamma(r)$ be a 2-connected map given
  by~\eqref{equation:tb-aut-general-form}.  Let $f'\colon \Gamma(1)\to \Gamma(r)$ be the
  map
  \begin{equation}
    \label{equation:f-with-j-eliminated}
    f'(t) = t^\epsilon u^p v^q, \quad
    f'(u) = u^a v^b [u,v]^m, \quad
    f'(v) = u^c v^d [u,v]^n.
  \end{equation}
  Then $f$ is 2-connected, and $f$ and $f'$ induce the same homomorphism $f_* =
  f'_*\colon H_3(\Gamma(1))\to H_3(\Gamma(r))$.
\end{lemma}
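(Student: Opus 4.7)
The plan has three stages: verify that $f'$ is $2$-connected; realize $f$ as $\psi\circ f'$ for an explicit automorphism $\psi$ of $\Gamma(r)$; and show that $\psi$ acts trivially on $H_3(\Gamma(r))$.

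For the first two stages, I would observe that the exponent conditions~\eqref{equation:tb-aut-exponent-condition} of Lemma~\ref{lemma:tb-aut-general-form} are insensitive to $j$. In the computations of $f(tut^{-1}u)$ and $f(tvt^{-1}v)$ carried out in the proof of Lemma~\ref{lemma:tb-aut-general-form}, the $[u,v]^{\pm j}$ factors from $f(t)^{\pm 1}$ cancel by centrality of $[u,v]$ in $\Gamma(r)$, leaving identical constraints on $a,b,c,d,m,n,p,q,\epsilon$. Hence $f'$ also satisfies~\eqref{equation:tb-aut-exponent-condition} and is $2$-connected by Lemma~\ref{lemma:tb-aut-general-form}. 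Next, define $\psi\colon \Gamma(r)\to \Gamma(r)$ by $\psi(t)=t[u,v]^{\epsilon j}$, $\psi(u)=u$, $\psi(v)=v$. Since $[u,v]$ is central in $\Gamma(r)$ and fixed by $\psi$, every defining relation is preserved, for example $\psi(tut^{-1}u)=t[u,v]^{\epsilon j}u[u,v]^{-\epsilon j}t^{-1}u=tut^{-1}u=1$; invertibility is immediate. A short computation using $\epsilon^2=1$ and centrality gives $\psi\circ f'=f$, so the lemma reduces to showing $\psi_*=\id$ on $H_3(\Gamma(r))$.

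For the main stage, I would combine the Wang sequence of the HNN extension $1\to A(r)\to \Gamma(r)\to \Z\to 1$ (as set up in Section~\ref{subsection:tb-omega+1-H_3}) with a chain-level argument. The Wang sequence yields
\[
  0 \to H_3(A(r)) \to H_3(\Gamma(r)) \xrightarrow{\partial} \Ker\{1-t_*\colon H_2(A(r))\rightarrow H_2(A(r))\} \to 0,
\]
using that $t_*=\id$ on $H_3(A(r))$. Now $\psi|_{A(r)}=\id$, and $\psi$ descends to the identity on $\Z=\Gamma(r)/A(r)$, so $\psi_*$ acts as the identity on both the sub- and the quotient-terms of this sequence. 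The remaining discrepancy is controlled by a homomorphism $\Ker(1-t_*)\to H_3(A(r))$ determined by the crossed homomorphism $\delta\colon \Gamma(r)/Z \to Z$ (with $Z=\langle [u,v]\rangle$ central) encoding $\psi$, which is supported only on the image of $t$. To show this discrepancy vanishes, I would produce an explicit chain homotopy on the bar resolution: $\psi_\sharp$ sends $[g_1|g_2|g_3]$ to $[g_1\delta(\bar g_1)|g_2\delta(\bar g_2)|g_3\delta(\bar g_3)]$, and since each $\delta(\bar g_i)$ is central, sliding these past the bar-slots in the standard way identifies $\psi_\sharp$ with the identity up to boundaries.

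The main obstacle will be executing this last step rigorously; while the centrality principles make the outline transparent, ensuring that all correction terms assemble into genuine boundaries (rather than representing nonzero classes in $H_3(A(r))$) requires careful bookkeeping across the Wang extension. An alternative, more geometric route uses $H_3(\Gamma(r))=\Omega^{\SO}_3(B\Gamma(r))$: since $[u,v]$ has finite order $r^2$ in $\Gamma(r)$, the loop $[u,v]^{\epsilon j}$ bounds a $2$-complex in $B\Gamma(r)$, and a tubular neighborhood of this $2$-complex attached to $Y\times[0,1]$ supplies a $4$-manifold cobordism between $(Y,Bf)$ and $(Y,Bf')$. I expect either route to yield the equality $f_*[Y]=f'_*[Y]$ with comparable effort.
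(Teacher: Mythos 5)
Your factorization $f=\psi\circ f'$ with $\psi(t)=t[u,v]^{\epsilon j}$, $\psi(u)=u$, $\psi(v)=v$ is correct, and so is the observation that $f'$ satisfies~\eqref{equation:tb-aut-exponent-condition} because those constraints do not involve $j$. But the reduction to ``$\psi_*=\id$ on $H_3(\Gamma(r))$'' is not a simplification: that assertion is \emph{equivalent in difficulty} to the original claim, and the two routes you sketch for it do not close the gap. Wang-sequence naturality only tells you that $\psi_*-\id$ carries $H_3(\Gamma(r))$ into $\Im\{H_3(A(r))\to H_3(\Gamma(r))\}$ (since $\psi|_{A(r)}=\id$ and $\psi$ descends to $\id$ on $\Z$), i.e.\ the discrepancy is a torsion class; there is no five-lemma conclusion that it vanishes. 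And the bar-resolution ``sliding'' argument cannot produce a chain homotopy between $\psi_\sharp$ and $\id$: a chain homotopy on the bar complex of an aspherical space would mean $B\psi\simeq\id_{B\Gamma(r)}$, hence $\psi$ inner --- but $\psi$ is \emph{not} inner when $j\not\equiv0\pmod{r^2}$ (conjugating by any $g=u^\alpha v^\beta[u,v]^\gamma t^{2\delta}$ sends $t\mapsto u^{2\alpha}v^{2\beta}[u,v]^{-\alpha\beta}t$ and fixing $u,v$ forces $\alpha=\beta=0$, hence $\gamma$-conjugation, which is trivial). So you are obliged to argue in degree $3$ specifically, and no such degree-$3$-specific mechanism appears in your write-up. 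The geometric alternative is also not a construction: ``a tubular neighborhood of this $2$-complex attached to $Y\times[0,1]$'' is not a $4$-manifold, and you never say how the map to $B\Gamma(r)$ extends over it so as to restrict to $Bf$ and $Bf'$ on the two ends.

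What is missing is precisely the arithmetic that makes the lemma true, and this is where the paper's proof differs decisively. The paper does not try to show $\psi_*=\id$; it homotopes $f$ to $F=(f'\cup g)\circ q$, where $q\colon Y\to Y\cup_{T^2}T^3$ pinches a fiber torus, and $g\colon T^3\to BA(r)\subset B\Gamma(r)$ records the extra central factor (so $g_\#\colon x\mapsto f(u)^{-1}$, $y\mapsto f(v)^{-1}$, $s\mapsto[u,v]^j$). Then $f_*[Y]=f'_*[Y]+g_*[T^3]$, and $g_*[T^3]$ is computed in the Lyndon--Hochschild--Serre spectral sequence of $\Z\to\Z^3\to\Z^2$ mapping to $\Z_{r^2}\to A(r)\to\Z^2$: on $E^2_{2,1}$ it is multiplication by $(ad-bc)\cdot j$, and since $ad-bc=\pm r^2$ (Lemma~\ref{lemma:tb-aut-general-form}) this kills $j$ in $\Z_{r^2}$. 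That determinant condition is the crux, and it appears nowhere in your argument. In fact your reduced claim ``$\psi_*=\id$ on $H_3(\Gamma(r))$'' holds exactly because the free part of $H_3(\Gamma(r))$ is generated, modulo torsion, by the image of the $2$-connected inclusion $\Gamma(1)\hookrightarrow\Gamma(r)$ (which has $a=d=r$, $b=c=0$, so $ad-bc=r^2$); verifying that $\psi_*$ fixes this generator \emph{is} the paper's spectral-sequence computation in disguise, so your plan does not avoid it --- it merely defers it.
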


\begin{proof}
  By Lemma~\ref{lemma:tb-aut-general-form}, the assignment
  \eqref{equation:f-with-j-eliminated} gives a well-defined 2-connected
  homomorphism, since the conditions
  in~\eqref{equation:tb-aut-exponent-condition} do not involve the exponent~$j$.

  Recall that $B\Gamma(1)=Y=T^2\times[0,1]/(h(z),0)\sim (z,1)$ where $h\colon
  T^2\to T^2=S^1\times S^1$ is the monodromy
  $h(\zeta,\xi)=(\zeta^{-1},\xi^{-1})$. Here $S^1$ is regarded as the unit
  circle in~$\C$.  Use $(1,1,0)$ as a basepoint of~$B\Gamma(1)$.  Choose maps
  $B\Gamma(1)\to B\Gamma(r)$ realizing $f$ and $f'$, and denote them by $f$ and
  $f'$, abusing the notation.

  Let $T^3=T^2\times S^1$, and use $(1,1,1)\in T^3$ as a basepoint.  Denote by
  $x$, $y$ the standard basis of $\pi_1(T^2)=\Z^2$, and denote by $s$ the
  generator of $\pi_1(S^1)=\Z$, so that $x$, $y$ and $s$ form a basis
  of~$\pi_1(T^3)$.   The element $[u,v]\in \Gamma(r)$ is central
  by~\eqref{equation:presentation-Gamma(l)}, and $f(u)$, $f(v)\in \Gamma(r)$
  commute since $u$, $v\in \Gamma(1)$ commute.  It follows that there is a map
  $g\colon T^3\to B\Gamma(r)$ which induces $\pi_1(T^3)=\Z^3 \to \Gamma(r)$
  given by $x\mapsto f(u)^{-1}$, $y\mapsto f(u)^{-1}$ and $s\mapsto [u,v]^j$. By
  homotopy if necessary, we may assume $g|_{T^2\times 1}$ is equal to
  $f'|_{T^2\times 1}$, since $f=f'$ on $u$ and $v$.  Define $F\colon
  B\Gamma(1)\to B\Gamma(r)$ to be the composition
  \begin{align*}
    F\colon B\Gamma(1) & \overset{}{=\joinrel=} T^2\times[0,1]/(h(z),0)\sim (z,1)
    \\[1ex]
    & \xrightarrow[q]{}
    \big( T^2\times[0,\tfrac12]/(h(z),0)\sim(z,\tfrac12) \big)
      \cupover{\vtop to0mm{\hbox{$\scriptstyle T^2\times\tfrac12$}\vss}}
      \big(T^2\times[\tfrac12,1]/(z,\tfrac12)\sim(z,1) \big)
    \\
    & \overset{}{=\joinrel=} Y\cupover{T^2} T^3
      \xrightarrow{f'\cup g} B\Gamma(r)
  \end{align*}
  where $q$ is the quotient map induced by $(z,t)\mapsto (z,t)$.  Observe that
  the induced map $F\colon \Gamma(1) \to \Gamma(r)$ satisfies $F(u)=f'(u)=f(u)$,
  $F(v)=f'(v)=f(v)$, and $F(t)=f'(t)g(s)=f(t)$.  It follows that $f$ and $F$ are
  homotopic. Therefore, on $H_3$, we have
  \[
  f_*[Y] = F_*[Y] = f'_*[Y]+g_*[Y]\in
  H_3(\Gamma(r)).
  \]

  So it suffices to prove that $g_*\colon H_3(T^3)\to H_3(\Gamma(r))$ is zero.
  To show this, first observe that $g\colon \pi_1(T^3)\to \Gamma(r)$ sends
  $\pi_1(T^3)=\Z^3$ to the subgroup~$A(r)$.  In addition, it induces a morphism
  of central extensions:
  \begin{equation}
    \label{equation:tb-T^3-diagram-g}
    \begin{tikzcd}[row sep=large,ampersand replacement=\&]
      0 \ar[r] \& \Z \ar[r]\ar[d,"\cdot j"] \&
      \Z^3 \ar[r]\ar[d,"g"] \& \Z^2 \ar[r]\ar[d,"\sbmatrix{a & c \\ b & d}"]
      \& 0
      \\
      0 \ar[r] \& \Z_{r^2} \ar[r] \& A(r) \ar[r] \& \Z^2 \ar[r] \& 0
    \end{tikzcd}
  \end{equation}
  Here the top row corresponds to the trivial fibration $S^1 \hookrightarrow T^3
  \to T^2$, and the bottom row is the exact sequence
  in~\eqref{equation:A(l)-as-extension}.  The leftmost and rightmost vertical
  maps are multiplication by $j$ and $\sbmatrix{a & c \\ b & d}$, by the
  definition of $g$ and description \eqref{equation:tb-aut-general-form}
  of~$f$.  The map $g$ induces a morphism of the spectral sequences. In
  particular, on $E^2_{2,1}$, $g$ induces a map
  \begin{equation}
    \label{equation:tb-T^3-E^2_2,1}
    \Z = H_2(\Z^2)\otimes H_1(\Z) \to H_2(\Z^2) \otimes
    H_1(\Z_{r^2}) = \Z_{r^2}.
  \end{equation}
  This is scalar multiplication by $(ad-bc)j$, by the above descriptions of the
  vertical maps in \eqref{equation:tb-T^3-diagram-g}. By
  Lemma~\ref{lemma:tb-aut-general-form}, $ad-bc=\pm r^2$.  It follows that
  \eqref{equation:tb-T^3-E^2_2,1} is a zero map.  Since $E^2_{2,1}$ for $\Z^3$
  is equal to $H_3(T^3)$, it follows that $g_*\colon H_3(T^3)\to H_3(A(r))$ is
  zero.
\end{proof}

The next step of our reduction is described by the following lemma.

\begin{lemma}
  \label{lemma:tb-aut-clean-form}
  Suppose $f\colon \Gamma=\Gamma(1)\to \Gamma(r)$ is a $2$-connected
  homomorphism. Then there is a $2$-connected homomorphism $f'\colon \Gamma\to
  \Gamma(r)$ such that $f'(t)=t^{\epsilon_f}$,  $\delta_f = \delta_{f'}$ and
  $f_* = f'_*$ on~$H_3(\Gamma)$.
\end{lemma}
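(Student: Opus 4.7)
The plan is to produce $f'$ from $f$ by a sequence of modifications, each preserving both the induced map on $H_3(\Gamma)$ and the pair $(\delta_f, \epsilon_f)$, successively reducing the exponents $p,q$ in $f(t)=t^\epsilon u^p v^q$ to zero. By Lemma~\ref{lemma:tb-aut-eliminate-j}, I may assume $f$ is already in the form~\eqref{equation:f-with-j-eliminated} with $j=0$. The modifications come in two flavors: precomposing with a ``shearing'' automorphism of $\Gamma$, and conjugating by an element of $\Gamma(r)$.

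For integers $a',b'$, define $\sigma_{a',b'}\colon \Gamma\to\Gamma$ by $t\mapsto tu^{a'}v^{b'}$, $u\mapsto u$, $v\mapsto v$. Since $u$ and $v$ commute in $\Gamma=\Gamma(1)$, a direct check using the defining relations shows $\sigma_{a',b'}$ is a well-defined automorphism. The composition $f\circ\sigma_{a',b'}$ agrees with $f$ on $u$ and $v$, while $(f\circ\sigma_{a',b'})(t)=f(t)f(u)^{a'}f(v)^{b'}$ equals $t^\epsilon u^{p+aa'+cb'}v^{q+ba'+db'}[u,v]^\ast$ for some integer exponent $\ast$. Since $\det\sbmatrix{a & c \\ b & d}=\pm r^2$ is odd, this matrix is invertible modulo $2$, so I can choose $(a',b')$ to make both $p+aa'+cb'$ and $q+ba'+db'$ even. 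Applying Lemma~\ref{lemma:tb-aut-eliminate-j} to clear the resulting $[u,v]$-factor yields $f_1$ with $f_1(t)=t^\epsilon u^{p_1}v^{q_1}$ for even $p_1,q_1$, the same $(\epsilon,a,b,c,d)$, and with $(f_1)_\ast=f_\ast$ on $H_3$. Next, conjugating $f_1$ by $\beta=u^{p_1/2}v^{q_1/2}\in\Gamma(r)$ yields, via the commutation rule $v^bu^a=[u,v]^{-ab}u^av^b$ and the identity $t^\epsilon\beta t^{-\epsilon}=\beta^{-1}$ (conjugation by $t^\epsilon$ inverts the abelian subgroup generated by $u,v$), a homomorphism $f_2$ with $f_2(t)=t^\epsilon[u,v]^{p_1q_1/2}$, while $f_2(u)=u^av^b[u,v]^{m''}$ and $f_2(v)=u^cv^d[u,v]^{n''}$ for some $m'',n''$. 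A final application of Lemma~\ref{lemma:tb-aut-eliminate-j} produces $f'$ with $f'(t)=t^\epsilon$, the same $(\epsilon,a,b,c,d)$ as $f$, and hence $\delta_{f'}=(ad-bc)/r^2=\delta_f$ by~\eqref{equation:general-form-epsilons}.

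The main obstacle is verifying $f_\ast=f'_\ast$ on $H_3(\Gamma)$. Conjugation by $\beta$ preserves the induced map on $H_\ast$ since it corresponds to a free homotopy of maps $Y\to B\Gamma(r)$, and the two applications of Lemma~\ref{lemma:tb-aut-eliminate-j} preserve $H_3$ by that lemma itself. The critical step is showing that precomposition with $\sigma_{a',b'}$ preserves $f_\ast[Y]$, which reduces to proving $(\sigma_{a',b'})_\ast=\mathrm{id}$ on $H_3(\Gamma)=H_3(Y)=\Z$. My plan is to realize $\sigma_{a',b'}$ by an orientation-preserving self-homeomorphism of $Y$. Choose a smooth loop $\gamma\colon[0,1]\to T^2$ with $\gamma(0)=\gamma(1)=e$ and representing $u^{a'}v^{b'}\in\pi_1(T^2,e)$, and define $\phi\colon Y\to Y$ by $\phi(z,t)=(z\cdot\gamma(t),t)$ using the group multiplication on $T^2$. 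This descends from $T^2\times[0,1]$ because $\phi(h(z),0)=(h(z),0)\sim(z,1)=\phi(z,1)$. The map $\phi$ is a fiber-preserving diffeomorphism of $Y\to S^1$ acting as a translation on each fiber, so its derivative is unipotent and $\phi$ is orientation preserving with $\deg\phi=1$. By construction $\phi_\ast=\sigma_{a',b'}$ on $\pi_1$, so $(\sigma_{a',b'})_\ast=\phi_\ast=\mathrm{id}$ on $H_3(Y)$, completing the argument.
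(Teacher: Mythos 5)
Your proof is correct and follows essentially the same route as the paper's: the paper peels off the exponents $p,q$ in $f(t)=t^\epsilon u^pv^q$ using the elementary shearings $\phi(t)=tu$, $\phi'(t)=tv$ iterated, then iterated conjugation by $u^{\pm1},v^{\pm1}$, whereas you consolidate these into one general shearing $\sigma_{a',b'}$ followed by one conjugation by $u^{p_1/2}v^{q_1/2}$, and you spell out the geometric fiberwise-translation argument for $(\sigma_{a',b'})_*=\id$ on $H_3$ that the paper only gestures at. One very minor caveat: with the convention $[\alpha][\beta]=[\alpha*\beta]$, the loop $s\mapsto(\gamma(s),s)$ is $\gamma$ concatenated with the $t$-circle, so $\phi_*(t)=u^{a'}v^{b'}t=tu^{-a'}v^{-b'}$ and your $\phi$ actually realizes $\sigma_{-a',-b'}$; replacing $\gamma$ by its inverse (or noting the sign does not affect which parities can be corrected) fixes this.
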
 

\begin{proof}
  Let $\epsilon=\epsilon_f$, and apply
  Lemmas~\ref{lemma:tb-aut-general-form} and~\ref{lemma:tb-aut-eliminate-j} to
  assume
  \begin{equation}
    \label{equation:before-cleaned}
    f(t)=t^\epsilon u^p v^q,\quad f(u)=u^av^b[u,v]^m,\quad f(v)=u^cv^d[u,v]^n.
  \end{equation}
  
  We claim that we may assume that both $p$ and $q$ are even
  in~\eqref{equation:before-cleaned}.  To show this, consider $\phi\colon
  \Gamma\to \Gamma$ given by $\phi(t)=tu$, $\phi(u)=u$, $\phi(v)=v$.  It is a
  well-defined 2-connected homomorphism by
  Lemma~\ref{lemma:tb-aut-general-form}.  Moreover, it induces the identity
  on~$H_3(\Gamma)=\Z$.  This can be seen geometrically, by inspecting the
  fundamental class $[Y]$ under an appropriate map $xB\Gamma = Y\to Y$
  realizing~$\phi$.  Alternately, use the Wang sequence for the extension $1\to
  A(1)\to \Gamma\to \Z$ to identify $H_3(\Gamma)$ with $H_2(A(1))=H_2(\Z^2)=\Z$,
  and use that $\phi|_{A(1)}$ is the identity. Now, since $\phi_*=\id$ on $H_3$,
  it follows that $f_* = (f\circ\phi)_*$ on~$H_3(\Gamma)$.  Similarly, define a
  2-connected homomorphism $\phi'\colon \Gamma\to \Gamma$ by $\phi'(t)=tv$,
  $\phi'(u)=u$, $\phi'(v)=v$. Then $f_* = (f\circ \phi')_*$ on $H_3$, too.  We
  have that
  \begin{align*}
    (f\circ\phi)(t) &= f(tu) = t^\epsilon u^p v^q \cdot u^a v^b [u,v]^m =
    t^\epsilon u^{p+a} v^{q+b} [u,v]^{m-aq},
    \\
    (f\circ\phi')(t) &= f(tv) = t^\epsilon u^p v^q \cdot u^c v^d [u,v]^m =
    t^\epsilon u^{p+c} v^{q+d} [u,v]^{m-cq}.
  \end{align*}
  By~\eqref{equation:tb-aut-exponent-condition}, $ad-bc$ is odd.  We assume $a$
  and $d$ are odd, and $b$ is even, since arguments for other cases are
  identical. Then, composition with $\phi$ alters the parity of $p$ and
  preserves the parity of $q$, and composition with $\phi'$ alters the parity of
  $q$ (while the parity of $p$ is left uncontrolled).  So, by composition, we
  may assume that both $p$ and $q$ are even.  Note that $a$, $b$, $c$, $d$ and
  $\epsilon$ are left unchanged.  Finally apply
  Lemma~\ref{lemma:tb-aut-eliminate-j} to obtain the form
  of~\eqref{equation:before-cleaned}.  This proves of the claim. 

  Now, define $\psi\colon \Gamma(r) \to \Gamma(r)$ to be $\psi(g)=ugu^{-1}$.  Since
  conjugation induces the identity on $H_*$ (e.g., see
  \cite[p.~191]{Weibel:1994-1}), we have $(\psi\circ f)_* = f_*$
  on~$H_3$.  Also, we have
  \begin{align*}
    (\psi\circ f)(t) &= u\cdot t^\epsilon u^p v^q \cdot u^{-1} = t^\epsilon
    u^{p-2} v^q [u,v]^q,
    \\
    (\psi\circ f)(u) &= u\cdot u^a v^b [u,v]^m \cdot u^{-1}
    = u^a v^b [u,v]^{m+b},
    \\
    (\psi\circ f)(v) &= u\cdot u^c v^d[u,v]^n \cdot u^{-1}
    = u^c v^d [u,v]^{n+d}.
  \end{align*}
  Apply Lemma~\ref{lemma:tb-aut-eliminate-j}, to eliminate $[u,v]^q$ in
  $(\psi\circ f)(t)$.  This changes $p$ to~$p-2$, without altering $a$, $b$,
  $c$, $d$, $\epsilon$ and~$q$ (but $m$ and $n$ are allowed to be altered).
  Using $\psi'(g)=u^{-1}gu$ in place of $\psi$, $p$ can also be changed to
  $p+2$. Similarly, $q$ can be changed to $q\pm 2$.  Applying this repeatedly,
  we can arrange $p=q=0$.  This gives us a homomorphism $f'\colon \Gamma\to
  \Gamma(r)$ of the promised form, which satisfies $f_* = f'_*$. Since $\phi$,
  $\phi'$, $\psi$ and $\psi'$ used above have $\epsilon_\bullet=1$ and
  $\delta_\bullet = 1$, we have $\epsilon_{f'}= \epsilon_f$ and
  $\delta_{f'}=\delta_f$.
\end{proof}

As the final step of our analysis, we investigate the special case of
2-connected homomorphisms in Lemma~\ref{lemma:tb-aut-clean-form}. Let
$i\colon \Gamma=\Gamma(1) \to \colim \Gamma(\ell) = \widehat\Gamma$ be the colimit map,
and $i_*\colon H_3(\Gamma) \to H_3(\widehat\Gamma)$ be the induced map. Recall
that the $\Z$ factor of $H_3(\widehat\Gamma)=(\Z_{(2)}/\Z) \times \Z$ is
generated by the image $i_*[Y]$ of the fundamental class $[Y] \in
H_3(\Gamma)=H_3(Y)=\Z$.

\begin{lemma}
  \label{lemma:tb-aut-clean-form-action}
  Suppose $f\colon \Gamma=\Gamma(1)\to \Gamma(r)$ is a 2-connected homomorphism
  such that $f(t) = t^{\epsilon_f}$.  Then the induced map $f_*\colon
  H_3(\Gamma) \to H_3(\widehat\Gamma)$ is given by
  \[
    f_* = \delta_f\cdot\epsilon_f\cdot i_*.
  \]
\end{lemma}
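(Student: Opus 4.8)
The plan is to realize $f$ geometrically as a map of mapping tori, reduce it to the simplest possible normal form, and then compute the induced map on top homology directly. First I would set up the mapping torus picture: conjugation by $t$ defines an \emph{involution} $\tau$ of $A(\ell)$ (it sends $u,v$ to $u^{-1},v^{-1}$ and fixes $[u,v]$), so $B\Gamma(\ell)$ is the mapping torus of a map $\bar\tau\colon BA(\ell)\to BA(\ell)$ realizing $\tau$; for $\ell=1$ we have $BA(1)=T^2$ and $\bar\tau=-\id$, so $Y=B\Gamma(1)$ is precisely this mapping torus. Using $f(t)=t^{\epsilon_f}$ together with $\tau^2=\id$, one checks that $f|_{A(1)}$ intertwines the two monodromies regardless of the sign, since $f(\tau(x))=t^{\epsilon_f}f(x)t^{-\epsilon_f}=\tau^{\epsilon_f}(f(x))=\tau(f(x))$ for $x\in A(1)$. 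Hence $f$ is induced by a map $F\colon Y\to B\Gamma(r)$ covering the degree $\epsilon_f$ self-map of $S^1$ and restricting to $B(f|_{A(1)})$ on fibres. Writing the degree $-1$ self-map as a reflection, the case $\epsilon_f=-1$ reduces to $\epsilon_f=+1$: the ``flip'' $\rho\colon Y\to Y$, $(s,z)\mapsto(-s,z)$ on $Y=(\R\times T^2)/(s,z)\sim(s+1,\tau z)$, reverses a coordinate, hence is orientation reversing, so $\rho_*[Y]=-[Y]$, and one writes $F=F'\circ\rho$ with $F'$ covering $\id_{S^1}$.

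So it remains to treat $\epsilon_f=+1$, where by Lemma~\ref{lemma:tb-aut-general-form} we may take $f(u)=u^av^b[u,v]^m$, $f(v)=u^cv^d[u,v]^n$ with $ad-bc=\delta_f r^2$. The next step is to remove the central correction terms $[u,v]^m,[u,v]^n$ without changing $F_*[Y]$, by an argument modelled on the proof of Lemma~\ref{lemma:tb-aut-eliminate-j}: cut $Y$ along a fibre, glue in copies of $T^2\times S^1$ carrying auxiliary maps to $B\Gamma(r)$ that absorb the $[u,v]$-powers, and verify via the Lyndon--Hochschild--Serre spectral sequence — exactly as in that proof, where $ad-bc=\pm r^2$ forces the relevant $E^2_{2,1}$-entry to map to zero — that the glued-in pieces contribute nothing to $H_3(\Gamma(r))$. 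After this reduction $f$ restricts to an \emph{injective} homomorphism $f|_{A(1)}\colon A(1)=\Z^2\to A(r)$ whose image $B$ is a rank-two free abelian subgroup with $\tau(B)=B$, so $f$ identifies $\Gamma(1)$ isomorphically with the torus-bundle subgroup $B\rtimes\Z\subset\Gamma(r)$. Thus $F$ factors as a homotopy equivalence $Y\to B(B\rtimes\Z)$ followed by an inclusion; since $B\rtimes\Z$ is again a torus-bundle group, $B(B\rtimes\Z)$ is a closed aspherical $3$-manifold and $F_*[Y]=\pm[B(B\rtimes\Z)]$ in $H_3(\Gamma(r))$.

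To pin down this class — its sign and any possible torsion contribution — I would use naturality of the Wang sequence for $Y\to S^1$ and $B\Gamma(r)\to S^1$: since $F$ now covers $\id_{S^1}$, the boundary map sends $F_*[Y]$ to $(f|_{A(1)})_*[T^2]\in H_2(A(r))$, and the spectral-sequence computation of Section~\ref{subsection:tb-omega+1-H_3} identifies this with $\delta_f$ times the image of $[T^2]$ under the standard inclusion $A(1)\to A(r)$ (the discrepancy lives in the torsion of $H_2(A(r))$, which dies on passing to the colimit $\cA$). Comparing with the standard colimit inclusion $\Gamma(1)\to\Gamma(r)$ — which becomes $i_*$ after mapping to $\widehat\Gamma$ — and using that the torsion of $H_3(A(r))$ surviving in $H_3(\widehat\Gamma)\cong H_3(\cA)\oplus\Z$ is controlled by the same cut-and-paste bookkeeping, one obtains $\bar f_*[Y]=\delta_f\,i_*[Y]$ in $H_3(\widehat\Gamma)$ for $\epsilon_f=+1$, and hence $f_*=\delta_f\epsilon_f\cdot i_*$ in general by the flip reduction. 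The main obstacle is the second step: showing that the central correction terms — and with them the potential torsion component of $f_*[Y]$ in $H_3(\widehat\Gamma)$ — make no contribution. This requires carrying out the cut-and-paste construction and the accompanying spectral-sequence vanishing carefully, in the spirit of the proof of Lemma~\ref{lemma:tb-aut-eliminate-j}; the rest of the argument is bookkeeping with the Wang sequences and the colimit homology computations already established.
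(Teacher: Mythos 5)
Your overall framework (mapping-torus picture, flip reduction for $\epsilon_f=-1$, Wang sequence naturality) is sensible, and the flip reduction is correct as stated. But the central step — eliminating the $[u,v]^m,[u,v]^n$ corrections and pinning down the torsion component of $f_*[Y]$ — has a genuine gap that your proposed cut-and-paste will not close.

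First, you cannot simply set $m=n=0$ and keep $f$ a homomorphism: with $p=q=0$, the well-definedness conditions from Lemma~\ref{lemma:tb-aut-general-form} read $2m\equiv ab$ and $2n\equiv cd\pmod{r^2}$, and since $r$ is odd these congruences determine $m,n\bmod r^2$ uniquely. A direct check confirms this: with $m=0$, one computes $f(tut^{-1}u)=[u,v]^{-ab}$, which is nontrivial in $\Gamma(r)$ unless $r^2\mid ab$. So the ``cleaned'' target homomorphism you want to compare against does not exist. Second, the cut-and-paste of Lemma~\ref{lemma:tb-aut-eliminate-j} absorbs a central correction sitting in $f(t)$, which traverses the $[0,1]$-direction of the mapping torus; that is why cutting along a fibre $T^2\times\{1/2\}$ and inserting $T^3$ along the base circle works. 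Your corrections sit in $f(u),f(v)$, which are fibre-direction elements, and there is no analogous decomposition of $Y$ exposing a cofibre of $u$ or $v$ along which to glue. Third, even granting the reduction, your Wang-sequence argument determines $F_*[Y]$ only modulo the image of $H_3(A(r))$ in $H_3(\Gamma(r))$; in the colimit this ambiguity is exactly $H_3(\cA)=\Z_{(2)}/\Z$, the entire torsion subgroup of $H_3(\widehat\Gamma)$ — which is precisely what the lemma needs you to pin down to zero. You flag this as ``the main obstacle'' and say it is ``controlled by the same cut-and-paste bookkeeping,'' but no such bookkeeping has been exhibited, and for the reasons above it does not carry over.

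The paper sidesteps all three difficulties at once by passing to the index-two subgroup $A(\ell)\times 2\Z\subset\Gamma(\ell)$, where the hypothesis $f(t)=t^{\epsilon_f}$ forces the restriction of $f$ to split as a product $(f|_{A(1)})\times(\epsilon_f\cdot)$. K\"unneth then computes the induced map on $H_3$ exactly (the contribution is governed by $\det(f|_{A(1)})_{ab}=\delta_f r^2$, and the torsion factors die in the colimit as in~\eqref{equation:exact-sequence-H_2(A(k))}), giving $2f_*[Y]=2\delta_f\epsilon_f\, i_*[Y]$, and then the conclusion follows because multiplication by $2$ is injective on $H_3(\widehat\Gamma)=(\Z_{(2)}/\Z)\times\Z$. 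This handles the torsion component without any auxiliary gluings.
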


\begin{proof}
  Consider the subgroup of $\Gamma(\ell)$ generated by $u$, $v$ and $t^2$, which
  corresponds a double cover.  Since $[t^2,u]=[t^2,v]=1$, this subgroup is the
  internal direct product $A(\ell)\times 2\Z$, where $A(\ell)$ is generated by
  $u$ and $v$ and the infinite cyclic group $2\Z$ is generated by~$t^2$. The
  colimit $\cA\times 2\Z = \colim (A(\ell)\times 2\Z)$ is an index two subgroup
  of $\widehat\Gamma$.   Since $f$ sends $A(1)$ to $A(r)$, $f$ lifts to a
  homomorphism $g\colon A(1)\times2\Z \to A(r)\times2\Z$.  Compose them with the
  colimit maps $A(r)\times 2\Z \to \colim(A(\ell)\times 2\Z) = \cA\times 2\Z$
  and $\Gamma(r) \to \colim_\ell \Gamma(\ell) = \widehat\Gamma$, and take $H_3$,
  to obtain the following diagram.
  \begin{equation}
    \label{equation:double-cover-diagram}
    \begin{tikzcd}[row sep=large]
      H_3(A(1)\times 2\Z) \ar[r,"g_*"] \ar[d] 
      & H_3(A(r)\times 2\Z) \ar[r,"\colim"] \ar[d] 
      & H_3(\cA \times 2\Z) \ar[d]
      \\
      H_3(\Gamma(1)) \ar[r,"f_*"'] 
      & H_3(\Gamma(r)) \ar[r,"\colim"']
      & H_3(\widehat\Gamma)
    \end{tikzcd}
  \end{equation}

  We will compare the composition of the top row with the homomorphism induced
  by the colimit $i|_{A(1)\times 2\Z}\colon A(1)\times 2\Z \to \cA\times 2\Z$.
  The key property, which is a consequence of the hypothesis
  $f(t)=t^{\epsilon_f}$, is that the lift $g$ can be written as a product: $g =
  (g|_{A(1)}) \times (\epsilon\cdot)$ where $g|_{A(1)}$ is equal to the
  restriction $f|_{A(1)}\colon A(1)\to A(r)$, and $\epsilon\cdot \colon 2\Z\to
  2\Z$ is multiplication by~$\epsilon:=\epsilon_f$. So, the induced map $g_*$ on
  $H_3$ is determined by $g|_{A(1)}$ and $\epsilon$ by the K\"unneth formula.
  More precisely, since $A(1)=\Z^2$, the composition of the top row
  of~\eqref{equation:double-cover-diagram} is equal to the composition
  \begin{multline*}
    H_3(A(1)\times2\Z) = H_2(A(1))\otimes H_1(2\Z)
    \xrightarrow{(g|_{A(1)})_*\otimes(\cdot\epsilon)} H_2(A(\ell)) \otimes H_1(2\Z)
    \\
    \xrightarrow{\colim\otimes(\cdot\epsilon)} H_2(\cA) \otimes H_1(2\Z)
    \xrightarrow{\times} H_3(\cA\times2\Z).
  \end{multline*}
  By~\eqref{equation:exact-sequence-H_2(A(k))}
  and~\eqref{equation:tb-omega+1-H_2(A)}, $H_2(\cA)$ is identified with the
  subgroup $r^2\Z\subset \Z=H_2(A(r)_{ab})$.
  In our case, the homomorphism $H_2(A(1))=\Z \to
  H_2(A(r)_{ab})=\Z$ induced by $g|_{A(1)} = f|_{A(1)}$ is multiplication by
  the determinant of $A(1)=\Z^2 \to A(r)_{ab}=\Z^2$, which is equal to $\delta_f
  \cdot r^2$ by~\eqref{equation:general-form-epsilons}. From this, it
  follows that the composition of the top row
  of~\eqref{equation:double-cover-diagram} is equal to~$\delta_f\cdot \epsilon
  \cdot (i|_{A(1)\times 2\Z})_*$.

  Consequently, the composition of the bottom row
  of~\eqref{equation:double-cover-diagram}, which is the induced homomorphism
  $f_*\colon H_3(\Gamma)\to H_3(\widehat\Gamma)$, is equal to $\delta_f\cdot
  \epsilon\cdot i_*$ \emph{on the image of} $H_3(A(1)\times 2\Z) \to
  H_3(\Gamma(1))=H_3(Y)$. Since $B(A(1)\times2\Z) \to B\Gamma(1)= Y$ is a double
  cover of the 3-manifold $Y$, the image is the subgroup generated by~$2[Y]\in
  H_3(Y)$.  So,
  \[
    2\cdot f_*[Y]= 2\cdot \delta_f\cdot \epsilon\cdot i_*[Y]
    \quad\text{in }H_3(\widehat\Gamma).
  \]
  Since $H_3(\widehat\Gamma)=(\Z_{(2)}/\Z)\times \Z$ by
  Theorem~\ref{theorem:tb-transfinite-computation}\ref{item:tb-transfinite-H_3},
  $2\theta=0$ implies $\theta=0$ for every $\theta\in H_3(\widehat\Gamma)$.  It
  follows that $f_*[Y] = \delta_f\cdot \epsilon\cdot i_*[Y]$. This
  completes the proof.
\end{proof}

Now we are ready to prove Theorem~\ref{theorem:tb-final-aut}, which asserts
that the action of $f\in \Aut(\widehat\Gamma)$ on
$H_3(\widehat\Gamma)=(\Z/\Z_{(2)}) \times \Z$ is given by $f_*(a,n) =
(\delta_f\cdot a, \delta_f \cdot\epsilon_f\cdot n)$. 

\begin{proof}[Proof of Theorem~\ref{theorem:tb-final-aut}] By
  Lemma~\ref{lemma:tb-aut-on-tH_3}, the restriction of $f_*$ on
  $tH_3(\widehat\Gamma)=\Z_{(2)}/\Z$ is multiplication by~$\delta_f$.  So
  it remains to investigate $f_*$ on the generator $(0,1)\in (\Z_{(2)}/\Z)
  \times \Z$.   By Lemmas~\ref{lemma:tb-aut-clean-form}
  and~\ref{lemma:tb-aut-clean-form-action}, we may assume that the map
  $H_3(\Gamma) \to H_3(\widehat\Gamma)$ induced by $f$ sends the fundamental
  class $[Y]$ to $\delta_f\cdot\epsilon_f \cdot i_*[Y]$.  Since $(0,1)\in
  (\Z_{(2)}/\Z) \times \Z$ is the image of $[Y]$, it follows that $f_*(0,1) =
  (0,\delta_f\cdot\epsilon_f)$.
\end{proof}

\section{Nontrivial transfinite Milnor invariants}
\label{section:modified-tb}

The goal of this section is to prove Theorem~\ref{theorem:main-tb-milnor} stated in Section~\ref{subsection:results-nontrivial-milnor}, which gives an infinite family of $3$-manifolds with vanishing Milnor invariants of finite length, but distinct nontrivial transfinite Milnor invariants of length~$\omega$.
As mentioned in
Section~\ref{subsection:results-nontrivial-milnor}, we do so by using a
family of $3$-manifolds, $\{M_d \mid d \in 2\Z+1\}$: $M_d$ is defined to be the
torus bundle $T^2\times[0,1]/(h_d(z),0)\sim(z,1)$, with monodromy
$h_d=\sbmatrix{-1 & d \\ 0 & -1}$.  Note that $M_d$ is obtained from the
original torus bundle $Y$ studied in the previous sections, by modifying the
$(1,2)$-entry of the monodromy from $0$ to~$d$.

Fix an odd integer~$d$.  We will use $M_d$ as the basepoint manifold to which
other 3-manifolds $M_r$ are compared.  That is, let $\Gamma=\pi_1(M_d)$.  Our
main goal of this section is to prove Theorem~\ref{theorem:main-tb-milnor},
which asserts the following:

\begin{enumerate}
  \item For every odd integer $r$, $\bar\mu_k(M_r)$ is defined and vanishes for
  all finite~$k$.  Moreover, $\widehat{\pi_1(M_r)}/\widehat{\pi_1(M_r)}_\omega
  \cong \widehat\Gamma/\widehat\Gamma_\omega$, so $\bar\mu_\omega(M_r)$ is
  defined.
  \item But, for odd $r$ and $s$, $\bar\mu_\omega(M_r) = \bar\mu_\omega(M_s)$ if
  and only if $|r/s|$ is a square in~$\Z_{(2)}^\times$. In particular,
  $\bar\mu_\omega(M_r)$ is nontrivial if and only if $|r/d|$ is not a square.
  \item Indeed, the set of realizable values of the Milnor invariant of length
  $\omega$, $\cR_\omega(\Gamma)/{\approx}$, is in 1-1 correspondence with
  $\Z_{(2)}^\times / {\pm}(\Z_{(2)}^\times)^2$.
\end{enumerate}

Here $\pm(\Z_{(2)}^\times)^2 := \{\pm \alpha^2 \mid \alpha\in
\Z_{(2)}^\times\}$. For every $a/b \in \Z_{(2)}^\times$ with $a$, $b\in 2\Z+1$,
we have $a/b \equiv |ab| \bmod \pm(\Z_{(2)}^\times)^2$ (multiplicatively), and
in the prime factorization of the integer $|ab|$, one can assume that each prime
has exponent at most one, modulo square. So $\cR_\omega(\Gamma)/{\approx}$ is
bijective to the set of odd positive integers which have no repeated primes in
the factorization.

To show Theorem~\ref{theorem:main-tb-milnor}, we will compute the realizable
classes and the equivalence relations ${\sim}$ and ${\approx}$ for the modified
torus bundle case.  In fact, both the arguments for computation and their
outcomes are very close to the original torus bundle $d=0$ case.  However, the
modified case has a small but important difference: the action of
$\Aut(\widehat\Gamma/\widehat\Gamma_\omega)$ on $\cR_\omega(\Gamma)$ turns out
to have smaller orbits.  See
Theorem~\ref{theorem:modified-tb-comp}\ref{item:modified-tb-aut} below and
compare it with
Theorem~\ref{theorem:tb-transfinite-computation}\ref{item:tb-transfinite-aut}.
From this the nontriviality of the length $\omega$ Milnor invariants will be
obtained.

More specifically, we will show the following.

\begin{theorem}
  \label{theorem:modified-tb-comp}
  Let $\Gamma=\pi_1(M_d)$ as above, $d$ odd.  Then, the following hold.
  \begin{enumerate}
    \item\label{item:modified-tb-realization}
    Each of
    $H_3(\widehat\Gamma/\widehat\Gamma_\omega)$,
    $H_3(\widehat\Gamma/\widehat\Gamma_{\omega+1})$,
    $\cR_\omega(\Gamma)$, $\cR_{\omega+1}(\Gamma)$,
    the map $\cR_{\omega+1}(\Gamma) \to \cR_\omega(\Gamma)$ and
    the equivalence relation $\sim$ on $\cR_\omega(\Gamma)$ 
    is identical with that given in
    Theorem~\ref{theorem:tb-transfinite-computation}: that is,
    \begin{align*}
      H_3(\widehat\Gamma/\widehat\Gamma_\omega) &= \Z_{(2)}, &
      H_3(\widehat\Gamma/\widehat\Gamma_{\omega+1}) &= (\Z_{(2)}/\Z)\times \Z,
      \\
      \cR_\omega(\Gamma) &= \Z_{(2)}^\times, &
      \cR_{\omega+1}(\Gamma) &= (\Z_{(2)}/\Z)\times\{\pm1\}.
    \end{align*}
    The map $\cR_{\omega+1}(\Gamma) \to \cR_\omega(\Gamma)$ is
    $(x,\epsilon)\mapsto \epsilon$, and on $\cR_\omega(\Gamma)=\Z_{(2)}^\times$,
    $\theta\sim \theta'$ if and only if $\theta=\pm\theta'$.
    
    \item\label{item:modified-tb-aut}
    The orbits of the action of $\Aut(\widehat\Gamma/\widehat\Gamma_\omega)$ on
    $\cR_\omega(\Gamma)=\Z_{(2)}^\times$ is given by: $\phi(\theta)=\theta'$ for
    some $\phi\in \Aut(\widehat\Gamma/\widehat\Gamma_\omega)$ if and only if
    $\theta/\theta'$ is a square.

    Consequently, on $\cR_\omega(\Gamma)$, $\theta\approx \theta'$ if and only
    if $\theta/\theta' = \pm\alpha^2$ for some $\alpha\in \Z_{(2)}^\times$.

    \item\label{item:modified-tb-value}
    For every odd integer $r$, there is an isomorphism $f\colon
    \widehat{\pi_1(M_r)}/\widehat{\pi_1(M_r)}_\omega \isomto
    \widehat\Gamma/\widehat\Gamma_\omega$ such that
    $\theta_\omega(M_r)=\theta_\omega(M_r,f)=r/d \in \Z_{(2)}^\times =
    \cR_\omega(\Gamma)$.  So $\bar\mu_\omega(M_r) = r/d = rd$ and
    $\bar\mu_\omega(M_{rd}) = r$ in $\Z_{(2)}^\times /
    {\pm}(\Z_{(2)}^\times)^2 = \cR_\omega(\Gamma)/{\approx}$.
  \end{enumerate}
\end{theorem}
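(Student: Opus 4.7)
My strategy is to reduce as much as possible to the framework developed for the unmodified torus bundle in Sections~\ref{section:tb-transfinite-computation}--\ref{section:tb-computation-final}. First I would identify $\widehat\Gamma/\widehat\Gamma_\omega$. Writing $\Gamma = \Z^2 \rtimes_{h_d} \Z$, the augmentation ideal $I = (t-1)$ acts on $A = \Z^2$ by $h_d - I = -2I + N_d$ with $N_d = \sbmatrix{0 & d \\ 0 & 0}$ nilpotent, so $(h_d - I)^k$ carries a factor of $2^{k-1}$, giving $\bigcap_k I^k A = 0$. Theorem~\ref{theorem:closure-in-completion-metabelian} then yields $\widehat\Gamma/\widehat\Gamma_\omega = S^{-1}A \rtimes \Z$, and evaluating acyclic elements $1 + (t - 1) q(t) \in S$ on $h_d$ (diagonal entries $1 - 2 q(-1)$ range over all odd integers) identifies $S^{-1}\Z = \Z_{(2)}$ and hence $\widehat\Gamma/\widehat\Gamma_\omega = \Z_{(2)}^2 \rtimes_{h_d} \Z$.

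For part~\ref{item:modified-tb-realization} I would run the Wang sequence for the extension $1 \to \Z_{(2)}^2 \to \overline\Gamma \to \Z \to 1$, exactly as in Sections~\ref{subsection:tb-omega-H_3}--\ref{subsection:tb-omega+1-realizable-classes}. Since $\det h_d = 1$, the action on $H_2(\Z_{(2)}^2) = \Z_{(2)}$ is trivial, yielding $H_3(\overline\Gamma) = \Z_{(2)}$ with $[M_d]$ mapping to the generator $1$. The cokernel of $I - h_d$ on $\Z_{(2)}^2$ is $\Z_4$ rather than $\Z_2^2$, but the realizability argument of Section~\ref{subsection:tb-omega-H_3} used only that $tH_1$ is a finite $2$-group, so the identical reasoning gives $\cR_\omega(\Gamma) = \Z_{(2)}^\times$; running the analogous computation with $h_d$-twisted versions of the approximating groups $\Gamma(\ell)$ of Section~\ref{subsection:tb-localization} then produces the stated $H_3(\widehat\Gamma)$, $\cR_{\omega+1}(\Gamma)$, projection formula, and the equivalence relation~$\sim$.

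Part~\ref{item:modified-tb-aut} is the crux. I would classify $\Aut(\overline\Gamma)$ by noting that $\Z_{(2)}^2$ is characteristic as the kernel of $\overline\Gamma \to H_1(\overline\Gamma) \otimes \Q$, so any automorphism $\phi$ restricts to $A \in \operatorname{GL}(2, \Z_{(2)})$ and acts by $\epsilon = \pm 1$ on the $\Z$-quotient, subject to the intertwining $h_d^\epsilon A = A h_d$. Writing $h_d = -I + dE$ with $E = \sbmatrix{0 & 1 \\ 0 & 0}$ and solving directly forces $A = \sbmatrix{\alpha & \beta \\ 0 & \epsilon \alpha}$, hence $\det A = \epsilon \alpha^2$. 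Tracking the orientation dependence of the Wang identification $H_3(\overline\Gamma) \cong H_2(\Z_{(2)}^2)$, the induced action on $H_3(\overline\Gamma) = \Z_{(2)}$ is $\epsilon \cdot \det A = \alpha^2$, so the orbit of $\theta \in \Z_{(2)}^\times$ is $(\Z_{(2)}^\times)^2 \cdot \theta$; combined with the $\{\pm 1\}$-equivalence from $\sim$ this produces $\approx$. Part~\ref{item:modified-tb-value} then follows from the explicit intertwining isomorphism $\Z_{(2)}^2 \rtimes_{h_r} \Z \isomto \overline\Gamma$ given by $A = \operatorname{diag}(1, r/d)$ (which satisfies $h_d A = A h_r$ because $r, d \in \Z_{(2)}^\times$), scaling $H_3$ by $\det A = r/d$ and hence sending $[M_r] \mapsto r/d$; the final identification $r/d \equiv rd \pmod{\pm(\Z_{(2)}^\times)^2}$ is immediate from $d^{-1} \equiv d$ modulo squares. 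The main obstacle is the automorphism classification: it is precisely the nonzero nilpotent $N_d$ that rigidifies the commutation relation enough to shrink the orbit from all of $\Z_{(2)}^\times$ (the $d = 0$ case of Section~\ref{subsection:tb-omega-aut}) down to the squares, which is the mechanism producing nontrivial transfinite Milnor invariants.
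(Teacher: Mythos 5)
Your approach to parts~\ref{item:modified-tb-aut} and~\ref{item:modified-tb-value} matches the paper's exactly: you observe $\Z_{(2)}^2$ is characteristic (as the kernel to $H_1\otimes\Q$), force $\phi|_{\Z_{(2)}^2} = \sbmatrix{\alpha & \beta \\ 0 & \epsilon\alpha}$ from $Ah_d = h_d^\epsilon A$ with $d\ne0$, read off $\phi_*=\alpha^2$ on $H_3$, and realize $\theta_\omega(M_r)=r/d$ via the intertwiner $\operatorname{diag}(1,r/d)$, which is precisely Lemma~\ref{lemma:modified-tb-diag-aut} with $(\alpha,\beta)=(1,r/d)$. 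Your computation of $\widehat\Gamma/\widehat\Gamma_\omega = \Z_{(2)}^2\rtimes_{h_d}\Z$ via Theorem~\ref{theorem:closure-in-completion-metabelian} and the Wang-sequence computation of $H_3(\widehat\Gamma/\widehat\Gamma_\omega)$, $\cR_\omega(\Gamma)$ also coincide with the paper, and you correctly flag that the cokernel of $1-t_*$ on $H_1$ becomes $\Z_4$ but that the realizability argument only needs $tH_1$ to be a finite 2-group.

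The one place your proposal has a genuine gap is where you say the statements about $H_3(\widehat\Gamma/\widehat\Gamma_{\omega+1})$ and $\cR_{\omega+1}(\Gamma)$ follow by ``running the analogous computation with $h_d$-twisted versions of the approximating groups $\Gamma(\ell)$.'' The naive $h_d$-twist --- keeping the presentation but replacing $tvt^{-1}v$ with $tvt^{-1}vu^{-d}$ --- does not form a direct system under the maps $u\mapsto u^r$, $v\mapsto v^r$: in the twisted group one gets $t(v^r)t^{-1}=(u^dv^{-1})^r = u^{rd}v^{-r}[u,v]^{d\binom{r}{2}}$, so the commutator factor $[u,v]^{d\binom{r}{2}}$ is an obstruction to the inclusion being a homomorphism, and one must correct the map by $v\mapsto v^r[u,v]^s$ with $2s\equiv -dr(r-1)/2 \pmod{(r\ell)^2}$. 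This is exactly the subtlety that the paper handles differently: rather than re-engineering a colimit of twisted $\Gamma(\ell)$'s, Theorem~\ref{theorem:modified-tb-localization} of the paper explicitly extends the monodromy to $\cA$ via Lemma~\ref{lemma:modified-monodromy-extn} (which exhibits precisely the same ``half a commutator'' correction term $\gamma\mapsto\gamma+\tfrac{d\beta^2}{2}$, made sense of because $2$ is invertible in $\Z_{(2)}/\Z$), checks $\cA\rtimes_{h_d}\Z$ is local, computes $H_2=\Z$ and the length-$\omega$ quotient, and concludes $\widehat\Gamma/\widehat\Gamma_{\omega+1}\cong\cA\rtimes_{h_d}\Z$ by a Stallings-type argument, never building the full colimit. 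Your route can be repaired, but if you assert it as ``analogous'' you should at minimum exhibit the corrected inclusion maps and verify that they form a direct system with colimit a local group; as written, this step is where the proof is incomplete.
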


Theorem~\ref{theorem:main-tb-milnor} follows immediately from
Theorem~\ref{theorem:modified-tb-comp}\ref{item:modified-tb-aut}
and~\ref{item:modified-tb-value}.

The remaining part of this section is devoted to the proof of
Theorem~\ref{theorem:modified-tb-comp}.  In Section~\ref{subsection:modified-tb-lcsq}, we compute the transfinite lower central quotients $\widehat\Gamma/\widehat\Gamma_\omega$ and~$\widehat\Gamma/\widehat\Gamma_{\omega+1}$.  In Section~\ref{subsection:modified-tb-H_*}, we prove Theorem~\ref{theorem:modified-tb-comp}\ref{item:modified-tb-realization}.  In Section~\ref{subsection:modified-tb-aut}, we prove Theorem~\ref{theorem:modified-tb-comp}\ref{item:modified-tb-aut} and~\ref{item:modified-tb-value}.

\subsection{Transfinite lower central series quotients of the localization}
\label{subsection:modified-tb-lcsq}

The group $\Gamma=\pi_1(M_d)$ is the semi-direct product $\Z^2 \rtimes \Z = \Z^2
\rtimes_{h_d} \Z$, where the generator $t$ of $\Z$ acts on $\Z^2$ by $h_d =
\sbmatrix{-1 & d \\ 0 & -1}$.  In what follows, we will compute
$\widehat\Gamma/\widehat\Gamma_\omega$ and
$\widehat\Gamma/\widehat\Gamma_{\omega+1}$.

Recall that the group $\cA=\colim_{\ell:\text{odd}} A(\ell)$ was defined in the
beginning of Section~\ref{subsection:tb-omega+1-H_3}.  One can write
\[
  \cA = \{ x^\alpha y^\beta [x,y]^\gamma \mid \alpha, \beta \in \Z_{(2)},\,
  \gamma\in \Z_{(2)}/\Z\}
\]
where the group operation is given by $x^\alpha y^\beta [x,y]^\gamma \cdot
x^\lambda y^\mu [x,y]^\zeta =
x^{\alpha+\lambda}y^{\beta+\mu}[x,y]^{\gamma+\zeta-\beta\lambda}$. The group
$\cA$ has $\Z^2$ as a subgroup, which is generated by $x$ and~$y$.  (Note that
$[x,y]$ is trivial in~$\cA$.)  Also, $\Z_{(2)}/\Z=\{[x,y]^\gamma\} \subset \cA$
is a central subgroup, and $\cA/(\Z_{(2)}/\Z)=\Z_{(2)}^2$.

Recall that, for $d=0$ case, we proved that
$\widehat\Gamma=\widehat\Gamma/\widehat\Gamma_{\omega+1}$ is equal to the
semi-direct product $\cA\rtimes_{h_0}\Z$, where the generator $t$ of $\Z$ acts
on $\cA$ is given by $h_0 = \sbmatrix{-1 & 0 \\ 0 & -1}$, that is, $t\cdot x =
x^{-1}$, $t\cdot y = y^{-1}$  See Section~\ref{subsection:tb-omega+1-H_3}.

We will prove a similar result for the modified torus bundle case.

For this purpose, we need to extend the action of $t=h_d$ on $\Z_2$
to~$\cA$.  Being an extension, $t\cdot x^n = x^{-n}$, $t\cdot y^n = x^{dn}
y^{-n}$ must be satisfied for every integer~$n$, but it can be seen that a
naive attempt to define $t\cdot x^{1/n} = x^{-1/n}$, $t\cdot y^{1/n} = x^{d/n}
y^{-1/n}$ does not give a group homomorphism $t\colon \cA \to \cA$.

Instead, we use the following lemma, which can be verified by a direct
computation.  To state it, we need the fact that the multiplication by~$2$,
$\Z_{(2)}/\Z \xrightarrow{\smash{2\cdot}} \Z_{(2)}/\Z$, is an isomorphism, so
$\gamma/2 \in \Z_{(2)}/\Z$ is well-defined for every $\gamma \in \Z_{(2)}/\Z$.

\begin{lemma}
  \label{lemma:modified-monodromy-extn}
  The map $t\colon \cA \to \cA$ defined by
    \[
    t\cdot (x^\alpha y^\beta [x,y]^\gamma) =
    x^{-\alpha+d\beta} y^{-\beta} [x,y]^{\gamma+\frac{d\beta^2}{2}}
  \]
  is a group isomorphism which extends $t=h_d\colon \Z^2 \to \Z^2$. 
\end{lemma}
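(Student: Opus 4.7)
The plan is to verify the four distinct assertions packaged into this lemma: (a) that the formula actually defines a map $\cA \to \cA$, (b) that it extends $h_d$ on $\Z^2 \subset \cA$, (c) that it is a group homomorphism, and (d) that it is bijective. I would organize the work in that order.

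For well-definedness, the only real issue is interpreting $\tfrac{d\beta^2}{2}$ when $\beta \in \Z_{(2)}$. I would first observe that multiplication by $2$ on $\Z_{(2)}/\Z$ is a bijection: injectivity is immediate ($2a/b \in \Z$ with $b$ odd forces $b \mid a$), and surjectivity follows from the fact that $2$ is a unit mod any odd integer, so $2c \equiv a \pmod b$ has a solution. Hence the operator $\tfrac{1}{2}\colon \Z_{(2)}/\Z \to \Z_{(2)}/\Z$ is unambiguously defined, and $\tfrac{d\beta^2}{2}$ means the unique $\eta \in \Z_{(2)}/\Z$ with $2\eta = [d\beta^2]$. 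For the extension claim, I would note that when $\beta \in \Z$, the class $[d\beta^2] \in \Z_{(2)}/\Z$ vanishes, hence $\tfrac{d\beta^2}{2} = 0$, so the formula collapses to $x^\alpha y^\beta \mapsto x^{-\alpha+d\beta}y^{-\beta}$, matching $h_d = \sbmatrix{-1 & d \\ 0 & -1}$.

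The homomorphism property is the main computational step, and it is also the only place where the mysterious quadratic term $\tfrac{d\beta^2}{2}$ must conspire correctly. Using the explicit multiplication rule
\[
x^{\alpha_1}y^{\beta_1}[x,y]^{\gamma_1} \cdot x^{\alpha_2}y^{\beta_2}[x,y]^{\gamma_2}
= x^{\alpha_1+\alpha_2}y^{\beta_1+\beta_2}[x,y]^{\gamma_1+\gamma_2-\beta_1\alpha_2}
\]
stated earlier in Section~\ref{subsection:modified-tb-lcsq}, I would expand both $t\cdot(g_1 g_2)$ and $(t\cdot g_1)(t\cdot g_2)$ and compare exponents. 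The $x$- and $y$-exponents match on the nose; for the $[x,y]$-exponent, the left side contributes $\tfrac{d(\beta_1+\beta_2)^2}{2} = \tfrac{d\beta_1^2}{2}+\tfrac{d\beta_2^2}{2}+d\beta_1\beta_2$ while the right side produces the cross-term $-(-\beta_1)(-\alpha_2+d\beta_2) = -\beta_1\alpha_2 + d\beta_1\beta_2$. The terms $-\beta_1\alpha_2$ and $d\beta_1\beta_2$ balance against the corresponding contributions from the LHS cross-term $-\beta_1\alpha_2$ and the square expansion, and everything cancels cleanly. The key point is that the quadratic correction $\tfrac{d\beta^2}{2}$ is precisely what is needed to absorb the bilinear term $d\beta_1\beta_2$ that appears from the commutator of $x^{d\beta_1}$ with $y^{-\beta_2}$ in the product.

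Finally, bijectivity is immediate once the formula is in hand: the assignment $(\alpha,\beta,\gamma) \mapsto (-\alpha+d\beta,-\beta,\gamma+\tfrac{d\beta^2}{2})$ on coordinates $(\alpha,\beta) \in \Z_{(2)}^2$, $\gamma \in \Z_{(2)}/\Z$ is an explicit affine bijection, with inverse $(\alpha',\beta',\gamma') \mapsto (-\alpha'-d\beta', -\beta', \gamma' - \tfrac{d\beta'^2}{2})$. So I would just write down this inverse and observe that it has the same form (with $d$ replaced by $-d$ up to a sign bookkeeping), hence is itself a homomorphism by the same calculation. The only obstacle I anticipate is the commutator bookkeeping in step (c); everything else is formal.
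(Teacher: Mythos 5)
Your proposal is correct and is precisely the direct computation that the paper asks for (the paper states ``which can be verified by a direct computation'' and gives no further details). Your organization into well-definedness, extension, homomorphism, and bijectivity matches the intended argument, and the key identity---that the quadratic correction $\tfrac{d\beta^2}{2}$ absorbs the bilinear term $d\beta_1\beta_2$ arising from the commutator bookkeeping---checks out; the only minor redundancy is reverifying that the inverse map is a homomorphism, which already follows formally once $t$ is known to be a bijective homomorphism.
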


Define a semi-direct product $\cA\rtimes\Z = \cA\rtimes_{h_d}\Z$ by using the
action of $t$ in the lemma.  The subgroup $\Z_{(2)}/\Z$ is central in
$\cA\rtimes\Z$, and the quotient $(\cA\rtimes\Z)/(\Z_{(2)}/\Z)$ is the
semi-direct product $\Z_{(2)}^2 \rtimes \Z = \Z_{(2)}^2 \rtimes_{h_d} \Z$, which
is defined using the action of $t=h_d$ on~$\Z_{(2)}^2$.  In what follows, we
omit $h_d$ in the semi-direct product notation.

\begin{theorem}
  \label{theorem:modified-tb-localization}
  $\widehat\Gamma/\widehat\Gamma_\omega = \Z_{(2)}^2\rtimes\Z$, and
  $\widehat\Gamma/\widehat\Gamma_{\omega+1} = \cA\rtimes\Z$.  The natural maps
  of $\Gamma=\Z^2\rtimes\Z$ into $\widehat\Gamma/\widehat\Gamma_\omega$ and
  $\widehat\Gamma/\widehat\Gamma_{\omega+1}$ are the inclusions.
\end{theorem}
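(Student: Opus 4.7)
The proof splits into identifying $\widehat\Gamma/\widehat\Gamma_\omega$ via Theorem~\ref{theorem:closure-in-completion-metabelian}, and identifying $\widehat\Gamma/\widehat\Gamma_{\omega+1}$ by adapting the colimit construction of Theorem~\ref{theorem:cha-orr-colim} to the case $d\ne 0$.

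For the first part, write $\Gamma = A \rtimes G$ with $A = \Z^2$ viewed as a $\Z G$-module where $G = \Z = \langle t\rangle$ acts by $h_d$. A direct matrix computation gives $(h_d - I)^2 = -4 h_d = \sbmatrix{4 & -4d \\ 0 & 4}$, so $I^2 A = 4A$ and inductively $I^{2k}A = 4^k A$, whence $\bigcap_k I^k A = 0$. Theorem~\ref{theorem:closure-in-completion-metabelian} then yields $\widehat\Gamma/\widehat\Gamma_\omega = S^{-1}A \rtimes \Z$. To identify $S^{-1}A$ with $\Z_{(2)}^2$: every $s = 1 + (t-1)p(t) \in S$ acts on $\Z_{(2)}^2$ as the upper-triangular matrix $I + (h_d - I) p(h_d)$ with diagonal entries $1 - 2 p(-1) \in \Z_{(2)}^\times$, so the inclusion $A \hookrightarrow \Z_{(2)}^2$ extends to an injective map $S^{-1}A \to \Z_{(2)}^2$; for surjectivity, for each odd $n\ge 1$ the element $s_n := 1 + (t-1)(1-n)/2 \in \Z G$ (integer coefficients since $n$ is odd) lies in $S$ and acts on $A$ as an upper-triangular matrix with diagonal $n$, so $s_n^{-1}$ realizes the denominator $1/n$ in the image.

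For the second part, define finitely presented groups
\begin{equation*}
\Gamma(\ell) = \langle x, y, t \mid txt^{-1}x,\, tyt^{-1}yx^{-d},\, [x,y]^{\ell^2},\, [[x,y],x],\, [[x,y],y] \rangle
\end{equation*}
with $\Gamma(1) = \Gamma$; the relation $[[x,y],t] = 1$ is a consequence since $t[x,y]t^{-1} = [x^{-1}, x^d y^{-1}] = [x,y]$ by commutator calculus in the 2-step nilpotent subgroup $A(\ell) = \langle x, y\rangle$. Define transition maps $\iota_{r,\ell}\colon \Gamma(\ell) \to \Gamma(r\ell)$ by $x \mapsto x^r$, $y \mapsto y^r[x,y]^{j_y}$, $t \mapsto t$, where $j_y \in \Z/(r\ell)^2\Z$ is the unique solution of $2j_y \equiv -dr(r-1)/2 \pmod{(r\ell)^2}$ (solvable because $(r\ell)^2$ is odd); this cancels the Heisenberg correction arising from $(x^d y^{-1})^r = x^{dr} y^{-r}[x,y]^{dr(r-1)/2}$ and makes $\iota_{r,\ell}$ well-defined. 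Verify 2-connectedness: on $H_1 \cong \Z/4 \oplus \Z$ (the Smith normal form of $h_d - I$ for odd $d$), $\iota_{r,\ell}$ acts as multiplication by $r$ on the torsion factor (a unit since $r$ is odd) and as the identity on the $\Z$ factor; for $H_2$-surjectivity, adapt the Wang sequence analysis of Section~\ref{subsection:tb-omega+1-H_3} to the extension $1 \to A(\ell) \to \Gamma(\ell) \to \Z \to 1$. By Theorem~\ref{theorem:localization-basic-facts}\ref{item:localization-colimit}, $\widehat\Gamma = \colim_\ell \Gamma(\ell)$. Each $\Gamma(\ell)$ sits in a central extension $1 \to \cyc{\ell^2} \to \Gamma(\ell) \to \frt{\ell}\rtimes_{h_d}\Z \to 1$ with $\cyc{\ell^2}$ generated by $[x,y]$; taking colimits yields $1 \to \Z_{(2)}/\Z \to \cA \rtimes \Z \to \Z_{(2)}^2 \rtimes \Z \to 1$ with $\Z$-action on $\cA$ matching Lemma~\ref{lemma:modified-monodromy-extn}. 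The argument of Lemma~\ref{lemma:tb-transfinite-lcs} then applies verbatim to give $\Gamma(\ell)_\omega = \cyc{\ell^2}$ and $\Gamma(\ell)_{\omega+1} = 1$ (since $[x,y]$ is central), so $\widehat\Gamma_\omega = \Z_{(2)}/\Z$ and $\widehat\Gamma_{\omega+1} = 1$ after colimit.

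The main technical hurdle is the $H_2$-surjectivity of $\iota_{r,\ell}$, because the commutator correction $[x,y]^{j_y}$ in the transition map complicates the induced map on $H_2(A(\ell))$ compared to the $d = 0$ case of Section~\ref{subsection:tb-omega+1-H_3}. A secondary check is that the limiting $t$-action on $\cA$ reproduces the Heisenberg-type cocycle $d\beta^2/2$ appearing in Lemma~\ref{lemma:modified-monodromy-extn}, which reduces to tracking how the corrections $j_y$ aggregate under iterated $\iota_{r,\ell}$.
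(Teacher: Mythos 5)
Your computation of $\widehat\Gamma/\widehat\Gamma_\omega$ via Theorem~\ref{theorem:closure-in-completion-metabelian} is correct and parallels the paper's, though the paper identifies $S^{-1}\Z^2 = \Z_{(2)}^2$ by passing to the symmetric set $T = \{\pm s(t)s(t^{-1})\}$ rather than by exhibiting the explicit elements $s_n$ as you do; both work.

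For the second half, however, there is a genuine gap in the key step, and your overall approach diverges from the paper's in a way that makes your proof substantially harder to complete. You assert that $\widehat\Gamma = \colim_\ell \Gamma(\ell)$ ``by Theorem~\ref{theorem:localization-basic-facts}\ref{item:localization-colimit},'' but that theorem only guarantees the \emph{existence} of \emph{some} colimit system of 2-connected maps computing $\widehat\Gamma$; it does not say that an arbitrary direct system of 2-connected maps out of $\Gamma$ has the localization as its colimit. (Consider the constant system $F \to F \to \cdots$ of identity maps on a free group: every map is 2-connected, but the colimit is $F$, not $\widehat F$.) What you are missing is a verification that $\colim_\ell \Gamma(\ell)$ is a \emph{local} group. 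In the $d=0$ case, this is precisely the content of Theorem~\ref{theorem:cha-orr-colim}, and its proof in \cite{Cha-Orr:2011-1} is a nontrivial argument; it is not an application of Theorem~\ref{theorem:localization-basic-facts}. The paper explicitly remarks, directly after stating Theorem~\ref{theorem:modified-tb-localization}, that $\widehat\Gamma = \cA\rtimes\Z$ \emph{could} be shown by modifying the arguments of \cite{Cha-Orr:2011-1}, but declines to do so and proves only the weaker statement of the theorem. Your route, then, amounts to carrying out exactly the adaptation they chose to avoid, and without supplying the locality argument your identification of $\widehat\Gamma$ does not follow. (The deferred $H_2$-surjectivity of $\iota_{r,\ell}$ is a second loose end, but it is the missing locality argument that is the critical obstruction.)

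The paper's path is cleaner precisely because it never computes $\widehat\Gamma$ itself. It proves that $\cA\rtimes\Z$ is local as a consequence of a general fact about central extensions of local groups by central abelian subgroups (quoting \cite[Theorem~A.2, Lemma~A.4]{Cha-Orr:2011-1}), which gives a canonical map $\widehat\Gamma \to \cA\rtimes\Z$ from the universal property. It then checks that $H_2(\cA\rtimes\Z) = \Z$ via a Wang sequence and that $(\cA\rtimes\Z)_\omega = \Z_{(2)}/\Z$ and $(\cA\rtimes\Z)_{\omega+1} = \{1\}$, and applies a Stallings-type lifting argument: since $\widehat\Gamma/\widehat\Gamma_\omega \cong (\cA\rtimes\Z)/(\cA\rtimes\Z)_\omega$ and $H_2$ surjects, the isomorphism lifts to length $\omega+1$, giving $\widehat\Gamma/\widehat\Gamma_{\omega+1} \cong \cA\rtimes\Z$ without ever knowing what $\widehat\Gamma$ is. If you want to salvage your approach, you would need to prove directly that $\colim_\ell \Gamma(\ell)$ (with your transition maps) is local; but it is easier to follow the paper and observe that the explicit group $\cA\rtimes\Z$ is local, then invoke Stallings.
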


Indeed, it can also be shown that $\widehat\Gamma = \cA\rtimes\Z$ and
$\widehat\Gamma_{\omega+1}=\{1\}$, by modifying the arguments used
in~\cite{Cha-Orr:2011-1}.  Since we do not use this stronger fact, we will
just provide a proof of Theorem~\ref{theorem:modified-tb-localization} only.

\begin{proof}[Proof of Theorem~\ref{theorem:modified-tb-localization}]
  
  First, we will compute $\widehat\Gamma/\widehat\Gamma_\omega$ using
  Theorem~\ref{theorem:closure-in-completion-metabelian}.  For this, we need to
  compute the classical module localization $S^{-1}\Z^2$, where $S=\{s(t)\in
  \Z[t^{\pm1}] \mid s(1)=\pm1\}$.  As a $\Z[t^{\pm1}]$-module, $\Z^2$ is
  presented by the matrix $tI-h_d = \sbmatrix{t+1 & -d \\ 0 & t+1}$. So, $\Z^2$
  is annihilated by the determinant, $t^2+2t+1$. Observe that for each $s(t)\in
  S$, $s(t)s(t^{-1})\in S$.  So, $S^{-1}\Z^2$ is equal to $T^{-1}\Z^2$, where
  $T=\{\pm s(t)s(t^{-1})\mid s(t)\in S\}$.  An element $p(t)\in T$ satisfies
  $p(t)=p(t^{-1})$, so $p(t)=a_0+\sum_{i>0} a_i(t+t^{-1})^i$ with
  $p(1)=a_0+\sum_{i>0} 2^i\cdot a_i = \pm1$.  Since $t+t^{-1}$ acts on $\Z^2$ by multiplication by~$-2$, it follows
  that multiplication by $p(t)$ on $\Z^2$ is equal to multiplication by
  $a_0+\sum_{i>0} (-2)^i\cdot a_i$, which is an odd integer. Conversely, an
  arbitrary odd integer $r$ can be written as $r=\pm1 + 4k$, so the
  multiplication by $r$ on $\Z^2$ is equal to multiplication by
  $s(t):=\pm1-k(t+t^{-1}-2)$, which lies in~$S$.  It follows that $S^{-1}\Z^2 =
  (2\Z+1)^{-1}\Z^2 = \Z_{(2)}^2$.

  Also, for the augmentation ideal $I=\{p(t)\in \Z[t^{\pm1}] \mid p(1)=0\}$, an
  element $p(t)\in I^{2k}$ is of the form $p(t)=(t-1)^{2k}q(t) = (t+t^{-1}-2)^k
  \cdot t^k q(t)$.
  Since $t+t^{-1}$ acts on $\Z^2$ by multiplication by~$-2$, it follows that $I^{2k}\Z^2 \subset 4^k \Z^2$, and consequently, $\bigcap_{k<\infty} I^k \Z^2=0$.
  So
  $\Gamma=\Z^2\rtimes \Z$ is residually nilpotent.  For later use, note that the
  same argument proves that $\Z_{(2)}^2\rtimes \Z$ is residually nilpotent too.
  
  Now, by Theorem~\ref{theorem:closure-in-completion-metabelian}, the closure in
  the completion is given by
  \[
    \widehat\Gamma/\widehat\Gamma_\omega = \overline{\Z^2\rtimes \Z}
    = S^{-1}\Z^2 \rtimes \Z = \Z_{(2)}^2 \rtimes \Z.
  \]
  This proves the first conclusion.

  To compute $\widehat\Gamma/\widehat\Gamma_{\omega+1}$, we claim
  the following:
  \begin{enumerate}
    \item $H_2(\cA\rtimes\Z) = H_2(\cA) = H_2(\Z^2) = H_2(\Z^2\rtimes\Z) = \Z$,
    \item $(\cA\rtimes\Z)_{\omega}=\Z_{(2)}/\Z=\{[x,y]^\gamma\}$,
    $(\cA\rtimes\Z)_{\omega+1}=\{1\}$. 
  \end{enumerate}
  Here, the equalities between $H_2(-)$ are induced by the inclusions of the
  groups.

  Before proving the claims, we will derive the second conclusion of the
  theorem.  Since $\Z_{(2)}/\Z$ is a central abelian subgroup of $\cA\rtimes\Z$
  and the quotient $(\cA\rtimes\Z)/(\Z_{(2)}/\Z) = \Z_{(2)}^2\rtimes\Z$ is a
  local group,
  $\cA\rtimes\Z$ is local, by~\cite[Theorem~A.2, Lemma~A.4]{Cha-Orr:2011-1}. So,
  the inclusion $\Gamma=\Z^2\rtimes\Z \hookrightarrow \cA\rtimes\Z$ induces
  $\widehat\Gamma\to\cA\rtimes\Z$.  We will apply the standard Stallings
  argument to $\widehat\Gamma\to\cA\rtimes\Z$.
  We have already shown that $\widehat\Gamma/\widehat\Gamma_\omega = \Z_{(2)}^2 \rtimes \Z$.
  Combining this with claim (2) above, it follows that
  $\widehat\Gamma/\widehat\Gamma_\omega \cong
  (\cA\rtimes\Z)/(\cA\rtimes\Z)_\omega$. Since the composition $H_2(\Gamma) \to
  H_2(\widehat\Gamma) \to H_2(\cA\rtimes\Z)$ is an isomorphism by the first
  claim, $H_2(\widehat\Gamma) \to H_2(\cA\rtimes\Z)$ is surjective.  So, by
  Stallings' work~\cite{Stallings:1965-1},
  $\widehat\Gamma/\widehat\Gamma_{\omega+1} \cong
  (\cA\rtimes\Z)/(\cA\rtimes\Z)_{\omega+1}$.  By the second claim, it follows
  that $\widehat\Gamma/\widehat\Gamma_{\omega+1} \cong \cA\rtimes\Z$.

  Therefore, to complete the proof, it only remains to show the claims. We begin
  with the first claim, which concerns~$H_2$.  In fact, $H_*(\cA\rtimes\Z)$ can
  be computed using the Wang sequence
  \begin{equation}
    \label{equation:wang-sequence-modified-tb}
    \cdots \to H_2(\cA) \xrightarrow{1-t_*} H_2(\cA) \to H_2(\cA\rtimes\Z)
    \to H_{1}(\cA) \xrightarrow{1-t_*} H_{1}(\cA) \to \cdots  
  \end{equation}
  similarly to Section~\ref{subsection:tb-omega+1-H_3}.  (Our
  \eqref{equation:wang-sequence-modified-tb} here is analogous
  to~\eqref{equation:wang-sequence-Gamma-hat}.)
  By~\eqref{equation:tb-omega+1-H_2(A)}, $H_2(\cA)=H_2(\Z^2)=\Z$.  Since $h_d$
  has determinant one, $t_* = \id$ on $H_2(\cA)$ and thus $1-t_*=0$.  Also,
  $H_1(\cA)=\Z_{(2)}^2$, and $t_*$ on $H_1(\cA)$ is given by~$h_d$.  So,
  $1-t_*=\sbmatrix{2 & -d \\ 0 & 2}$ on $H_1(\cA)$ and this is injective.
  Therefore, from \eqref{equation:wang-sequence-modified-tb}, it follows that
  \begin{equation}
    \label{equation:modified-tb-H_2}
    H_2(\cA\rtimes\Z)=H_2(\cA)=H_2(\Z^2)=\Z.
  \end{equation}
  We remark that while our monodromy $h_d$ ($d$ odd) is different from the $d=0$
  case, $H_2(\cA\rtimes\Z)$ remains the same as the $d=0$ case given
  in~\eqref{equation:tb-omega+1-H_2}.
  
  For the second claim, we proceed similarly to the proof of
  Lemma~\ref{lemma:tb-transfinite-lcs}.  We have already shown that
  $(\cA\rtimes\Z) / (\Z_{(2)}/\Z) = \Z_{(2)}^2\rtimes \Z$ is residually
  nilpotent.  So, $(\cA\rtimes\Z)_\omega\subset \Z_{(2)}/\Z$.  For the reverse
  inclusion, observe that $[x^\beta,t]=x^{-2\beta}$, so by induction,
  $x^{2^k\beta}\in (\cA\rtimes\Z)_{k+1}$.  Thus $[x,y]^{2^k\beta} =
  [x^{2^k\beta},y]$ lies in $(\cA\rtimes\Z)_{k+2}$ for all~$\beta\in\Z_{(2)}$.
  For every $\gamma\in \Z_{(2)}$, there is $\beta\in \Z_{(2)}$ such that
  $2^k\beta \equiv \gamma \bmod \Z$, since $2$ is invertible in~$\Z_{(2)}/\Z$.
  It follows that $[x,y]^\gamma = [x,y]^{2^k\beta} \in (\cA\rtimes\Z)_{k+2}$.
  Since it holds for every~$k$, $[x,y]^\gamma \in (\cA\rtimes\Z)_\omega$.  This
  shows that $(\cA\rtimes\Z)_\omega = \Z_{(2)}/\Z$.  Finally, since
  $[x,y]^\gamma$ is central in $\cA\rtimes\Z$,
  $(\cA\rtimes\Z)_{\omega+1}=\{1\}$.  This completes the proof of the claims.
\end{proof}

\subsection{Homology and realizable classes}
\label{subsection:modified-tb-H_*}

We will prove
Theorem~\ref{theorem:modified-tb-comp}\ref{item:modified-tb-realization}. To
compute $H_3(\widehat\Gamma/\widehat\Gamma_\omega)$, we use the Wang sequence
for $\widehat\Gamma/\widehat\Gamma_\omega = \Z_{(2)}^2\rtimes\Z$, similarly to
Section~\ref{subsection:tb-omega-H_3}.  Indeed, the Wang sequence was already
given in~\eqref{equation:wang-sequence-omega}:
\begin{multline*}
  0=H_3(\Z_{(2)}^2)
  \to H_3(\widehat\Gamma/\widehat\Gamma_\omega) \to H_2 (\Z_{(2)}^2)
  \xrightarrow{1-t_*} H_2(\Z_{(2)}^2)
  \to H_2(\widehat\Gamma/\widehat\Gamma_\omega)
  \\
  \to H_1 (\Z_{(2)}^2) \xrightarrow{1-t_*} H_1(\Z_{(2)}^2)
  \to H_1(\widehat\Gamma/\widehat\Gamma_\omega) \to \Z \to 0
\end{multline*}
Here, the difference from Section~\ref{subsection:tb-omega-H_3} is that $t_*$ is
induced by $h_d=\sbmatrix{-1 & d \\ 0 & -1}$.  So, $1-t_*$ on
$H_2(\Z_{(2)}^2)=\Z_{(2)}$ is zero, and $1-t_*$ on $H_1(\Z_{(2)}^2)=\Z_{(2)}^2$
is $\sbmatrix{2 & -d \\ 0 & 2}$.  It follows that
\begin{equation}
  \label{equation:modified-tb-omega-H_i}
  \begin{aligned}
    H_3(\widehat\Gamma/\widehat\Gamma_\omega) &= H_2(\Z_{(2)}^2) = \Z_{(2)}, \\
    H_2(\widehat\Gamma/\widehat\Gamma_\omega) &= H_2(\Z_{(2)}^2) = \Z_{(2)}, \\
    H_1(\widehat\Gamma/\widehat\Gamma_\omega) &= \Z_4 \times \Z.
  \end{aligned}
\end{equation}

Note that $H_i(\widehat\Gamma/\widehat\Gamma_\omega)$ remains the same as that
of original torus bundle in Section~\ref{subsection:tb-omega-H_3} for $i=2,3$,
while $H_1(\widehat\Gamma/\widehat\Gamma_\omega)$ is altered since $d$ is odd.
Compare \ref{equation:modified-tb-omega-H_i} with~\eqref{equation:tb-omega-H_i}.
But, $H_1(\widehat\Gamma/\widehat\Gamma_\omega)$ is still a finite abelian
2-group. By this, the analysis of the cap
products~\eqref{equation:tb-omega-cap-H^2-to-H_1}
and~\eqref{equation:tb-omega-cap-H^1-to-H_2} (which uses
Theorem~\ref{theorem:main-realization}) in Section~\ref{subsection:tb-omega-H_3}
applies to our case without any modification.  This shows that
$\cR_\omega(\Gamma) = \Z_{(2)}^\times$.

To compute $H_3(\widehat\Gamma/\widehat\Gamma_{\omega+1})$, we proceed similarly
to Section~\ref{subsection:tb-omega+1-H_3}.  For
$\widehat\Gamma/\widehat\Gamma_{\omega+1} = \cA\rtimes\Z$, we have the Wang
sequence~\eqref{equation:wang-sequence-Gamma-hat}
\begin{equation*}
  \cdots \to H_i(\cA) \xrightarrow{1-t_*} H_i(\cA)
  \to H_i(\widehat\Gamma/\widehat\Gamma_{\omega+1})
  \to H_{i-1}(\cA) \xrightarrow{1-t_*} H_{i-1}(\cA) \to \cdots  
\end{equation*}
where $t_*$ is again induced by $h_d=\sbmatrix{-1 & d \\ 0 & -1}$. We have
$H_3(\cA)=H_3(\Z_{(2)}/\Z)=\Z_{(2)}/\Z$ by~\eqref{equation:tb-omega+1-H_3(A)}.
Since the subgroup $\Z_{(2)}/\Z\subset\cA$ is generated by $[x,y]^\gamma$ on
which our $t$ acts trivially, $t_*$ on $H_3(\cA)$ is the identity.  Also, since
$H_2(\cA)=H_2(\Z^2)=\Z$ by~\eqref{equation:tb-omega+1-H_2(A)}, $t_*$ on
$H_2(\cA)$ is the identity too. It follows that
$H_3(\widehat\Gamma/\widehat\Gamma_{\omega+1}) = (\Z_{(2)}/\Z)\rtimes \Z$, the
same as~\eqref{equation:tb-omega+1-H_3} in
Section~\ref{subsection:tb-omega+1-H_3}.

Also, for $\theta\in H_3(\widehat\Gamma/\widehat\Gamma_{\omega+1})$, the
analysis of the cap products \eqref{equation:tb-omega-cap-H^2-to-H_1} and
\eqref{equation:tb-omega-cap-H^1-to-H_2} in
Section~\ref{subsection:tb-omega+1-realizable-classes} is carried out for our
case
\begin{align*}
  {}\cap\theta\colon & tH^2(\widehat\Gamma/\widehat\Gamma_{\omega+1})
  \to tH_1(\widehat\Gamma/\widehat\Gamma_{\omega+1})
  = tH_1(\Gamma)
  \\
  {}\cap\theta\colon & H^1(\widehat\Gamma/\widehat\Gamma_{\omega+1})
  \to H_2(\widehat\Gamma/\widehat\Gamma_{\omega+1})
  /\Ker\{H_2(\widehat\Gamma/\widehat\Gamma_{\omega+1}) \rightarrow
  H_2(\widehat\Gamma/\widehat\Gamma_\omega)\}
\end{align*}
without modification, using that $tH_1(\Gamma)=\Z_4$ is a finite abelian
2-group.  This shows that $\cR_{\omega+1}(\Gamma) = \Z_{(2)}/\Z \times\{\pm1\}
\subset H_3(\widehat\Gamma/\widehat\Gamma_{\omega+1})$.

Note that we have shown that $\cR_{\omega}(\Gamma)$ and $\cR_{\omega+1}(\Gamma)$
are the same as those of the original torus bundle case ($d=0$).  So, by the
argument in the last paragraph, $\cR_{\omega+1}(\Gamma) \to
\cR_{\omega}(\Gamma)$ is also the same as the the original torus bundle case.

To complete the proof of
Theorem~\ref{theorem:modified-tb-comp}\ref{item:modified-tb-realization}, it
remains to determine the equivalence relation ${\sim}$ on
$\cR_\omega(\Gamma)=\Z_{(2)}^\times \subset
H_3(\widehat\Gamma/\widehat\Gamma_\omega)=\Z_{(2)}$.

Let $\theta=a/b \in \cR_\omega(\Gamma)=\Z_{(2)}^\times$ with $a$, $b$ odd
integers.  To compute the equivalence class of $\theta$, we will first find a
3-manifold realizing~$\theta$.  Recall that $M_d$ is the modified torus bundle,
with monodromy $h_d = \sbmatrix{-1 & d \\ 0 & -1}$, and that
$\Gamma=\pi_1(M_d)$.  For another odd integer $r$ which will be specified later,
consider the 3-manifold $M_r$.  By
Theorem~\ref{theorem:modified-tb-localization} aplied to $r$ instead of $d$, we
have $\widehat{\pi_1(M_r)}/\widehat{\pi_1(M_r)}_\omega =
\Z_{(2)}^2\rtimes_{h_r}\Z$.  Because the following observation will also be used
later, we state it as a lemma.

\begin{lemma}
  \label{lemma:modified-tb-diag-aut}
  Let $\alpha$, $\beta \in \Z_{(2)}^\times$.  Then
  $\phi=\phi_{\alpha,\beta}\colon \Z_{(2)}^2\rtimes_{h_r}\Z \to
  \Z_{(2)}^2\rtimes_{h_d}\Z$ given by $\phi(a,b,n) = (\alpha\cdot a, \beta\cdot
  b, n)$ is a group isomorphism if and only if $d\beta = r\alpha$.  When it is the case, the induced isomorphism
  \[
    \phi_*\colon H_3(\widehat{\pi_1(M_r)}/\widehat{\pi_1(M_r)}_\omega)
    = \Z_{(2)}\to H_3(\widehat\Gamma/\widehat\Gamma_\omega) = \Z_{(2)}
  \]
  is multiplication by~$\alpha\beta$. 
\end{lemma}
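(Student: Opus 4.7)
The plan is to verify the two claims directly from the explicit formula for~$\phi$, since once the setup is in place both are essentially computations.

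For the algebraic criterion, since $\alpha, \beta \in \Z_{(2)}^\times$ are units, $\phi$ is automatically a set-theoretic bijection, so the content is exactly whether $\phi$ is a homomorphism. I would write out the semi-direct product law
\[
  (a,b,n)\cdot(a',b',n') = \bigl((a,b) + h_r^n(a',b'),\, n+n'\bigr)
\]
in $\Z_{(2)}^2 \rtimes_{h_r} \Z$ and compare $\phi$ of this with the product $\phi(a,b,n)\cdot\phi(a',b',n')$ computed in $\Z_{(2)}^2 \rtimes_{h_d} \Z$. Setting $M=\mathrm{diag}(\alpha,\beta)$, this reduces to asking that $Mh_r^n = h_d^n M$ for all $n\in\Z$, hence (by taking $n=1$ and using that $h_r,h_d$ are invertible) to the single equation $Mh_r = h_d M$. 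The direct matrix computation
\[
  Mh_r = \begin{bmatrix}-\alpha & r\alpha \\ 0 & -\beta\end{bmatrix},\qquad
  h_dM = \begin{bmatrix}-\alpha & d\beta \\ 0 & -\beta\end{bmatrix}
\]
shows this is equivalent to $r\alpha = d\beta$.

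For the second claim, I would invoke naturality of the Wang sequence associated to the HNN extensions $1\to \Z_{(2)}^2 \to \Z_{(2)}^2\rtimes_{h_\bullet} \Z \to \Z \to 1$ from~\eqref{equation:modified-tb-omega-H_i}. Since $t_*$ acts trivially on $H_2(\Z_{(2)}^2)=\Z_{(2)}$ and $H_3(\Z_{(2)}^2)=0$, this sequence furnishes an isomorphism $H_3(\Z_{(2)}^2\rtimes \Z) \cong H_2(\Z_{(2)}^2)$ in both the domain and the codomain of~$\phi_*$. Because $\phi$ is the identity on the $\Z$-factor and restricts on the normal subgroup $\Z_{(2)}^2$ to the linear map $M=\mathrm{diag}(\alpha,\beta)$, naturality reduces $\phi_*$ on $H_3$ to the map $M_*$ on $H_2(\Z_{(2)}^2)$. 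Writing $\Z_{(2)}^2 = \colim_{\ell\text{ odd}} (\tfrac{1}{\ell}\Z)^2$ and using that an endomorphism of a free abelian group of rank two induces multiplication by its determinant on $H_2 = \Lambda^2$, we find $M_* = \det M \cdot \id = \alpha\beta \cdot \id$ on $H_2(\Z_{(2)}^2) = \Z_{(2)}$, yielding the desired conclusion.

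Neither step presents a genuine obstacle; the only minor care required is to track the two different monodromies $h_r$ and $h_d$ when invoking naturality of the Wang sequence, and to verify that the identification $H_3 \cong H_2(\Z_{(2)}^2)$ given by~\eqref{equation:modified-tb-omega-H_i} is canonical enough that the same isomorphism is used on both sides.
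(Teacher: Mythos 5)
Your proof is correct and takes essentially the same route as the paper: the homomorphism condition reduces to the matrix intertwining identity $Mh_r = h_d M$ (the paper writes it as $h_d\phi = \phi h_r$), and the action on $H_3$ is read off via the Wang-sequence identification $H_3(\Z_{(2)}^2\rtimes \Z)\cong H_2(\Z_{(2)}^2)$, on which a linear map acts by its determinant. The only difference is that you spell out the reduction to $n=1$ and the naturality of the Wang sequence a bit more explicitly than the paper, which simply cites the prior homology computation.
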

\begin{proof}
  Since the monodromies are $h_d=\sbmatrix{-1 & d \\ 0 & -1}$ and
  $h_r=\sbmatrix{-1 & r \\ 0 & -1}$ and $\phi=\sbmatrix{\alpha & 0 \\ 0 &
  \beta}$ on $\Z_{(2)}^2$, our $\phi$ is an isomorphism between the semi-direct
  products if and only if the matrix identity $h_d\phi = \phi h_r$ holds.  From
  this, the first conclusion follows immediately, using the condition $d\ne 0$.

  Since $H_3(\widehat{\pi_1(M_r)}/\widehat{\pi_1(M_r)}_\omega) = H_2(\Z_{(2)}^2)
  = \Z_{(2)}$ by~\eqref{equation:modified-tb-omega-H_i} and since the
  restriction $\phi|_{\Z_{(2)}^2}$ is $\sbmatrix{\alpha & 0 \\ 0 & \beta}$, the
  induced map $\phi_*$ on $H_3$ is the multiplication by $\det\phi|_{\Z_{(2)}^2}
  = \alpha\beta$.
\end{proof}

For our purpose, let $r=abd$, $\alpha=1/b$ and $\beta=a$.  By
Lemma~\ref{lemma:modified-tb-diag-aut}, $\phi=\phi_{\alpha,\beta}$ is an
isomorphism, and  $\phi_*$ on $H_3$ is multiplication by~$a/b$.  Since the
fundamental class $[M_r]$ is equal to $1\in \Z_{(2)} =
(\widehat{\pi_1(M_r)}/\widehat{\pi_1(M_r)}_\omega)$, it follows that the value
of the invariant $\theta(M_r) = \theta(M_r, \phi)$ defined using the isomorphism
$\phi$ is equal to the class $\theta=a/b \in \cR_\omega(\Gamma)$. Therefore, the
equivalence class of $\theta$, with respect to ${\sim}$, is equal to the image
of the composition
\[
  \begin{tikzcd}[row sep=1.5ex]
    \cR_{\omega+1}(\pi_1(M_r)) \ar[r] \ar[d,equal]
    & \cR_\omega(\pi_1(M_r)) \ar[r,"\cong", "\phi_*"'] \ar[d,equal]
    & \cR_\omega(\Gamma) \ar[d,equal]
    \\
    |[overlay]|
    (\Z_{(2)}/\Z)\times\{\pm1\} 
    & \Z_{(2)}^\times
    & \Z_{(2)}^\times
  \end{tikzcd}
\]
by Definition~\ref{definition:milnor-equivalence-rel}.  Since the first arrow is
$(x,\pm1) \mapsto \pm1$ and the second arrow is multiplication by $\theta=a/b$,
it follows that $\theta\sim\theta'$ if and only if $\theta' = \pm\theta$.  The
completes the proof of
Theorem~\ref{theorem:modified-tb-comp}\ref{item:modified-tb-realization}.

\subsection{Automorphism action and Milnor invariants}
\label{subsection:modified-tb-aut}

Recall that $\Gamma=\pi_1(M_d)$ where $d$ is fixed.  We will prove
Theorem~\ref{theorem:modified-tb-comp}\ref{item:modified-tb-aut}.

Suppose $\phi\colon \widehat\Gamma/\widehat\Gamma_\omega \to
\widehat\Gamma/\widehat\Gamma_\omega=\Z_{(2)}^2\rtimes\Z$ is an automorphism.
Similarly to the proof of Lemma~\ref{lemma:tb-aut-general-form}, we have that
$\phi$ restricts to an automorphism on the subgropup $\Z_{(2)}^2$, since
$\Z_{(2)}^2$ is the first rational derived subgroup
of~$\widehat\Gamma/\widehat\Gamma_\omega$.  Write $\phi|_{\Z_{(2)}^2} =
\sbmatrix{\alpha & \smash{\beta} \\ \smash{\gamma} & \delta} \in
\operatorname{GL}(2,\Z_{(2)})$. For the generator $t$ of the $\Z$ factor of
$\Z_{(2)}^2\rtimes\Z$, we have that $\phi(0,t) = (v,t^\epsilon)$ for some
$v\in\Z_{(2)}^2$ and $\epsilon\in\{\pm1\}$, since $\phi$ is an automorphism on
the quotient $(\widehat\Gamma/\widehat\Gamma_\omega) / \Z_{(2)}^2 = \Z$.  Since
$\phi$ is a group homomorphism on the semi-direct product with respect to the
monodromy $h_d$, the matrix identity $\phi h_d = h_d^\epsilon \phi$ must be
satisfied.  By comparing the matrix entries, it implies that $\phi|_{\Z_{(2)}^2}
= \sbmatrix{\alpha & \beta \\ 0 & \epsilon\alpha}$. (Here one uses the
assumption that $d$ is \emph{nonzero}!)  From this, it follows that the induced
automorphism $\phi_*$ on $H_3(\widehat\Gamma/\widehat\Gamma_\omega) =
H_2(\Z_{(2)}^2)=\Z_{(2)}$ is equal to multiplication by $\epsilon\cdot
\det\phi|_{\Z_{(2)}^2} = \alpha^2$.  Note that $\alpha\in \Z_{(2)}^\times$ since
$\phi|_{\Z_{(2)}^2}$ is invertible over~$\Z_{(2)}$.

Conversely, the above computation also shows that for any square $\alpha^2 \in
\Z_{(2)}^\times$, there is an automorphism $\phi$ on
$\widehat\Gamma/\widehat\Gamma_\omega=\Z_{(2)}^2\rtimes\Z$ such that $\phi_*$ on
$H_3$ is multiplication by~$\alpha^2$.  For instance, by setting $\beta=0$ and
$\epsilon=1$, the automorphism $\phi$ given by $\phi|_{\Z_{(2)}^2} =
\sbmatrix{\alpha & 0 \\ 0 & \alpha}$ and $\phi(0,t)=(0,t)$ has that property.

From the above, Theorem~\ref{theorem:modified-tb-comp}\ref{item:modified-tb-aut}
follows immedately: for $\theta$, $\theta' \in \Z_{(2)}^\times =
\cR_\omega(\Gamma)$, $\phi(\theta)=\theta'$ for some $\phi\in
\Aut(\widehat\Gamma/\widehat\Gamma_\omega)$ if and only if $\theta/\theta'$ is
a square in~$\Z_{(2)}^\times$.  By the above computation of the equivalence
relaiton ${\sim}$ and by Definition~\ref{definition:milnor-equivalence-rel}, it
also follows that $\theta \approx \theta'$ in $\cR_\omega(\Gamma)$ if and only
if $\theta/\theta' = \pm\alpha^2$ for some $\alpha\in \Z_{(2)}^\times$.

Finally, we will prove
Theorem~\ref{theorem:modified-tb-comp}\ref{item:modified-tb-value}. Recall that
$d$ is the fixed odd integer.  Let $r$ be an arbitrary odd integer. Let
$\theta=r/d \in \cR_\omega(\widehat\Gamma/\widehat\Gamma_\omega) =
\Z_{(2)}^\times$.  Apply Lemma~\ref{lemma:modified-tb-diag-aut}, for
$(\alpha,\beta) = (1,r/d)$, to obtain the isomorphism
\[
  \phi=\phi_{\alpha,\beta}\colon
  \widehat{\pi_1(M_r)}/\widehat{\pi_1(M_r)}_\omega
  \to \widehat\Gamma/\widehat\Gamma_\omega.
\]
Furthermore, Lemma~\ref{lemma:modified-tb-diag-aut} says that $\phi_*$ on $H_3$
is multiplication by $\alpha\beta = r/d\in \Z_{(2)}^\times$. Since the
fundamental class is $[M_r]=1\in\Z_{(2)}$, we have $\theta_\omega(M_r) =
\theta_\omega(M_r, \phi)=\phi_*(1)=r/d$.  This completes the proof of
Theorem~\ref{theorem:modified-tb-comp}, the last theorem of this paper.

\section{Questions}
\label{section:questions}

We list some questions which naturally arise from this work.

\begin{enumerate}
  \item\label{item:question-finite-type}
  Can one interpret the invariants $\theta_k(M)$ and $\bar\mu_k(M)$ of
  finite length (i.e.~$k<\infty$) as Gusarov-Vassiliev finite type
  invariants in an appropriate sense?
\end{enumerate}

We remark that $\theta_k(M)$ and $\bar\mu_k(M)$ are invariant under
Habiro-Gusarov clasper surgery, which is now often called
$Y_k$-equivalence.  More precisely, the following hold.

Fix a closed 3-manifold group~$\Gamma$, and let $M$ and $M'$ be two closed 3-manifold which are $Y_{k-1}$-equivalent.  Then $\theta_k(M)$ is defined if and
only if $\theta_k(M')$ is defined, and when they are defined,
$\theta_k(M)=\theta_k(M')$ in $\cR_k(\Gamma)/\Aut(\Gamma/\Gamma_k)$ if
$M$ and $M'$ are $Y_k$-equivalent.

The following three questions are relevant.

\begin{enumerate}[resume]
  \item\label{item:question-kontsevich-integral}
  Can one extract the invariants $\theta_k(M)$ and $\bar\mu_k(M)$ of
  finite length from (some variant of) the Kontsevich integral, or related
  quantum invariants?

  \item\label{item:question-transfinite-type}
  Our results strongly suggest that there should be a notion of transfinite type invariants.
  Can one interpret the transfinite length invariants $\theta_\kappa(M)$ and $\bar\mu_\kappa(M)$ as a finite type invariant? If not, can one
  generalize the notion of finite type invariants of 3-manifolds to a
  suitable notion of ``transfinite type'' invariants, so that the invariant
  $\theta_\kappa(M)$ of transfinite length can be viewed as invariants of
  transfinite type?

  \item\label{item:question-transfinite-kontsevich}
  Can we extend the definition of the Kontsevich integral (of 3-manifolds or links) to a transfinite version?
\end{enumerate}

The following addresses the (non)triviality of the transfinite invariants of a
given length.

\begin{enumerate}[resume]
  \item\label{item:question-nontriviality-given-length}  For every (countable) ordinal $\kappa$, is there a closed 3-manifold group
  $\Gamma$ for which the sets $\cR_\kappa(\Gamma)/\mathord{\sim}$ and
  $\cR_\kappa(\Gamma)/\mathord{\approx}$ have more than one element?
\end{enumerate}

Milnor's original work~\cite{Milnor:1957-1} combined with Orr's
result~\cite{Orr:1989-1} tell us that the answer
to~\ref{item:question-nontriviality-given-length} is affirmative for
finite~$\kappa$.  See also Theorem~\ref{theorem:main-tb-finite} in this paper.
Theorems~\ref{theorem:main-tb-omega} and~\ref{theorem:main-tb-milnor} show that
the answer is affirmative for $\kappa=\omega$.

\begin{enumerate}[resume]
  \item\label{item:maximal-countable-ordinal} 
    Is there a countable ordinal $\varpi$ such that if $\Gamma$ is a $3$-manifold group and $M$ is a $3$-manifold equipped with an isomorphism $\widehat{\pi_1(M)} / \widehat{\pi_1(M)}_\varpi \to \widehat\Gamma / \widehat\Gamma_\varpi$ for which $\bar\mu_\varpi(M)$ vanishes (over $\Gamma$), then $\bar\mu_\kappa(M)$ is defined and vanishes (over $\Gamma$) for every $\kappa > \varpi$. 
  \item\label{item:question-concordance}
  Do $\theta_\kappa$ and $\bar\mu_\kappa(M)$ (with $\kappa$ either finite or
  transfinite) reveal new information on link concordance?
\end{enumerate}

Regarding~\ref{item:question-concordance}, consider the following. Fix rational numbers $a_1/b_1,\ldots,a_m/b_m \in \Q$.  For a given $m$-component link $L$,
perform Dehn filling on the exterior of the link, with slopes~$a_i/b_i$,
to obtain a closed 3-manifold.  Call it~$M_L$.  Fix a link $L_0$, and
let $Y=M_{L_0}$, $\Gamma=\pi_1(Y)$. To compare a given link $L$
with the link $L_0$, consider the invariants $\theta_\kappa(M_L)$ and
$\bar\mu_\kappa(M)$, over the group $\Gamma$, as link invariants.

It seems particularly interesting whether $\theta_\kappa$ and
$\bar\mu_\kappa$ of transfinite length gives a new nontrivial link invariant
in this way.

In addition, the finite length case may also have some interesting potential
applications.  Recall from Section~\ref{section:tb-finite-computation} that
there are examples for which the finite length invariants $\theta_k$ live in
finite abelian groups, and thus have torsion-values.

\begin{enumerate}[resume]
  \item\label{item:question-torsion-concordance-inv}
  Do $\theta_k$ and $\bar\mu_k$ of finite length give new torsion-valued
  link concordance invariants?
\end{enumerate}

The following is closely related to~\ref{item:question-torsion-concordance-inv}.
In~\cite{Conant-Schneiderman-Teichner:2012-2} (see also the
survey~\cite{Conant-Teichner-Schneiderman:2011-1} of a series of related
papers), Conant, Schneiderman and Teichner proposed a higher order version of
the classical Arf invariant for links.  It may be viewed as certain 2-torsion
valued information extracted from Whitney towers and gropes in 4-space.  A
key conjecture in the theory of Whitney towers is whether the higher order Arf
invariants are nontrivial.

\begin{enumerate}[resume]
  \item\label{item:question-higher-order-arf}
  Are the invariants $\theta_k$ and $\bar\mu_k$ related to the higher
  order Arf invariants? More specifically, can one show the conjectural
  nontriviality of the higher order Arf invariants using these invariants (of
  certain 3-manifolds associated to links)?
\end{enumerate}

Also, the existence of transfinite Milnor invariants suggest the existence of transfinite Arf-invariants.

\begin{enumerate}[resume]
  \item\label{item:transfinite-Arf} Do transfinite Arf invariants of links and $3$-manifolds exist?
  \item If so, are these determined by the invariants, or some analogue of the invariants, $\theta_\kappa$?
 \end{enumerate}

\bibliographystyle{amsalpha}
\renewcommand{\MR}[1]{}

\bibliography{research.bib}

\end{document}